\newtheorem{theorem}{Theorem}[section]
\newtheorem{remark}{Remark}[section]
\newtheorem{definition}{Definition}[section]
\newtheorem{lemma}[theorem]{Lemma}
\newtheorem{pro}[theorem]{Proposition}
\newtheorem{cor}[theorem]{Corollary}
\renewcommand{\div}{{\rm div }}
\newcommand{\bt}{\begin{theorem}}
\newcommand{\bl}{\begin{lemma}}
\newcommand{\el}{\end{lemma}}
\newcommand{\et}{\end{theorem}}
\newcommand{\bn}{\begin{eqnarray}}
\newcommand{\en}{\end{eqnarray}}
\newcommand{\bnn}{\begin{eqnarray*}}
\newcommand{\enn}{\end{eqnarray*}}
\newcommand{\ba}{\begin{aligned}}
\newcommand{\ea}{\end{aligned}}
\newcommand{\be}{\begin{equation}}
\newcommand{\ee}{\end{equation}}
\newcommand{\R}{\mathbb{R}}
\newcommand{\Bx}{{\boldsymbol{x}}}
\newcommand{\Bv}{{\boldsymbol{v}}}
\newcommand{\oBv}{\overline{{\boldsymbol{v}}}}
\newcommand{\Bn}{{\boldsymbol{n}}}
\newcommand{\Bt}{{\boldsymbol{\tau}}}
\newcommand{\Bu}{{\boldsymbol{u}}}
\newcommand{\oBu}{\overline{{\boldsymbol{u}}}}
\newcommand{\Be}{{\boldsymbol{e}}}
\newcommand{\Bh}{{\boldsymbol{h}}}
\newcommand{\BU}{\boldsymbol{U}}
\newcommand{\Bg}{{\boldsymbol{g}}}
\newcommand{\Bw}{{\boldsymbol{w}}}
\newcommand{\BD}{{\boldsymbol{D}}}
\newcommand{\Ba}{{\boldsymbol{a}}}
\newcommand{\Bp}{{\boldsymbol{\phi}}}
\newcommand{\mcH}{\mathcal{H}}
\newcommand{\mfc}{\mathfrak{c}}
\begin{document}

\title[Two-dimensional flows with Navier slip boundary condition]
{On the Steady Navier-Stokes system with Navier slip boundary conditions in two-dimensional channels}

\author{Kaijian Sha}
\address{School of mathematical Sciences, Shanghai Jiao Tong University, 800 Dongchuan Road, Shanghai, China}
\email{kjsha11@sjtu.edu.cn}

\author{Yun Wang}
\address{School of Mathematical Sciences, Center for dynamical systems and differential equations, Soochow University, Suzhou, China}
\email{ywang3@suda.edu.cn}

\author{Chunjing Xie}
\address{School of mathematical Sciences, Institute of Natural Sciences,
Ministry of Education Key Laboratory of Scientific and Engineering Computing,
and CMA-Shanghai, Shanghai Jiao Tong University, 800 Dongchuan Road, Shanghai, China}
\email{cjxie@sjtu.edu.cn}

\begin{abstract}
In this paper, we investigate the incompressible steady Navier-Stokes system with Navier slip boundary condition in a two-dimensional channel. As long as the width of cross-section of the channel grows more slowly than the linear growth, the existence of solutions with arbitrary flux is established. Furthermore, if the flux is suitably small, the solution is unique even when the width of the channel is unbounded, and  approaches to the shear flows at far field where the channels tend to be straight at far fields. One of the major difficulties for the analysis on flows with Navier boundary conditions is that the tangential velocity may not be zero on the boundary so that we have to study the behavior of solutions near the boundary carefully.
The crucial ingredients of analysis include the construction of an appropriate flux carrier, and the detailed analysis for the flow behavior near boundary via combining a Hardy type inequality for normal component of velocity and the divergence free property of the velocity.
\end{abstract}

\keywords{}
\subjclass[2010]{
35Q30, 35J67, 76D05,76D03}

\thanks{Updated on \today}

\maketitle

\section{Introduction}
It is not only mathematically interesting but also physically important  to study the well-posedness of the steady Navier-Stokes system
\begin{equation}\label{NS}
\left\{
\begin{aligned}
&-\Delta \Bu+\Bu\cdot \nabla \Bu +\nabla p=0 ~~~~&\text{ in }\Omega,\\
&{\rm div}~\Bu=0&\text{ in }\Omega,
\end{aligned}\right.
\end{equation}
in a channel type domain $\Omega$, where the unknown function $\Bu=(u_1,u_2)$ is the velocity and $p$ is the pressure. Here the domain $\Omega$ is assumed to be a two-dimensional channel,
\begin{equation}\label{defOmega}
\Omega=\{(x_1,x_2):x_1\in \mathbb{R},~f_1(x_1)<x_2<f_2(x_1)\},
\end{equation}
where $f_1$ and $f_2$ are smooth functions.
\begin{figure}[h]
	\centering

	\tikzset{every picture/.style={line width=0.75pt}} 

	\begin{tikzpicture}[x=0.75pt,y=0.75pt,yscale=-1,xscale=1]
	
	\draw    (118.6,84.5) .. controls (130.6,88.5) and (160.6,112.5) .. (197.8,121) .. controls (235,129.5) and (234.12,105.1) .. (268.8,105) .. controls (303.48,104.9) and (307.37,144.08) .. (337.6,145.5) .. controls (367.83,146.92) and (447.45,98.71) .. (472.8,92) ;
	\draw    (126.6,196.5) .. controls (152.46,185.3) and (171.99,167.36) .. (214.6,174.5) .. controls (257.21,181.64) and (256.17,206.04) .. (286.6,213.5) .. controls (317.03,220.96) and (359.6,186.5) .. (388.95,186.06) .. controls (418.31,185.62) and (439.6,189.5) .. (484.6,210.5) ;
	\draw  [dash pattern={on 4.5pt off 4.5pt}]  (310,131.2) -- (310,212.2) ;
	
	\draw (300,233.4) node [anchor=north west][inner sep=0.75pt]    {$\Omega $};
	\draw (129,64.4) node [anchor=north west][inner sep=0.75pt]    {$f_{2}( x_{1})$};
	\draw (129,207.4) node [anchor=north west][inner sep=0.75pt]    {$f_{1}( x_{1})$};
	\draw (314,161.4) node [anchor=north west][inner sep=0.75pt]    {$\Sigma ( x_{1})$};

	\end{tikzpicture}

	\caption{The channel $\Omega$}
\end{figure}
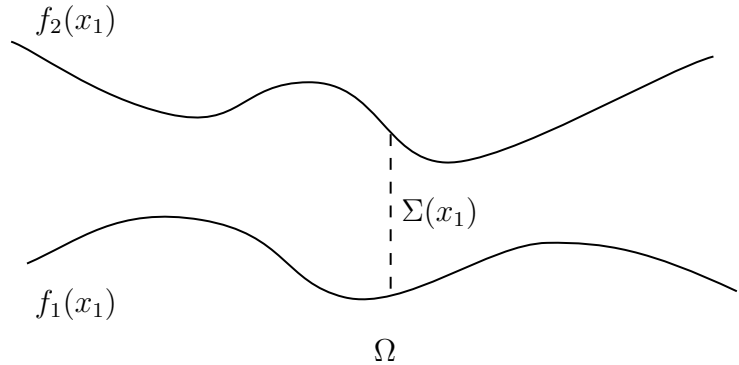

 If, in addition, the outlets of the channel tend to be straight, in 1950s, Leray proposed a  problem to study the solutions of \eqref{NS} with no-slip boundary condition, which tend to Poiseuille flows at far field.  More precisely, suppose that there exist constants $c_i^\pm (i=1,2)$ and $K>0$ such that $f_i(x_1)=c_i^-$ and $f_i(x_1)=c_i^+(i=1,2)$ for any $x_1<-K$ and $x_1>K$, respectively. Without loss of generality, we assume that $c_i^+=c_i^-=c_i$. Hence one looks for the solutions of the Navier-Stokes system \eqref{NS} supplemented with no-slip boundary conditions so that 
\begin{equation}\label{far field}
	\Bu \to \BU \text{ as }|x_1|\to \infty,
\end{equation}
where  $\BU=U(x_2)\Be_1$ is the shear flow solution of \eqref{NS} with flux $\Phi$ in the corresponding straight channel
\begin{equation*}
{\Omega}^\sharp=\{(x_1,x_2):~x_1\in \R,~x_2\in (c_1,c_2)\},
\end{equation*}
satisfying no-slip boundary condition. The far field behavior \eqref{far field} implies automatically the flux constraint
\begin{equation}\label{flux constraint}
	\int_{\Sigma(x_1)}\Bu\cdot \Bn\,ds=\Phi \text{ for any }x_1\in \R,
\end{equation}
where $\Phi$ is a constant and $\Sigma(x_1)=\{(x_1,x_2): (x_1,x_2)\in \Omega\}$.
This problem is  called Leray problem nowadays. Without loss of generality, we always assume 
that $\Phi$ is nonnegative  in this paper.

The major breakthrough for the Leray problem in infinitely long channels was made by Amick \cite{A1,A2,AF}, Ladyzhenskaya and Solonnikov \cite{LS}. It was proved in \cite{A1, LS} that Leray problem is solvable as long as the flux is small. Actually, the existence of solutions
of \eqref{NS} in a nozzle
with arbitrary flux  was also proved in \cite{LS}. However, the far field behavior and the uniqueness of such solutions are not clear when the flux is large. In order to completely resolve Leray problem, one needs only to prove the far field convergence of solutions obtained in \cite{LS}. The far field behavior of solutions was studied in \cite{AF,Ga} and references therein. To the best of our knowledge, there is no result on the far field behavior of solutions of steady Navier-Stokes system with large flux except for the axisymmetric solutions in a pipe studied in \cite{WX2}.

For viscous flows near solid boundary, besides the no-slip boundary condition,
 the Navier boundary conditions
\begin{equation}\label{BC}
	\Bu\cdot \Bn=0,~(\Bn\cdot \BD(\Bu)+\alpha \Bu)\cdot \Bt=0~\text{ on }\partial\Omega,
\end{equation}
are also usually used, which were suggested by Navier \cite{Na} for the first time. Here $\BD(\Bu)$ is the strain tensor defined by
\[
 (\BD(\Bu))_{ij}=(\partial_{x_j}u_i + \partial_{x_i} u_j )/2,
\] and $\alpha\ge 0$ is the friction coefficient which measures the tendency of a fluid to slip over the boundary. $\Bt$ and $\Bn$ are the unit tangent and outer normal vector on the boundary $\partial\Omega$, respectively. If $\alpha=0$, the boundary conditions \eqref{BC} are also called the full slip boundary conditions. If $\alpha\to \infty$, the boundary conditions \eqref{BC} formally reduce to the classical no-slip boundary conditions.

The Navier-Stokes system with Navier slip boundary conditions has been widely studied in various aspects. One may refer to \cite{Bei1,CMR,DL,DLX,IS,Ke,LZ,MR,APS,Ga,GL,JM,WWX,XX} and the references therein for some important results on the nonstationary problem.  For the stationary problem, the existence and regularity of the solutions were first studied in \cite{SS}, where the usual Dirichlet conditions and the full slip conditions are imposed on different parts of the boundary of a three-dimensional interior or exterior domain. It is noteworthy that  the existence and the regularity for solutions of a generalized Stokes system with Navier boundary conditions were investigated in \cite{Bei} in some regular domain. In a flat domain supplemented with full slip boundary conditions, the existence and uniqueness of very weak, weak, and strong solutions  of the Navier-Stokes system  were proved in appropriate Banach spaces in \cite{Ber, AR}.  Recently,  the stationary Stokes and Navier-Stokes system with nonhomogeneous Navier boundary conditions in a three-dimensional bounded domain were studied in \cite{AACG}, where the existence and uniqueness for weak and strong solutions in $W^{1,p}$ and $W^{2,p}$ spaces, respectively were established even when friction coefficient $\alpha$ is generalized to a function with minimal regularity, and the behavior of the solution is also investigated when $\alpha$ tends to infinity. For more analysis on flows with the Navier slip boundary condition, one may refer to \cite{Co,Bei2,Me}.

For flows in a channel with Navier-slip boundary condition, we can also consider Leray problem where the far field shear flows can be prescribed. Without loss of generality, we assume $c_1^\pm=-1$ and $c_2^\pm= 1$ for the channel $\Omega ^\sharp$, then the associated shear flow in $\Omega^\sharp$ with Navier boundary conditions can be written as 
\begin{equation}\label{shearflow}
	\BU=\Phi\left(\frac{3(2+\alpha)}{4(3+\alpha)}-\frac{3\alpha}{4(3+\alpha)}x_2^2\right)\Be_1.	
\end{equation}
 The corresponding Leray problem has been studied by \cite{Mu1,Mu2,Mu3,Ko,LPY} and references therein.  In \cite{Mu1}, the problem \eqref{NS}-\eqref{BC} with friction coefficient $\alpha=0$ was solved for any flux provided the two-dimensional channel with straight upper boundary is contained in a straight channel and coincides with the straight channel at far field. Then the exponential convergence of the velocity is studied in \cite{Mu3}. It's worth noting that the Dirichlet norm of the solution is finite since the corresponding shear flow $\BU$ is a constant flow in the case $\alpha=0$. For general two-dimensional channel with straight outlets, it was also proved  in \cite{Mu2} that the Navier-Stokes system has a smooth solution with arbitrary flux if
 \[
 \|\alpha-2\chi\|_{L^\infty(\partial\Omega)}\leq C(\Omega),
 \]
 where $\chi$ is the curvature of the boundary and $C(\Omega)$ is a constant depending only on $\Omega$. However, the far behavior is not known even when the flux is small. The Leray problem for flows in a general two-dimensional channel with total slip boundary condition has been solved in \cite{SWX22} recently.

In the case that the domain is a three-dimensional pipe with straight outlets, a weak solution of the Navier-Stokes system with arbitrary flux is  obtained in \cite{Ko}, which satisfies mixed boundary condition and the far field behavior \eqref{far field}. Very recently, Leray problem with Navier boundary condition was solved  in \cite{LPY},  as long as the flux $\Phi$ is small and the pipe becomes straight at large distance.  The exponential convergence at far fields and regularity of the solutions with small fluxes were also achieved in \cite{LPY}.

In this paper, we study the Navier-Stokes system \eqref{NS} supplemented with \eqref{flux constraint}-\eqref{BC} in channels whose cross-section can be even unbounded. Before stating the main results of this paper, the definitions of some function spaces and the weak solution are introduced.
\begin{definition}\label{def1}
	Given a domain $D\subseteq \mathbb{R}^2$, denote
	\[
	L_0^2(D)=\left\{w(x): w\in L^2(D), \, \int_D w(x)dx =0.\right\}.
	\]
Given $\Omega$ defined in \eqref{defOmega},		for any  constants $a<b$ and $T>0$, denote
	\begin{equation*}
	\Omega_{a,b}=\{(x_1,x_2)\in \Omega:a<x_1<b\}\  \text{ and }\ \Omega_T=\Omega_{-T,T}.
	\end{equation*}
	Define
	\begin{equation*}
		\mathcal{C}(\Omega_{a,b})=\left\{\Bu|_{\Omega_{a,b}}:\begin{array}{l} \Bu\in C^\infty(\overline{\Omega}),~\Bu=0\text{ in  }\Omega\setminus \Omega_{a,b},~
			\Bu\cdot \Bn=0 \text{ on }\partial\Omega_{a,b}\cap \partial\Omega\end{array} \right\}
	\end{equation*}
and
	\begin{equation*}
		\mathcal{C}_\sigma(\Omega_{a,b})=\left\{\Bu|_{\Omega_{a,b}}: ~ \Bu\in\mathcal{C}(\Omega_{a,b}),
			~{\rm div}\,\Bu=0 \text{ in }\Omega_{a,b} \right\}.
	\end{equation*}
Let $\mcH(\Omega_{a,b})$ and $\mcH_\sigma(\Omega_{a,b})$  be the completions of $\mathcal{C}(\Omega_{a,b})$ and  $\mathcal{C}_\sigma(\Omega_{a,b})$  under $H^1$ norm, respectively. Denote
\begin{equation*}
	H_\sigma(\Omega)=\left\{\Bu\in H_{loc}^1(\Omega):~{\rm div}\,\Bu=0 \text{ in }\Omega,~\Bu\cdot \Bn=0 \text{ on } \partial\Omega\right\}
\end{equation*}
and $H_*^1(\Omega_{a,b})$ to be the set of vector-valued functions in $H^1(\Omega_{a,b})$
with zero flux, i.e., for any $\Bv\in H_*^1(\Omega_{a,b})$, one has
\begin{equation}\label{zeroflux}
\int_{f_1(x_1)}^{f_2(x_1)} v_1(x_1, x_2) dx_2=0\quad \text{for any}\,\, x_1\in (a, b).
\end{equation}
\end{definition}

\begin{definition}
	A vector field $\Bu\in H_\sigma(\Omega)$ is said to be a weak solution of the Navier-Stokes system \eqref{NS} with Navier slip boundary conditions \eqref{BC} if for any $T>0$, $\Bu$ satisfies
\begin{equation}\label{weak solution}
\int_{\Omega}2\BD(\Bu):\BD(\Bp)+\Bu\cdot\nabla\Bu\cdot \Bp\,dx+2\alpha \int_{\partial\Omega}\Bu\cdot \Bp\,ds
=0 \quad \text{for any}\,\,  \Bp \in \mcH_\sigma(\Omega_T). 	
\end{equation}
\end{definition}

Denote
\begin{equation}\label{deffbar}
	f(x_1):= f_2(x_1)-f_1(x_1)\quad \text{and}\quad \bar{f}(x_1):= \frac{f_2(x_1)+f_1(x_1)}{2}.
\end{equation}
In this paper, we always assume that
\begin{equation}\label{assumpf}
	\inf_{x_1\in \mathbb{R}} f(x_1)=d >0\quad \text{and}\quad \max_{i=1,2} \|f_i'\|_{C(\mathbb{R})}=\beta <+\infty.
\end{equation}

 The first main result of this paper can be stated as follows.
\begin{theorem}\label{bounded channel}
	Let $\Omega$ be the domain given in \eqref{defOmega} and the friction coefficient $\alpha>0$. If the width of the channel $\Omega$  is uniformly bounded, i.e.,
	\be \label{bounded}
	f(x_1) := f_2(x_1)-f_1(x_1) \leq \overline{d} < +\infty, \ \ \ \text{for any}\, \, x_1 \in \mathbb{R},
	\ee
	then the problem \eqref{NS} and \eqref{flux constraint}-\eqref{BC} has a solution $\Bu\in H_\sigma(\Omega)$ satisfying the estimate
	\begin{equation}\label{1-4}
		\|\nabla \Bu\|_{L^2(\Omega_t)}^2+\|\Bu\|_{L^2(\partial\Omega_t\cap\partial\Omega)}^2\leq \overline{C}(1+t),\ \ \text{ for any }t\ge 0,
	\end{equation}
	where $\overline{C}$ is a positive constant independent of $t$.
	Furthermore, the solution satisfies the following properties.
	\begin{enumerate}
		\item[(a)] There exists a constant $\Phi_0>0$ such that for any flux $\Phi\in [0,\Phi_0)$, the solution $\Bu$ obtained in Theorem \ref{bounded channel} is unique
	in the class of functions satisfying \eqref{1-4}.

		\item[(b)]	If, in addition, the outlet of the channel is straight, i.e., there exists a constant $k>0$ such that $\Omega\cap \{x_1>k\}=\{(x_1,x_2): x_1>k, x_2\in (c_1, c_2)\}$ for some constants $c_1<c_2$, then there exists a constant $\Phi_1>0$ such that for any flux $\Phi\in [0,\Phi_1)$, the solution $\Bu$ tends to the corresponding shear flow $\BU=U(x_2)\Be_1$ in the sense
		\begin{equation*}
			\|\Bu-\BU\|_{H^1(\Omega\cap \{x_1>k\})}<\infty.
		\end{equation*}
	\end{enumerate}
\end{theorem}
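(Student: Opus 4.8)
The plan is to adapt the invading-domains method of Ladyzhenskaya and Solonnikov to the Navier slip setting, the new feature being that neither solutions nor admissible test functions vanish on $\partial\Omega$. Throughout one writes $\Bu=\Ba+\Bw$, where $\Ba$ is a \emph{flux carrier}: a divergence-free field in $H_\sigma(\Omega)$, tangent to $\partial\Omega$, carrying the flux $\Phi$ through every cross-section $\Sigma(x_1)$, equal to the shear flow $\BU$ of \eqref{shearflow} wherever the channel is already straight, and supported in an arbitrarily thin $\delta$-collar of $\partial\Omega$ in the generic region, with $\|\Ba\|_{L^\infty}\lesssim\Phi/\delta$ and $\|\nabla\Ba\|_{L^2(\Omega_t)}^2\lesssim 1+t$. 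Two near-boundary inequalities drive everything. First, since $\alpha>0$, a Korn inequality with friction gives $\|\nabla\Bw\|_{L^2(\Omega_{a,b})}^2\le C(\|\BD(\Bw)\|_{L^2(\Omega_{a,b})}^2+\|\Bw\|_{L^2(\partial\Omega_{a,b}\cap\partial\Omega)}^2)$, so the natural energy $2\|\BD(\Bu)\|_{L^2}^2+2\alpha\|\Bu\|_{L^2(\partial\Omega)}^2$ controls precisely the quantity in \eqref{1-4}. Second, a Leray--Hopf type bound for the flux carrier: because $\Bw\cdot\Bn=0$ on $\partial\Omega$, the normal component of $\Bw$ satisfies a one-dimensional Hardy inequality across the collar, while the (non-vanishing) tangential component is controlled on the collar by $\delta\|\nabla\Bw\|_{L^2}$ plus the trace $\|\Bw\|_{L^2(\partial\Omega)}$; combining these with the structure $\nabla\Ba\sim a'(\mathrm{dist}(\cdot,\partial\Omega))\,\Bn\otimes\Bt$ one obtains, for every $\varepsilon>0$ and a suitable $\delta=\delta(\varepsilon,\Phi)$,
\[
\Bigl|\int_{\Omega_T}\Bw\cdot\nabla\Ba\cdot\Bw\,dx\Bigr|\le\varepsilon\|\nabla\Bw\|_{L^2(\Omega_T)}^2+\varepsilon\|\Bw\|_{L^2(\partial\Omega_T\cap\partial\Omega)}^2,\qquad\Bw\in\mcH_\sigma(\Omega_T),
\]
the last term being harmless since it is absorbed by the friction term. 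This replaces the plain Hardy inequality available under the no-slip condition.

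With these in hand I would first solve the truncated problems on $\Omega_T$: the equation for $\Bw\in\mcH_\sigma(\Omega_T)$ is solved by a Galerkin scheme together with the Leray--Schauder principle, the a priori bound coming from testing with $\Bw$ and absorbing $\int\Bw\cdot\nabla\Ba\cdot\Bw$ via the inequality above. The decisive point is the estimate \eqref{1-4}, uniform in $T$. Test the $\Bw$-equation with $\zeta_t^2\Bw$ plus a Bogovskii corrector restoring $\div\Bw=0$ on the cut-off strips $\{t<|x_1|<t+1\}$; the corrector is small because $\Bw$ has zero flux, so $\int_{\Sigma(x_1)}w_1\,dx_2=0$ and Poincar\'e in $x_2$ applies. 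Setting $y(t)=\|\nabla\Bw\|_{L^2(\Omega_t)}^2+\|\Bw\|_{L^2(\partial\Omega_t\cap\partial\Omega)}^2$ and using Korn-with-friction on the left, one arrives at a Saint-Venant type inequality
\[
y(t)\le C_0+C_1\bigl(y(t+1)-y(t)\bigr)+C_2\bigl(y(t+1)-y(t)\bigr)^{3/2},
\]
where $C_0$ is the finite contribution of the genuine source $\Ba\cdot\nabla\Ba$, nonzero only where the channel bends, and the cubic term arises from $-\int\zeta_t\zeta_t'\,w_1|\Bw|^2$, estimated on the unit strips by the two-dimensional Ladyzhenskaya inequality together with the zero-flux Poincar\'e bound on $\Bw$. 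Iterating this inequality gives $y(t)\le\overline{C}(1+t)$ with $\overline{C}$ independent of $T$ and $t$. Letting $T\to\infty$ along a subsequence (weak convergence in $H_{loc}^1(\Omega)$, strong convergence in $L^2_{loc}$ and on $\partial\Omega\cap\Omega_t$, which lets one pass to the limit in the trilinear term) produces a weak solution $\Bu=\Ba+\Bw\in H_\sigma(\Omega)$ satisfying \eqref{1-4}.

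For (a), two solutions $\Bu^{(1)},\Bu^{(2)}$ obeying \eqref{1-4} have difference $\Bw=\Bu^{(1)}-\Bu^{(2)}\in H_\sigma(\Omega)$ with zero flux, solving the linearized system; re-running the localized energy identity (with $\zeta_t^2\Bw$ plus corrector) and using that, for $\Phi\in[0,\Phi_0)$, any solution obeying \eqref{1-4} has uniformly small local energy — so the linearized trilinear terms are absorbed into the good part — one gets $y(t)\le C\bigl(y(t+1)-y(t)\bigr)$ for $y(t)=\|\nabla\Bw\|_{L^2(\Omega_t)}^2+\|\Bw\|_{L^2(\partial\Omega_t\cap\partial\Omega)}^2$, hence $y(t+n)\ge(1+c)^n y(t)$; the at-most-linear growth \eqref{1-4} then forces $\Bw\equiv0$. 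Part (b) is of the same nature: on $\Omega\cap\{x_1>k\}$ the shear flow $\BU$ is an exact solution, so $\Bv=\Bu-\BU$ has zero flux and solves a homogeneous perturbed system there; the localized energy estimate (corrector small by zero flux and Poincar\'e in $x_2$) gives, for the tail energy $\mathcal{E}(R)=\|\nabla\Bv\|_{L^2((k+1,R)\times\Sigma)}^2+\|\Bv\|_{L^2(\partial\Omega\cap\{k+1<x_1<R\})}^2$, an inequality $\mathcal{E}(R)\le CS+\tfrac12\bigl(\mathcal{E}(R+1)-\mathcal{E}(R)\bigr)$, where $S=\|\nabla\Bv\|_{L^2(\Omega_{k+2})}^2$ is a fixed finite quantity and the $\Phi$-dependent terms $\int\BU\cdot\nabla\Bv\cdot\Bv$, $\int\Bv\cdot\nabla\BU\cdot\Bv$ and the cubic term are absorbed for $\Phi\in[0,\Phi_1)$ using $\|U'\|_{L^\infty}\lesssim\Phi$, the zero flux of $\Bv$, and the 2D Ladyzhenskaya inequality. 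Since geometric growth of $\mathcal{E}$ would contradict the linear bound \eqref{1-4}, one concludes $\mathcal{E}(R)\le 2CS$ for all $R$, i.e. $\|\nabla\Bv\|_{L^2(\Omega\cap\{x_1>k\})}<\infty$, whence $\|\Bv\|_{L^2}<\infty$ by Poincar\'e and $\|\Bu-\BU\|_{H^1(\Omega\cap\{x_1>k\})}<\infty$.

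The main obstacle is precisely the near-boundary analysis: because the tangential velocity need not vanish on $\partial\Omega$, the classical Hardy inequality is not available, and it must be replaced — in the Leray--Hopf bound for the flux carrier, in the Poincar\'e inequalities on the cut-off strips, and in the smallness of the Bogovskii correctors — by the combination of a one-dimensional Hardy inequality for the normal component $\Bw\cdot\Bn$, the divergence-free condition (which converts tangential derivatives into normal ones), and the boundary trace $\|\Bw\|_{L^2(\partial\Omega)}$, itself controlled through the friction coefficient $\alpha>0$. Arranging all the constants so that the Saint-Venant iteration closes — in particular handling the cubic nonlinearity in the linear-growth estimate \eqref{1-4} — is the technically delicate part.
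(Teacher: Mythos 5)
Your overall architecture (flux carrier plus invading domains, Korn inequality with friction, Bogovskii correctors on unit strips, a Saint--Venant/differential inequality for the local energy, and a growth-versus-uniqueness contradiction) is the same as the paper's. The genuine gap is at the single estimate everything hinges on: your Leray--Hopf-type bound
$\bigl|\int\Bw\cdot\nabla\Ba\cdot\Bw\,dx\bigr|\le\varepsilon\|\nabla\Bw\|_{L^2}^2+\varepsilon\|\Bw\|_{L^2(\partial\Omega)}^2$
for a carrier supported in a $\delta$-collar of $\partial\Omega$ with $\|\Ba\|_{L^\infty}\lesssim\Phi/\delta$. With such a plug-type carrier the best you can extract from ``tangential component controlled on the collar by $\delta\|\nabla\Bw\|_{L^2}$ plus the trace'' is
$\bigl|\int\Bw\cdot\nabla\Bw\cdot\Ba\bigr|\lesssim \Phi\|\nabla\Bw\|_{L^2}^2+\Phi\,\delta^{-1/2}\|\Bw\|_{L^2(\partial\Omega)}\|\nabla\Bw\|_{L^2}$,
whose gradient coefficient is $O(\Phi)$ and does not shrink as $\delta\to0$; this would at best give existence for small flux, whereas the theorem (and the paper) claims existence for arbitrary $\Phi$. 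If instead you upgrade to a Hopf-type carrier with $|\Ba|\le\varepsilon\Phi/\mathrm{dist}(\Bx,\partial\Omega)$ to gain the factor $\varepsilon$, then the singular weight $1/\mathrm{dist}$ hits the tangential component of $\Bw$, which does not vanish on $\partial\Omega$, and terms like $\int_{\rm collar}|w_\tau|^2/\mathrm{dist}^2$ (or even $/\mathrm{dist}$ after tangential integration by parts in the curved parts) are not finite for a general $H^1$ field with nonzero trace; the trace term in your inequality cannot repair this. What makes the argument close in the paper is a structural cancellation you never verify: the carrier is built as a stream-function cutoff so that, using ${\rm div}\,\Bv=0$ and the algebraic relation \eqref{1-11} between $g_2$ and $f_2'g_1$, the singular weight only ever multiplies the combination $v_2-f_2'v_1$ (the normal component at the upper boundary, which vanishes there, so the one-dimensional Hardy inequality applies) and $v_1/f$ (controlled by the zero-flux Poincar\'e inequality); see \eqref{2-6-1}--\eqref{2-7}. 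Your closing paragraph names these ingredients, but the proposal never performs the computation showing that no pure tangential-times-singular-weight term survives, and for a carrier anchored to the boundary in a curved channel such terms do appear. (The paper in fact sidesteps the extra boundary terms altogether by supporting $\Bg$ near the mid-line, so that $\Bg$ vanishes near $\partial\Omega$ and the Navier condition for $\Bu=\Bv+\Bg$ reduces to that for $\Bv$; your collar carrier, equal to $\BU$ in the straight part, also generates friction and $\Bn\cdot\BD(\Ba)\cdot\Bt$ boundary terms that you do not treat.)

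A secondary, fixable gap: in parts (a) and (b) you drop the cubic contribution of the difference field on the cut-off strips and write purely linear Saint--Venant inequalities such as $y(t)\le C\,(y(t+1)-y(t))$. The difference $\oBu$ (respectively $\Bv=\Bu-\BU$) is not known a priori to be small on the moving strips -- only the linearly growing bound \eqref{1-4} is available there -- so the term $\bigl(y(t+1)-y(t)\bigr)^{3/2}$ cannot simply be absorbed. The paper keeps this term, first proves a uniform local Dirichlet bound on unit strips (its Proposition on uniform estimates), and then concludes via the Ladyzhenskaya--Solonnikov differential-inequality lemma, which yields cubic growth of $y$ unless $y\equiv0$; cubic growth contradicts \eqref{1-4}. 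Your argument can be repaired along exactly those lines, but as written the linear difference inequality is not justified.
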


There are a few remarks in order.
\begin{remark}
	The constant $\overline{C}$ depends only on the slip coefficient $\alpha$, the flux $\Phi$, and the domain $\Omega$. More precisely, it depends on $\alpha$, $\Phi$, $\|f_i\|_{C^2(\R)}$, and $\inf_{x_1\in \R} f(x_1)$.
\end{remark}

\begin{remark}
	The case $\alpha=0$ is studied in detail in \cite{SWX22}, where Leray problem with arbitrary flux in two-dimensional channels with straight ends is completely solved. A similar result has been  obtained independently in \cite{LPY2}. 
	\end{remark}
\begin{remark}
After we finished this paper, we got to know that  the existence and asymptotic behavior of solutions for steady Navier-Stokes system in channels with Navier boundary conditions have also been investigated in \cite{LPY2}. However, the major results and methods in \cite{LPY2} are in the similar spirit as that in \cite{A1}, which are quite different from that in this paper. 
\end{remark}

\begin{remark}
	When the flux is small, Theorem \ref{bounded channel} provides a positive answer to Leray problem with Navier slip boundary condition. Employing the same technique as in \cite{LS,LPY2,Kn,To}, one can also  prove the exponential convergence of $\Bu$ to the corresponding shear flow.
\end{remark}
\begin{remark}
	The asymptotic convergence of the solutions to the shear flows at far field  highly relates to the Liouville type theorem of shear flows in a flat strip. One may refer to \cite{WX3} for recent progress along this direction for the flows in a pipe with Navier boundary conditions.
\end{remark}

For the flows in channels with unbounded outlets, we have also the following theorem.
\begin{theorem}\label{unbounded channel}
	Let $\Omega$ be the domain given in \eqref{defOmega} and the friction coefficient $\alpha>0$. Suppose that	
	\begin{equation}\label{assumpf''}
		\max_{i=1,2}\sup_{x_1\in \R}|(f_i^{\prime \prime} f)(x_1)|<\infty.
	\end{equation}
\begin{enumerate}
	\item[(\romannumeral1)] (Existence) The problem \eqref{NS} and \eqref{flux constraint}-\eqref{BC} has a solution $\Bu\in H_\sigma(\Omega)$  satisfying the estimate
	\begin{equation}\label{1-9}
		\|\nabla \Bu\|_{L^2(\Omega_t)}^2+\|\Bu\|_{L^2(\partial\Omega_t\cap\partial\Omega)}^2\leq \widetilde{C}\left(1+\int_{-t}^t f^{-3}(x_1)\,dx_1 \right)\text{ for any }t\ge 0,
	\end{equation}
	where $\widetilde{C}$ is a positive constant depending only on the friction coefficient $\alpha$, the flux $\Phi$, and $\Omega$.
	
	\item[(\romannumeral2)] (Uniqueness)
	 If, in addition, it holds  that either
		\begin{equation}\label{1-16}
			\left|\int_0^{\pm\infty}f^{-3}(\tau)\,d\tau\right|=\infty,\ \ \ \lim_{|t|\to +\infty} f'(t)= 0,
		\end{equation}
		or
		\begin{equation}\label{1-17}
			\left|\int_0^{\pm\infty}f^{-3}(\tau)\,d\tau\right|<\infty,\ \ \ \lim_{t\to \pm\infty}\frac{\sup_{\pm \tau \ge |t|}f'(\tau)}{\left|\int_t^{\pm \infty} f^{-3}(\tau)\,d\tau\right|^\frac12}= 0,
		\end{equation}
	 then there exists a constant $\Phi_2>0$ such that for any flux $\Phi\in [0,\Phi_2)$, the solution $\Bu$ obtained in Theorem \ref{unbounded channel} is unique in the class of functions satisfying \eqref{1-9}.
\end{enumerate}
\end{theorem}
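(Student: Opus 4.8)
\noindent\emph{Plan.} I would obtain the existence in (\romannumeral1) by the invading--domain method and the uniqueness in (\romannumeral2) by a Saint-Venant type energy estimate; the hypothesis \eqref{assumpf''} enters only through the construction of the flux carrier, while \eqref{1-16}--\eqref{1-17} are exactly what is needed to close the uniqueness argument. For the existence, I would first build a flux carrier adapted to the varying width: set $\sigma(x_1,x_2):=\frac{x_2-f_1(x_1)}{f(x_1)}\in[0,1]$, fix a smooth profile $g$ on $[0,1]$ with $g(0)=0$, $g(1)=1$, and put $\psi:=\Phi\,g(\sigma)$, $\Ba:=(\partial_{x_2}\psi,\,-\partial_{x_1}\psi)$. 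Then $\div\Ba=0$, $\Ba\cdot\Bn=0$ on $\partial\Omega$, and $\int_{\Sigma(x_1)}\Ba\cdot\Bn\,ds=\Phi$ for every $x_1$. The second derivatives of $\sigma$ contain $f_i''$ only through the combination $f_i''/f$, so \eqref{assumpf''} together with the standing assumption \eqref{assumpf} gives the pointwise bound $|\nabla\Ba(x)|\le C\Phi\,f(x_1)^{-2}$, hence
\[
\|\nabla\Ba\|_{L^2(\Omega_t)}^2+\|\Ba\|_{L^2(\partial\Omega_t\cap\partial\Omega)}^2\le C\Phi^2\Big(1+\int_{-t}^{t}f^{-3}(\tau)\,d\tau\Big),
\]
which is precisely the size permitted in \eqref{1-9}. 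For each $T>0$ one then solves \eqref{NS} and \eqref{BC} in $\Omega_T$ with $\Bu=\Ba$ on the artificial sections $\{x_1=\pm T\}$; on this bounded domain existence is classical (Galerkin together with the Leray--Schauder principle) and needs no smallness of $\Phi$. Writing $\Bu_T=\Ba+\Bw_T$ with $\Bw_T\in\mcH_\sigma(\Omega_T)$, everything reduces to an estimate of $\Bw_T$ uniform in $T$, after which a diagonal subsequence as $T\to\infty$ produces a weak solution obeying \eqref{1-9}.

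The uniform estimate is the heart of the matter. Testing the equation for $\Bu_T$ against $\zeta^2\Bw_T$ --- made divergence-free by a Bogovskii correction supported in $\Omega_{s+1}\setminus\Omega_s$, with $\zeta=\zeta(x_1)$ equal to $1$ on $\Omega_s$ and vanishing outside $\Omega_{s+1}$ --- one controls $\|\zeta\nabla\Bw_T\|_{L^2}^2+\alpha\|\zeta\Bw_T\|_{L^2(\partial\Omega)}^2$ by: terms generated by $\Ba$, estimated through the displayed bound above; commutator terms carrying a factor $\zeta'$, localized in the cell $\Omega_{s+1}\setminus\Omega_s$ (this includes the cubic term from $(\Bw_T\cdot\nabla)\Bw_T$, and the term $\int\zeta^2\Ba\cdot\nabla\Bw_T\cdot\Bw_T$, which collapses to $-\int\zeta\zeta'(\Ba\cdot\Be_1)|\Bw_T|^2$ after using $\div\Ba=0$, $\Ba\cdot\Bn=0$); and the convective term $\int\zeta^2\,\Bw_T\cdot\nabla\Ba\cdot\Bw_T$. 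The difficulty special to the Navier condition is that $\Bw_T$ does not vanish on $\partial\Omega$ --- only its normal component $\Bw_T\cdot\Bn$ does --- so these near-boundary terms cannot be absorbed by a Poincar\'e inequality as in the no-slip theory. I would split $\Bw_T$ near $\partial\Omega$ into the normal part $\Bw_T\cdot\Bn$, which vanishes on $\partial\Omega$ and thus obeys a Hardy inequality $\int_{\Omega}|\Bw_T\cdot\Bn|^2\,\mathrm{dist}(x,\partial\Omega)^{-2}\,dx\lesssim\|\nabla\Bw_T\|_{L^2}^2$, and the tangential part $\Bw_T\cdot\Bt$, which is tied to the normal part through $\div\Bw_T=0$ (its tangential derivative equals, up to curvature, minus the normal derivative of $\Bw_T\cdot\Bn$). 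Combined with the explicit form of $\Ba$, this lets one absorb the dangerous contributions into $\ve\|\zeta\nabla\Bw_T\|_{L^2}^2$ plus controlled quantities, yielding a recursion for the cell energies $E_s:=\|\nabla\Bw_T\|_{L^2(\Omega_s)}^2+\alpha\|\Bw_T\|_{L^2(\partial\Omega_s\cap\partial\Omega)}^2$ of the schematic form $E_s\le C\big(1+\int_{-s}^{s}f^{-3}\big)+\ve(E_{s-1}+E_{s+1})$, hence \eqref{1-9} by summation. I expect this near-boundary control of the nonlinear and boundary terms to be by far the main obstacle.

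For the uniqueness, let $\Bu^{(1)},\Bu^{(2)}$ be two solutions with the same flux $\Phi$ satisfying \eqref{1-9}, and $\Bw:=\Bu^{(1)}-\Bu^{(2)}$; then $\Bw$ has zero flux on every $\Sigma(x_1)$ and solves weakly the linearized system with the same boundary condition \eqref{BC}. Testing against (the solenoidal correction of) $\zeta_R^2\Bw$, the convection by $\Bu^{(1)}$ leaves only a commutator $\int\zeta_R\zeta_R'\,(\Bu^{(1)}\cdot\Be_1)\,|\Bw|^2$ near $\{x_1=\pm R\}$, while the remaining quadratic term $\int\zeta_R^2\,\Bw\cdot\nabla\Bu^{(2)}\cdot\Bw$ is absorbed once $\Phi$ is small, using the cross-sectional Poincar\'e inequalities for $\Bw$ (the component $\Bw\cdot\Be_1$ has zero mean on each $\Sigma(x_1)$, and $\Bw\cdot\Bn=0$ at its two endpoints) together with \eqref{1-9} for $\Bu^{(2)}$. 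One is then left with a Saint-Venant type differential inequality for $G(R):=\|\nabla\Bw\|_{L^2(\Omega_R)}^2+\alpha\|\Bw\|_{L^2(\partial\Omega_R\cap\partial\Omega)}^2$, schematically $G(R)\le\Psi(R)\,G'(R)$, where the coefficient $\Psi$ is governed by $f$ and $f'$ through the cross-sectional inequalities. The subtle point --- and the reason two regimes occur --- is that $G$ need not stay bounded: under \eqref{1-16} it may grow like $\int_{-R}^{R}f^{-3}$, so the classical Ladyzhenskaya--Solonnikov dichotomy does not apply directly and one needs $f'\to0$ to keep $\Psi$ from degrading, whereas under \eqref{1-17} the carrier has finite energy and $G$ is bounded, but then $f'$ must decay faster than $\big|\int_t^{\pm\infty}f^{-3}\big|^{1/2}$ for $\int^{\infty}dR/\Psi$ to diverge. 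In either case the hypotheses are precisely what forces $G\equiv0$; since then $\nabla\Bw\equiv0$, the constant vector $\Bw$ together with $\Bw\cdot\Bn\equiv0$ on $\partial\Omega$ and the zero-flux condition must vanish, i.e. $\Bu^{(1)}=\Bu^{(2)}$.
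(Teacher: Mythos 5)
Your existence argument breaks down at the single point on which the whole paper turns: the choice of flux carrier. Your $\Ba=\nabla^{\perp}\big(\Phi\,g(\sigma)\big)$ spreads the flux over the entire cross-section with a \emph{fixed} profile $g$, so it carries no tunable small parameter: since $g$ must rise from $0$ to $1$, $g'$ cannot be made small, and the best bound for the crucial convective interaction is $\big|\int \zeta^{2}\,\Bw_T\cdot\nabla\Ba\cdot\Bw_T\,dx\big|\le C\Phi\,\|\zeta\nabla\Bw_T\|_{L^2}^{2}+(\text{cell terms})$ with $C$ an absolute constant (the cross-sectional Poincar\'e inequalities give exactly this and no better). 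For large $\Phi$ this cannot be absorbed by $\mfc\|\zeta\nabla\Bw_T\|_{L^2}^2$, so neither the Leray--Schauder a priori bound on $\Omega_T$ nor the uniform-in-$T$ estimate closes, whereas part (i) asserts existence for \emph{arbitrary} flux. Your near-boundary Hardy splitting does not repair this, because with your carrier the dangerous term is not localized near $\partial\Omega$: $|\nabla\Ba|\sim\Phi f^{-2}$ throughout the cross-section. (In fact even your displayed bound for the carrier fails in its boundary term: $|\Ba|\sim\Phi f^{-1}$ on $\partial\Omega$ gives only $\Phi^{2}\int f^{-2}$, which is not controlled by $1+\int f^{-3}$ when, say, $f(t)\sim |t|^{1/2}$.) The paper's carrier \eqref{defg} is a logarithmic Hopf-type layer anchored at the upper boundary but supported strictly inside $\Omega$, with a free parameter $\ve$ and the algebraic identity \eqref{1-11}; combined with the fact that $v_2-f_2'v_1$ vanishes on the upper boundary and the one-sided Hardy inequality of Lemma \ref{lemma1}, this gives $\big|\int\Bv\cdot\nabla\Bv\cdot\Bg\,dx\big|\le C\ve\Phi\|\nabla\Bv\|_{L^2}^{2}$, which is made smaller than $\mfc/2$ by choosing $\ve$, for every $\Phi$. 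A second quantitative defect: your cells $\Omega_{s+1}\setminus\Omega_s$ have unit length but width $f\to\infty$, so the Bogovskii constant (Lemma \ref{lemmaA5}) and the $L^4$-embedding constant on a cell are not uniform in $s$; the paper takes cells of horizontal length $\sim\beta^{*}f$ and a cutoff of slope $f^{-1}$ (the truncation \eqref{cut-off-hat} built from the inverse of $\int_0^t f^{-5/3}$) precisely to keep these constants uniform before running the differential-inequality argument of Lemma \ref{lemmaA4}.

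The uniqueness sketch has a parallel gap. To absorb $\int\zeta_R^{2}\,\Bw\cdot\nabla\Bu^{(2)}\cdot\Bw\,dx$ you invoke the cross-sectional inequalities ``together with \eqref{1-9} for $\Bu^{(2)}$'', but on a slab of horizontal length $\sim f(t)$ the relevant $L^4$ constant scales like $f^{1/2}(t)$, so what is actually needed is local decay $\|\nabla\Bu^{(2)}\|_{L^2}\lesssim \Phi\,f^{-1}(t)$ on such slabs. Estimate \eqref{1-9} gives nothing of the sort: it is a global, non-additive bound and only yields $O\big((1+\int f^{-3})^{1/2}\big)$ on any sub-slab. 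This is exactly why the paper first proves the refined decay $\|\nabla\Bu\|_{L^2(\Omega_{t-\beta^{*}f(t),\,t})}^{2}\le C f^{-2}(t)$ (Propositions \ref{decay rate-right} and \ref{decay rate-left}), and it is there --- in verifying $\varphi\ge 2\Psi(t,\varphi')$ for the comparison function --- that hypotheses \eqref{1-16} and \eqref{1-17} are actually used, through $\sup_{\tau\ge T_0}|f'(\tau)|$ and $\sup_{\tau\ge T_0}|f'(\tau)|\,\big(\int_{T_0}^{\infty}f^{-3}\big)^{-1/2}$. Only after that does the quadratic term acquire a coefficient of order $\Phi$ uniformly over all slabs, and the conclusion follows from the dichotomy of Lemma \ref{lemmaA4} (either $\oBu=0$ or cubic growth of the truncated energy, the latter contradicting the linear-in-$t$ growth implied by \eqref{1-9}). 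Your reading of \eqref{1-16}--\eqref{1-17} as conditions making $\int dR/\Psi$ diverge in a Saint-Venant ODE is not how they enter, and without the intermediate decay estimate your differential inequality for $G$ cannot even be derived with controlled coefficients.
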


There are some remarks in order.
\begin{remark}
If $f(t)$ is a power function at far field, the conditions \eqref{1-16}-\eqref{1-17} are equivalent to 
\begin{equation*}
	f(t)=o(t^\frac35)\ \ \ \ \text{ as }|t|\to\infty.
\end{equation*} 
\end{remark}

\begin{remark}
It should be emphasized that there is no restriction on the flux $\Phi$ for the existence of solutions in both Theorems \ref{bounded channel} and \ref{unbounded channel}.
\end{remark}

\begin{remark} In certain sense, the estimates \eqref{1-4} and \eqref{1-9} are optimal, as there exists a constant $C>0$ such that
\begin{equation}\label{optimalest}
\Phi^2  \int_{-t}^t f^{-3} (x_1) \, dx_1 \leq C(\|\nabla \Bu\|_{L^2(\Omega_t)}^2+\|\Bu\|_{L^2(\partial\Omega_t\cap\partial\Omega)}^2).
\end{equation}
 Indeed, for any flow $\Bu$ with flux $\Phi$, one has
\begin{equation*}\begin{aligned}
		\Phi^2=&\left|\int_{\Sigma(x_1)}\Bu\cdot \Bn\,ds\right|^2\leq |\Sigma(x_1)|\int_{\Sigma(x_1)}|\Bu|^2\,dx_2 \\
		\leq&|f(x_1)|\int_{\Sigma(x_1)}\left|\Bu(x_1, f_2(x_1))-\int^{f_2(x_1)}_{x_2}\partial_{x_2}\Bu(x_1,\xi) \,d\xi\right|^2\,dx_2\\
		\leq& 2|f (x_1)|\int_{\Sigma(x_1)}\left(|\Bu(x_1,f_2(x_1))|^2+|\Sigma(x_1)|\int_{\Sigma(x_1)}|\partial_{x_2}\Bu(x_1,\xi)|^2\,d\xi\right)\,dx_2\\
		\leq&C|f (x_1)|^3\left(|\Bu(x_1,f_2(x_1))|^2+\int_{\Sigma(x_1)}|\nabla \Bu|^2\,dx_2\right).
\end{aligned}\end{equation*}
Integrating this inequality with respect to $x_1$ over $(-t,t)$, one has \eqref{optimalest}.
\end{remark}

\begin{remark}
For the channels satisfying \eqref{1-16} and \eqref{1-17}, we also obtain the decay rate for the Dirichlet norm of the solutions in Proposition \ref{decay rate-right}. The pointwise convergence rates of the solutions of steady Navier-Stokes system in a class of two-dimenisonal channels are studied in detail in a forthcoming paper \cite{SWXrate}. 
\end{remark}



Here we give a sketch of the proof and point out the major difficulties and key ingredients of the analysis.  Inspired by the analysis of \cite{LS}, we construct a flux carrier $\Bg$, which is a solenoidal vector field with flux $\Phi$ satisfying the Navier slip boundary condition. We seek for a solution of the form $\Bu = \Bv + \Bg$. To prove the existence of solutions, the main difficulty is to estimate the term $\int \Bv \cdot \nabla\Bv \cdot \Bg\,dx$. When the flow satisfies the no-slip boundary condition, i.e. $\Bv=0$ on $\partial\Omega$, the main idea in \cite{A1, LS} is to
apply the Hardy inequality to show
\begin{equation}\label{1-5}
	\left|\int \Bv \cdot \nabla\Bv \cdot \Bg\,dx \right|\leq\delta ^2\|\nabla\Bv\|_{L^2}^2
\end{equation}
for arbitrary $\delta>0$ provided
\[
|\Bg(\Bx)|\leq \frac{c}{\mathrm{ dist}(\Bx,\partial\Omega)}\quad \text{for}\,\, \Bx \,\,\text{near the boundary}.
\]
  If $\Bv$ satisfies the Navier boundary conditions so that in general $\Bv$ does not vanish on the boundary, one cannot apply Hardy inequality directly to prove \eqref{1-5}. A key observation is that the inequality \eqref{1-5} still holds even   when one has only $\Bv\cdot \Bn=0$ on the boundary, if the flux carrier $\Bg$ is chosen appropriately.

The second difficulty is that when $\Bv$ is not zero on the boundary so that we cannot apply Poincar\'e's inequality directly. Another key point in this paper is that the Poincar\'e's inequality for the normal component of velocity field still holds since $\Bv \cdot \Bn =0$ on the boundary. Meanwhile,  we can estimate the horizontal component since the flux of $\Bv$ equals to zero.


The third issue is about the Bogovskii map which is used to estimate the pressure term. Since the norm of the Bogovskii map depends on the domain so that we have to choose the truncated domains in a delicate way to make sure that the norm is uniform, in particular, when the width of channels grows to be unbounded at far fields.

 Finally,  Korn's inequality plays an important role in the energy estimate for the Navier-Stokes system. In this paper, we prove that the constant in Korn's inequality is uniform with respect to different truncated domains.

The rest of the paper is organized as follows. In Section \ref{secpreliminary}, we give some  lemmas which are used here and there in the paper. The flux carrier $\Bg$ is constructed in Section \ref{secflux}, which is used to prove the existence of the approximate solution on the bounded domain $\Omega_T$ via Leray-Schauder fixed point theorem. Section 4 devotes to the study on the Navier-Stokes system in channels with bounded cross-sections. The crucial growth estimate on the Dirichlet norm of the approximate solution is established with the aid of the analysis on divergence equation. By virtue of these estimates, the well-posedness of the Navier-Stokes system and the far field behavior of the solutions are proved in Section \ref{secexist1}. In Section \ref{secexist2}, the flows in channels with unbounded outlets are investigated, where the existence of solutions is also proved with the help of the growth estimate on Dirichlet norm.

\section{Preliminaries}\label{secpreliminary}
In this section, we collect some elementary but important lemmas. We first give the Poincar\'e type inequality in channels.
\begin{lemma}\label{lemmaA1}
For any $\Bv \in H^1_*(\Omega_{a,b})$ satisfying $\Bv \cdot \Bn =0$ on $\partial\Omega_{a,b}\cap \partial\Omega$, one has
\begin{equation}\label{A1-0}
	\left\|\frac{v_1}{f}\right\|_{L^2(\Omega_{a,b})}\leq M_0 \left\|\partial_{x_2} v_1\right\|_{L^2(\Omega_{a,b})}
\end{equation}
and
\begin{equation}
\left\|\Bv\right\|_{L^2(\Omega_{a,b})}\leq M_1(\Omega_{a,b}) \left\|\nabla\Bv\right\|_{L^2(\Omega_{a,b})},
\end{equation}
 where $M_0$ is a uniform constant independent of the domain $\Omega_{a,b}$ and
 \begin{equation}\label{defM1}
 M_1(\Omega_{a,b})=C\|f\|_{L^\infty(a, b)}
  \cdot \left(1+\|f_2'\|_{L^\infty(a,b)}\right).
 \end{equation}
\end{lemma}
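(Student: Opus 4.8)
The plan is to prove the two inequalities in Lemma \ref{lemmaA1} separately, both by reduction to a one-dimensional Poincar\'e inequality on the cross-section $\Sigma(x_1) = (f_1(x_1), f_2(x_1))$, integrated in $x_1$ over $(a,b)$.

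\textbf{First inequality.} Fix $x_1 \in (a,b)$ and write $\Sigma = (f_1(x_1), f_2(x_1))$, a bounded interval of length $f(x_1)$. The zero-flux condition \eqref{zeroflux} says $\int_\Sigma v_1(x_1, x_2)\, dx_2 = 0$, so $v_1(x_1, \cdot)$ has mean zero on $\Sigma$. The classical one-dimensional Poincar\'e--Wirtinger inequality on an interval of length $\ell$ reads $\|w - \bar w\|_{L^2} \le \tfrac{\ell}{\pi}\|w'\|_{L^2}$; applied to $w = v_1(x_1, \cdot)$ with $\bar w = 0$ this gives
\begin{equation*}
\int_{\Sigma} |v_1(x_1, x_2)|^2 \, dx_2 \le \frac{f(x_1)^2}{\pi^2} \int_{\Sigma} |\partial_{x_2} v_1(x_1, x_2)|^2 \, dx_2.
\end{equation*}
Dividing both sides by $f(x_1)^2$ and using $f(x_1) = $ length of $\Sigma$, then integrating in $x_1$ over $(a,b)$ and using Fubini, yields $\|v_1/f\|_{L^2(\Omega_{a,b})}^2 \le \pi^{-2}\|\partial_{x_2} v_1\|_{L^2(\Omega_{a,b})}^2$, i.e.\ \eqref{A1-0} with $M_0 = 1/\pi$, which is indeed independent of the domain. (One should first verify the estimate for $\Bv \in \mathcal{C}(\Omega_{a,b})$, or smooth functions dense in $H^1_*$, and then pass to the limit; the zero-flux condition and the trace are continuous under $H^1$ convergence.)

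\textbf{Second inequality.} Here the subtlety is that $v_2$ need not have zero mean on cross-sections, but it does satisfy $\Bv \cdot \Bn = 0$ on $\partial\Omega_{a,b} \cap \partial\Omega$, i.e.\ on the top and bottom curves $x_2 = f_i(x_1)$. The strategy is to control $v_2$ on each cross-section by its boundary values plus a derivative term. Since $\Bv \cdot \Bn = 0$ on the graph $x_2 = f_2(x_1)$, and the outward normal there is $(-f_2', 1)/\sqrt{1 + (f_2')^2}$, we get $v_2(x_1, f_2(x_1)) = f_2'(x_1)\, v_1(x_1, f_2(x_1))$. Writing, for $x_2 \in \Sigma$,
\begin{equation*}
v_2(x_1, x_2) = v_2(x_1, f_2(x_1)) - \int_{x_2}^{f_2(x_1)} \partial_{x_2} v_2(x_1, \xi)\, d\xi = f_2'(x_1) v_1(x_1, f_2(x_1)) - \int_{x_2}^{f_2(x_1)} \partial_{x_2} v_2(x_1, \xi)\, d\xi,
\end{equation*}
one squares, uses Cauchy--Schwarz on the integral term (giving a factor $f(x_1)$), and integrates over $\Sigma$ to obtain
\begin{equation*}
\int_\Sigma |v_2|^2\, dx_2 \le C f(x_1)\, (f_2'(x_1))^2\, |v_1(x_1, f_2(x_1))|^2 + C f(x_1)^2 \int_\Sigma |\partial_{x_2} v_2|^2\, dx_2.
\end{equation*}
The trace term $|v_1(x_1, f_2(x_1))|^2$ must now be bounded by cross-sectional integrals of $v_1$ and $\partial_{x_2} v_1$; using $v_1(x_1, f_2(x_1)) = \fint_\Sigma v_1\, dx_2 + \fint_\Sigma \big(\int_{x_2}^{f_2(x_1)} \partial_{x_2} v_1\, d\xi\big) dx_2$ together with $\fint_\Sigma v_1 = 0$ (zero flux!), one gets $|v_1(x_1, f_2(x_1))|^2 \le C f(x_1) \int_\Sigma |\partial_{x_2} v_1|^2 dx_2$. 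Combining, integrating in $x_1$, and adding the trivial bound $\int_\Sigma |v_1|^2 dx_2 \le C f(x_1)^2 \int_\Sigma |\partial_{x_2} v_1|^2 dx_2$ from the first part, gives
\begin{equation*}
\|\Bv\|_{L^2(\Omega_{a,b})}^2 \le C\, \|f\|_{L^\infty(a,b)}^2 \big(1 + \|f_2'\|_{L^\infty(a,b)}^2\big) \|\nabla \Bv\|_{L^2(\Omega_{a,b})}^2,
\end{equation*}
which is \eqref{defM1} after taking square roots (absorbing $(1 + \|f_2'\|^2)^{1/2} \le C(1 + \|f_2'\|)$).

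\textbf{Main obstacle.} The routine part is the one-dimensional Poincar\'e inequality; the genuinely delicate step is handling the nonzero boundary values of $v_2$, which is exactly where the hypotheses $\Bv \cdot \Bn = 0$ and $\Bv \in H^1_*$ (zero flux) are both essential and must be used in tandem — the normal-trace condition converts $v_2$ on the boundary into $f_2' v_1$ on the boundary, and the zero-flux condition is what lets one bound that boundary trace of $v_1$ by the $L^2$ norm of $\partial_{x_2} v_1$ rather than by $v_1$ itself. One should also be careful that all the constants produced are genuinely uniform in $a, b$ (they depend on $\|f\|_{L^\infty(a,b)}$ and $\|f_2'\|_{L^\infty(a,b)}$ only, which is consistent with the claimed form of $M_1$), and that the density argument used to reduce to smooth functions respects the zero-flux and normal-trace constraints.
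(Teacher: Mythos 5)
Your proof is correct, and its overall strategy is the same as the paper's: slice-wise one-dimensional inequalities on $\Sigma(x_1)$ integrated in $x_1$, with the zero-flux condition handling $v_1$ and the impermeability condition at $x_2=f_2(x_1)$ handling $v_2$. The first inequality is proved exactly as in the paper (Poincar\'e--Wirtinger with mean zero on each cross-section). For the second inequality your route differs slightly in mechanics: you represent $v_2$ by the fundamental theorem of calculus from the upper boundary, replace the boundary value $v_2(x_1,f_2(x_1))$ by $f_2'(x_1)v_1(x_1,f_2(x_1))$, and then bound that trace of $v_1$ by $f(x_1)^{1/2}\|\partial_{x_2}v_1\|_{L^2(\Sigma(x_1))}$ using zero flux; the paper instead observes that the single function $w=v_2-f_2'v_1$ vanishes on the upper boundary (since $f_2'$ depends only on $x_1$), applies a one-endpoint Poincar\'e inequality to $w$ on each slice, and recovers $v_2$ by Minkowski together with the already-proved bound on $v_1$. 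The two arguments use the hypotheses in the same places and produce the same form of constant $M_1(\Omega_{a,b})=C\|f\|_{L^\infty(a,b)}(1+\|f_2'\|_{L^\infty(a,b)})$; the paper's version is marginally slicker (no explicit trace estimate needed), while yours makes the role of the zero-flux condition in controlling the boundary trace of $v_1$ more explicit. Your closing remarks on density and uniformity of constants are appropriate and consistent with the paper.
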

\begin{proof}
For any $\Bv\in H^1_*(\Omega_{a,b})$, one has \eqref{zeroflux}.
Hence it follows from Poincar\'{e}'s inequality that
\begin{equation}\label{A1-1}
\int_{\Sigma(x_1)}|v_1|^2\,dx_2\leq M_0^2 | \Sigma(x_1)|^2\int_{\Sigma(x_1)}|\partial_{x_2}v_1|^2\,dx_2= M_0^2  f^2(x_1)\int_{\Sigma(x_1)}|\partial_{x_2}v_1|^2\,dx_2.
\end{equation}
Integrating \eqref{A1-1} with respect to $x_1$ from $a$ to $b$, one has \eqref{A1-0} and
\begin{equation}\label{A1-2}
\|v_1\|_{L^2(\Omega_{a,b})}\leq M_0 \|f\|_{L^\infty(a,b)}\|\partial_{x_2}v_1\|_{L^2(\Omega_{a,b})}.
\end{equation}
On the other hand, the boundary condition $\Bv\cdot \Bn=0$ on the upper boundary $x_2=f_2(x_1)$ can be written as
\begin{equation*}
v_2(x_1, f_2(x_1))-f_2'(x_1)v_1(x_1, f_2(x_1))=0.
\end{equation*}
Using Poinc\'{a}re's inequality again yields
\begin{equation}\label{A1-3}
\int_{\Sigma(x_1)}|v_2-f_2'v_1|^2\,dx_2\leq C|\Sigma(x_1)|^2\int_{\Sigma(x_1)}|\partial_{x_2}(v_2-f_2'v_1)|^2\,dx_2.
\end{equation}
Integrating \eqref{A1-3} with respect to $x_1$ from $a$ to $b$, one also has
\begin{equation}\label{A1-4}
\|v_2-f_2'v_1\|_{L^2(\Omega_{a,b})}\leq C\|f\|_{L^\infty(a,b)}\|\partial_{x_2}(v_2-f_2'v_1)\|_{L^2(\Omega_{a,b})}.
\end{equation}
With the aid of Minkowski's inequality, it follows from  \eqref{A1-2} and \eqref{A1-4} that one has
\begin{equation}\label{A1-5}
\begin{aligned}
\|v_2\|_{L^2(\Omega_{a,b})}\leq &\|v_2-f_2'v_1\|_{L^2(\Omega_{a,b})}+\|f_2'\|_{L^\infty(a,b)}\|v_1\|_{L^2(\Omega_{a,b})}\\
\leq &C\|f\|_{L^\infty(a,b)}\left(\|\partial_{x_2}(v_2-f_2'v_1)\|_{L^2(\Omega_{a,b})}+\|f_2'\|_{L^\infty(a,b)}\|\partial_{x_2}v_1\|_{L^2(\Omega_{a,b})}\right)\\
\leq&C\|f\|_{L^\infty(a,b)}\left(\|\partial_{x_2}v_2\|_{L^2(\Omega_{a,b})}+\|f_2'\|_{L^\infty(a,b)}\|\partial_{x_2}v_1\|_{L^2(\Omega_{a,b})}\right)\\
\leq &C\|f\|_{L^\infty(a,b)}(1+\|f_2'\|_{L^\infty(a,b)})\|\partial_{x_2}\Bv\|_{L^2(\Omega_{a,b})}.
\end{aligned}
\end{equation}
This finishes the proof of the lemma.
\end{proof}

\begin{lemma}\label{lemmaA2} Assume that $f(x_1)\ge d_{a,b}>0$ for any $x_1\in (a,b)$. Then for any $\Bv \in H_*^1(\Omega_{a,b})$ satisfying  $\Bv \cdot \Bn = 0$ on $\partial\Omega_{a,b}\cap \partial\Omega$, one has
\begin{equation*}
\|\Bv\|_{L^4(\Omega_{a,b})}\leq M_4(\Omega_{a,b}) \| \nabla \Bv\|_{L^2(\Omega_{a,b})},
\end{equation*}
where 
\begin{equation}\label{defM4}
M_4(\Omega_{a,b})=C(1+\|(f_1',f_2')\|_{L^\infty(a,b)}^2) \left(\frac{M_1}{b-a}+1\right)^{\frac12}(|\Omega_{a,b}|+(b-a)d_{a,b})^{\frac14} \left(1+\frac{M_1}{d_{a,b}}\right)
\end{equation}
with a universal constant $C$ and $M_1=M_1(\Omega_{a,b})$ defined in \eqref{defM1}.
\end{lemma}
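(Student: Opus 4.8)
The plan is to derive the $L^4$ bound by interpolating between $L^2$ and $H^1$ in a way that tracks the domain dependence explicitly. The natural tool is the Gagliardo--Nirenberg--Ladyzhenskaya inequality $\|\Bv\|_{L^4(D)}^2 \le C\|\Bv\|_{L^2(D)}\|\Bv\|_{H^1(D)}$ in two dimensions, but since the constant here is asserted to depend only on the quantities appearing in \eqref{defM4} and not on the shape of $\Omega_{a,b}$ in an uncontrolled way, I would not quote it as a black box on $\Omega_{a,b}$ directly. Instead, the first step is to flatten the channel: introduce the change of variables $y_1 = x_1$, $y_2 = (x_2 - \bar f(x_1))/f(x_1)$ (or a similar affine-in-$x_2$ map) carrying $\Omega_{a,b}$ onto the rectangle $R = (a,b)\times(-1/2,1/2)$. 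The Jacobian is $f(x_1)$, bounded below by $d_{a,b}$ and above by $\|f\|_{L^\infty(a,b)}$, and the chain rule introduces factors involving $f'$, $f_1'$, $f_2'$, which is exactly where the $(1+\|(f_1',f_2')\|_{L^\infty}^2)$ prefactor in \eqref{defM4} originates.

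On the rectangle $R$ one can use the standard Ladyzhenskaya inequality, but the cleanest route that produces the factor $(|\Omega_{a,b}| + (b-a)d_{a,b})^{1/4}$ and the term $M_1/(b-a) + 1$ is the following: write $\|\Bv\|_{L^4(R)}^4 \le \big(\int_R |\Bv|^2\big)\cdot\big(\sup \text{(1D slices)}\big)$ estimated via the one-dimensional embedding along both coordinate directions, using $\|w\|_{L^\infty(I)}^2 \le C(|I|^{-1}\|w\|_{L^2(I)}^2 + \|w\|_{L^2(I)}\|w'\|_{L^2(I)})$ on each segment and then integrating over the transverse variable. Applying this once in the $x_1$-direction (length $b-a$) and once in the $x_2$-direction (length comparable to $d_{a,b}$ after rescaling, or $f(x_1)$ before) and multiplying, one gets $\|\Bv\|_{L^4}^4 \lesssim (\|\Bv\|_{L^2}^2 + (b-a)^{-1}\|\Bv\|_{L^2}\|\partial_{x_1}\Bv\|_{L^2})(\cdots)$; then I would use Young's inequality together with Lemma~\ref{lemmaA1}, which gives $\|\Bv\|_{L^2(\Omega_{a,b})}\le M_1\|\nabla\Bv\|_{L^2(\Omega_{a,b})}$, to replace all pure $L^2$ norms of $\Bv$ by $\|\nabla\Bv\|_{L^2}$ times $M_1$. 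The appearance of $M_1/d_{a,b}$ and $M_1/(b-a)$ is precisely the bookkeeping from converting a bare $\|\Bv\|_{L^2}$ picked up from a slice of width $d_{a,b}$ (resp.\ $b-a$) into a gradient norm.

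Concretely, I expect the chain to be: (i) slice-wise 1D interpolation in $x_2$ gives $\int_{\Sigma(x_1)}|\Bv|^4\,dx_2 \le C\big(f(x_1)^{-1}\int_{\Sigma(x_1)}|\Bv|^2 + (\int_{\Sigma(x_1)}|\Bv|^2)^{1/2}(\int_{\Sigma(x_1)}|\partial_{x_2}\Bv|^2)^{1/2}\big)\int_{\Sigma(x_1)}|\Bv|^2\,dx_2$; (ii) a symmetric estimate in $x_1$ after the flattening, with $f(x_1)^{-1}$ replaced by $(b-a)^{-1}$; (iii) Cauchy--Schwarz in $x_1$ to combine, producing the product structure and the $(|\Omega_{a,b}| + (b-a)d_{a,b})^{1/4}$ volume-type factor; (iv) Young's inequality to split products of mixed norms; (v) Lemma~\ref{lemmaA1} to absorb $\|\Bv\|_{L^2}$ into $M_1\|\nabla\Bv\|_{L^2}$; and (vi) collecting constants, where the flattening change of variables contributes the $(1+\|(f_1',f_2')\|_{L^\infty}^2)$ factor because $\nabla_x$ and $\nabla_y$ differ by a matrix whose entries are bounded by $1+\|f'\|_{L^\infty}$.

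The main obstacle will be step (vi): keeping the constants honest. It is easy to write down a correct $L^4$-bound with \emph{some} constant depending on the domain; the real work is verifying that every estimate can be arranged so the final constant is exactly of the form \eqref{defM4} with a \emph{universal} $C$ — in particular making sure the flattening does not secretly introduce a dependence on $b-a$ or on second derivatives of $f_i$ (it should not, since only first derivatives enter the change-of-variables Jacobian matrix), and making sure the lower bound $f\ge d_{a,b}$ is used only where it genuinely is needed (the transverse 1D Poincaré/interpolation step and the conversion of the $f^{-1}$ slice factor). A secondary subtlety is that $\Bv\in H^1_*(\Omega_{a,b})$ with $\Bv\cdot\Bn=0$ on $\partial\Omega_{a,b}\cap\partial\Omega$ is used only through Lemma~\ref{lemmaA1}; the interpolation itself needs no boundary condition, so I would be careful to invoke the hypothesis exactly once, at step (v).
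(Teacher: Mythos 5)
Your strategy is sound and would prove the lemma, but it is genuinely different from the paper's. The paper never flattens: it cuts off at distance $\sim d_{a,b}$ from the two curved boundaries and extends $\Bv$ by a higher-order reflection across $x_2=f_i(x_1)$ to a function $\widetilde{\Bv}$ on the strip $(a,b)\times\mathbb{R}$ with compact support in the $x_2$-direction; this cutoff-plus-reflection is the source of both the factor $C(1+\|(f_1',f_2')\|_{L^\infty}^2)$ and the term $d_{a,b}^{-1}\|\Bv\|_{L^2}$, which Lemma \ref{lemmaA1} absorbs at the end (your step (v)). It then applies the one-dimensional Gagliardo--Nirenberg inequality $\|w\|_{L^4}^4\le C\|w'\|_{L^2}^2\|w\|_{L^1}^2$ only in the $x_2$-direction (no lower-order term, thanks to the compact support) and controls $\sup_{x_1}\int_{\Sigma^d(x_1)}|\widetilde\Bv|\,dx_2$ by an average-plus-fundamental-theorem argument in $x_1$; this $L^1$-based step, via H\"older, is precisely where the factors $((b-a)^{-1}M_1+1)^{1/2}$ and $(|\Omega_{a,b}|+2(b-a)d_{a,b})^{1/4}$ in \eqref{defM4} arise. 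Your flattened-rectangle route with two-directional $L^2$-based interpolation avoids the extension altogether, which is a genuine simplification, but two bookkeeping caveats: (a) the flattening is not as tame as you suggest --- besides $f_i'$, the pulled-back gradient and the Jacobian weight introduce $f^{-1}$ (hence $d_{a,b}^{-1}$) and $\|f\|_{L^\infty(a,b)}$ factors, so your constant comes out in powers of $\|f\|_{L^\infty}$ and $d_{a,b}^{-1}$ and will not literally contain the volume factor $(|\Omega_{a,b}|+(b-a)d_{a,b})^{1/4}$ you expect to ``produce'' in step (iii); (b) this is harmless for the statement, because $M_1\sim\|f\|_{L^\infty(a,b)}(1+\|f_2'\|_{L^\infty})$, $M_1\gtrsim d_{a,b}$ and $|\Omega_{a,b}|\ge(b-a)d_{a,b}$ show that the constant obtained this way is dominated by a universal multiple of \eqref{defM4} --- but that comparison is an extra (short) verification your plan must actually carry out, and it is the only piece still missing; it does go through. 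Your observation that the hypotheses ($\Bv\in H^1_*$, $\Bv\cdot\Bn=0$) enter only through Lemma \ref{lemmaA1} agrees with the paper's proof.
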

\begin{proof} We divide the proof into two steps.
	
	{\it Step 1. Extension of functions.}
We claim that $\Bv$ can be extended to a function $\widetilde{\Bv}\in H^1((a,b)\times \mathbb{R})$ satisfying
\begin{equation*}
\operatorname{supp} \widetilde{\Bv} \subset \{(x_1,x_2):x_1\in[a,b],~~x_2\in[f_1(x_1)-d_{a,b},f_2(x_1)+d_{a,b}]\}
\end{equation*} and
\begin{equation}\label{eqA2_1}
\|\nabla \widetilde{\Bv}\|_{H^1((a,b)\times \mathbb{R})}\leq M_2(\Omega_{a,b}) \left( \|\nabla \Bv\|_{L^2(\Omega_{a,b})} + d_{a,b}^{-1} \|\Bv\|_{L^2(\Omega_{a, b}) }\right),
\end{equation}
where $M_2(\Omega_{a,b})=C  \left(1+\|(f_1',f_2')\|_{L^\infty(a,b)}^2 \right)$.

Let $\chi(t)$ be a smooth decreasing function on $[0,+\infty)$ satisfying
\begin{equation}\label{defchi}
 \chi(t)=1\ \text{ if }0\leq t\leq \frac{d_{a,b}}{4},  \ \ \chi(t)=0\  \text{ if }t\ge \frac{d_{a,b}}{2}, \quad \text{and}\,\, |\chi'(t)|\leq \frac{8}{d_{a,b}}\,\,\text{for}\,\, t\in [0, +\infty).
\end{equation}
Given $\Bv\in C^1(\overline{\Omega_{a,b}})$,
one can  decompose it as follows,
\begin{equation*}
	\begin{aligned}
\Bv=&\Bv(1-\chi(f_2(x_1)-x_2)-\chi(x_2-f_1(x_1)))+\Bv\chi(f_2(x_1)-x_2)+\Bv\chi(x_2-f_1(x_1))\\
=:&\Bv_0+\Bv_1+\Bv_2.
\end{aligned}
\end{equation*}
Noting that $\Bv_1$ is defined on $\{(x_1,x_2):x_1\in [a,b],~~x_2\in(-\infty,f_2(x_1)]\}$, we define
\begin{equation*}
\widetilde{\Bv}_1 =:\left\{
\begin{aligned}
&\Bv_1(x_1,x_2),~~~~~~~~~~~~~~~~~~~~~~~~~~~~~~~~~~~~~~~~&x_2\leq f_2(x_1),\\
&-3\Bv_1(x_1,-x_2+2f_2(x_1))+4\Bv_1 \left(x_1,-\frac12x_2+\frac32f_2(x_1) \right), &x_2>f_2(x_1).
\end{aligned}\right.
\end{equation*}
It's easy to verify that $\widetilde{\Bv}_1\in C^1([a,b]\times \mathbb{R})$ and
\begin{equation*}
\operatorname{supp} \widetilde{\Bv}_1 \subset \left\{(x_1,x_2):\, x_1\in [a,b],~x_2\in \left[f_2(x_1)-\frac{d_{a,b}}{2},\, f_2(x_1)+d_{a,b} \right] \right\}.
\end{equation*}
Moreover, direct computation shows that
\begin{equation*}
\| \nabla \widetilde{\Bv}_1 \|_{L^2((a,b)\times \mathbb{R})}\leq C(1+\|f_2'\|_{L^\infty(a,b)})\| \nabla \Bv_1\|_{L^2(\Omega_{a,b})}.
\end{equation*}
Similarly, one can also extend $\Bv_2$ to the domain $[a,b]\times \mathbb{R}$ by defining
\begin{equation*}
\widetilde{\Bv}_2 =:\left\{
\begin{aligned}
&\Bv_2(x_1,x_2), ~~~~~~~~~~~~~~~~~~~~~~~~~~~~~~~~~~~~~&x_2\ge  f_1(x_1),\\
&-3\Bv_2(x_1,-x_2+2f_1(x_1))+4\Bv_2 \left(x_1,-\frac12x_2+\frac32f_1(x_1)\right), &x_2<f_1(x_1).
\end{aligned}\right.
\end{equation*}
The extension $\widetilde{\Bv}_2 \in C^1([a,b]\times \mathbb{R})$ satisfies
\begin{equation*}
\operatorname{supp} \widetilde{\Bv}_2 \subset \left\{(x_1,x_2):x_1\in [a,b],~x_2\in \left[f_1(x_1)-d_{a,b},\, f_1(x_1)+\frac{d_{a,b}}{2} \right]\right\}
\end{equation*}
and
\begin{equation*}
\|\nabla \widetilde{\Bv}_2 \|_{L^2((a,b)\times \mathbb{R})}\leq C(1+\|f_1'\|_{L^\infty(a,b)})\| \nabla \Bv_2\|_{L^2(\Omega_{a,b})}.
\end{equation*}

Now we define the extension of $\Bv$ by
\begin{equation*}
\widetilde{\Bv}:=\Bv_0+\widetilde{\Bv}_1 +\widetilde{\Bv}_2 .
\end{equation*}
Let $\widetilde{\Omega}_{a, b} = (a, b) \times \mathbb{R}$. According to the above computations,
\begin{equation*}
\begin{aligned}
\| \nabla \widetilde{\Bv}\|_{L^2( \tilde{\Omega}_{a, b} ) }\leq & \| \nabla \widetilde{\Bv}_1 \|_{L^2(\widetilde{\Omega}_{a, b} ) }+ \| \nabla \widetilde{\Bv}_2 \|_{L^2 (\widetilde{\Omega}_{a, b} )}+ \|\nabla \Bv_0\|_{L^2 ( \widetilde{\Omega}_{a, b} ) }\\
\leq&C(1+\|(f_1',f_2')\|_{L^\infty(a,b)}) \left( \|\nabla \Bv_1 \|_{L^2(\Omega_{a, b} )} + \| \nabla \Bv_2 \|_{L^2(\Omega_{a, b})}+ \|\nabla \Bv_0\|_{L^2(\Omega_{a, b})}        \right) \\
\leq&C  \left(1+\|(f_1',f_2')\|_{L^\infty(a,b)}^2 \right) \left( \|\nabla \Bv\|_{L^2(\Omega_{a, b}) }+  d_{a,b}^{-1} \|\Bv\|_{L^2(\Omega_{a, b})} \right),
\end{aligned}
\end{equation*}
where the bound of $\chi'$ in \eqref{defchi} has been used. By standard density argument, the conclusion \eqref{eqA2_1} holds for any $\Bv \in H^1(\Omega_{a,b})$.

{\it Step 2. Embedding for the extended functions.}
Denote $\Sigma^d(x_1):=(f_1(x_1)-d_{a,b},\, f_2(x_1)+d_{a,b})$ and
\begin{equation*}
\Omega_{a,b}^d:=\{(x_1,x_2):\, x_1\in(a,b),~~x_2\in \Sigma^d(x_1)\}
\end{equation*}
It is noted that $\operatorname{supp}\widetilde{\Bv}\subset \overline{\Omega_{a, b}^d} $.
Using Gagliardo-Nirenberg interpolation inequality (\hspace{1sp}\cite{Fiorenza}) gives
\begin{equation}\label{A2-1}
\begin{aligned}
\int_{\Omega_{a,b}^d}|\widetilde{\Bv}|^4\,dx=&\int_a^b\,dx_1\int_{\Sigma^d(x_1)}|\widetilde{\Bv}|^4\,dx_2\\
\leq &C\int_{a}^b\,dx_1\int_{\Sigma^d(x_1)}|\partial_{x_2}\widetilde{\Bv}|^2\,dx_2\left(\int_{\Sigma^d(x_1)}|\widetilde{\Bv}|\,dx_2\right)^2\\
\leq &C\sup_{x_1\in (a,b)}\left(\int_{\Sigma^d(x_1)}|\widetilde{\Bv}|\,dx_2\right)^2\int_{\Omega_{a,b}^d}|\partial_{x_2}\widetilde{\Bv}|^2\,dx.
\end{aligned}
\end{equation}
On the other hand, note that for any $x_1\in (a,b)$, one has
\begin{equation*}
|\widetilde{\Bv}(x_1,x')|\leq |\widetilde{\Bv}(x_1^0,x')|+\int_a^b|\partial_{x_1}\widetilde{\Bv}|\,dx_1.
\end{equation*}
Then we integrate the inequality with respect to $x'\in \Sigma^d (x_1)$ and $x_1^0$ from $a$ to $b$, and use H\"{o}lder inequality to obtain
\begin{equation*}\begin{aligned}
&\int_{\Sigma^d(x_1)}|\widetilde{\Bv}|\,dx_2\leq (b-a)^{-1}\int_{\Omega^d_{a,b}}|\widetilde{\Bv}|\,dx+\int_{\Omega^d_{a,b}}|\partial_{x_1}\widetilde{\Bv}|\,dx\\
\leq & (b-a)^{-1}|\Omega^d_{a,b}|^\frac12\left(\int_{\Omega^d_{a,b}}|\widetilde{\Bv}|^2\,dx\right)^\frac12
+|\Omega^d_{a,b}|^\frac12\left(\int_{\Omega^d_{a,b}}|\partial_{x_1}\widetilde{\Bv}|^2\,dx\right)^\frac12\\
\leq& \left[ (b-a)^{-1} M _1(\Omega_{a,b}^d)+1 \right] |\Omega^d_{a,b}|^\frac12\left(\int_{\Omega^d_{a,b}}|\nabla\widetilde{\Bv}|^2\,dx\right)^\frac12,
\end{aligned}\end{equation*}
where $ M _1(\Omega_{a,b}^d)$ is the constant obtained in Lemma \ref{lemmaA1} corresponding to the domain $\Omega^d_{a,b}$. This, together with \eqref{A2-1} gives
\begin{equation}\label{eqA2_2}
	\begin{aligned}
\|\widetilde{\Bv}\|_{L^4(\widetilde{\Omega}_{a,b})}\leq M_3(\Omega_{a,b})\|\nabla\widetilde{\Bv} \|_{L^2(\widetilde{\Omega}_{a,b})},
\end{aligned}\end{equation}
where
\begin{equation*}
\begin{aligned}
	M_3(\Omega_{a,b})=&\left[ (b-a)^{-1} M _1(\Omega_{a,b}^d)+1 \right]^\frac12|\Omega^d_{a,b}|^\frac14\\
	=&\left[ C(b-a)^{-1} \|f+2d_{a,b}\|_{L^\infty(a,b)}(1+\|f_2'\|_{L^\infty(a,b)})+1 \right]^\frac12(|\Omega_{a,b}|+2(b-a)d_{a,b})^\frac14\\
	\leq& \left[ 3(b-a)^{-1} M _1(\Omega_{a,b})+1 \right]^\frac12(|\Omega_{a,b}|+2(b-a)d_{a,b})^\frac14.
\end{aligned}
\end{equation*}
Finally, by virtue of Lemma \ref{lemmaA1}, one has
\begin{equation*}
\begin{aligned}
\|\Bv\|_{L^4(\Omega_{a,b})} &\leq \|\widetilde{\Bv}\|_{L^4( \widetilde{\Omega}_{a,b})}\leq M_3(\Omega_{a,b}) \|\nabla \widetilde{\Bv}\|_{L^2( \widetilde{\Omega}_{a,b})}\\
&\leq M_2(\Omega_{a,b})M_3(\Omega_{a,b})\left( \|\nabla \Bv\|_{L^2(\Omega_{a,b}) } + d_{a,b}^{-1} \|\Bv\|_{L^2(\Omega_{a,b})} \right)\\
& \leq M_4(\Omega_{a,b}) \|\nabla \Bv\|_{L^2(\Omega_{a,b})},
\end{aligned}
\end{equation*}
where $M_4(\Omega_{a,b})$ is given in \eqref{defM4}, and \eqref{eqA2_1} has been used to get the third inequality. This completes the proof of Lemma \ref{lemmaA2}.
\end{proof}

\begin{lemma}\label{lemmaA3}
	Given  $\Bv \in \mcH_\sigma(\Omega_{a, b}),$ it holds that
\begin{equation}\label{eqA3_1}
\mfc\|\nabla \Bv\|_{L^2(\Omega_{a, b} )}^2 \leq 2\|\BD(\Bv)\|_{L^2(\Omega_{a, b} )}^2+\alpha\|\Bv\|_{L^2(\partial\Omega_{a, b}\cap \partial\Omega)}^2,
\end{equation}
where
\be \label{defmfc}
\mfc = \frac{\alpha}{\alpha + \| \partial_\tau \Bn\|_{L^\infty(\partial \Omega)} }.
\ee
\end{lemma}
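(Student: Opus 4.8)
The plan is to reduce \eqref{eqA3_1} to an exact integration-by-parts identity on $\Omega_{a,b}$ followed by an elementary algebraic step. Since both sides of \eqref{eqA3_1} are continuous with respect to the $H^1(\Omega_{a,b})$ norm (the boundary term because the trace operator $H^1(\Omega_{a,b})\to L^2(\partial\Omega_{a,b})$ is bounded), it suffices, by the definition of $\mcH_\sigma(\Omega_{a,b})$, to prove \eqref{eqA3_1} for $\Bv\in\mathcal{C}_\sigma(\Omega_{a,b})$ and then pass to the limit.

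First I would start from the pointwise identity $2|\BD(\Bv)|^2=|\nabla\Bv|^2+\partial_{x_j}v_i\,\partial_{x_i}v_j$ (summation convention), integrate it over $\Omega_{a,b}$, and integrate by parts in the last term. Because $\div\Bv=0$, the interior contribution $\int_{\Omega_{a,b}}v_i\,\partial_{x_i}(\div\Bv)\,dx$ vanishes, and because $\Bv\in\mathcal{C}_\sigma(\Omega_{a,b})$ vanishes near the cross-sections $\Sigma(a)$ and $\Sigma(b)$, the remaining boundary term is supported on $\partial\Omega_{a,b}\cap\partial\Omega$. This gives
\[
2\|\BD(\Bv)\|_{L^2(\Omega_{a,b})}^2=\|\nabla\Bv\|_{L^2(\Omega_{a,b})}^2+\int_{\partial\Omega_{a,b}\cap\partial\Omega}\big((\Bv\cdot\nabla)\Bv\big)\cdot\Bn\,ds.
\]

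Next I would compute the boundary integrand using the impermeability condition. On $\partial\Omega_{a,b}\cap\partial\Omega$ one has $\Bv\cdot\Bn=0$, hence $\Bv=v_\tau\Bt$ with $v_\tau:=\Bv\cdot\Bt$, and consequently $(\Bv\cdot\nabla)\Bv=v_\tau\,\partial_\tau\Bv$, where $\partial_\tau$ denotes differentiation along the boundary curve. Differentiating $\Bv\cdot\Bn\equiv0$ tangentially yields $(\partial_\tau\Bv)\cdot\Bn=-\Bv\cdot\partial_\tau\Bn=-v_\tau\,(\Bt\cdot\partial_\tau\Bn)$, so that $\big((\Bv\cdot\nabla)\Bv\big)\cdot\Bn=-v_\tau^2\,(\Bt\cdot\partial_\tau\Bn)$ there. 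Substituting this and using $|v_\tau|=|\Bv|$ on $\partial\Omega$, we obtain
\[
\begin{aligned}
\|\nabla\Bv\|_{L^2(\Omega_{a,b})}^2&=2\|\BD(\Bv)\|_{L^2(\Omega_{a,b})}^2+\int_{\partial\Omega_{a,b}\cap\partial\Omega}v_\tau^2\,(\Bt\cdot\partial_\tau\Bn)\,ds\\
&\le 2\|\BD(\Bv)\|_{L^2(\Omega_{a,b})}^2+\|\partial_\tau\Bn\|_{L^\infty(\partial\Omega)}\,\|\Bv\|_{L^2(\partial\Omega_{a,b}\cap\partial\Omega)}^2.
\end{aligned}
\]

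Finally, multiplying the last inequality by $\mfc=\alpha/\big(\alpha+\|\partial_\tau\Bn\|_{L^\infty(\partial\Omega)}\big)\in(0,1]$ and using both $\mfc\le1$ and $\mfc\,\|\partial_\tau\Bn\|_{L^\infty(\partial\Omega)}\le\alpha$ gives exactly \eqref{eqA3_1}; if $\|\partial_\tau\Bn\|_{L^\infty(\partial\Omega)}=+\infty$ then $\mfc=0$ and there is nothing to prove. The only delicate point is the boundary computation, namely justifying the tangential differentiation of $\Bv\cdot\Bn=0$ and the pointwise formula $\big((\Bv\cdot\nabla)\Bv\big)\cdot\Bn=-v_\tau^2(\Bt\cdot\partial_\tau\Bn)$, which is precisely why one argues first with smooth $\Bv$ and invokes density at the end; everything else is routine.
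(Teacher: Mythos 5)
Your proof is correct and follows essentially the same route as the paper: you arrive at the same boundary term $\int_{\partial\Omega_{a,b}\cap\partial\Omega}\big((\Bv\cdot\nabla)\Bv\big)\cdot\Bn\,ds$ and handle it by the same tangential differentiation of $\Bv\cdot\Bn=0$, then conclude with the same multiplication by $\mfc$. The only cosmetic difference is that you integrate by parts once on the cross term of the pointwise identity $2|\BD(\Bv)|^2=|\nabla\Bv|^2+\partial_{x_j}v_i\,\partial_{x_i}v_j$, while the paper organizes the same computation through $\Delta\Bv=2\,{\rm div}\,\BD(\Bv)$ with two integrations by parts.
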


\begin{proof}
Without loss of generality, we assume that $\Bv \in \mathcal{C}_\sigma(\Omega_{a,b})$. According to the formula
\begin{equation*}
\Delta \Bv=2\div \BD(\Bv),
\end{equation*}
integration by parts yields
\begin{equation}\label{A3-2}
\begin{aligned}
&\int_{\Omega_{a, b} }2|\BD(\Bv)|^2\,dx -\int_{\partial\Omega_{a, b}\cap \partial \Omega } 2\Bn\cdot\BD(\Bv)\cdot\Bv  \,ds\\
=& \int_{\Omega_{a, b} }-2{\rm div}\BD(\Bv)\cdot\Bv\,dx=\int_{\Omega_{a, b} }-\Delta \Bv\cdot\Bv\,dx\\
=&\int_{\Omega_{a, b} }|\nabla\Bv|^2\,dx-\int_{\partial\Omega_{a, b} \cap \partial\Omega}\Bn\cdot \nabla \Bv\cdot \Bv\,ds.
\end{aligned}
\end{equation}
Therefore, one has
\begin{equation}\label{A3-3}
\int_{\Omega_{a, b} }|\nabla\Bv|^2\,dx=\int_{\Omega_{a, b} }2|\BD(\Bv)|^2\,dx -  \int_{\partial\Omega_{a, b}\cap \partial \Omega }2 \Bn \cdot \BD(\Bv)  \cdot \Bv  - \Bn\cdot \nabla \Bv\cdot \Bv\,ds.
\end{equation}
Note that
\begin{equation}\label{A3-3.5}
\Bn\cdot \nabla\Bv\cdot \Bv=2\Bn\cdot \BD(\Bv)\cdot \Bv-\sum_{i,j=1}^2n_j\partial_{x_i} v_jv_i.
\end{equation}
The boundary condition $\Bv\cdot \Bn= 0$  implies that  $\partial_{\tau }(\Bv\cdot \Bn)=0$ on the boundary.  Hence it holds that
\begin{equation}\label{A3-4}
\sum_{i,j=1}^2n_j\partial_{x_i}v_jv_i=(\Bv\cdot  \Bt) [\partial_\tau (\Bv \cdot \Bn)-\Bv \cdot \partial_\tau\Bn]=-(\Bv\cdot  \Bt)(\Bv \cdot \partial_\tau\Bn),\ \ \ \ \mbox{on}\ \partial \Omega_{a, b} \cap \partial \Omega.
\end{equation}
Substituting \eqref{A3-3.5}-\eqref{A3-4} into \eqref{A3-3} yields
\begin{equation*}
\int_{\Omega_{a, b}}|\nabla\Bv|^2\,dx\leq 2\int_{\Omega_{a,b}}|\BD(\Bv)|^2\,dx+\|\partial_\tau \Bn\|_{L^\infty(\partial\Omega)}\int_{\partial\Omega_{a, b}\cap \partial\Omega}|\Bv|^2\,ds.
\end{equation*}
This gives \eqref{eqA3_1}.
Hence the proof of Lemma \ref{lemmaA3} is completed.
\end{proof}

The following lemma on the solvability of the divergence equation is used to obtain the estimates involving pressure. For the proof, one may refer to \cite[Theorem  \uppercase\expandafter{\romannumeral3}.3.1 ]{Ga} and \cite{Bo}. 
\begin{lemma}\label{lemmaA5}
Let $D \subset \R^n$ be a locally Lipschitz domain. Then there exists a constant $M_5$ such that for any $w\in L_0^2(D)$, the  problem
\begin{equation}\label{A5-1}
\left\{\begin{aligned}
{\rm div}~\Ba=w ~~~~~~~~~~&\text{ in }D,\\
\Ba=0 ~~~~~~~~~~~~~~~~~&\text{ on }\partial D,
\end{aligned}\right.
\end{equation}
has a solution $\Ba \in H^1_0(D)$ satisfying
\[
\|\nabla\Ba\|_{L^2(D)}\leq M_5(D)\|w\|_{L^2(D)}.
\]  In particular, if the domain is of the form
\begin{equation*}
 D=\bigcup_{k=1}^N D_k,
\end{equation*}
where each $D_k$ is star-like with respect to some open ball $B_k$ with  $\overline{B_k}\subset D_k$, then the constant $M_5(D)$ admits the following estimate
\begin{equation}\label{A5-2}M_5(D)\leq C_D \left(\frac{R_0}{R}\right)^n\left(1+\frac{R_0}{R}\right).
\end{equation}
Here, $R_0$ is the diameter of the domain $D$, $R$ is the smallest radius of the balls $B_k$, and
\begin{equation}\label{A5-3}
C_D=\max_{1\leq k\leq N}\left(1+\frac{|D|^\frac12}{|\tilde{D}_k|^\frac12}\right)\prod_{i=1}^{k-1}\left(1+\frac{|\hat{D}_i\setminus D_i|^\frac12}{|\tilde{D}_i|^\frac12}\right),
\end{equation}
with $\tilde{D}_i=D_i\cap \hat{D}_i$ and $\hat{D}_i=\bigcup_{j=i+1}^N D_j$.
\end{lemma}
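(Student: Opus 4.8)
This is the classical solvability theorem for the divergence equation due to Bogovski\u\i, so the plan is to reproduce the construction behind \cite{Bo} and \cite[Theorem III.3.1]{Ga}, taking $D$ bounded, which is the only case used in this paper. I would first handle a single star-like piece: assume $D$ is star-like with respect to a ball $B=B_R(x_0)\subset D$ and put $R_0=\operatorname{diam} D$. Fixing $\omega\in C^\infty_c(B)$ with $\int_B\omega\,dx=1$ and $|\omega|+R|\na\omega|\le c(n)R^{-n}$, I would define for $w\in L_0^2(D)$ the Bogovski\u\i\ field
\begin{equation*}
\Ba(x)=\int_D w(y)\left[\frac{x-y}{|x-y|^n}\int_{|x-y|}^{\infty}\omega\!\left(y+\xi\,\frac{x-y}{|x-y|}\right)\xi^{n-1}\,d\xi\right]dy.
\end{equation*}
A direct computation shows that $\Ba$ vanishes outside $D$ and solves $\operatorname{div}\Ba=w$ in $D$, the hypothesis $\int_D w\,dx=0$ being exactly what cancels the boundary term; then, writing $\na\Ba$ as the sum of a Calder\'on--Zygmund singular integral and a bounded remainder and invoking the Calder\'on--Zygmund theorem, one obtains $\Ba\in H^1_0(D)$ together with $\|\na\Ba\|_{L^2(D)}\le c(n)(R_0/R)^n(1+R_0/R)\|w\|_{L^2(D)}$.

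Next I would split the datum when $D=\bigcup_{k=1}^N D_k$. Choose $\theta_i\in C^\infty_c(\tilde D_i)$ with $\int\theta_i\,dx=1$ and $\|\theta_i\|_{L^2}\le c(n)|\tilde D_i|^{-1/2}$ for $i=1,\dots,N-1$, set $w^{(1)}=w$, and recursively put $g_i=w^{(i)}\mathbf{1}_{D_i}-(\int_{D_i}w^{(i)}\,dx)\theta_i$ and $w^{(i+1)}=w^{(i)}-g_i$ for $1\le i\le N-1$, with $g_N=w^{(N)}$. Since $\operatorname{supp}\theta_i\subset\tilde D_i=D_i\cap\hat D_i$, an induction gives $\operatorname{supp} w^{(i+1)}\subset\hat D_i$ and $\int w^{(i+1)}\,dx=0$, so each $g_i\in L_0^2(D_i)$ and $w=\sum_{i=1}^N g_i$. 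The key point for the constant is the identity $\int_{D_i}w^{(i)}\,dx=-\int_{\hat D_i\setminus D_i}w^{(i)}\,dx$, valid because $w^{(i)}$ has zero mean over $\hat D_{i-1}=D_i\cup\hat D_i$; it yields $\|w^{(i+1)}\|_{L^2}\le(1+|\hat D_i\setminus D_i|^{1/2}|\tilde D_i|^{-1/2})\|w^{(i)}\|_{L^2}$, whereas inside $g_k$ one uses only the coarser bound $|\int_{D_k}w^{(k)}\,dx|\le|D|^{1/2}\|w^{(k)}\|_{L^2}$. Iterating these two estimates produces $\|g_k\|_{L^2(D_k)}\le C_D\|w\|_{L^2(D)}$ with $C_D$ exactly as in \eqref{A5-3}.

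To assemble, I would solve $\operatorname{div}\Ba_k=g_k$ on $D_k$ by the first step, extend each $\Ba_k$ by zero (so $\Ba_k\in H^1_0(D)$), and set $\Ba=\sum_{k=1}^N\Ba_k$; then $\operatorname{div}\Ba=w$, $\Ba\in H^1_0(D)$, and the two previous estimates combine to give \eqref{A5-2}. Finally, for a general bounded domain $D$ with locally Lipschitz boundary, every $x\in\partial D$ has a neighbourhood in which, after rotation, $D$ is the region above a Lipschitz graph and hence (on a small enough cylinder) star-like with respect to a small ball; covering $\partial D$ by finitely many such neighbourhoods and the compact interior by finitely many balls realises $D$ as a finite union of the required type, and the previous paragraph applies.

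The step I expect to be the main obstacle is the $L^2$ estimate for $\na\Ba$ in the star-like case: one must recognise the leading part of $\na\Ba$ as a singular integral operator with a Calder\'on--Zygmund kernel and control its operator norm while keeping the dependence on the geometric ratio $R_0/R$ (and on the Lipschitz constant of the boundary chart) explicit, so that after the patching argument the constant emerges in the scale-invariant form \eqref{A5-2}. The recursive bookkeeping in the splitting step that reproduces \eqref{A5-3} verbatim is elementary but must be carried out carefully.
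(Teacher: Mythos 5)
Your proposal is correct and is exactly the argument the paper relies on: the paper gives no proof of Lemma \ref{lemmaA5}, citing instead \cite[Theorem III.3.1]{Ga} and \cite{Bo}, and your outline (Bogovski\u\i's integral operator with the Calder\'on--Zygmund estimate on a star-like domain, the recursive splitting of the zero-mean datum over $D=\bigcup_k D_k$ yielding the constant \eqref{A5-3}, and the covering argument for a bounded locally Lipschitz domain) is precisely the proof in those references, with the constant in the scale-invariant form \eqref{A5-2}. The only cosmetic points are that the zero-mean hypothesis is what makes $\operatorname{div}\Ba=w$ (the vanishing of $\Ba$ outside $D$ comes from the support structure of the kernel), and that to recover \eqref{A5-3} verbatim one should take $\theta_i=|\tilde D_i|^{-1}\mathbf{1}_{\tilde D_i}$ rather than a smooth approximation.
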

Later on, for any $D=\{\Bx\in \Omega:~ t-1<x_1<t\}$, we shall study the dependence of the constant $M_5(D)$ on $t$ carefully, which is crucial to obtain the uniform estimate of the pressure.

We next recall the estimates for some differential inequalities, whose proof can be found in \cite{LS}. These differential inequalities play crucial role in the estimates for local Dirichlet norm. 
\begin{lemma}\label{lemmaA4}~
(1) Let $z(t)$ and $\varphi(t)$ be the nontrivial, nondecreasing, and nonnegative smooth functions. Suppose that $\Psi(t, s)$ is a monotonically increasing function with respect to $s$, equals to zero for $s=0$ and tends to $\infty$ as $s\to \infty$. Suppose that $\delta_1\in (0,1)$ is a fixed constant and for any  $t\in [t_0,T]$, $z(t)$ and $\varphi(t)$ satisfy
\begin{equation}\label{A4-1}
z(t)\leq \Psi(t, z'(t))+(1-\delta_1)\varphi(t)
\end{equation}
and
\begin{equation}\label{A4-2}
\varphi(t)\geq \delta_1^{-1}\Psi(t, \varphi'(t)).
\end{equation}
If $z(T) \leq \varphi(T)$, then
\begin{equation}\label{A4-3}
z(t)\leq \varphi(t)\text{ for any }t\in [t_0,T].
\end{equation}

(2) Assume that $\Psi(t, s) = \Psi(s)$ and the inequalities \eqref{A4-1} and \eqref{A4-2} are fulfilled for any $t\ge t_0$.  If
\begin{equation*}
  \liminf_{t\to \infty} \frac{z(t)}{\varphi(t)}<1\ \ \ \ \text{ or } \ \ \ \ \lim_{t\to \infty}\frac{z(t)},{\widetilde{z}(t)}=0
\end{equation*} 
where $\widetilde{z}(t)$ is the positive solutions to the equation
\begin{equation*}
  \tilde{z}(t)=\delta_1^{-1}\Psi(\tilde{z}'(t)),
\end{equation*}
then \eqref{A4-3}
holds.

(3) Assume that $\Psi(t, s)= \Psi(s)$ and the function $z(t)$ is nontrivial and nonnegative. If there exist $m>1,t_0, s_1\ge 0,c_0>0$ such that
\begin{equation*}
  z(t)\leq \Psi(z'(t))~~~~\text{ for any } t\ge t_0
\end{equation*}
  and
\begin{equation*}
\Psi(s)\leq c_0 s^m \text{ for any }s \ge s_1,
\end{equation*}
then
\begin{equation*}
  \liminf_{t\to \infty} t^\frac{-m}{m-1}z(t)>0.
\end{equation*}
\end{lemma}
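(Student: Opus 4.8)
The plan is to follow the classical Ladyzhenskaya--Solonnikov comparison argument, treating each of the three parts by a barrier/contradiction scheme. For part (1), I would argue by contradiction: suppose the set $S = \{t \in [t_0, T] : z(t) > \varphi(t)\}$ is nonempty. Since $z(T) \le \varphi(T)$ and both functions are continuous, $S$ is contained in $[t_0, T)$; let $(a, b)$ be a maximal open subinterval of $S$, so that $z(b) = \varphi(b)$ (or $b = T$, also giving equality) and $z(t) > \varphi(t)$ on $(a, b)$. On this interval I would compare the differential inequalities \eqref{A4-1} and \eqref{A4-2}. The idea is that, because $\Psi(t, \cdot)$ is increasing, the inequality $z(t) > \varphi(t)$ together with \eqref{A4-1} forces $\Psi(t, z'(t)) > \Psi(t, \varphi'(t)) - \delta_1 \varphi(t) \ge \cdots$; more precisely, one combines $z(t) \le \Psi(t,z'(t)) + (1-\delta_1)\varphi(t)$ with $\varphi(t) \ge \delta_1^{-1}\Psi(t,\varphi'(t))$ to deduce $z(t) - \varphi(t) \le \Psi(t, z'(t)) - \Psi(t, \varphi'(t))$, whence $z'(t) > \varphi'(t)$ on $(a,b)$ by monotonicity of $\Psi$ in $s$. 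But then $z - \varphi$ is strictly increasing on $(a, b)$, which is incompatible with $z(a) = \varphi(a)$ (at the left endpoint of a maximal interval, or $a = t_0$ where one needs the additional input that $z(t_0) \le \varphi(t_0)$; if $a=t_0$ is not covered by hypothesis one instead runs the argument from the right using $z(b)=\varphi(b)$, noting $z-\varphi$ increasing forces $z<\varphi$ just left of $b$). This contradiction yields \eqref{A4-3}.

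For part (2), the hypotheses of part (1) hold on every $[t_0, T]$, but now the endpoint condition $z(T) \le \varphi(T)$ is not assumed; instead one has an asymptotic smallness condition. Here I would use the autonomy $\Psi(t,s) = \Psi(s)$ to bring in the comparison function $\widetilde z$ solving $\widetilde z(t) = \delta_1^{-1}\Psi(\widetilde z'(t))$, which plays the role of a sharp growing barrier. If $\liminf_{t\to\infty} z(t)/\varphi(t) < 1$, then along a sequence $t_k \to \infty$ one has $z(t_k) \le \varphi(t_k)$, and applying part (1) on each $[t_0, t_k]$ and letting $k \to \infty$ gives \eqref{A4-3}. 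If instead $\lim_{t\to\infty} z(t)/\widetilde z(t) = 0$, one shows this forces $z(t_k) \le \varphi(t_k)$ along some sequence: since $\widetilde z$ is, by its defining equation, an extremal solution that grows at least as fast as any solution of \eqref{A4-1} that ever exceeds $\varphi$, a solution $z$ staying eventually below $\widetilde z$ cannot be trapped in the regime $z > \varphi$ on an unbounded interval (on such an interval the contradiction argument of part (1) would make $z$ grow like $\widetilde z$). Then part (1) applies again.

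For part (3), the statement is a lower bound rather than an upper bound, and the argument is a direct Gronwall-type integration. From $z(t) \le \Psi(z'(t)) \le c_0 (z'(t))^m$ for $t \ge t_0$ (using $z'(t) \ge s_1$ eventually, since $z$ nontrivial, nonnegative and the inequality forces $z'$ to stay bounded away from $0$ once $z$ is bounded away from $0$), one gets $z'(t) \ge (z(t)/c_0)^{1/m}$, i.e. $z^{-1/m}(t)\, z'(t) \ge c_0^{-1/m}$. Integrating from $t_0$ to $t$ yields $\frac{m}{m-1}\big(z^{1-1/m}(t) - z^{1-1/m}(t_0)\big) \ge c_0^{-1/m}(t - t_0)$, so $z^{(m-1)/m}(t) \ge c\, t$ for large $t$ with $c > 0$, which is exactly $\liminf_{t\to\infty} t^{-m/(m-1)} z(t) > 0$.

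The main obstacle I anticipate is part (2): making rigorous the claim that $\lim z(t)/\widetilde z(t) = 0$ forces $z \le \varphi$ somewhere arbitrarily far out. One has to rule out the possibility that $z$ stays strictly above $\varphi$ on a whole ray $[t_0, \infty)$ while still being $o(\widetilde z)$; the key is that on such a ray the comparison in part (1) shows $z' > \varphi'$ and in fact, feeding $z$ itself back into the chain $z(t) \le \Psi(z'(t)) + (1-\delta_1)\varphi(t) < \Psi(z'(t)) + (1-\delta_1) z(t)$, i.e. $\delta_1 z(t) \le \Psi(z'(t)) \le \delta_1 \widetilde z$-type bound after rescaling, one deduces $z$ dominates a solution of the extremal equation and hence $z(t)/\widetilde z(t) \not\to 0$ — a contradiction. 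Since this is exactly the lemma whose proof the paper defers to \cite{LS}, in the write-up I would either reproduce this comparison carefully or simply cite \cite{LS}; the remaining parts are routine once part (1) is in hand.
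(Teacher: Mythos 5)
The paper does not actually prove this lemma --- it recalls it from \cite{LS} --- so there is no in-paper argument to compare against; your Ladyzhenskaya--Solonnikov comparison scheme is exactly the intended route. Part (1) is correct, but note that the contradiction lives at the \emph{right} endpoint: $z-\varphi$ strictly increasing and positive on $(a,b)$ is perfectly compatible with $z(a)=\varphi(a)$, whereas it is incompatible with $z(b)-\varphi(b)\le 0$ (by maximality or by $z(T)\le\varphi(T)$), which is the fallback you do supply; the cleanest phrasing is that once $z(t_1)>\varphi(t_1)$, the chain $z-\varphi\le \Psi(t,z')-\Psi(t,\varphi')$ keeps $z-\varphi$ positive and increasing up to $T$, contradicting $z(T)\le\varphi(T)$. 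Part (3) is fine, with the small repair that $z'\ge s_1$ is reached in two steps: $z\ge a>0$ for $t\ge t_1$ forces $z'\ge\Psi^{-1}(a)>0$, hence $z\to\infty$, hence eventually $\Psi(z')\ge z>\Psi(s_1)$ and so $z'>s_1$; then your integration of $z'\ge (z/c_0)^{1/m}$ gives the stated $\liminf$. The one genuine soft spot is the second criterion in part (2): from $z>\varphi$ on a ray you correctly obtain $\delta_1 z\le\Psi(z')$, so $z$ dominates the positive solution of the extremal equation through $(t_1,z(t_1))$; but passing from this to $z/\tilde z\not\to 0$ needs the fact that two positive solutions of the autonomous extremal equation (time translates of one another) have comparable growth, which fails for arbitrary $\Psi$ (for super-exponentially growing solutions a time shift can be $o(\tilde z)$). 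One closes this either by combining the part (3) lower bound with a matching upper bound on $\tilde z$, or by using the concrete $\Psi(\tau)=C(\tau+\tau^{3/2})$ that occurs in this paper; since the authors themselves defer the entire lemma to \cite{LS}, your plan to cite \cite{LS} for precisely this point is consistent with what the paper does.
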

As an application of Lemma \ref{lemmaA4},
when there is the estimate of the Dirichlet norm in some fixed sub-domain $\Omega_T$, one could further prove the uniform estimate in arbitrary $\Omega_t$ with $t\leq T$ with the aid of the differential inequalities.

\section{Flux carrier and the approximate problem}\label{secflux}
In this section, we construct the so-called flux carrier, which is a solenoidal vector field with flux $\Phi$ and satisfies the Navier-slip boundary condition \eqref{BC}.

Let $\mu(t)$ be a smooth function on $\R$ which satisfies
\begin{equation*}
	\mu(t)=\left\{
	\begin{aligned}
		&0 \,\,\,\,\,\,\text{ if } t\ge 1,\\
		&1 \,\,\,\,\,\,\text{ if } t\le 0.
	\end{aligned}
	\right.
\end{equation*}
For any $\varepsilon\in(0,1)$ to be determined, define
\begin{equation}\label{defg}
\Bg=(g_1,g_2)=(\partial_{x_2}G,-\partial_{x_1}G),
\end{equation}
where
\begin{equation*}
G(x_1,x_2;\varepsilon)=\left\{
\begin{aligned}&\Phi\mu\left(1+\varepsilon \ln \frac{f_2(x_1)-x_2}{x_2-\bar{f}(x_1)}\right),&\text{ if }x_2>\bar{f}(x_1),\\
&	0,&\text{ if }x_2\leq \bar{f}(x_1),
\end{aligned}\right.
\end{equation*}
with $\bar{f}$ defined in \eqref{deffbar}. Clearly, $\Bg$ is a smooth solenoidal vector field.

Noting
\begin{equation*}
G(x_1,x_2;\varepsilon)=\left\{
\begin{aligned}
&\Phi, &&\text{ if }x_2\text{ near }f_2(x_1),\\
&0, &&\text{ if }x_2\leq \bar{f}(x_1),
\end{aligned}
\right.
\end{equation*}
one can see that the vector field $\Bg$ vanishes near the boundary $\partial \Omega$ and satisfies the flux constraint \eqref{flux constraint}. Since $\operatorname{supp} \mu' \subset [0,1]$, one has
\begin{equation*}
\operatorname{supp} \Bg\subset \left\{ (x_1, x_2) \in \Omega:~e^{-\frac1\varepsilon}\leq \frac{f_2(x_1)-x_2}{x_2-\bar{f}(x_1)}\leq 1 \right\}.
\end{equation*}
This implies that for any $\Bx\in \operatorname{supp}\Bg$, one has
\begin{equation}\label{1-6}
 f_2(x_1)-x_2 \leq  x_2-\bar{f}(x_1)\leq e^\frac1\varepsilon \left(f_2(x_1)-x_2\right).
\end{equation}
It also follows from \eqref{1-6} that for any $\Bx\in \operatorname{supp}\Bg$, one has
\begin{equation*}
	2(x_2-\bar{f}(x_1))\ge f_2(x_1)-x_2+x_2-\bar{f}(x_1)=f_2(x_1)-\bar{f}(x_1)=\frac{f(x_1)}{2}
\end{equation*}
and
\begin{equation*}
	(1+e^{-\frac1\varepsilon})(x_2-\bar{f}(x_1))\leq  x_2-\bar{f}(x_1)+f_2(x_1)-x_2=f_2(x_1)-\bar{f}(x_1)=\frac{f(x_1)}{2},
\end{equation*}
where $f$ is defined in \eqref{deffbar}.
Hence, one has
\begin{equation}\label{1-7}
	\frac{f(x_1)}{4}\leq x_2-\bar{f}(x_1)\leq \frac{1}{1+e^{-\frac1\varepsilon}}\frac{f(x_1) }{2}\leq \frac{f(x_1)}{2}
\end{equation}
and
\begin{equation}\label{1-10}
	f_2(x_1)-x_2\ge e^{-\frac1\varepsilon }(x_2-\bar{f}(x_1)) \ge e^{-\frac1\varepsilon }\frac{f(x_1)}{4}.
   \end{equation}

Moreover, direct computations give
\begin{equation}\label{1-11-1}
\begin{aligned}
	g_1=&\Phi\partial_{x_2} \mu\left(1+\varepsilon \ln (f_2 (x_1)-x_2)- \varepsilon \ln(x_2-\bar{f}(x_1))\right)\\
	=&\varepsilon \Phi \mu'(\cdot) \left(\frac{-1}{f_2 (x_1)-x_2}-\frac{1}{x_2-\bar{f}(x_1)}\right)
\end{aligned}
\end{equation}
and
\begin{equation}\label{1-11-2}
\begin{aligned}
	g_2=&-\Phi\partial_{x_1} \mu\left(1+\varepsilon \ln (f_2 (x_1)-x_2)- \varepsilon \ln\left(x_2-\bar{f}(x_1)\right)\right)\\
=&-\varepsilon \Phi \mu'(\cdot) \left(\frac{f_2'(x_1)}{f_2 (x_1)-x_2}+\frac{\bar{f}'(x_1)}{x_2-\bar{f}(x_1)}\right),
\end{aligned}
\end{equation}
where $\mu'(\cdot)=\mu'\left(1+\varepsilon \ln (f_2(x_1)-x_2)- \varepsilon \ln \left(x_2-\bar{f}(x_1) \right)\right)$.
Clearly, one has
\begin{equation}\label{1-11}
\begin{aligned}
	g_2=&g_1 f_2'(x_1)+\varepsilon \Phi \mu'(\cdot) \frac{f_2'(x_1)}{x_2-\bar{f}(x_1)}-\varepsilon \Phi \mu'(\cdot)\frac{\bar{f}'(x_1)}{x_2-\bar{f}(x_1)}\\
	=&g_1 f_2'(x_1)+\varepsilon \Phi \mu'(\cdot) \frac{ f'(x_1)}{2(x_2-\bar{f}(x_1))}.
\end{aligned}
\end{equation}

The following lemma gives some properties of the flux carrier $\Bg$, which play an important role in the construction of approximate solutions, especially when $\Phi$ is not small.
\begin{lemma}\label{lemma1}
For any function $w\in H^1(\Omega_{a,b})$ satisfying $w=0$ on the upper boundary $S_{2;a,b}:=\{\Bx\in \partial \Omega:\ x_2=f_2(x_1),~a<x_1<b\}$, it holds that
\begin{equation*}
\int_{\Omega_{a,b}} g_1^2w^2\,dx\leq C \Phi^2\varepsilon^2\int_{\Omega_{a,b}}|\partial_{x_2} w |^2\,dx.
\end{equation*}
 Moreover, if $f_i(i=1,2)$ satisfies \eqref{assumpf''}, then one has
\begin{equation*}
	|\Bg|\leq \frac{C(\varepsilon) \Phi}{f(x_1)},\ \ \ \ \ |\nabla\Bg|\leq \frac{C(\varepsilon, \gamma) \Phi}{f^2(x_1)},
\end{equation*}
and
\begin{equation*}
  \int_{\Omega_{a,b}}|\nabla \Bg|^2+|\Bg|^4\,dx\leq C(\epsilon, \gamma) ( \Phi^2 +  \Phi^4) \int_{a}^b f^{-3}(x_1)\,dx_1,
\end{equation*}
where $C(\varepsilon)$ is a constant depending on $\varepsilon$ and $C(\varepsilon,\gamma)$ depends on $\varepsilon$ and $\gamma$ with
\begin{equation}
\gamma=	\max_{i=1,2}\sup_{x_1\in \mathbb{R}} |f_i''(x_1)f(x_1)|.
\end{equation}

\end{lemma}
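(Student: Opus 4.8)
The plan is to establish the three assertions one after another, each reduced to the explicit expressions \eqref{1-11-1}--\eqref{1-11} for $g_1$ and $g_2$ together with the elementary geometric bounds \eqref{1-6}, \eqref{1-7} and \eqref{1-10}, all of which hold on $\supp\Bg$ and, more generally, on the set $\{\Bx\in\Omega:\ e^{-1/\varepsilon}\le (f_2(x_1)-x_2)/(x_2-\bar f(x_1))\le 1\}$, which also contains $\supp\nabla\Bg$ since $\mu'$, and hence $\mu''$, is supported in $[0,1]$.

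\emph{Weighted inequality for $g_1$.} From \eqref{1-11-1} and \eqref{1-6} one gets at once the pointwise bound $|g_1(\Bx)|\le 2\varepsilon\Phi\|\mu'\|_{L^\infty}(f_2(x_1)-x_2)^{-1}$ on $\supp\Bg$, so it suffices to prove the Hardy-type inequality $\int_{\Omega_{a,b}}(f_2(x_1)-x_2)^{-2}w^2\,dx\le 4\int_{\Omega_{a,b}}|\partial_{x_2}w|^2\,dx$. For a.e.\ fixed $x_1$ the function $x_2\mapsto w(x_1,x_2)$ belongs to $H^1(f_1(x_1),f_2(x_1))$ and vanishes at $x_2=f_2(x_1)$; substituting $t=f_2(x_1)-x_2$ and invoking the one-dimensional Hardy inequality $\int_0^L t^{-2}h(t)^2\,dt\le 4\int_0^L|h'(t)|^2\,dt$ (valid whenever $h(0)=0$), then integrating in $x_1$ over $(a,b)$, yields it. Hence $\int_{\Omega_{a,b}}g_1^2w^2\,dx\le 16\|\mu'\|_{L^\infty}^2\,\varepsilon^2\Phi^2\int_{\Omega_{a,b}}|\partial_{x_2}w|^2\,dx$.

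\emph{Pointwise bounds on $\Bg$ and $\nabla\Bg$.} Combining the last display with \eqref{1-10} (which gives $f_2-x_2\ge e^{-1/\varepsilon}f/4$ on $\supp\Bg$) yields $|g_1|\le C(\varepsilon)\Phi/f$; then \eqref{1-11}, the a priori bounds $|f_2'|\le\beta$, $|f'|\le2\beta$ from \eqref{assumpf}, and \eqref{1-7} (so that $x_2-\bar f\ge f/4$) give $|g_2|\le C(\varepsilon)\Phi/f$, hence $|\Bg|\le C(\varepsilon)\Phi/f$. For the gradient, write $\Bg=(\Phi\mu'(\eta)\partial_{x_2}\eta,\,-\Phi\mu'(\eta)\partial_{x_1}\eta)$ with $\eta=1+\varepsilon\ln(f_2-x_2)-\varepsilon\ln(x_2-\bar f)$; differentiation produces only terms of the form $\mu''(\eta)\,\partial_i\eta\,\partial_j\eta$ and $\mu'(\eta)\,\partial_i\partial_j\eta$. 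By \eqref{1-7}, \eqref{1-10} and $|f_i'|\le\beta$ one has $|\nabla\eta|\le C(\varepsilon)/f$, so the first kind of term is $\le C(\varepsilon)\Phi/f^2$. Among the second kind, $\partial_{x_2}^2\eta$ and $\partial_{x_1}\partial_{x_2}\eta$ contain only the factors $(f_2-x_2)^{-2}$, $(x_2-\bar f)^{-2}$ and $f_i'$, hence are $\le C(\varepsilon)\Phi/f^2$; but $\partial_{x_1}^2\eta$ additionally involves $f_2''/(f_2-x_2)$ and $\bar f''/(x_2-\bar f)$, and it is exactly here that \eqref{assumpf''} is used: since $f_2-x_2$ and $x_2-\bar f$ are both comparable to $f$, these quantities are $O(|f_i''f|/f^2)=O(\gamma/f^2)$. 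Collecting all terms gives $|\nabla\Bg|\le C(\varepsilon,\gamma)\Phi/f^2$.

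\emph{Integral bound.} By \eqref{1-7}, for every $x_1$ the slice $\supp\Bg\cap\Sigma(x_1)$ is contained in $[\bar f(x_1)+f(x_1)/4,\ \bar f(x_1)+f(x_1)/2]$, so its length is at most $f(x_1)/4$. Integrating the pointwise bounds over this slice and then over $x_1\in(a,b)$,
\[
\int_{\Omega_{a,b}}|\Bg|^4\,dx\le\int_a^b\frac{C(\varepsilon)\Phi^4}{f(x_1)^3}\,dx_1,\qquad \int_{\Omega_{a,b}}|\nabla\Bg|^2\,dx\le\int_a^b\frac{C(\varepsilon,\gamma)\Phi^2}{f(x_1)^3}\,dx_1,
\]
and adding these gives $\int_{\Omega_{a,b}}|\nabla\Bg|^2+|\Bg|^4\,dx\le C(\varepsilon,\gamma)(\Phi^2+\Phi^4)\int_a^bf^{-3}(x_1)\,dx_1$. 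The only genuinely delicate point is the estimate for $\nabla\Bg$: one must single out the terms that are singular of order $1/f^2$ only by virtue of \eqref{assumpf''} — namely those carrying a second derivative of $f_i$ divided by a single power of the distance to the boundary — and absorb the remaining power of $1/f$ into $\gamma$; everything else is a routine, if slightly tedious, chain-rule computation over the thin support of $\Bg$.
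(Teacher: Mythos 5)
Your proposal is correct and follows essentially the same route as the paper: the pointwise bound $|g_1|\le C\varepsilon\Phi(f_2(x_1)-x_2)^{-1}$ combined with a slice-wise Hardy inequality (using $w=0$ on the upper boundary), chain-rule pointwise estimates for $\Bg$ and $\nabla\Bg$ via \eqref{1-6}--\eqref{1-10} with the hypothesis \eqref{assumpf''} entering only through the $f_i''$ terms in $\partial_{x_1}g_2$, and integration of the resulting $f^{-4}$ bounds over cross-sections of length $O(f)$ to obtain the $\int_a^b f^{-3}\,dx_1$ estimate. Your organization of the gradient computation through $\eta$ and its derivatives is just a compact rewriting of the paper's term-by-term estimates \eqref{1-13}--\eqref{1-15}; no substantive difference or gap.
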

\begin{proof}
We use the definition, the property \eqref{1-6}, and Hardy inequality (\hspace{1sp}\cite{HLP}) to obtain
\begin{equation*}
	\begin{aligned}
\int_{\Omega_{a,b}} g_1^2w^2\,dx=&\int_{\Omega_{a,b}} \varepsilon^2\Phi^2( \mu'(\cdot))^2 \left(\frac{-1}{f_2(x_1)-x_2}-\frac{1}{x_2-\bar{f}(x_1)}\right)^2w^2\,dx\\
\leq &C\Phi^2\varepsilon^2\int_a^b\,dx_1\int_{\bar{f}(x_1)}^{f_2(x_1)}\frac{w^2}{(f_2(x_1)-x_2)^2}\,dx_2\\
\leq&C\Phi^2\varepsilon^2\int_{\Omega_{a,b}}|\partial_{x_2}w|^2\,dx.
\end{aligned}\end{equation*}
On the other hand,  it follows from \eqref{1-6}-\eqref{1-11-2} that
\begin{equation}\label{1-12}
	|\Bg|\leq \frac{C (\varepsilon) \Phi }{f(x_1)}.
\end{equation}
Moreover, direct computations give that
\begin{equation}\label{1-13}
	\begin{aligned}
	&|\partial_{x_1}g_1|=|\partial_{x_2}g_2|\\
\leq &\left|\varepsilon^2 \Phi \mu^{\prime\prime} (\cdot) \left(\frac{-1}{f_2(x_1)-x_2}-\frac{1}{x_2-\bar{f}(x_1)}\right)\left(\frac{f_2'(x_1)}{f_2(x_1)-x_2}+\frac{\bar{f}'(x_1)}{x_2-\bar{f}(x_1)}\right)\right|\\
	&+\left|\varepsilon \Phi \mu' (\cdot) \left(\frac{f_2'(x_1)}{(f_2(x_1)-x_2)^2}-\frac{\bar{f}'(x_1)}{\left(x_2-\bar{f}(x_1)\right)^2}\right)\right|\\
	\leq& \frac{C (\varepsilon) \Phi}{f^2(x_1)}
	\end{aligned}
\end{equation}
and
\begin{equation}\label{1-14}
	\begin{aligned}
|\partial_{x_2}g_1|
\leq &\left|\varepsilon^2 \Phi \mu^{\prime\prime}(\cdot) \left(\frac{-1}{f_2(x_1)-x_2}-\frac{1}{x_2-\bar{f}(x_1)}\right)^2\right|\\
	&+\left|\varepsilon \Phi \mu'(\cdot) \left(\frac{1}{(f_2(x_1)-x_2)^2}+\frac{1}{\left(x_2-\bar{f}(x_1)\right)^2}\right)\right|\\
	\leq& \frac{C(\varepsilon) \Phi }{f^2(x_1)}.
	\end{aligned}
\end{equation}
Furthermore, one has
\begin{equation}\label{1-15}
\begin{aligned}
&|\partial_{x_1}g_2|
\leq \left|\varepsilon^2 \Phi \mu^{\prime\prime}(\cdot)\left(\frac{f_2'(x_1)}{f_2(x_1)-x_2}+\frac{\bar{f}'(x_1)}{x_2-\bar{f}(x_1)}\right)^2\right|\\
	&+\left|\varepsilon \Phi \mu'(\cdot) \left(\frac{f_2^{\prime\prime} (x_1)}{f_2(x_1)-x_2}-\frac{[f_2'(x_1)]^2}{(f_2(x_1)-x_2)^2}+\frac{\bar{f}''(x_1)}{x_2-\bar{f}(x_1)}+\frac{\left[\bar{f}'(x_1)\right]^2}{(x_2-\bar{f}(x_1))^2}\right)\right|\\
	\leq& C\varepsilon \Phi\left(\frac{1}{f^2(x_1)}+ \left|\frac{f_2''(x_1)}{f_2(x_1)-x_2}\right|+ \left|\frac{\bar{f}''(x_1)}{x_2-\bar{f}(x_1)}\right|\right)\\
	\leq& C(\varepsilon ) \Phi \left(\frac{1}{f^2(x_1)}+\frac{|f_1''(x_1)|}{f(x_1)}+\frac{|f_2''(x_1)|}{f(x_1)}\right)\\
	\leq&\frac{C(\varepsilon, \gamma) \Phi}{f^2(x_1)},
	\end{aligned}
\end{equation}
where the last inequality follows from the definition of $\gamma$. Then combining \eqref{1-12}-\eqref{1-15} gives
\begin{equation*}
\begin{aligned}
	\int_{\Omega_{a,b}} |\nabla \Bg|^2+|\Bg|^4\,dx\leq &C(\varepsilon,\gamma)(\Phi^2 +\Phi^4) \int_{a}^b\int_{\Sigma(x_1)} f^{-4}(x_1)\,dx_2dx_1 \\
	\leq &C(\varepsilon,\gamma)(\Phi^2 + \Phi^4)\int_{a}^b f^{-3}(x_1)\,dx_1.
\end{aligned}
\end{equation*}
This finishes the proof of the lemma.
\end{proof}


Given the flux carrier $\Bg$ constructed in \eqref{defg}, if $\Bu$ satisfies \eqref{NS}, \eqref{flux constraint} and \eqref{BC}, then $\Bv=\Bu-\Bg$ satisfies
\begin{equation}\label{NS1}
\left\{
\begin{aligned}
&-\Delta \Bv+\Bv\cdot \nabla \Bg +\Bg\cdot \nabla \Bv+\Bv\cdot \nabla \Bv  +\nabla p=\Delta \Bg-\Bg\cdot \nabla\Bg   ~~~~&\text{ in }\Omega,\\
&{\rm div}~\Bv=0&\text{ in }\Omega,\\
&\Bv\cdot \Bn=0,~~(\Bn\cdot \BD(\Bv)+\alpha \Bv)\cdot \Bt=0&\text{ on }\partial\Omega,\\
&\int_{\Sigma(x_1)} \Bv\cdot \Bn \,ds=0&\text{ for any }x_1\in \mathbb{R}.
\end{aligned}\right.
\end{equation}

The weak solutions of \eqref{NS1} is defined as follows.
\begin{definition}
A vector field $\Bv\in H_\sigma(\Omega)$ is said to be a weak solution of the problem \eqref{NS1} if for any $\Bp\in \mcH_\sigma(\Omega_T)$ with $T>0$, one has
\begin{equation*}
\begin{aligned}
&\int_{\Omega}2\BD(\Bv):\BD(\Bp)+(\Bv\cdot \nabla \Bg +(\Bg+\Bv)\cdot\nabla \Bv)\cdot \Bp\,dx+2\alpha \int_{\partial\Omega}\Bv\cdot \Bp\,ds \\
=&\int_{\Omega}(\Delta \Bg-\Bg\cdot \nabla \Bg)\cdot \Bp\,dx.
\end{aligned}
\end{equation*}

\end{definition}

In the rest of this section, we study the following approximate problems of \eqref{NS1} on the bounded domain $\Omega_{a,b}$,
\begin{equation}\label{aNS}
\left\{
\begin{aligned}
&-\Delta \Bv+\Bv\cdot \nabla \Bg +\Bg\cdot \nabla \Bv+\Bv\cdot \nabla \Bv  +\nabla p=\Delta \Bg-\Bg\cdot \nabla \Bg   ~~~~&\text{ in }\Omega_{a,b},\\
&{\rm div}~\Bv=0&\text{ in }\Omega_{a,b},\\
&\Bv\cdot \Bn=0,~(\Bn\cdot \BD(\Bv)+\alpha \Bv)\cdot \Bt=0&\text{ on }\partial\Omega_{a,b}\cap \partial\Omega,\\
&\Bv=0&\text{ on }\Sigma(a)\cup\Sigma(b)
\end{aligned}\right.
\end{equation}
and its linearized problem
\begin{equation}\label{laNS}
\left\{\begin{aligned}
&-\Delta \Bv+\Bv\cdot \nabla \Bg +\Bg\cdot \nabla \Bv +\nabla p=\Delta \Bg-\Bg\cdot \nabla \Bg   ~~~~&\text{ in }\Omega_{a,b},\\
&{\rm div}~\Bv=0&\text{ in }\Omega_{a,b},\\
&\Bv\cdot \Bn=0,~(\Bn\cdot \BD(\Bv)+\alpha \Bv)\cdot \Bt=0&\text{ on }\partial\Omega_{a,b}\cap \partial\Omega,\\
&\Bv=0&\text{ on }\Sigma(a)\cup\Sigma(b).\end{aligned}\right.\end{equation}

The weak solutions of problems \eqref{aNS} and \eqref{laNS} can be defined as follows.
\begin{definition}
A vector field $\Bv\in \mcH_\sigma(\Omega_{a,b})$ is a weak solution of the problem \eqref{aNS} and \eqref{laNS}, respectively
if for any  $\Bp\in \mcH_\sigma(\Omega_{a,b})$, $\Bv$ satisfies
\begin{equation}\label{2-1}
\begin{aligned}
&\int_{\Omega_{a,b}}2\BD(\Bv):\BD(\Bp)+(\Bv\cdot \nabla \Bg +(\Bg+\Bv)\cdot \nabla \Bv)\cdot \Bp\,dx+2\alpha \int_{\partial\Omega_{a,b}\cap \partial\Omega}\Bv\cdot \Bp\,ds \\
=&\int_{\Omega_{a,b}}(\Delta \Bg-\Bg\cdot \nabla \Bg)\cdot \Bp\,dx
\end{aligned}
\end{equation}
and
\begin{equation}\label{2-2}\begin{aligned}
&\int_{\Omega_{a,b}}2\BD(\Bv):\BD(\Bp)+(\Bv\cdot \nabla \Bg +\Bg\cdot \nabla \Bv)\cdot \Bp\,dx+2\alpha \int_{\partial\Omega_{a,b}\cap \partial\Omega}\Bv\cdot \Bp\,ds\\
=&\int_{\Omega_{a,b}}(\Delta \Bg-\Bg\cdot \nabla \Bg)\cdot \Bp\,dx,
\end{aligned}\end{equation}respectively.
\end{definition}

Next, we use Leray-Schauder fixed point theorem (cf. \cite[Theorem 11.3]{GT}) to prove the existence of solutions to the approximate  problem \eqref{aNS}. To this end,  the existence of solutions to  the linearized problem \eqref{laNS} is first established by the following lemma.
\begin{lemma}\label{lemma5}
For any $\Bh\in L^\frac43(\Omega_{a,b})$, there exists a unique $\Bv\in \mcH_\sigma(\Omega_{a,b})$ such that for any $\Bp\in \mcH_\sigma(\Omega_{a,b})$, it holds that
\begin{equation}\label{2-12}
\int_{\Omega_{a,b}}2\BD(\Bv):\BD(\Bp)+(\Bv\cdot \nabla \Bg +\Bg\cdot \nabla \Bv)\cdot \Bp\,dx+2\alpha \int_{\partial\Omega_{a,b}\cap \partial\Omega}\Bv\cdot \Bp\,ds
=\int_{\Omega_{a,b}}\Bh\cdot \Bp\,dx.
\end{equation}
\end{lemma}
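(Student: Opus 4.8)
The plan is to cast the weak formulation \eqref{2-12} as an operator equation on the Hilbert space $\mcH_\sigma(\Omega_{a,b})$ and apply the Lax--Milgram theorem. First I would define the bilinear form
\[
\mathcal{B}[\Bv,\Bp]:=\int_{\Omega_{a,b}}2\BD(\Bv):\BD(\Bp)+(\Bv\cdot\nabla\Bg+\Bg\cdot\nabla\Bv)\cdot\Bp\,dx+2\alpha\int_{\partial\Omega_{a,b}\cap\partial\Omega}\Bv\cdot\Bp\,ds
\]
on $\mcH_\sigma(\Omega_{a,b})\times\mcH_\sigma(\Omega_{a,b})$, and the linear functional $\ell(\Bp):=\int_{\Omega_{a,b}}\Bh\cdot\Bp\,dx$. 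Since $\Omega_{a,b}$ is bounded, $\Bg$ and $\nabla\Bg$ are smooth and bounded there, so the two convective terms are easily controlled: by Hölder and the $L^4$ embedding of Lemma \ref{lemmaA2} (or simply the boundedness of $\Bg$) one gets $|\mathcal{B}[\Bv,\Bp]|\le C\|\nabla\Bv\|_{L^2}\|\nabla\Bp\|_{L^2}$, using also the trace inequality to bound the boundary term. Boundedness of $\ell$ follows from $\Bh\in L^{4/3}$, Hölder, and the $L^4$ embedding: $|\ell(\Bp)|\le\|\Bh\|_{L^{4/3}}\|\Bp\|_{L^4}\le C\|\Bh\|_{L^{4/3}}\|\nabla\Bp\|_{L^2}$.

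The heart of the matter is coercivity. For $\Bv\in\mcH_\sigma(\Omega_{a,b})$ the symmetric term gives, via Korn's inequality (Lemma \ref{lemmaA3}), $\int 2|\BD(\Bv)|^2\,dx+\alpha\int_{\partial\Omega}|\Bv|^2\,ds\ge\mfc\|\nabla\Bv\|_{L^2}^2$. The term $\int\Bg\cdot\nabla\Bv\cdot\Bv\,dx$ vanishes: since $\div\Bg=0$ and $\Bg\cdot\Bn=0$ on $\partial\Omega_{a,b}\cap\partial\Omega$ while $\Bv=0$ on $\Sigma(a)\cup\Sigma(b)$, integration by parts yields $\int\Bg\cdot\nabla\Bv\cdot\Bv\,dx=\tfrac12\int\Bg\cdot\nabla|\Bv|^2\,dx=0$. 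The only genuinely dangerous term is $\int(\Bv\cdot\nabla\Bg)\cdot\Bv\,dx$, which is not obviously small. Here I would invoke Lemma \ref{lemma1}: writing $\Bv=\Bv-\Bv(x_1,f_2(x_1))+\Bv(x_1,f_2(x_1))$ is not available since $\Bv\ne0$ on the lateral boundary, so instead I use that $g_1$ is the only component whose $H^1$-weight is singular and exploit $\supp\Bg$ lying in the region \eqref{1-6}. Actually the cleanest route: bound $\int|\Bv|^2|\nabla\Bg|\,dx\le\|\nabla\Bg\|_{L^2(\supp\Bg)}\|\Bv\|_{L^4}^2\le C(\varepsilon,\gamma,\Omega_{a,b})\Phi\,\|\nabla\Bv\|_{L^2}^2$ by Lemma \ref{lemmaA2}; this is \emph{not} small for fixed $\Omega_{a,b}$. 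Therefore coercivity of $\mathcal{B}$ itself may fail, and I expect the main obstacle to be precisely this term.

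To circumvent it, rather than pure Lax--Milgram I would use the Fredholm alternative: the map $\Bv\mapsto\int(\Bv\cdot\nabla\Bg)\cdot\Bp\,dx$ is compact on $\mcH_\sigma(\Omega_{a,b})$ (it factors through the compact embedding $\mcH_\sigma\hookrightarrow L^4$ and boundedness of $\nabla\Bg$), so \eqref{2-12} is solvable iff the homogeneous adjoint problem has only the trivial solution, and uniqueness for \eqref{2-12} gives existence. Uniqueness in turn is obtained by energy estimate on the difference $\Bw$ of two solutions: $\mfc\|\nabla\Bw\|_{L^2}^2\le-\int(\Bw\cdot\nabla\Bg)\cdot\Bw\,dx$, and here one integrates by parts once more, using $\div\Bw=0$ and $\Bw\cdot\Bn=0$, to rewrite $\int(\Bw\cdot\nabla\Bg)\cdot\Bw=-\int(\Bw\cdot\nabla\Bw)\cdot\Bg\,dx$, which by the key inequality behind \eqref{1-5} (the first estimate of Lemma \ref{lemma1}, i.e. $\int g_1^2w^2\le C\Phi^2\varepsilon^2\int|\partial_{x_2}w|^2$) is bounded by $C\Phi\varepsilon\|\nabla\Bw\|_{L^2}^2$; choosing $\varepsilon$ small (depending only on $\alpha$ and the domain, which is legitimate since $\varepsilon$ is at our disposal in the construction of $\Bg$) makes this strictly less than $\mfc\|\nabla\Bw\|_{L^2}^2$, forcing $\Bw=0$. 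This yields uniqueness, hence existence via Fredholm, and the a priori bound $\|\nabla\Bv\|_{L^2}\le C\|\Bh\|_{L^{4/3}}$ comes from the same energy estimate applied to $\Bv$ itself against the right-hand side $\ell$.
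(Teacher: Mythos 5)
Your overall architecture (Lax--Milgram set-up, compact perturbation, Fredholm alternative reduced to uniqueness) is viable, but the step you lean on is asserted rather than proved, and it is exactly the crux of the paper's argument. You claim that $\bigl|\int_{\Omega_{a,b}}\Bw\cdot\nabla\Bw\cdot\Bg\,dx\bigr|\le C\varepsilon\Phi\|\nabla\Bw\|_{L^2}^2$ follows "by the first estimate of Lemma \ref{lemma1}". That lemma only applies to functions vanishing on the upper boundary $S_{2;a,b}$, and the components $w_1,w_2$ of a field in $\mcH_\sigma(\Omega_{a,b})$ do not vanish there (only $\Bw\cdot\Bn=0$ holds). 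If instead you estimate crudely, using $|\Bg|\le C(\varepsilon)\Phi/f$ on $\operatorname{supp}\Bg$ together with the weighted Poincar\'e bounds, you get a constant $C(\varepsilon)\Phi$ with $C(\varepsilon)\sim\varepsilon e^{1/\varepsilon}$, which does \emph{not} become small as $\varepsilon\to0$; so the smallness you need for uniqueness (hence for injectivity in the Fredholm step) is not available this way. The missing idea is the algebraic rearrangement in \eqref{2-6-1}--\eqref{2-6-2}: using ${\rm div}\,\Bw=0$, the structural identity \eqref{1-11} relating $g_2$ to $f_2'g_1$ plus a term of size $\varepsilon\Phi/(x_2-\bar f)$, and the impermeability condition $w_2=f_2'w_1$ on $x_2=f_2(x_1)$, one rewrites the integral so that the singular factor $g_1$ is always paired with $w_2-f_2'w_1$ (which does vanish on the top boundary, so Lemma \ref{lemma1} applies and produces the factor $\varepsilon\Phi$), while the remainder is controlled by $\|w_1/f\|_{L^2}\le M_0\|\partial_{x_2}w_1\|_{L^2}$ from the zero-flux Poincar\'e inequality of Lemma \ref{lemmaA1}. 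Without this manipulation your key bound, and with it your uniqueness proof, does not go through.

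Two further remarks. First, your diagnosis that "coercivity of $\mathcal{B}$ itself may fail" is incorrect: the very estimate just described, combined with Lemma \ref{lemmaA3} and the choice $C\varepsilon\Phi<\tfrac12\mfc$, gives $B[\Bv,\Bv]\ge\tfrac{\mfc}{2}\|\nabla\Bv\|_{L^2}^2+\alpha\|\Bv\|_{L^2(\partial\Omega_{a,b}\cap\partial\Omega)}^2$, hence coercivity in $H^1$ via Lemma \ref{lemmaA1}. This is precisely the paper's proof, and it makes the Fredholm detour unnecessary (though not wrong): plain Lax--Milgram then yields existence and uniqueness simultaneously. Second, once you do carry out the rearrangement, your compactness observation and the reduction of existence to uniqueness are fine, so the proposal is repairable; but as written, the decisive inequality is a gap, not a citation.
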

\begin{proof} The proof is based on Lax-Milgram theorem and is divided into two steps.

{\em Step 1. Bilinear functional.} For any $\Bv,\Bw\in \mcH_\sigma(\Omega_{a,b})$, define the bilinear functional on $\mcH_\sigma(\Omega_{a,b})$ as follows
\begin{equation*}
B[\Bv,\Bw]=\int_{\Omega_{a,b}}2\BD(\Bv):\BD(\Bw)+(\Bv\cdot \nabla \Bg +\Bg\cdot \nabla \Bv)\cdot \Bw\,dx+2\alpha \int_{\partial\Omega_{a,b}\cap \partial\Omega}\Bv\cdot \Bw\,ds.
\end{equation*}

 Using the trace theorem and H\"{o}lder inequality yields
\begin{equation}\label{2-1-1}
|B[\Bv,\Bw]|\leq C\|\Bv\|_{H^1(\Omega_{a,b})}\|\Bw\|_{H^1(\Omega_{a,b})}.
\end{equation}
It follows from Lemma \ref{lemmaA3} that one has
\begin{equation}\label{2-4}
2\int_{\Omega_{a,b}}|\BD(\Bv)|^2\,dx+\alpha \int_{\partial\Omega_{a,b}\cap \partial\Omega}|\Bv|^2\,ds\ge \mfc\|\nabla \Bv\|_{L^2(\Omega_{a,b})}^2
\end{equation}
where $\mfc$ is given in \eqref{defmfc}. Moreover, using integration by parts gives
\begin{equation}\label{2-5}
\int_{\Omega_{a,b}}\Bg\cdot \nabla \Bv\cdot \Bv\,dx=0
\end{equation}
and
\begin{equation}\label{2-6} \begin{aligned}
\int_{\Omega_{a,b}}\Bv\cdot \nabla \Bg\cdot \Bv\,dx & =-\int_{\Omega_{a,b}}\Bv\cdot\nabla \Bv\cdot \Bg\,dx \\
& = - \int_{\Omega_{a, b}} (v_1 \partial_{x_1} + v_2 \partial_{x_2}) v_1 g_1 \, dx - \int_{\Omega_{a, b}} (v_1 \partial_{x_1} + v_2 \partial_{x_2}) v_2 g_2 \, dx .
\end{aligned} \end{equation}
Recalling the equality \eqref{1-11} and using integration by parts yield
\begin{equation}\label{2-6-1}
\begin{aligned}
&\int_{\Omega_{a,b}}(v_1\partial_{x_1}+v_2\partial_{x_2})v_1 g_1\,dx\\
=&\int_{\Omega_{a,b}} \left( v_1\partial_{x_1}v_1g_1+v_1\partial_{x_2}v_1 g_2 \right) \,dx+\int_{\Omega_{a,b}} \left( v_2\partial_{x_2}v_1g_1-v_1\partial_{x_2}v_1 g_2 \right)\,dx\\
=&-\int_{\Omega_{a,b}}\frac12 (v_1)^2(\partial_{x_1}g_1+\partial_{x_2}g_2)\,dx+\int_{\Omega_{a,b}}(v_2g_1-v_1 g_2)\partial_{x_2}v_1\,dx\\
=&\int_{\Omega_{a,b}}(v_2-v_1 f_2')g_1\partial_{x_2}v_1\,dx-\int_{\Omega_{a,b}}\varepsilon \Phi \mu' (\cdot) \frac{\frac12 f'(x_1)}{x_2-\bar{f}(x_1)} v_1\partial_{x_2}v_1\,dx
\end{aligned}\end{equation}
and
\begin{equation}\label{2-6-2}
\begin{aligned}
&\int_{\Omega_{a,b}}(v_1\partial_{x_1}+v_2\partial_{x_2})v_2g_2\,dx\\
=&\int_{\Omega_{a,b}}(v_1\partial_{x_1}v_2g_2-v_2\partial_{x_1}v_2g_1)\,dx+\int_{\Omega_{a,b}}(v_2\partial_{x_1}v_2g_1+v_2\partial_{x_2}v_2g_2)\,dx\\
=&\int_{\Omega_{a,b}}(v_1g_2-v_2g_1)\partial_{x_1}v_2\,dx-\int_{\Omega_{a,b}}\frac12 (v_2)^2(\partial_{x_1}g_1+\partial_{x_2}g_2)\,dx\\
=&\int_{\Omega_{a,b}}(v_1 f_2'-v_2)g_1\partial_{x_1}v_2\,dx+\int_{\Omega_{a,b}}\varepsilon \Phi \mu'(\cdot) \frac{\frac12 f'(x_1)}{x_2-\bar{f}(x_1)} v_1\partial_{x_1}v_2\,dx.
\end{aligned}
\end{equation}
It follows from \eqref{1-6}-\eqref{1-7} that one has
\begin{equation}\label{2-7-1}
\begin{aligned}
\left|\int_{\Omega_{a,b}}\Bv\cdot\nabla \Bv\cdot \Bg\,dx\right|\leq &\left|\int_{\Omega_{a,b}}(v_2-v_1f_2')(\partial_{x_1}v_2-\partial_{x_2}v_1)g_1\,dx\right|\\
&+\left|\int_{\Omega_{a,b}}\varepsilon \Phi \mu' (\cdot) \frac{ \frac12 f'(x_1)}{x_2-\bar{f}(x_1)} v_1(\partial_{x_1}v_2-\partial_{x_2}v_1)\,dx\right|\\
\leq &\int_{\Omega_{a,b}}|(v_2-v_1f_2')(\partial_{x_1}v_2-\partial_{x_2}v_1)g_1|\,dx\\
&+C\varepsilon \Phi\int_{\Omega_{a,b}}\left| \frac{v_1}{f}(\partial_{x_1}v_2-\partial_{x_2}v_1)\right|\,dx.
\end{aligned}
\end{equation}
Note that on the upper boundary $S_{2;a,b}=\{\Bx:~a<x_1<b,~x_2=f_2(x_1)\}$, the impermeability condition $\Bv\cdot\Bn=0$ can be written as
\begin{equation*}
v_2(x_1,f_2(x_1))-f_2'(x_1)v_1(x_1,f_2(x_1))=0.
\end{equation*}
Then one uses Lemmas \ref{lemmaA1} and \ref{lemma1} to obtain
\begin{equation}\label{2-7}
\begin{aligned}
\left|\int_{\Omega_{a,b}}\Bv\cdot\nabla \Bv\cdot \Bg\,dx\right|
\leq&C\varepsilon \Phi \|\nabla \Bv\|_{L^2(\Omega_{a,b})} \|\partial_{x_2}(v_2-f_2'v_1)\|_{L^2(\Omega_{a,b})}\\
&+C\varepsilon \Phi \|\nabla \Bv\|_{L^2(\Omega_{a,b})} \left\|\frac{v_1}{f}\right\|_{L^2(\Omega_{a,b})}^2\\
\leq&C\varepsilon \Phi\|\nabla\Bv\|_{L^2(\Omega_{a,b})}^2.
\end{aligned}
\end{equation}
Choosing sufficiently small $\varepsilon$ such that $C\varepsilon \Phi <\frac 12 \mfc$ gives
\begin{equation}\label{2-8}
	\begin{aligned}
B[\Bv,\Bv] =&\int_{\Omega_{a,b}}2|\BD(\Bv)|^2+(\Bv\cdot \nabla \Bg +\Bg\cdot \nabla \Bv)\cdot \Bv\,dx+2\alpha \int_{\partial\Omega_{a,b}\cap \partial\Omega}|\Bv|^2\,ds \\
\geq& \frac \mfc2 \|\nabla\Bv\|_{L^2(\Omega_{a,b})}^2+\alpha\|\Bv\|_{L^2(\partial\Omega\cap \partial\Omega_{a,b})}^2.
\end{aligned}
\end{equation}
Therefore, by Lemma \ref{lemmaA1}, one has
\begin{equation}\label{2-9}
B[\Bv,\Bv]\ge \frac{c}{2(1+M_1(\Omega_{a,b})^2)}\|\Bv\|_{H^1(\Omega_{a,b})}^2.
\end{equation}

{\em Step 2. Existence of weak solution. } 
For any $\Bp\in \mcH_\sigma(\Omega_{a,b})$, one uses H\"{o}lder inequality and Lemma \ref{lemmaA2} to obtain
\begin{equation}\label{2-10}
	\left|\int_{\Omega_{a,b}}\Bh\cdot \Bp\,dx\right|\leq \|\Bh\|_{L^\frac43(\Omega_{a,b})}\|\Bp\|_{L^4(\Omega_{a,b})}\leq C\|\Bh\|_{L^\frac43(\Omega_{a,b})}\|\nabla \Bp\|_{L^2(\Omega_{a,b})}.
\end{equation}
Then the Lax-Milgram theorem, together with \eqref{2-1-1} and \eqref{2-9}-\eqref{2-10}, shows that there exists a unique $\Bv\in \mcH_\sigma(\Omega_{a,b})$ such that for any $\Bp\in \mcH_\sigma(\Omega_{a,b})$, it holds that 
\begin{equation*}
	B[\Bv,\Bp]=\int_{\Omega_{a,b}}\Bh\cdot \Bp\,dx.
\end{equation*} 
This finishes the proof of the lemma.
\end{proof}

Note that $\Bg\in C^2(\bar{\Omega})$. Hence one has   $\Delta \Bg-\Bg\cdot \nabla \Bg \in L^\frac43(\Omega_{a,b})$. Therefore, the existence of solutions to the linearized problem \eqref{laNS} is a consequence of Lemma \ref{lemma5}.
\begin{cor} For any $a<b$, the linearized problem \eqref{laNS} admits a unique solution $\Bv\in \mcH_\sigma(\Omega_{a,b})$. 
\end{cor}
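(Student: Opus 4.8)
The plan is to obtain the corollary as an immediate application of Lemma \ref{lemma5}, since the statement has in essence already been reduced to it. The only point that needs to be recorded is that the inhomogeneous term produced by the flux carrier, namely $\Bh := \Delta\Bg-\Bg\cdot\nabla\Bg$, is an admissible right-hand side for Lemma \ref{lemma5}, i.e. $\Bh\in L^{4/3}(\Omega_{a,b})$. First I would invoke the construction of $\Bg$ in \eqref{defg}: since $\Bg=(\partial_{x_2}G,-\partial_{x_1}G)$ is built from the smooth cutoff $\mu$ and the smooth functions $f_1,f_2,\bar f$, and moreover $\Bg$ vanishes in a neighborhood of $\partial\Omega$ and for $x_2\le\bar f(x_1)$, the field $\Bg$ is smooth up to $\overline{\Omega_{a,b}}$; in particular $\Bg\in C^2(\overline{\Omega_{a,b}})$. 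As $\Omega_{a,b}$ is bounded, both $\Delta\Bg$ and $\Bg\cdot\nabla\Bg$ are continuous on the compact set $\overline{\Omega_{a,b}}$, hence bounded, so $\Bh\in L^\infty(\Omega_{a,b})\hookrightarrow L^{4/3}(\Omega_{a,b})$.

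Next I would apply Lemma \ref{lemma5} with this particular choice $\Bh=\Delta\Bg-\Bg\cdot\nabla\Bg$: it produces a \emph{unique} $\Bv\in\mcH_\sigma(\Omega_{a,b})$ such that \eqref{2-12} holds for every test field $\Bp\in\mcH_\sigma(\Omega_{a,b})$. But with this $\Bh$ the identity \eqref{2-12} is exactly the weak formulation \eqref{2-2} of the linearized problem \eqref{laNS}. Hence $\Bv$ is, by definition, the unique weak solution of \eqref{laNS}, which is precisely the claim of the corollary; uniqueness is inherited verbatim from Lemma \ref{lemma5}.

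There is no genuine obstacle at this stage: all the analytic difficulty has already been expended in Lemma \ref{lemma5}, whose proof via Lax--Milgram hinged on the coercivity estimate \eqref{2-9}, which in turn used the smallness of $\varepsilon$ relative to $\Phi$ through \eqref{2-7}. The only thing one must be slightly attentive to is the membership $\Bh\in L^{4/3}(\Omega_{a,b})$, and that follows at once from the regularity and near-boundary vanishing of the flux carrier $\Bg$ noted above.
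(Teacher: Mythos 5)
Your argument is correct and coincides with the paper's own reasoning: the paper likewise observes that $\Bg\in C^2(\overline{\Omega})$ so that $\Delta\Bg-\Bg\cdot\nabla\Bg\in L^{4/3}(\Omega_{a,b})$, and then obtains the corollary by applying Lemma \ref{lemma5} with this right-hand side, the weak formulation \eqref{2-12} being exactly \eqref{2-2}. No issues.
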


Now we are ready to prove  the existence of solutions for the approximate problem \eqref{aNS}.
\begin{pro}\label{appro-existence}
For any $a<b$, the problem \eqref{aNS} has a weak solution $\Bv\in \mcH_\sigma(\Omega_{a,b})$ satisfying
\begin{equation}\label{2-10-1}
\|\nabla \Bv\|_{L^2(\Omega_{a,b})}^2+\alpha\|\Bv\|_{L^2(\partial\Omega_{a,b}\cap\partial\Omega)}^2\leq C_0\int_{\Omega_{a,b}} |\nabla \Bg|^2+|\Bg|^4\,dx,
\end{equation}
where the constant $C_0$ is independent of $a$ and $b$.
\end{pro}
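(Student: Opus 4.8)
The plan is to solve \eqref{aNS} by the Leray--Schauder fixed point theorem \cite[Theorem 11.3]{GT}, treating the quadratic term $\Bv\cdot\nabla\Bv$ as an additional forcing and inverting the linear operator supplied by Lemma \ref{lemma5}. Concretely, I would define $\mathcal{T}\colon\mcH_\sigma(\Omega_{a,b})\to\mcH_\sigma(\Omega_{a,b})$ by letting $\mathcal{T}(\Bw)$ be the unique solution $\Bv$ of \eqref{2-12} given by Lemma \ref{lemma5} with right-hand side $\Bh=\Delta\Bg-\Bg\cdot\nabla\Bg-\Bw\cdot\nabla\Bw$. This is admissible because $\Bg\in C^2(\overline\Omega)$ makes $\Delta\Bg-\Bg\cdot\nabla\Bg$ bounded on $\Omega_{a,b}$, while $\Bw$ has zero flux (being divergence free, vanishing on $\Sigma(a)$, and satisfying $\Bw\cdot\Bn=0$ on the lateral boundary), so Lemma \ref{lemmaA2} gives $\|\Bw\cdot\nabla\Bw\|_{L^{4/3}(\Omega_{a,b})}\le\|\Bw\|_{L^4(\Omega_{a,b})}\|\nabla\Bw\|_{L^2(\Omega_{a,b})}<\infty$. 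By construction a fixed point of $\mathcal{T}$ is precisely a weak solution of \eqref{aNS}.

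Next I would check that $\mathcal{T}$ is continuous and compact. If $\Bw_n\rightharpoonup\Bw$ in $\mcH_\sigma(\Omega_{a,b})$, then by the Rellich--Kondrachov theorem $\Bw_n\to\Bw$ strongly in $L^4(\Omega_{a,b})$, hence $\Bw_n\otimes\Bw_n\to\Bw\otimes\Bw$ in $L^2(\Omega_{a,b})$. Since for any $\Bp\in\mcH_\sigma(\Omega_{a,b})$ one has $\int_{\Omega_{a,b}}\Bw_n\cdot\nabla\Bw_n\cdot\Bp\,dx=-\int_{\Omega_{a,b}}(\Bw_n\otimes\Bw_n):\nabla\Bp\,dx$ (the boundary integrals vanish because $\Bw_n\cdot\Bn=0$ on $\partial\Omega_{a,b}\cap\partial\Omega$ and $\Bp=0$ on $\Sigma(a)\cup\Sigma(b)$), the associated forcing functionals converge in the dual of $\mcH_\sigma(\Omega_{a,b})$; together with the boundedness of the linear solution operator (which follows from the coercivity \eqref{2-9}) this yields $\mathcal{T}(\Bw_n)\to\mathcal{T}(\Bw)$ strongly in $\mcH_\sigma(\Omega_{a,b})$. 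Hence $\mathcal{T}$ maps weakly convergent sequences to strongly convergent ones, so it is continuous and compact.

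It then remains to bound uniformly the set of $\Bv\in\mcH_\sigma(\Omega_{a,b})$ with $\Bv=\sigma\mathcal{T}(\Bv)$ for some $\sigma\in[0,1]$. Such a $\Bv$ satisfies $B[\Bv,\Bp]=\sigma\int_{\Omega_{a,b}}(\Delta\Bg-\Bg\cdot\nabla\Bg-\Bv\cdot\nabla\Bv)\cdot\Bp\,dx$ for all $\Bp\in\mcH_\sigma(\Omega_{a,b})$; taking $\Bp=\Bv$ and using $\int_{\Omega_{a,b}}\Bv\cdot\nabla\Bv\cdot\Bv\,dx=0$ reduces the right-hand side to $\sigma\int_{\Omega_{a,b}}(\Delta\Bg-\Bg\cdot\nabla\Bg)\cdot\Bv\,dx$. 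The crucial step is to estimate this not by duality but by integrating by parts once more: since $\Bg$ vanishes near $\partial\Omega$ (see \eqref{1-7}--\eqref{1-10}) and $\Bv=0$ on $\Sigma(a)\cup\Sigma(b)$, all boundary terms drop and
\[
\sigma\int_{\Omega_{a,b}}(\Delta\Bg-\Bg\cdot\nabla\Bg)\cdot\Bv\,dx=-\sigma\int_{\Omega_{a,b}}(\nabla\Bg-\Bg\otimes\Bg):\nabla\Bv\,dx\le\left(\|\nabla\Bg\|_{L^2(\Omega_{a,b})}+\|\Bg\|_{L^4(\Omega_{a,b})}^2\right)\|\nabla\Bv\|_{L^2(\Omega_{a,b})}.
\]
On the left, \eqref{2-8} gives $B[\Bv,\Bv]\ge\frac{\mfc}{2}\|\nabla\Bv\|_{L^2(\Omega_{a,b})}^2+\alpha\|\Bv\|_{L^2(\partial\Omega_{a,b}\cap\partial\Omega)}^2$, so absorbing the factor $\|\nabla\Bv\|_{L^2(\Omega_{a,b})}$ by Young's inequality yields \eqref{2-10-1} with $C_0$ depending only on $\mfc$. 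This bound is independent of $\sigma$, $a$, and $b$, so the Leray--Schauder theorem produces a fixed point of $\mathcal{T}$, and the case $\sigma=1$ of the same estimate is \eqref{2-10-1}.

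The Leray--Schauder machinery and the continuity/compactness of $\mathcal{T}$ are routine; the one point requiring care is that $C_0$ must not depend on $a,b$. This is precisely why the forcing term is estimated by integration by parts rather than through $\|\Bg\cdot\nabla\Bg\|_{L^{4/3}}\|\Bv\|_{L^4}$ (whose constant in Lemma \ref{lemmaA2} is domain dependent), and it also relies on the fact, established earlier, that both the Korn constant $\mfc$ of Lemma \ref{lemmaA3} and the smallness threshold for $\varepsilon$ entering \eqref{2-7}--\eqref{2-8} are uniform in the truncation parameters.
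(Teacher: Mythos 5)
Your proposal is correct and follows essentially the same route as the paper: the linear solvability of Lemma \ref{lemma5} defines the fixed-point map, Leray--Schauder is applied after verifying continuity and compactness via the embedding $H^1(\Omega_{a,b})\hookrightarrow L^4(\Omega_{a,b})$, and the uniform bound \eqref{2-10-1} is obtained exactly as in the paper by testing with $\Bv$, using the coercivity \eqref{2-8} (with $\varepsilon$ small, uniformly in $a,b$), and integrating the forcing term by parts so that only $\bigl(\int_{\Omega_{a,b}}|\nabla\Bg|^2+|\Bg|^4\,dx\bigr)^{1/2}\|\nabla\Bv\|_{L^2(\Omega_{a,b})}$ appears, making $C_0$ depend only on $\mfc$.
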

\begin{proof}
Lemma \ref{lemma5} defines a map $\mathcal{T}$ which maps $\Bh\in L^\frac43(\Omega)$ to $\Bv\in \mcH_\sigma(\Omega_{a,b})$. For any $\Bw\in \mcH_\sigma(\Omega_{a,b})$, using H\"{o}lder inequality and Lemma \ref{lemmaA2} gives   
\begin{equation*}
\|\Bw\cdot\nabla \Bw\|_{L^\frac43}\leq \|\Bw\|_{L^4(\Omega)}\|\nabla\Bw\|_{L^2(\Omega)}\leq C\|\nabla\Bw\|_{L^2(\Omega_{a,b})}^2.
\end{equation*}
Hence, $\Bh=\Delta \Bg-\Bg\cdot\nabla \Bg-\Bw\cdot\nabla\Bw \in L^\frac43(\Omega_{a,b})$ and one could  define the map 
	\begin{equation*}
		K(\Bw):=\mathcal{T}(\Delta \Bg-\Bg\cdot\nabla \Bg-\Bw\cdot\nabla\Bw)
	\end{equation*}
	from $\mcH_\sigma(\Omega_{a,b})$ to $\mcH_\sigma(\Omega_{a,b})$. Solving the problem \eqref{aNS} is transformed to finding a fixed point for
\begin{equation*}
K(\Bv)=\Bv.
\end{equation*}

In order to apply Leray-Schauder fixed point theorem, we show that $K:\, \mcH_\sigma(\Omega_{a,b})\to \mcH_\sigma(\Omega_{a,b})$ is continuous and compact. First, for any $\Bv^1,\Bv^2\in \mcH_\sigma(\Omega_{a,b})$, integration by parts yields
\begin{equation*}
\begin{aligned}
&\left|\int_{\Omega_{a,b}}(\Bv^1\cdot\nabla \Bv^1-\Bv^2\cdot\nabla \Bv^2)\cdot \Bp\,dx\right|\\
=&\left|\int_{\Omega_{a,b}}\Bv^1\cdot\nabla \Bp\cdot \Bv^1-\Bv^2\cdot\nabla\Bp\cdot \Bv^2\,dx\right|\\
=&\left|\int_{\Omega_{a,b}}\Bv^1\cdot\nabla \Bp\cdot (\Bv^1-\Bv^2)+(\Bv^2-\Bv^1)\cdot\nabla \Bp\cdot \Bv^2\,dx\right|\\
\leq& C(\|\Bv^1\|_{L^4(\Omega_{a,b})}+\|\Bv^2\|_{L^4(\Omega_{a,b})})\|\Bv^1-\Bv^2\|_{L^4(\Omega_{a,b})}\|\Bp\|_{H^1(\Omega_{a,b})}.
\end{aligned}
\end{equation*}
Hence it holds that
\begin{equation*}
\begin{aligned}
\|K(\Bv^1)-K(\Bv^2)\|_{H^1(\Omega_{a,b})}\leq&C\|\mathcal{T}(\Bv^1\cdot\nabla \Bv^1-\Bv^2\cdot\nabla \Bv^2)\|_{H^1(\Omega_{a,b})}\\
\leq&C(\|\Bv^1\|_{L^4(\Omega_{a,b})}+\|\Bv^2\|_{L^4(\Omega_{a,b})})\|\Bv^1-\Bv^2\|_{L^4(\Omega_{a,b})}.
\end{aligned}
\end{equation*}
This implies that $K$ is a continuous map from $\mcH_{\sigma}(\Omega_{a,b})$ into itself. Moreover, the compactness of $K$ follows from the compactness of the Sobolev embedding $H^1(\Omega_{a,b}) \hookrightarrow L^4(\Omega_{a,b})$.

Finally, if $\Bv\in \mcH_\sigma(\Omega_{a,b})$ satisfies $\Bv=\sigma K(\Bv)$ with $\sigma\in[0,1]$, then for any $\Bp\in \mcH_\sigma(\Omega_{a,b})$,
\begin{equation*}
\begin{aligned}
&\int_{\Omega_{a,b}}2\BD(\Bv):\BD(\Bp)+(\Bv\cdot \nabla \Bg +\Bg\cdot \nabla \Bv)\cdot \Bp\,dx+2\alpha \int_{\partial\Omega_{a,b}\cap \partial\Omega}\Bv\cdot \Bp\,ds \\
=&\sigma \int_{\Omega_{a,b}}(\Delta \Bg-\Bg\cdot \nabla \Bg-\Bv\cdot \nabla \Bv)\cdot \Bp\,dx.
\end{aligned}
\end{equation*}
Taking $\Bp=\Bv$, it holds that 
\begin{equation}\label{2-14}
\begin{aligned}
&\int_{\Omega_{a,b}}2|\BD(\Bv)|^2+(\Bv\cdot \nabla \Bg +\Bg\cdot \nabla \Bv)\cdot \Bv\,dx+2\alpha \int_{\partial\Omega_{a,b}\cap \partial\Omega}|\Bv|^2\,ds \\
=&\sigma \int_{\Omega_{a,b}}(\Delta \Bg-\Bg\cdot \nabla \Bg-\Bv\cdot \nabla \Bv)\cdot \Bv\,dx.
\end{aligned}
\end{equation}
Noting that $\Bg=0$ near the boundary $\partial \Omega$, $\Bv\cdot \Bn=0$ on $\partial\Omega\cap \partial\Omega_{a,b}$, and $\Bv=0$ on $\Sigma(a)\cup\Sigma(b)$, one uses integration by parts to obtain 
\begin{equation}\label{2-13}
\begin{aligned}
&\left|\int_{\Omega_{a,b}}(\Delta \Bg-\Bg\cdot \nabla \Bg-\Bv\cdot \nabla \Bv)\cdot \Bv\,dx\right|\\
=&\left|\int_{\Omega_{a,b}}-\nabla\Bg:\nabla \Bv+\Bg\cdot\nabla \Bv\cdot \Bg\,dx\right|\\
\leq& \left(\int_{\Omega_{a, b}} |\nabla \Bg|^2+|\Bg|^4\,dx\right)^\frac12\|\nabla \Bv\|_{L^2(\Omega_{a,b})}.
\end{aligned}
\end{equation}
This, together with \eqref{2-9} and \eqref{2-14}, gives
\begin{equation*}
	\|\Bv\|_{H^1(\Omega_{a,b})}^2\leq C\int_{\Omega_{a,b}}|\nabla \Bg|^2+|\Bg|^4\,dx.
\end{equation*}
Then Leray-Schauder fixed point theorem shows that there exists a solution $\Bv \in \mcH_\sigma(\Omega_{a,b})$ of the problem $\Bv=K(\Bv)$.

Moreover, combining \eqref{2-8} and \eqref{2-14}-\eqref{2-13} yields 
\begin{equation*}
\begin{aligned}
\frac{\mfc}{2}\|\nabla\Bv\|_{L^2(\Omega_{a,b})}^2+\alpha\|\Bv\|_{L^2(\partial\Omega_{a,b}\cap\partial\Omega)}^2
\leq& 2 \left(\int_{\Omega_{a, b}} |\nabla \Bg|^2+|\Bg|^4\,dx\right)^\frac12\|\nabla\Bv\|_{L^2(\Omega_{a,b})}\\
\leq & \frac{4}{\mfc}\int_{\Omega_{a, b}} |\nabla \Bg|^2+|\Bg|^4\,dx+ \frac{\mfc}{4}\|\nabla\Bv\|_{L^2(\Omega_{a,b})}^2.
\end{aligned}
\end{equation*}
Hence one has \eqref{2-10-1} and the proof of the proposition is completed.
\end{proof}

As long as the existence of weak solution $\Bv$ for the problem \eqref{2-1} is established, one can further obtain the associated pressure for \eqref{aNS} with the aid of the following lemma,  whose proof can be found in \cite[Theorem \uppercase\expandafter{\romannumeral3}.5.3]{Ga}.
\begin{pro}\label{pressure}
The vector field $\Bv\in \mcH_\sigma(\Omega_{a,b})$ is a weak solution of the approximate problem \eqref{aNS} if and only if there exists a function $p\in L^2(\Omega_{a,b})$ such that the identity
\begin{equation}\label{2-11}
\begin{aligned}
\int_{\Omega_{a,b}}2\BD(\Bv):\BD(\Bp)+(\Bv\cdot \nabla \Bg +(\Bg+\Bv)\cdot \nabla \Bv)\cdot \Bp\,dx&\\
-\int_{\Omega_{a,b}}p{\rm div}\Bp\,dx+2\alpha \int_{\partial\Omega_{a,b}\cap \partial\Omega}\Bv\cdot \Bp\,ds &=\int_{\Omega_{a,b}}(\Delta \Bg-\Bg\cdot \nabla \Bg)\cdot \Bp\,dx
\end{aligned}
\end{equation}
holds for any $\Bp\in \mcH(\Omega_{a,b})$ which is defined in Definition \ref{def1}.
\end{pro}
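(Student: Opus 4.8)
The plan is to follow the classical scheme for recovering the pressure from a solenoidal weak formulation, namely de Rham's theorem realized concretely through the Bogovski\u{\i} operator, in the spirit of \cite[Theorem III.5.3]{Ga}. The sufficiency is immediate: if such a $p\in L^2(\Omega_{a,b})$ exists, then for every test field $\Bp\in\mcH_\sigma(\Omega_{a,b})$ one has ${\rm div}\,\Bp=0$, so the term $\int_{\Omega_{a,b}}p\,{\rm div}\,\Bp\,dx$ drops out of \eqref{2-11} and the identity collapses to \eqref{2-1}; hence $\Bv$ is a weak solution of \eqref{aNS}. For the necessity, I would introduce the linear functional $\mathcal{F}$ on $\mcH(\Omega_{a,b})$ given by
\[
\mathcal{F}(\Bp):=\int_{\Omega_{a,b}}2\BD(\Bv):\BD(\Bp)+\big(\Bv\cdot\nabla\Bg+(\Bg+\Bv)\cdot\nabla\Bv\big)\cdot\Bp\,dx+2\alpha\int_{\partial\Omega_{a,b}\cap\partial\Omega}\Bv\cdot\Bp\,ds-\int_{\Omega_{a,b}}(\Delta\Bg-\Bg\cdot\nabla\Bg)\cdot\Bp\,dx,
\]
so that \eqref{2-11} is equivalent to the assertion $\mathcal{F}(\Bp)=\int_{\Omega_{a,b}}p\,{\rm div}\,\Bp\,dx$ for all $\Bp\in\mcH(\Omega_{a,b})$. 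First I would check that $\mathcal{F}$ is bounded on $\mcH(\Omega_{a,b})$: the bilinear term by Cauchy--Schwarz, the three convective terms by H\"older's inequality together with $\Bg\in C^2(\overline{\Omega})$ (so $\Bg,\nabla\Bg$ are bounded on $\Omega_{a,b}$) and the Sobolev embedding $H^1(\Omega_{a,b})\hookrightarrow L^4(\Omega_{a,b})$ on the fixed bounded Lipschitz domain, the boundary term by the trace theorem, and the forcing term by H\"older since $\Delta\Bg-\Bg\cdot\nabla\Bg\in L^\infty(\Omega_{a,b})\subset L^{4/3}(\Omega_{a,b})$. Because $\Bv$ satisfies \eqref{2-1}, one has $\mathcal{F}\equiv 0$ on the subspace $\mcH_\sigma(\Omega_{a,b})\subset\mcH(\Omega_{a,b})$.

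The heart of the matter, and the step I expect to be the main obstacle, is to factor $\mathcal{F}$ through the divergence. Every $\Bp\in\mcH(\Omega_{a,b})$ has vanishing normal trace on the whole of $\partial\Omega_{a,b}$ (it is the $H^1$-limit of fields that vanish identically near $\Sigma(a)\cup\Sigma(b)$ and satisfy $\Bp\cdot\Bn=0$ on $\partial\Omega_{a,b}\cap\partial\Omega$), hence ${\rm div}\,\Bp\in L^2_0(\Omega_{a,b})$; conversely, Lemma \ref{lemmaA5} applied to the bounded Lipschitz domain $\Omega_{a,b}$ provides, for each $w\in L^2_0(\Omega_{a,b})$, some $\Ba\in H^1_0(\Omega_{a,b})\subset\mcH(\Omega_{a,b})$ with ${\rm div}\,\Ba=w$ and $\|\nabla\Ba\|_{L^2(\Omega_{a,b})}\le M_5(\Omega_{a,b})\|w\|_{L^2(\Omega_{a,b})}$. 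Thus ${\rm div}:\mcH(\Omega_{a,b})\to L^2_0(\Omega_{a,b})$ is a bounded surjection whose kernel is exactly $\mcH_\sigma(\Omega_{a,b})$ (the standard identification of the solenoidal subspace of $\mcH(\Omega_{a,b})$). Since any two preimages of a given $w$ differ by an element of $\mcH_\sigma(\Omega_{a,b})$ on which $\mathcal{F}$ vanishes, the rule $\ell(w):=\mathcal{F}(\Ba)$ (with $\Ba$ any preimage of $w$) defines a bounded linear functional on $L^2_0(\Omega_{a,b})$ with $\|\ell\|\le M_5(\Omega_{a,b})\|\mathcal{F}\|$; by the Riesz representation theorem there is $p\in L^2_0(\Omega_{a,b})$ with $\ell(w)=\int_{\Omega_{a,b}}pw\,dx$ for all $w\in L^2_0(\Omega_{a,b})$. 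Finally, for arbitrary $\Bp\in\mcH(\Omega_{a,b})$, taking a preimage $\Ba$ of $w={\rm div}\,\Bp$ gives $\Bp-\Ba\in\mcH_\sigma(\Omega_{a,b})$, whence
\[
\mathcal{F}(\Bp)=\mathcal{F}(\Ba)=\ell({\rm div}\,\Bp)=\int_{\Omega_{a,b}}p\,{\rm div}\,\Bp\,dx,
\]
which is precisely \eqref{2-11}.

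I would close with two remarks. The pressure $p$ is determined only up to an additive constant, since $\int_{\Omega_{a,b}}c\,{\rm div}\,\Bp\,dx=c\int_{\partial\Omega_{a,b}}\Bp\cdot\Bn\,ds=0$ for every $\Bp\in\mcH(\Omega_{a,b})$; normalizing to zero mean, i.e. choosing $p\in L^2_0(\Omega_{a,b})$, makes it unique. All the analytic difficulty is concentrated in the two facts borrowed from the theory of the divergence equation on Lipschitz domains: the surjectivity of ${\rm div}$ from $\mcH(\Omega_{a,b})$ onto $L^2_0(\Omega_{a,b})$ with a bounded right inverse (Lemma \ref{lemmaA5}) and the density statement $\ker\,{\rm div}=\mcH_\sigma(\Omega_{a,b})$ inside $\mcH(\Omega_{a,b})$; everything else is a routine boundedness check and an application of Riesz representation.
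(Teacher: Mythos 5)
Your proposal is correct and follows essentially the same route as the paper, which gives no separate argument but simply invokes \cite[Theorem III.5.3]{Ga}: the standard de Rham-type reconstruction of the pressure via the Bogovski\u{\i} right inverse of the divergence (Lemma \ref{lemmaA5}) and the Riesz representation theorem on $L^2_0(\Omega_{a,b})$. You correctly isolate, and attribute to the standard theory, the one nontrivial ingredient beyond routine boundedness checks, namely that the divergence-free part of $\mcH(\Omega_{a,b})$ coincides with $\mcH_\sigma(\Omega_{a,b})$, which is exactly what the cited theorem of Galdi supplies.
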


\section{Flows in channels with bounded outlets}\label{secexist1}
In this section, we investigate the flows in channels with bounded outlets. First, we establish some uniform estimates for the approximate solutions obtained in Proposition \ref{appro-existence}. Some bounds for the Dirichlet norm of the approximate solutions in $\Omega_T$ have been already obtained in Section \ref{secflux}. A key estimate obtained in this section is the uniform bound for approximate solutions in any fixed subdomain. Our strategy is to establish a differential inequality for the Dirichlet norm of these approximate solutions, which characterizes its growth rate and then gives a uniform estimate.

\begin{lemma}\label{lemma3}
Assume that $\Omega$ is a channel with bounded outlets, i.e., $f$ satisfies \eqref{assumpf}. Let $\Bv^T$ be the solution of the approximate problem  \eqref{aNS} in $\Omega_T$, which is obtained in Proposition \ref{appro-existence}. Then one has
\begin{equation}\label{3-0}
\|\nabla\Bv^T\|_{L^2(\Omega_t)}^2+\alpha\|\Bv^T\|_{L^2(\partial\Omega_t\cap \partial\Omega)}^2\leq C_3+C_4t \ \ \ \text{ for any }1< t\leq T-1,
\end{equation}
where the constants $C_3$ and $C_4$ are independent of $t$ and $T$.
\end{lemma}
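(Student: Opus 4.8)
The plan is to derive an energy identity for $\Bv^T$ localized to the subdomain $\Omega_t$ by using a cut-off function in $x_1$ as the test function, and then convert the resulting inequality into a differential inequality of the form covered by Lemma \ref{lemmaA4}(1). Concretely, let $y(t):=\|\nabla\Bv^T\|_{L^2(\Omega_t)}^2+\alpha\|\Bv^T\|_{L^2(\partial\Omega_t\cap\partial\Omega)}^2$. Choosing in the weak formulation \eqref{2-11} a test function of the form $\Bp=\zeta(x_1)\Bv^T-\Ba$, where $\zeta$ is a smooth cut-off equal to $1$ on $(-t,t)$ and vanishing outside $(-t-1,t+1)$, and $\Ba$ is the Bogovskii corrector (Lemma \ref{lemmaA5}) on the slab $\Omega_{t,t+1}\cup\Omega_{-t-1,-t}$ chosen so that $\zeta\Bv^T-\Ba$ is divergence free and admissible, I can absorb the pressure term and obtain an identity in which the ``good'' term is essentially $\mfc\,y(t)$ (via Lemma \ref{lemmaA3}) and the error terms are supported in the two unit slabs $t<|x_1|<t+1$.

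The key point is then to estimate those error terms by the derivative $y'(t)$. The main contributions are: the term $\int \nabla\zeta\cdot(\cdots)$ coming from $\BD(\zeta\Bv^T)$, the convective term $\int (\Bg+\Bv^T)\cdot\nabla\Bv^T\cdot(\zeta\Bv^T)$, the pressure--Bogovskii term $\int p\,\mathrm{div}\,\Ba$, and the forcing $\int(\Delta\Bg-\Bg\cdot\nabla\Bg)\cdot\Bp$. On the unit slab $\Omega_{t,t+1}$ the width is bounded (by \eqref{bounded} and \eqref{assumpf}), so Lemmas \ref{lemmaA1}, \ref{lemmaA2}, \ref{lemmaA3} and \ref{lemmaA5} all hold with constants uniform in $t$; this is exactly why bounded outlets are assumed here. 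Using $\|\Bv^T\|_{L^4(\Omega_{t,t+1})}\lesssim \|\nabla\Bv^T\|_{L^2(\Omega_{t,t+1})}+\|\Bv^T\|_{L^2(\partial\Omega_{t,t+1}\cap\partial\Omega)}$, the cubic convective term is bounded by $C(y(t+1)-y(t))^{3/2}$, the pressure term similarly (after controlling $\|p\|_{L^2(\Omega_{t,t+1})}$ by $\|\nabla\Bv^T\|_{L^2}+\|\Bv^T\|_{L^4}^2+\|\Bg\|$ via Proposition \ref{pressure} and the Bogovskii bound), and the $\Bg$--terms contribute a fixed constant since $\int_{\R}(|\nabla\Bg|^2+|\Bg|^4)\,dx<\infty$ by Lemma \ref{lemma1} together with $f\geq d$. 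Altogether one reaches an inequality of the shape
\begin{equation*}
y(t)\leq C_1+C_2\bigl(y'(t)\bigr)^{\frac12}+C_3\bigl(y'(t)\bigr)^{\frac32},
\end{equation*}
where $y'(t)=\tfrac{d}{dt}y(t)\geq y(t+1)-y(t)$ (up to harmless modifications replacing $y'$ by the increment; one may first prove it for the increments and pass to the smooth version, or simply work with $\Psi(t,s)=C_2 s^{1/2}+C_3 s^{3/2}$).

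Finally I would feed this into Lemma \ref{lemmaA4}(1): take $\Psi(t,s)=C_1+C_2 s^{1/2}+C_3 s^{3/2}$ rewritten in the required form, and the comparison function $\varphi(t)=C_3+C_4 t$ for suitably large $C_4$, which satisfies $\varphi(t)\geq \delta_1^{-1}\Psi(t,\varphi'(t))=\delta_1^{-1}\Psi(t,C_4)$ once $C_4$ is chosen large. Since $\Bv^T$ vanishes outside $\Omega_T$ and \eqref{2-10-1} controls $y(T-1)$ by the (finite) total mass of $\Bg$, which is $\leq \varphi(T-1)$ provided $C_4$ is large enough, the hypothesis $z(T-1)\leq\varphi(T-1)$ is met; Lemma \ref{lemmaA4} then gives $y(t)\leq C_3+C_4 t$ for all $1<t\leq T-1$, which is \eqref{3-0}. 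The main obstacle I anticipate is bookkeeping the Bogovskii corrector: one must choose the shape of the slab domains so that the constant $M_5$ in Lemma \ref{lemmaA5} is uniform in $t$ (using the star-shaped decomposition estimate \eqref{A5-2}), and verify that $\zeta\Bv^T-\Ba$ is an admissible test function in $\mcH_\sigma(\Omega_T)$, i.e. divergence free and vanishing where required; handling the precise slab geometry so that all constants are $t$-independent under \eqref{bounded} is the delicate part.
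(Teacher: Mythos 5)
Your overall scaffolding (cut-off in $x_1$, Bogovskii correction in unit slabs with $t$-uniform constant, reduction to a differential inequality $y\leq \Psi(y')+\varphi$ and comparison via Lemma \ref{lemmaA4} with $\varphi(t)=C_3+C_4t$, endpoint from \eqref{2-10-1}) matches the paper's strategy. But there is a genuine gap at the heart of the estimate: your list of error terms omits $\int_{\Omega_T}\Bv\cdot\nabla\Bg\cdot(\zeta\Bv)\,dx$. After integration by parts this produces, besides harmless slab terms, the \emph{bulk} term $\int\zeta\,\Bv\cdot\nabla\Bv\cdot\Bg\,dx$, which is quadratic in $\Bv$ over all of $\Omega_t$ and carries a factor of order $\Phi$; it cannot be bounded by $y'(t)$ (it is not supported in the slabs), and a naive Cauchy--Schwarz/Poincar\'e absorption into $\mfc\|\zeta^{1/2}\nabla\Bv\|_{L^2}^2$ fails because the flux $\Phi$ is arbitrary here. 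The paper's proof (estimate \eqref{3-5}) handles exactly this term using the special structure \eqref{1-11} of the flux carrier, the fact that $v_2-f_2'v_1$ vanishes on the upper boundary so that the Hardy-type inequality of Lemma \ref{lemma1} applies, and the zero-flux Poincar\'e inequality \eqref{A1-0} for $v_1/f$; this yields an absorption constant $C\varepsilon\Phi$, which is made smaller than $\mfc/2$ by choosing $\varepsilon$ small \emph{after} $\Phi$ is given. Without this mechanism your differential inequality is not established for large flux, and the lemma is claimed for all $\Phi$.

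Two further points. First, your claim that the forcing contributes ``a fixed constant since $\int_{\R}(|\nabla\Bg|^2+|\Bg|^4)\,dx<\infty$'' is false: by Lemma \ref{lemma1} this integral over $\Omega_t$ is comparable to $\int_{-t}^{t}f^{-3}(x_1)\,dx_1$, which for $d\leq f\leq\overline d$ grows linearly in $t$; indeed this term is precisely the source of the $C_4t$ in \eqref{3-0} (the structure of your argument survives once you replace the constant by $C(1+t)$, as the paper does, but the statement as written is wrong). Second, working with the sharp quantity $y(t)=\|\nabla\Bv^T\|_{L^2(\Omega_t)}^2+\alpha\|\Bv^T\|_{L^2(\partial\Omega_t\cap\partial\Omega)}^2$ and asserting $y'(t)\geq y(t+1)-y(t)$ is not legitimate; the paper instead takes $y(t)$ to be the cut-off--weighted energy, whose $t$-derivative is exactly the slab integral, so that the hypotheses of Lemma \ref{lemmaA4} are met without any ``harmless modification'' argument. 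You should adopt that smoothed definition (or prove a discrete analogue of Lemma \ref{lemmaA4}) to make this step rigorous.
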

\begin{proof} For convenience, we omit the superscript $T$ throughout the proof and divide the proof into three steps.

{\em Step 1. Truncating function.}   First, let $\zeta(\Bx,t)$ be a smooth function for $\Bx\in \Omega$ satisfying
\begin{equation*}
\zeta(\Bx,t)=\left\{
\begin{aligned}
&1,~~~~~~~~~~~~~~ &&\text{ if }x_1\in (-t+1,t-1),\\
&0,~~~~~~~~~~~~~~ &&\text{ if }x_1\in (-\infty,-t)\cup(t,\infty),\\
&t-x_1,~~~~~~&&\text{ if }x_1\in [t-1,t],\\
&t+x_1,~~~~~~&&\text{ if }x_1\in [-t,-t+1].
\end{aligned}\right.
\end{equation*}
Clearly, $\zeta$ depends only on $t,x_1$ and satisfies
\[
|\partial_{x_1}\zeta|=|\partial_t\zeta| =1\quad \text{in}\,\, E=E^+\cup E^-,
\] where
\begin{equation}\label{defE}
 E^-=\{\Bx\in\Omega:x_1\in (-t,-t+1)\}\text{ and } E^+=\{\Bx\in\Omega:x_1\in (t-1,t)\}.
\end{equation}
Now for any  $1\leq t\leq T$, taking the test function $\Bp=\zeta\Bv$ in \eqref{2-11} yields
\begin{equation}\label{3-1}
\begin{aligned}
\int_{\Omega_T}2\BD(\Bv):\BD(\zeta\Bv)+(\Bv\cdot \nabla \Bg +(\Bg+\Bv)\cdot\nabla \Bv)\cdot (\zeta\Bv)\,dx&\\
-\int_{\Omega_T}p{\rm div}\,(\zeta \Bv)\,dx+2\alpha \int_{\partial\Omega_T\cap \partial\Omega}\zeta
\Bv^2\,ds =&\int_{\Omega_T}(\Delta \Bg-\Bg\cdot \nabla \Bg)\cdot (\zeta\Bv)\,dx.
\end{aligned}
\end{equation}
Similar to the proof of Lemma \ref{lemmaA3}, for any $\Bv\in \mathcal{C}_\sigma(\Omega_{T})$, one has
\begin{equation}\label{3-3-1}
\begin{aligned}
&\int_{\Omega_T }2 \BD(\Bv):\BD(\zeta \Bv)\,dx-\int_{\partial\Omega_T\cap \partial \Omega } 2\zeta \Bn\cdot\BD(\Bv)\cdot\Bv  \,ds = \int_{\Omega_T} -2{\rm div}\BD(\Bv)\cdot(\zeta \Bv)\,dx\\
= &\int_{\Omega_T }-\Delta \Bv\cdot(\zeta \Bv)\,dx=\int_{\Omega_T }\zeta |\nabla\Bv|^2+\partial_{x_1}\zeta \partial_{x_1}\Bv\cdot \Bv\,dx-\int_{\partial\Omega_T \cap \partial\Omega}\zeta \Bn\cdot \nabla \Bv\cdot \Bv \, ds.
\end{aligned}
\end{equation}
Therefore, it holds that
	\begin{equation*}
	\begin{aligned}
		\int_{\Omega_T }\zeta |\nabla\Bv|^2\,dx=&\int_{\Omega_T }2\BD(\Bv):\BD(\zeta \Bv)-\partial_{x_1}\zeta \partial_{x_1}\Bv\cdot \Bv\,dx\\
		&-  \int_{\partial\Omega_T\cap \partial \Omega }\zeta  [2\Bn \cdot \BD(\Bv)  \cdot \Bv  -  \Bn\cdot \nabla \Bv\cdot \Bv]\,ds.
	\end{aligned}
	\end{equation*}
This, together with \eqref{A3-3.5} and \eqref{A3-4},  yields
	\begin{equation*}
	\int_{\Omega_T}\zeta |\nabla\Bv|^2\,dx\leq \int_{\Omega_T }2\BD(\Bv):\BD(\zeta \Bv)\,dx+\|\partial_\tau \Bn\|_{L^\infty(\partial\Omega)}\int_{\partial\Omega_T\cap \partial\Omega}\zeta |\Bv|^2\,ds-\int_{E}\partial_{x_1}\zeta \partial_{x_1}\Bv\cdot \Bv\,dx.
	\end{equation*}
	Hence one has
\begin{equation}\label{3-3}
\begin{aligned}
 &\mfc\int_{\Omega_T}\zeta|\nabla\Bv|^2\,dx
\leq  \int_{\Omega_T }2\BD(\Bv):\BD(\zeta \Bv)\,dx + \alpha\int_{\partial\Omega_T \cap \partial \Omega}\zeta|\Bv|^2\,ds+ \left|\int_{ E}\partial_{x_1}\zeta \partial_{x_1}\Bv\cdot \Bv\,dx\right|
\end{aligned}
\end{equation}
with $\mfc$ defined in \eqref{defmfc}.
Since $\mathcal{C}_\sigma(\Omega_T)$ is dense in $H_\sigma(\Omega_T)$, the estimate \eqref{3-3} also holds for any $\Bv\in \mcH_\sigma(\Omega_T)$.

Next, using integration by parts gives
\begin{equation}\label{3-4-1}
-\int_{\Omega_T}\Bv\cdot\nabla \Bg\cdot (\zeta \Bv)\,dx
=\int_{\Omega_T}\zeta\Bv\cdot\nabla \Bv\cdot \Bg\,dx+\int_{ E}(\Bv\cdot \Bg)v_1\partial_{x_1}\zeta\,dx.
\end{equation}
Similar to \eqref{2-6-1}-\eqref{2-7}, one uses integration by parts and \eqref{1-11} to obtain
\begin{equation*}
\begin{aligned}
&\int_{\Omega_T}\zeta(v_1\partial_{x_1}+v_2\partial_{x_2})v_1g_1\,dx\\
=&\int_{\Omega_T}\zeta(v_1\partial_{x_1}v_1 g_1+v_1\partial_{x_2}v_1 g_2)\,dx+\int_{\Omega_T}\zeta(v_2\partial_{x_2}v_1g_1-v_1\partial_{x_2}v_1 g_2)\,dx\\
=&\int_{\Omega_T}\frac12 \zeta(\partial_{x_1}(v_1^2) g_1+\partial_{x_2}(v_1^2) g_2)\,dx+\int_{\Omega_T}\zeta(v_2g_1-v_1g_2)\partial_{x_2}v_1\,dx\\
=&-\int_{\Omega_T}\frac12 \zeta v_1^2 (\partial_{x_1}g_1+\partial_{x_2}g_2)\,dx-\int_{\Omega_T}\frac12 \partial_{x_1}\zeta v_1^2 g_1\,dx\\
&+\int_{\Omega_T}\zeta\left(v_2g_1-v_1g_1 f_2'-v_1 \frac{\varepsilon \Phi \mu'(\cdot) f'(x_1) }{2(x_2-\bar{f}(x_1))} \right)\partial_{x_2}v_1\,dx\\
=&-\int_{ E}\frac12v_1^2 g_1\partial_{x_1}\zeta\,dx+\int_{\Omega_T}\zeta(v_2-f_2'v_1)\partial_{x_2}v_1g_1\,dx- \int_{\Omega_{T}}\frac{\varepsilon \Phi \mu'(\cdot) f'(x_1) }{2(x_2-\bar{f}(x_1))} \zeta v_1 \partial_{x_2}v_1\,dx
\end{aligned}\end{equation*}
and
\begin{equation*}
\begin{aligned}
&\int_{\Omega_T}\zeta(v_1\partial_{x_1}+v_2\partial_{x_2})v_2g_2\,dx\\
=&\int_{\Omega_T}\zeta(v_1\partial_{x_1}v_2 g_2-v_2\partial_{x_1}v_2 g_1)\,dx+\int_{\Omega_T}\zeta(v_2\partial_{x_2}v_2g_2+v_2\partial_{x_1}v_2 g_1)\,dx\\
=&\int_{\Omega_T}\zeta(v_1g_2-v_2g_1)\partial_{x_1}v_2\,dx+\int_{\Omega_T}\frac12 \zeta(\partial_{x_2}(v_2^2) g_2+\partial_{x_1}(v_2^2) g_1)\,dx\\
=&\int_{\Omega_T}\zeta\left(v_1g_1 f_2'+v_1 \frac{\varepsilon \Phi \mu'(\cdot) f'(x_1) }{2(x_2-\bar{f}(x_1))} -v_2g_1\right)\partial_{x_1}v_2\,dx\\
&-\int_{\Omega_T}\frac12 \zeta v_2^2 (\partial_{x_1}g_1+\partial_{x_2}g_2)\,dx-\int_{\Omega_T}\frac12 \partial_{x_1}\zeta v_2^2 g_1\,dx\\
=&\int_{\Omega_T}\zeta(f_2'v_1-v_2)\partial_{x_1}v_2g_1\,dx+\int_{\Omega_{T}}\frac{\varepsilon \Phi \mu'(\cdot) f'(x_1) }{2(x_2-\bar{f}(x_1))} \zeta v_1 \partial_{x_1}v_2\,dx-\int_{ E}\frac12v_2^2 g_1\partial_{x_1}\zeta\,dx.
\end{aligned}
\end{equation*}
Note that on the boundary $S_2=\{(x_1,x_2):x_2=f_2(x_1)\}$, the impermeability condition $\Bv\cdot\Bn=0$ can be written as
\begin{equation*}
v_2(x_1,f_2(x_1))-f_2'(x_1)v_1(x_1,f_2(x_1))=0.
\end{equation*}
It follows from  \eqref{1-7} and Lemmas \ref{lemmaA1} and \ref{lemma1} that one has
\begin{equation}\label{3-5}
\begin{aligned}
&-\int_{\Omega_T}\Bv\cdot\nabla \Bg\cdot (\zeta \Bv)\,dx\\
=&\int_{\Omega_T}\zeta(v_1\partial_{x_1}+v_2\partial_{x_2})v_1g_1+\zeta(v_1\partial_{x_1}+v_2\partial_{x_2})v_2g_2\,dx+ \int_{ E}(\Bv\cdot \Bg)v_1\partial_{x_1}\zeta\,dx\\
=&\int_{\Omega_T}\zeta(v_2-f_2'v_1)(\partial_{x_2}v_1-\partial_{x_1}v_2)g_1\,dx- \int_{\Omega_{T}}\frac{\varepsilon \Phi \mu'(\cdot) f'(x_1) }{2(x_2-\bar{f}(x_1))} \zeta v_1 (\partial_{x_2}v_1-\partial_{x_1}v_2)\,dx\\
&+\int_{ E}(\Bv\cdot \Bg)v_1\partial_{x_1}\zeta-\frac12|\Bv|^2g_1\partial_{x_1}\zeta\,dx\\
\leq &\left(\int_{\Omega_T}\zeta (v_2-f_2'v_1)^2g_1^2\,dx\right)^\frac12\|\zeta^\frac12 \nabla \Bv\|_{L^2(\Omega_T)}+C\varepsilon \Phi \int_{\Omega_{T}} \left| \frac{1}{f} \zeta v_1 (\partial_{x_2}v_1-\partial_{x_1}v_2) \right| \,dx \\
&+\int_{ E}(\Bv\cdot \Bg)v_1\partial_{x_1}\zeta-\frac12|\Bv|^2g_1\partial_{x_1}\zeta\,dx\\
\leq&C\varepsilon \Phi\|\zeta^\frac12\partial_{x_2}(v_2-f_2'v_1)\|_{L^2(\Omega_T)}\|\zeta ^\frac12 \nabla \Bv\|_{L^2(\Omega_T)}+C\varepsilon \Phi  \left\|\zeta^\frac12 \frac{v_1}{f}\right\|_{L^2(\Omega_T)}\|\zeta^\frac12 \nabla \Bv\|_{L^2(\Omega_T)}\\
&+\int_{ E}(\Bv\cdot \Bg)v_1\partial_{x_1}\zeta-\frac12|\Bv|^2g_1\partial_{x_1}\zeta\,dx\\
\leq&\frac{\mfc}{2}\|\zeta ^\frac12 \nabla\Bv\|_{L^2(\Omega_T)}^2+\int_{ E}(\Bv\cdot \Bg)v_1\partial_{x_1}\zeta-\frac12|\Bv|^2g_1\partial_{x_1}\zeta\,dx,
\end{aligned}
\end{equation}
provided that $\varepsilon$ is sufficiently small. By virtue of integration by parts, one has
\begin{equation}\label{3-6}
\begin{aligned}
\int_{\Omega_T}-(\Bg\cdot \nabla \Bv+\Bv\cdot \nabla \Bv )\cdot(\zeta\Bv)\,dx =&\int_{\Omega_T}\frac12|\Bv|^2(\Bg+\Bv)\cdot \nabla \zeta\,dx\\
=&\int_{ E}\frac12|\Bv|^2(g_1+v_1)\partial_{x_1}\zeta\,dx
\end{aligned}
\end{equation}
and
\begin{equation}\label{3-7}
\begin{aligned}
\int_{\Omega_T}(\Delta \Bg-\Bg\cdot \nabla\Bg )\cdot(\zeta\Bv)\,dx = &\int_{\Omega_T}\zeta (-\nabla \Bg:\nabla \Bv+\Bg\cdot\nabla\Bv\cdot \Bg)\,dx\\
&+\int_{ E}[-\partial_{x_1}\Bg\cdot \Bv+ (\Bg\cdot\Bv) g_1]\partial_{x_1}\zeta\,dx.
\end{aligned}
\end{equation}
Combining \eqref{3-1}-\eqref{3-7} gives
\begin{equation}\label{3-8}
\begin{aligned}
&\frac {\mfc}{2} \int_{\Omega_T}\zeta|\nabla\Bv|^2\,dx+ \alpha \int_{\partial \Omega_T \cap \partial \Omega} \zeta |\Bv|^2 \, ds  \\
\leq & \int_{\Omega_T}\zeta (-\nabla \Bg:\nabla \Bv+\Bg\cdot\nabla\Bv\cdot \Bg)\,dx   + \left|\int_{ E}\partial_{x_1}\zeta \partial_{x_1}\Bv\cdot \Bv \, dx \right|\\
&+\int_{ E}\left[ (\Bv\cdot \Bg)v_1+\frac12|\Bv|^2v_1 \right] \partial_{x_1}\zeta + \partial_{x_1}\zeta pv_1 \,dx\\
&+\int_{ E}\left[-\partial_{x_1}\Bg\cdot \Bv+(\Bg\cdot\Bv)g_1 \right] \partial_{x_1}\zeta \,dx .
\end{aligned}
\end{equation}

{\em Step 2. Estimates for the Dirichlet norm.} The major task in this step is to estimate the terms on the right hand side of \eqref{3-8}. First, using Cauchy-Schwarz inequality and Lemma \ref{lemmaA1}, one has
\begin{equation}\label{3-9}
\left|\int_{ E^\pm}\partial_{x_1}\zeta \partial_{x_1}\Bv\cdot \Bv \, dx \right|\leq M_1(E^\pm)\|\nabla\Bv\|_{L^2( E^\pm)}^2.
\end{equation}
Next, it follows from Lemmas \ref{lemmaA1} and \ref{lemmaA2} that
\begin{equation}\label{3-10}
\begin{aligned}
\left|\int_{ E^\pm}\frac12 v_1|\Bv|^2\partial_{x_1}\zeta\,dx\right| \leq & \frac12 \|\Bv\|_{L^2( E^\pm )} \|\Bv\|_{L^4( E^\pm)}^2
\leq\frac12 M_1(E^\pm) M_4^2(E^\pm) \|\nabla \Bv\|_{L^2( E^\pm)}^3.
\end{aligned}
\end{equation}

Noting that $\Bg$ is bounded when $\varepsilon$ is determined, one uses Lemmas \ref{lemmaA1}, \ref{lemma1} and Young's inequality to obtain
\begin{equation}\label{3-11}
\left|\int_{ E^\pm}(\Bv\cdot \Bg)v_1\partial_{x_1} \zeta\,dx\right|\leq C\|\Bg\|_{L^\infty( E^\pm)}\|\Bv\|_{L^2( E^\pm)}^2\leq CM_1^2(E^\pm)\|\nabla\Bv\|_{L^2( E^\pm)}^2,
\end{equation}
\begin{equation}\label{3-12}
\begin{aligned}
 \left|\int_{ E}\left[ -\partial_{x_1}\Bg\cdot \Bv+(\Bg\cdot\Bv) g_1 \right] \partial_{x_1}\zeta\,dx\right|
\leq & C M_1(E^\pm) \left( \int_{ E^\pm} |\nabla \Bg|^2 + |\Bg|^4 \, dx \right)^{\frac12}   \|\nabla\Bv\|_{L^2( E^\pm)}\\
\leq&  C \int_{ E^\pm} |\nabla \Bg|^2 + |\Bg|^4 \, dx  +M_1^2(E^\pm)\|\nabla\Bv\|_{L^2( E^\pm)}^2,
\end{aligned}
\end{equation}
and
\begin{equation}\label{3-13}
\left|\int_{\Omega_T}\zeta (-\nabla \Bg:\nabla \Bv+\Bg\cdot\nabla\Bv\cdot \Bg)\,dx\right|\leq \frac{\mfc}{4}\int_{\Omega_T}\zeta|\nabla \Bv|^2\,dx+C\int_{\Omega_t}|\nabla \Bg|^2+|\Bg|^4\,dx.
\end{equation}

The most troublesome term involves the pressure $p$. Here we adapt a method introduced in \cite{LS}, by making use of the Bogovskii map. For $v_1\in L^2_0(E^\pm)$, it follows from Lemma \ref{lemmaA5} that there exists a vector $\Ba\in H_0^1(E^\pm)$ satisfying
\begin{equation*}
	\operatorname{div} \Ba=v_1\quad \text{in}\,\,  E^\pm
\end{equation*}
 and
 \be \nonumber
 \|\nabla \Ba \|_{L^2( E^\pm)} \leq M_5(E^\pm) \|v_1\|_{L^2( E^\pm)}.
 \ee
 We claim that the corresponding constant $M_5(E^\pm)$ is in dependent of $t$. Indeed, for $ E^+=\{\Bx\in \Omega:t-1<x_1<t\}$, one may write
\begin{equation*}
   E^+=\bigcup_{k=1}^{2N-1} E_k,
\end{equation*}
where
\begin{equation*}
 E_k=\left\{\Bx\in  E:t-1+\frac{k-1}{2N}\leq x_1\leq t-1+\frac{k+1}{2N}\right\}
\end{equation*}
and $N$ is a positive integer to be determined. For $k=1,2,\cdots,2N-1$, let $B_k$ denote the ball with radius $R_0$ centered at 
\begin{equation*}
	(t_k,q_k):=\left(t-1+\frac{k}{2N},\bar{f}(t_k)\right),
\end{equation*}
where $\bar{f}$ is defined in \eqref{deffbar}. By choosing 
\begin{equation*}
	N>\frac{\beta}{d}\ \ \ \text{ and	}\ \ \ R<\min\left\{\frac{1}{2N} ,~\frac{d}{2}  -\frac{\beta }{2N } \right\},
\end{equation*}
one can verify that $\overline{B_k}\subset  E_k$. 
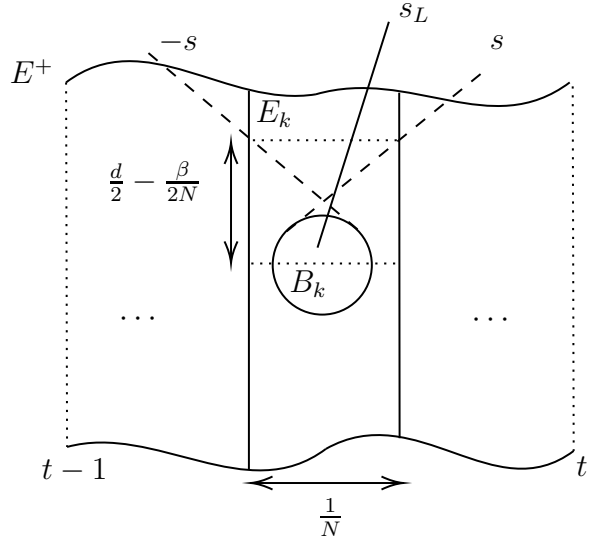
\begin{figure}[h]
\centering

\tikzset{every picture/.style={line width=0.75pt}} 

\begin{tikzpicture}[x=0.75pt,y=0.75pt,yscale=-1,xscale=1]

\draw    (215.8,65.8) .. controls (255.8,35.8) and (296.8,79) .. (341.8,71.8) ;
\draw    (216.29,249.8) .. controls (261.09,237.6) and (306.8,280.8) .. (346.8,250.8) ;
\draw    (341.8,71.8) .. controls (389.8,58) and (429.8,95.8) .. (469.8,65.8) ;
\draw    (346.8,250.8) .. controls (392.8,224.8) and (431.8,281.8) .. (471.8,251.8) ;
\draw  [line width=0.75]  (320,158) .. controls (320,144.19) and (331.19,133) .. (345,133) .. controls (358.81,133) and (370,144.19) .. (370,158) .. controls (370,171.81) and (358.81,183) .. (345,183) .. controls (331.19,183) and (320,171.81) .. (320,158) -- cycle ;
\draw  [dash pattern={on 4.5pt off 4.5pt}]  (327,141) -- (426.8,60) ;
\draw    (342.6,149.2) -- (378.6,35.2) ;
\draw    (308,70) -- (308,261.4) ;
\draw    (383.8,71) -- (384,245.4) ;
\draw  [dash pattern={on 0.84pt off 2.51pt}]  (310,95) -- (384.8,95) ;
\draw  [dash pattern={on 0.84pt off 2.51pt}]  (309.2,157.2) -- (384,157.2) ;
\draw  [dash pattern={on 0.84pt off 2.51pt}]  (215.8,65.8) -- (216.29,249.8) ;
\draw  [dash pattern={on 0.84pt off 2.51pt}]  (471.31,67.8) -- (471.8,251.8) ;
\draw  [dash pattern={on 4.5pt off 4.5pt}]  (362.8,140) -- (253.8,48) ;
\draw    (310.98,268) -- (383.8,268) ;
\draw [shift={(385.8,268)}, rotate = 180] [color={rgb, 255:red, 0; green, 0; blue, 0 }  ][line width=0.75]    (10.93,-3.29) .. controls (6.95,-1.4) and (3.31,-0.3) .. (0,0) .. controls (3.31,0.3) and (6.95,1.4) .. (10.93,3.29)   ;
\draw [shift={(308.98,268)}, rotate = 0] [color={rgb, 255:red, 0; green, 0; blue, 0 }  ][line width=0.75]    (10.93,-3.29) .. controls (6.95,-1.4) and (3.31,-0.3) .. (0,0) .. controls (3.31,0.3) and (6.95,1.4) .. (10.93,3.29)   ;
\draw    (298.98,98) -- (298.98,155) ;
\draw [shift={(298.98,157)}, rotate = 270] [color={rgb, 255:red, 0; green, 0; blue, 0 }  ][line width=0.75]    (10.93,-3.29) .. controls (6.95,-1.4) and (3.31,-0.3) .. (0,0) .. controls (3.31,0.3) and (6.95,1.4) .. (10.93,3.29)   ;
\draw [shift={(298.98,96)}, rotate = 90] [color={rgb, 255:red, 0; green, 0; blue, 0 }  ][line width=0.75]    (10.93,-3.29) .. controls (6.95,-1.4) and (3.31,-0.3) .. (0,0) .. controls (3.31,0.3) and (6.95,1.4) .. (10.93,3.29)   ;

\draw (310,73.4) node [anchor=north west][inner sep=0.75pt]    {$E_{k}$};
\draw (202,254.4) node [anchor=north west][inner sep=0.75pt]    {$t-1$};
\draw (471.8,253.2) node [anchor=north west][inner sep=0.75pt]    {$t$};
\draw (241,180) node [anchor=north west][inner sep=0.75pt]    {$\cdots $};
\draw (186,52.4) node [anchor=north west][inner sep=0.75pt]    {$E^{+}$};
\draw (382,24.4) node [anchor=north west][inner sep=0.75pt]    {$s_{L}$};
\draw (419,180.4) node [anchor=north west][inner sep=0.75pt]    {$\cdots $};
\draw (327,159.4) node [anchor=north west][inner sep=0.75pt]    {$B_{k}$};
\draw (428,40.4) node [anchor=north west][inner sep=0.75pt]    {$s$};
\draw (235,104.4) node [anchor=north west][inner sep=0.75pt]    {$\frac{d}{2} -\frac{\beta }{2N}$};
\draw (261,38.4) node [anchor=north west][inner sep=0.75pt]    {$-s$};
\draw (341,274.4) node [anchor=north west][inner sep=0.75pt]    {$\frac{1}{N}$};

\end{tikzpicture}

\caption{$E_k$ is star-like}
\end{figure}
Then we choose suitable $N$ such that any $ E_k$ is star-like with respect to $B_k$. For any  $\Bx\in B_k$, let $L$ be any ray starting from $\Bx$ and intersecting $\partial E_k$ at some $\tilde{\Bx}\in \partial E_k$. If $\tilde{\Bx}\in \partial E_k\setminus \partial\Omega$, then it is easy to see that $L$ intersects  $\partial E_k$ at and only at $\tilde{\Bx}$. If $\tilde{\Bx}\in \partial E_k\cap \partial\Omega$, then the slope $s_L$ of $L$ satisfies $|s_L|\ge s$ where $s$ is the slope of the segment above the ball $B_k$, which is tangent to the ball $B_k$ and  crosses the point $(t_k+\frac{1}{2N}, q_k+\frac{d}{2}-\frac{\beta}{2N})$. Direct computations show that $R$ and $s$ satisfy the following equality,
\begin{equation*}
  R=\frac{\left| \frac{s}{2N}-\frac{d}{2}+\frac{\beta}{2N }\right|}{\sqrt{1+s^2}}.
\end{equation*}
Noting that as $N\to +\infty$, one has $R\to 0$ and hence $s\to +\infty$. Therefore, we can choose sufficiently large $N$ such that
\begin{equation*}
  s > \beta\ge \|f_i'\|_{L^\infty}.
\end{equation*}
Thus, the ray $L$ has no intersection with $\partial E_k$ except at $\tilde{\Bx}$ and $ E_k$ is star-like with respect to $B_k$. Finally, following the notations in Lemma \ref{lemmaA5}, we denote the diameter of the domain $E^+$ by $R_0$ and set
\begin{equation*}
	\hat{E}_k= \bigcup_{i=k+1}^{2N-1} E_i, ~\tilde{E}_k=E_k\cap \hat{E}_k \ \ \ \text{ for } k=1,\cdots,2N-1.
\end{equation*}
Then direct computations give
\begin{equation*}
	R_0\leq \sqrt{1+(\overline{d}+\beta)^2},\ \ \ |E^+|\leq \overline{d},\ \ \ |\hat{E_k}\setminus E_k|\leq  |E^+|\leq \overline{d}
\end{equation*}
and
\begin{equation*}
|\tilde{E}_k|=|E_k\cap E_{k+1}|\ge \frac{d}{2N}.
\end{equation*}
These, together with \eqref{A5-2} and \eqref{A5-3}, show that the  constant $M_5(E^+)$ is independent of  $t$ since $N$ and $R$ are uniformly determined  by $d,\overline{d}$, and $\beta$.

It follows from  Lemmas \ref{lemmaA1} and \ref{lemmaA5} that one has
\begin{equation}\label{3-13-1}
\begin{aligned}
&\left|\int_{E^\pm}\partial_{x_1}\zeta pv_1\,dx\right|=\left|\int_{E^\pm}pv_1\,dx\right|=\left|\int_{ E^\pm}p{\rm div}\,\Ba\,dx\right| \\
=&\left|\int_{ E^\pm}2\BD(\Bv):\BD(\Ba)+(\Bv\cdot \nabla \Bg +(\Bg+\Bv)\cdot\nabla \Bv-\Delta \Bg+\Bg\cdot \nabla \Bg)\cdot \Ba\,dx\right|\\
=&\left|\int_{ E^\pm}2\BD(\Bv):\BD(\Ba)-\Bv\cdot \nabla \Ba\cdot \Bg - (\Bg+\Bv)\cdot\nabla \Ba\cdot \Bv+\nabla\Bg:\nabla \Ba-\Bg\cdot \nabla \Ba\cdot \Bg\,dx\right|\\
\le &C\|\nabla\Ba\|_{L^2(E^\pm)}\left(\|\nabla \Bv\|_{L^2( E^\pm)}+\|\Bv\|_{L^4( E^\pm)}^2+\|\nabla \Bg\|_{L^2( E^\pm)}+\|\Bg\|_{L^4( E^\pm )}^2\right)\\
\le &CM_5( E^\pm)\|v_1\|_{L^2( E^\pm)}\left(\|\nabla \Bv\|_{L^2( E^\pm)}+\|\Bv\|_{L^4( E^\pm)}^2+\|\nabla \Bg\|_{L^2( E^\pm)}+\|\Bg\|_{L^4( E^\pm)}^2\right)\\
\leq&CM_5( E^\pm)M_1( E^\pm)\|\nabla \Bv\|_{L^2( E^\pm)}\left(\|\nabla \Bv\|_{L^2( E^\pm)}+\|\Bv\|_{L^4( E^\pm)}^2+\|\nabla\Bg\|_{L^2( E^\pm)}\right.\\
&\left.+\|\Bg\|_{L^4( E^\pm)}^2\right),
\end{aligned}
\end{equation}
where the equality \eqref{2-11} has been used by taking $\Bp=\Ba$. Using Cauchy-Schwarz inequality and  Lemma \ref{lemmaA2} gives
\begin{equation}\label{3-14}
\begin{aligned}
	\left|\int_{E^\pm}\partial_{x_1}\zeta pv_1\,dx\right| \leq  &CM_5( E^\pm)M_1( E^\pm)\left(\|\nabla \Bv\|_{L^2( E^\pm)}^2+M_4^2( E^\pm)\|\nabla\Bv\|_{L^2( E^\pm)}^3\right)\\
	&+CM_5( E^\pm)M_1( E^\pm)\int_{ E^\pm }|\nabla \Bg|^2+|\Bg|^4\,dx.
\end{aligned}
\end{equation}

Here the constants $M_1(E^\pm)$ and $M_4(E^\pm)$ appeared in \eqref{3-9}-\eqref{3-14} depend on the domains $E^\pm$. Since the width of the cross-section $\Sigma(x_1)$ is uniformly bounded, it follows from \ref{lemmaA1} and \ref{lemmaA2} that  $M_1( E^\pm)$ and $M_4( E^\pm)$ are also uniform bounded. In summary, we have
\begin{equation}\label{summary1}
	\begin{aligned}
	 &\int_{\Omega_T}\zeta|\nabla\Bv|^2\,dx+ \alpha \int_{\partial \Omega_T \cap \partial \Omega} \zeta |\Bv|^2 \, ds\\
	\leq &C_1\left[\int_{ E}|\nabla\Bv|^2\,dx+\left(\int_{ E}|\nabla\Bv|^2\,dx\right)^\frac32\right]+ C \int_{\Omega_t} |\nabla \Bg|^2+|\Bg|^4\,dx .
\end{aligned}
\end{equation}

{\em Step 3. Growth estimate for the Dirichlet norm.} Let
\begin{equation*}
	y(t)=\int_{\Omega_t}\zeta|\nabla\Bv|^2\,dx+ \alpha \int_{\partial \Omega_t \cap \partial \Omega} \zeta |\Bv|^2 \, ds.
\end{equation*}
The straightforward computations give
\begin{equation*}
	y'(t)=\int_{ E}|\nabla\Bv|^2\,dx+ \alpha \int_{\partial  E \cap \partial \Omega}  |\Bv|^2 \, ds.
\end{equation*}
Then the estimate \eqref{summary1} can be written as
\begin{equation*}
y(t)\leq C_1\left\{y'(t)+[y'(t)]^\frac32\right\}+C_2\int_{\Omega_t} |\nabla \Bg|^2+|\Bg|^4\,dx.
\end{equation*}
Let $\delta_1=\frac12$,
\begin{equation*}
\Psi(\tau)=C_1(\tau+\tau^\frac{3}{2}),\quad \text{and}\quad
\varphi(t)=C_3+C_4t.
\end{equation*}
By Lemma \ref{lemma1}, if $f$ is bounded, then one has
\begin{equation*}
	\int_{\Omega_t} |\nabla \Bg|^2+|\Bg|^4\,dx\leq C(1+t).
\end{equation*}
 Therefore, one could choose $C_3$ and $C_4$ large enough such that
\begin{equation*}
 C_2 \int_{\Omega_t} |\nabla \Bg|^2+|\Bg|^4\,dx\leq \frac12(C_3+C_4t)=(1-\delta_1)\varphi(t)
\end{equation*}
and
\begin{equation*}
\varphi(t)=C_3+C_4t \ge 2C_1(C_4+C_4^\frac32)=\delta_1^{-1}\Psi(\varphi'(t)) \quad \text{for any}\,\, t\geq 1,
\end{equation*}
provided that $C_3$ is large enough.

Finally, according to Proposition \ref{appro-existence} and Lemma \ref{lemma1}, if $C_3$ and $C_4$ are chosen to be large enough, then one has
\begin{equation*}
y(T)\leq \|\nabla\Bv\|_{L^2(\Omega_{T})}^2+\alpha\|\Bv\|_{L^2(\partial\Omega_T\cap \partial\Omega)}^2\leq C_0\int_{\Omega_T} |\nabla \Bg|^2+|\Bg|^4\,dx\leq \varphi(T).
\end{equation*}
Hence we can apply Lemma \ref{lemmaA4} to finish the proof of this lemma.
\end{proof}

  As proved in Lemma \ref{lemma3}, for any $1\leq t\le T-1$,
\begin{equation*}
\|\nabla \Bv^T\|_{L^2(\Omega_{t})}^2+\alpha\|\Bv^T\|_{L^2(\partial\Omega_t\cap \partial\Omega)}^2\leq C_3 +C_4 t.
\end{equation*}
 Since the constants $C_3$ and $C_4$ are independent of $t$ and $T$, one can take the limit $T\to \infty$ and select a subsequence which converges weakly in $H^1_{loc}(\Omega)$ to a solution $\Bv$ of \eqref{NS1}. Moreover, $\Bv$ satisfies the estimate
\begin{equation*}
\|\nabla \Bv\|_{L^2(\Omega_{t})}^2+\alpha\|\Bv\|_{L^2(\partial\Omega_t\cap \partial\Omega)}^2\leq C_3 +C_4 t.
\end{equation*}
With the estimate for $\Bg$ in Lemma \ref{lemma1}, one has the following proposition on the existence of solutions.
\begin{pro}\label{straight-existence}
The problem \eqref{NS} and \eqref{flux constraint}-\eqref{BC} has a solution  $\Bu=\Bv+\Bg\in H_\sigma(\Omega)$ satisfying
\begin{equation}\label{3-27}
\|\nabla \Bu\|_{L^2(\Omega_{t})}^2+\|\Bu\|_{L^2(\partial\Omega_t\cap \partial\Omega)}^2\leq \tilde{C}(1+t),
\end{equation}
where the constant $\tilde{C}$ depends only on $\alpha$, $\Phi$, and $\Omega$.
\end{pro}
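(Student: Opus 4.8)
The plan is to let the truncation parameter tend to infinity in the approximate solutions of Proposition~\ref{appro-existence} and extract a limiting solution by compactness; the statement will then be essentially a soft consequence of the $T$-independent growth estimate of Lemma~\ref{lemma3}.

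First I would take, for each $T>1$, the weak solution $\Bv^T\in\mcH_\sigma(\Omega_T)$ of the approximate problem \eqref{aNS} furnished by Proposition~\ref{appro-existence}, so that $\Bv^T$ satisfies the identity \eqref{2-1} on $\Omega_T$. By Lemma~\ref{lemma3},
\[
\|\nabla\Bv^T\|_{L^2(\Omega_t)}^2+\alpha\|\Bv^T\|_{L^2(\partial\Omega_t\cap\partial\Omega)}^2\le C_3+C_4t,\qquad 1<t\le T-1,
\]
with $C_3$ and $C_4$ independent of $t$ and $T$; these constants depend only on $\alpha$, $\Phi$, and $\Omega$, since the cut-off parameter $\varepsilon$ in the construction of the flux carrier $\Bg$ was fixed once and for all through the smallness requirement $C\varepsilon\Phi<\tfrac12\mfc$. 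In particular, for every fixed $t\ge1$ the family $\{\Bv^T|_{\Omega_t}:T>t+1\}$ is bounded in $H^1(\Omega_t)$.

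Next I would pick a sequence $T_n\to\infty$ and run a diagonal argument: on each $\Omega_t$ the bounded sequence $\Bv^{T_n}$ has a subsequence converging weakly in $H^1(\Omega_t)$, strongly in $L^4(\Omega_t)$ by the compact Sobolev embedding, and strongly in $L^2(\partial\Omega_t\cap\partial\Omega)$ by the compact trace embedding; a diagonal subsequence yields a field $\Bv\in H^1_{loc}(\Omega)$ for which these convergences hold on every $\Omega_t$. The constraints $\div\Bv^{T_n}=0$, $\Bv^{T_n}\cdot\Bn=0$ on $\partial\Omega$, and the zero-flux condition survive the weak limit, so $\Bv\in H_\sigma(\Omega)$. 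Fixing any $\Bp\in\mcH_\sigma(\Omega_S)$ and letting $n\to\infty$ in \eqref{2-1} on $\Omega_{T_n}$ (valid once $T_n>S$, since $\Bp$ is then admissible there), the linear and boundary terms pass to the limit by the weak $H^1$, resp.\ strong $L^2$-trace, convergence, and the convective term $\int\Bv^{T_n}\cdot\nabla\Bv^{T_n}\cdot\Bp\,dx$ passes by the strong $L^4(\Omega_S)$ convergence; hence $\Bv$ is a weak solution of \eqref{NS1}. Weak lower semicontinuity of the two quadratic functionals on each $\Omega_t$ transfers the displayed estimate to $\Bv$.

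Finally I would set $\Bu=\Bv+\Bg$. Since $\Bg$ is smooth, solenoidal, carries flux $\Phi$ and satisfies \eqref{BC}, and $\Bv$ solves \eqref{NS1}, by construction $\Bu\in H_\sigma(\Omega)$ is a weak solution of \eqref{NS}, \eqref{flux constraint}--\eqref{BC}. Because $\Bg$ vanishes near $\partial\Omega$ (cf.\ \eqref{1-7}--\eqref{1-10}), $\|\Bu\|_{L^2(\partial\Omega_t\cap\partial\Omega)}=\|\Bv\|_{L^2(\partial\Omega_t\cap\partial\Omega)}$, while Lemma~\ref{lemma1} gives $\|\nabla\Bg\|_{L^2(\Omega_t)}^2\le C(1+t)$ in the bounded-width case; combining with the estimate for $\Bv$ via the triangle inequality produces \eqref{3-27} with $\widetilde C$ depending only on $\alpha$, $\Phi$, and $\Omega$. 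I expect the genuinely delicate work to have already been done in Lemma~\ref{lemma3} (the $T$-independent growth bound via the differential inequality of Lemma~\ref{lemmaA4}); the remaining issues here — justifying the limit passage in the nonlinear term on the unbounded channel, and the preservation of the structural constraints under the weak limit — are routine.
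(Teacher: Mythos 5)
Your proposal is correct and follows essentially the same route as the paper: the paper also takes the approximate solutions $\Bv^T$ of \eqref{aNS}, invokes the $T$-independent growth bound of Lemma \ref{lemma3}, extracts a subsequence converging weakly in $H^1_{loc}(\Omega)$ to a solution $\Bv$ of \eqref{NS1} satisfying the same estimate, and then sets $\Bu=\Bv+\Bg$ and uses Lemma \ref{lemma1} to absorb the flux carrier into \eqref{3-27}. Your write-up merely spells out the diagonal/compactness details (strong $L^4$ and trace convergence for the nonlinear and boundary terms) that the paper leaves implicit.
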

\begin{remark}
	There exists a constant $C>0$ such that
	for any fixed subdomain $\Omega_{a,b}$, if $\Phi>0$ is sufficiently small, one has
	\[
	\int_{\Omega_{a,b}}|\nabla \Bg|^2+|\Bg|^4\,dx\leq C\Phi^2.
	\]
	Therefore, there exists a $\Phi_0>0$ such that if $\Phi\in [0, \Phi_0)$, one has
\[
C_3+C_4+\tilde{C}\leq C\Phi^2,
\]
where $C_3,C_4$, and $\tilde{C}$ are the constant appeared in \eqref{3-0} and \eqref{3-27}.
\end{remark}

Similar to Lemma \ref{pressure}, one can also define the pressure of the problem \eqref{NS} and \eqref{BC}.
\begin{pro}\label{pressure1}
The vector field $\Bu\in H_\sigma(\Omega)$ is a weak solution of the problem \eqref{NS} and \eqref{BC} if and only if there exists a function $p\in L^2_{loc}(\Omega)$ such that  for any $\Bp\in \mcH(\Omega_T)$ with $T>0$, it holds that
\begin{equation}\label{3-14-1}
\begin{aligned}
\int_{\Omega_T}2\BD(\Bu):\BD(\Bp)+\Bu\cdot \nabla \Bu \cdot \Bp - p{\rm div}\Bp\,dx+2\alpha \int_{\partial\Omega_T\cap \partial\Omega}\Bu\cdot \Bp\,ds=0.
\end{aligned}
\end{equation}

\end{pro}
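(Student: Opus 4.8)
The plan is to follow the argument of Proposition \ref{pressure} almost verbatim, the only new features being that the pressure must now be produced on every truncation $\Omega_T$ and the resulting local pressures glued together. The implication ``$\Leftarrow$'' is immediate: if $p\in L^2_{loc}(\Omega)$ satisfies \eqref{3-14-1} for all $\Bp\in\mcH(\Omega_T)$ and every $T>0$, then restricting to $\Bp\in\mcH_\sigma(\Omega_T)\subset\mcH(\Omega_T)$ kills the term $\int_{\Omega_T}p\,{\rm div}\,\Bp\,dx$, and since such $\Bp$ vanishes outside $\overline{\Omega_T}$ the surface integral over $\partial\Omega_T\cap\partial\Omega$ agrees with the one over $\partial\Omega$; this is precisely the defining identity \eqref{weak solution}, so $\Bu$ is a weak solution.

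For ``$\Rightarrow$'', I would fix $T>0$ and consider the functional
\[
\mathcal{F}_T(\Bp):=\int_{\Omega_T}2\BD(\Bu):\BD(\Bp)+\Bu\cdot\nabla\Bu\cdot\Bp\,dx+2\alpha\int_{\partial\Omega_T\cap\partial\Omega}\Bu\cdot\Bp\,ds,\qquad \Bp\in\mcH(\Omega_T).
\]
On the bounded domain $\Omega_T$ one has $H^1(\Omega_T)\hookrightarrow L^4(\Omega_T)$, so the trilinear term is bounded by $\|\Bu\|_{L^4(\Omega_T)}\|\nabla\Bu\|_{L^2(\Omega_T)}\|\Bp\|_{L^4(\Omega_T)}$ via H\"older's inequality with exponents $4,2,4$; together with the trace theorem this shows $\mathcal{F}_T$ is a bounded linear functional on $\mcH(\Omega_T)$. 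By the definition of weak solution, $\mathcal{F}_T$ vanishes on $\mcH_\sigma(\Omega_T)$. Restricting first to $H^1_0(\Omega_T)\subset\mcH(\Omega_T)$ and invoking the de Rham--type pressure lemma (cf. \cite[Theorem III.5.3]{Ga}), one obtains $p_T\in L^2(\Omega_T)$ with $\mathcal{F}_T(\Bp)=\int_{\Omega_T}p_T\,{\rm div}\,\Bp\,dx$ for all $\Bp\in H^1_0(\Omega_T)$. To extend this identity to an arbitrary $\Bp\in\mcH(\Omega_T)$, note that ${\rm div}\,\Bp\in L^2_0(\Omega_T)$, since $\int_{\Omega_T}{\rm div}\,\Bp=\int_{\partial\Omega_T}\Bp\cdot\Bn\,ds=0$ (the trace of $\Bp$ vanishes on $\Sigma(\pm T)$, and $\Bp\cdot\Bn=0$ on $\partial\Omega_T\cap\partial\Omega$); hence Lemma \ref{lemmaA5} yields $\Bw\in H^1_0(\Omega_T)$ with ${\rm div}\,\Bw={\rm div}\,\Bp$, so that $\Bp-\Bw\in\mcH_\sigma(\Omega_T)$ and therefore $\mathcal{F}_T(\Bp)=\mathcal{F}_T(\Bw)=\int_{\Omega_T}p_T\,{\rm div}\,\Bw\,dx=\int_{\Omega_T}p_T\,{\rm div}\,\Bp\,dx$. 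This is \eqref{3-14-1} on $\Omega_T$.

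It then remains to glue the $p_T$'s into a single $p\in L^2_{loc}(\Omega)$. For $0<S<T$, both $p_S$ and the restriction of $p_T$ represent $\mathcal{F}_S$ on $H^1_0(\Omega_S)$ (extending test functions by zero), so $\int_{\Omega_S}(p_S-p_T)\,{\rm div}\,\Bp\,dx=0$ for all $\Bp\in H^1_0(\Omega_S)$; since ${\rm div}$ maps $H^1_0(\Omega_S)$ onto $L^2_0(\Omega_S)$ (again Lemma \ref{lemmaA5}), $p_S-p_T$ is constant on $\Omega_S$. Normalizing, say, $\int_{\Omega_1}p_T\,dx=0$ for all $T\ge 1$ forces these constants to vanish, so the family $\{p_T\}_{T>0}$ defines a single $p\in L^2_{loc}(\Omega)$ satisfying \eqref{3-14-1} for every $T$.

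I do not expect a substantial analytic obstacle here; the two points that require care are that the test space $\mcH(\Omega_T)$ permits nonzero tangential traces on $\partial\Omega_T\cap\partial\Omega$ — handled by the Bogovskii correction $\Bw$, which reduces everything to the classical statement on $H^1_0$ — and the consistency of the locally defined pressures across nested truncations, handled by the surjectivity of the divergence onto mean-zero functions. The mild bookkeeping in the gluing step is the part most prone to slips, rather than anything genuinely hard.
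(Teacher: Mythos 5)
Your proof is correct and takes essentially the same route as the paper, which establishes Proposition \ref{pressure1} simply by remarking that it is ``similar to'' Proposition \ref{pressure} and citing \cite[Theorem III.5.3]{Ga}: the content of that citation is exactly your combination of the de Rham representation of the functional $\mathcal{F}_T$ on $H^1_0(\Omega_T)$, the Bogovskii correction reducing a general $\Bp\in\mcH(\Omega_T)$ (whose divergence has zero mean by the boundary conditions) to the solenoidal case, and the gluing of the local pressures across nested truncations, which you carry out correctly. The only point you assume silently is that every divergence-free element of $\mcH(\Omega_T)$ belongs to $\mcH_\sigma(\Omega_T)$ --- needed to conclude $\mathcal{F}_T(\Bp-\Bw)=0$ --- a standard density fact on the bounded Lipschitz domain $\Omega_T$ that the paper likewise leaves implicit.
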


Actually, one can show that the Dirichlet norm of the solution $\Bu$ is uniformly bounded in any subdomain $\Omega_{t-1,t}$.
\begin{pro}\label{uniform estimate}
Let $\Bu$ be the solution obtained in Proposition \ref{straight-existence}. Then there exists a constant $C_7$ such that
\begin{equation}\label{estv4.4}
\|\nabla \Bu\|_{L^2(\Omega_{t-1,t})}^2+\|\Bu\|_{L^2(\partial\Omega_{t-1,t}\cap \partial\Omega)}^2\leq C_7, \quad \text{for any}\,\, t\in \mathbb{R}.
\end{equation}
\end{pro}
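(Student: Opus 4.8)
The plan is to re-run the scheme of Lemma \ref{lemma3}, but with the cut-off window slid to an arbitrary location $s\in\mathbb{R}$, and then to feed the linear growth \eqref{3-27} — which is already at our disposal — into the differential inequality of Lemma \ref{lemmaA4}. Fix $s$ and, for $t\ge 1$, set $\zeta_s(\Bx,t):=\zeta(x_1-s,t)$ with $\zeta$ the function used in the proof of Lemma \ref{lemma3}, so that $\zeta_s\equiv 1$ on $\Omega_{s-t+1,s+t-1}$, $\zeta_s\equiv 0$ outside $\Omega_{s-t,s+t}$, and $|\partial_{x_1}\zeta_s|=1$ on the two unit collars $E^{\pm}_s$ adjacent to $\Sigma(s\pm t)$. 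Writing $\Bv=\Bu-\Bg$ and proceeding exactly as in the proof of Lemma \ref{lemma3} but with $\zeta$ replaced by $\zeta_s$ (the pressure being furnished by Proposition \ref{pressure1}), one obtains, for
\[
y_s(t):=\int_{\Omega}\zeta_s|\nabla\Bv|^2\,dx+\alpha\int_{\partial\Omega}\zeta_s|\Bv|^2\,ds,
\]
the inequality $y_s(t)\le C_1\{y_s'(t)+[y_s'(t)]^{3/2}\}+C_2\int_{\Omega_{s-t,s+t}}(|\nabla\Bg|^2+|\Bg|^4)\,dx$ for every $t\ge 1$.

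The crucial point is that every constant in this derivation is independent of $s$. Since the cross-section has width bounded below by $d$ and above by $\overline d$, the Poincar\'e constant $M_1(E^{\pm}_s)$ and the embedding constant $M_4(E^{\pm}_s)$ of Lemmas \ref{lemmaA1} and \ref{lemmaA2} are uniform, the Korn constant $\mfc$ of \eqref{defmfc} depends only on $\alpha$ and $\Omega$, and the Bogovskii constant $M_5(E^{\pm}_s)$ controlling the pressure on the collars is uniform, because the covering of a unit-length piece by star-shaped subdomains built in the proof of Lemma \ref{lemma3} is translation invariant. Moreover, by Lemma \ref{lemma1} together with $f\ge d$,
\[
C_2\int_{\Omega_{s-t,s+t}}(|\nabla\Bg|^2+|\Bg|^4)\,dx\le C(\Phi^2+\Phi^4)\int_{s-t}^{s+t}f^{-3}(x_1)\,dx_1\le 2C(\Phi^2+\Phi^4)d^{-3}\,t,
\]
so the forcing term is linear in $t$ with a constant independent of $s$. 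Hence $y_s(t)\le \Psi(y_s'(t))+C_6\,t$ for $t\ge 1$, where $\Psi(\tau)=C_1(\tau+\tau^{3/2})$ and $C_1,C_6$ do not depend on $s$.

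It remains to run the comparison argument. Put $\delta_1=\frac12$ and $\varphi(t)=C_3+C_4t$, choosing $C_3,C_4$ once and for all (independently of $s$) so large that $C_6t\le(1-\delta_1)\varphi(t)$ and $\varphi(t)\ge\delta_1^{-1}\Psi(\varphi'(t))=2C_1(C_4+C_4^{3/2})$ hold for all $t\ge1$, and so that $C_4$ exceeds the slope of the linear bound in \eqref{3-27}; then \eqref{A4-1} and \eqref{A4-2} hold with $z=y_s$ and $t_0=1$. For the terminal condition, note $\Omega_{s-T,s+T}\subset\Omega_{|s|+T}$, so \eqref{3-27} (together with $\Bv=\Bu-\Bg$ and Lemma \ref{lemma1}) gives $y_s(T)\le C(1+|s|+T)$; being linear in $T$ with slope strictly below that of $\varphi$, this forces $y_s(T)\le\varphi(T)$ for some $T=T(s)$. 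Part (1) of Lemma \ref{lemmaA4} then yields $y_s(t)\le\varphi(t)$ for all $t\in[1,T(s)]$; evaluating at $t=2$ and using $\zeta_s(\cdot,2)\equiv1$ on $\Omega_{s-1,s+1}$ gives
\[
\|\nabla\Bv\|_{L^2(\Omega_{s-1,s+1})}^2+\alpha\|\Bv\|_{L^2(\partial\Omega_{s-1,s+1}\cap\partial\Omega)}^2\le y_s(2)\le\varphi(2)=C_3+2C_4,
\]
uniformly in $s$. Combined with $\Bu=\Bv+\Bg$ and the uniform per-slab bound for $\Bg$ from Lemma \ref{lemma1}, this proves \eqref{estv4.4}. (If $\Bv\equiv0$ on $\Omega_{s-1,s+1}$ the estimate is trivial, which disposes of the non-triviality hypothesis of Lemma \ref{lemmaA4}.)

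The main obstacle is the bookkeeping in the second paragraph: one must check that \emph{every} structural constant entering the differential inequality — most delicately the Bogovskii constant absorbing the pressure on the collars — is genuinely insensitive to the position $s$ of the window, which is precisely where the two-sided width bound $d\le f\le\overline d$ is essential. By contrast, the a priori estimate \eqref{3-27}, although proved only for windows centred at the origin, is used only to produce the terminal time $T(s)$, and is allowed to depend on $s$; that dependence is harmless, since part (1) of Lemma \ref{lemmaA4} needs the comparison $z(T)\le\varphi(T)$ at just one value of $T$.
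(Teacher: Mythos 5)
Your proposal is correct and follows essentially the same route as the paper: the paper likewise slides the cut-off window (centred at an arbitrary $T$ with $|T|\ge 2$), re-runs the derivation of the differential inequality from Lemma \ref{lemma3} with position-independent constants, uses the linear growth bound of Proposition \ref{straight-existence} as the terminal comparison for Lemma \ref{lemmaA4}(1), and then evaluates the comparison at a small time (the paper takes $t=\tfrac32$, you take $t=2$) to get the uniform local bound. The only cosmetic difference is that the paper takes the terminal time equal to the window centre, whereas you fix the comparison function once and choose a terminal time $T(s)$ depending on the centre; both are harmless for the same reason you note.
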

\begin{proof} It's sufficient to prove the estimate \eqref{estv4.4} for $\Bv=\Bu-\Bg$. For any fixed $T\ge 2$, let $\zeta_T(\Bx,t)$ be a truncating function with
\begin{equation*}
\zeta_T(\Bx,t)=\left\{\begin{aligned}
&0,~~~~~~~~~~~~~~~~~~~~~~~~~~~~~~~~~~~~&& \text{ if }x_1\in (-\infty,T-t)\cup (T+t,\infty),\\
&1,~~~~~~~~~~~~~~~~~~~~~~~~~~~~~~~~~~~~&&\text{ if }x_1\in (T-t+1,T+t-1),\\
&-T+t+x_1,~~~~~~~~~~~&&\text{ if }x_1\in [T-t,T-t+1],\\
&T+t-x_1,~~~~~~~~~~~&&\text{ if }x_1\in [T+t-1,T+t].
\end{aligned}\right.
\end{equation*}
Denote
\begin{equation*}
y_T(t)=\int_{\Omega}\zeta_T|\nabla\Bv|^2\,dx+\alpha\int_{\partial\Omega}\zeta_T|\Bv|^2\,ds.
\end{equation*}
Similar to the proof of Lemma \ref{lemma3}, one  has the estimate
\begin{equation*}
y_T(t)\leq C_1 \left\{  y_T'(t)+[y_T'(t)]^\frac32\right\}+C_2\int_{\Omega_{T-t,T+t}}|\nabla\Bg|^2+|\Bg|^4\,dx.
\end{equation*}

According to Proposition \ref{straight-existence}, one has
\begin{equation*}
  y_T(T)\leq \int_{\Omega_{0,2T}}|\nabla\Bv|^2\,dx+\alpha\int_{ \partial\Omega_{0,2T}\cap \partial\Omega}|\Bv|^2\,ds\leq C_3+2C_4T.
\end{equation*}
Then we take $\tilde{\varphi}(t)=C_5+C_6t$ where $C_5$ and $C_6$ are  large enough such that
for any $t\geq 1$,
one has
\begin{equation*}
C_3+ 2C_4T \leq \tilde{\varphi}(T),~\ \ \ C_2 \int_{\Omega_{T-t,T+t}}|\nabla\Bg|^2+|\Bg|^4\,dx \leq \frac12\tilde{\varphi}(t),
\end{equation*}
and
\begin{equation*}
\tilde{\varphi}(t)\ge 2C_1 \left(C_6 + C_6^\frac32 \right).
\end{equation*}
 Hence, by taking $\Psi(\tau)=C_2(\tau+\tau^\frac32)$ and $\delta_1=\frac12$, we make use of Lemma \ref{lemmaA4} to obtain
\begin{equation*}
y_T(t)\leq C_5+C_6t\quad \text{for any}\,\, t\in [1,T].
\end{equation*}
 In particular, choosing $t=\frac32$ gives
\begin{equation*}
\int_{\Omega_{T-\frac12,T+\frac12}}|\nabla \Bv|^2\,dx+\alpha \int_{\partial\Omega_{T-\frac12,T+\frac12}\cap \partial\Omega}|\Bv|^2\,ds \leq y_T\left(\frac32\right)=C_5+\frac32 C_6.
\end{equation*}
The case $T\leq -2$ can be proved similarly. Hence the proof of the proposition is completed.
\end{proof}

With the help of the uniform estimate given in Proposition \ref{uniform estimate}, we can prove the uniqueness of the solution when the flux is sufficiently small.
\begin{pro}\label{uniqueness}
There exists a constant $\Phi_0>0$ such that for any flux $\Phi\in [0,\Phi_0)$, the solution obtained in Proposition \ref{straight-existence} is unique.
\end{pro}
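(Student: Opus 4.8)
The plan is an energy estimate for the difference of two solutions. Let $\Bu_1$ be the solution produced in Proposition \ref{straight-existence}, and let $\Bu_2$ be any weak solution of \eqref{NS} and \eqref{flux constraint}--\eqref{BC} obeying the growth bound \eqref{3-27}; the goal is to show $\Bu_1=\Bu_2$ once $\Phi<\Phi_0$. Set $\Bw:=\Bu_1-\Bu_2$. Since $\Bu_1$ and $\Bu_2$ carry the same flux $\Phi$, the difference $\Bw$ is solenoidal, has zero flux through every cross section, and satisfies $\Bw\cdot\Bn=0$ on $\pa\Omega$. Subtracting the two copies of \eqref{weak solution} and using $\Bu_1\cdot\na\Bu_1-\Bu_2\cdot\na\Bu_2=\Bw\cdot\na\Bu_1+\Bu_2\cdot\na\Bw$, one finds that $\Bw$ satisfies
\begin{equation*}
\int_\Omega 2\BD(\Bw):\BD(\Bp)+(\Bw\cdot\na\Bu_1+\Bu_2\cdot\na\Bw)\cdot\Bp\,dx+2\alpha\int_{\pa\Omega}\Bw\cdot\Bp\,ds=0\qquad\text{for every }\Bp\in\mcH_\sigma(\Omega_T).
\end{equation*}
The flux carrier $\Bg$ has disappeared because both solutions use the same $\Bg$, so there is neither a source term nor a pressure to control.

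Since $\Bw$ has no compact support and $\zeta\Bw$ is not solenoidal, the plan is to test with $\Bp=\zeta\Bw+\Ba$, where $\zeta=\zeta(x_1,t)$ is the cut-off from the proof of Lemma \ref{lemma3} (equal to $1$ on $\Omega_{t-1}$, supported in $\Omega_t$, with $|\pa_{x_1}\zeta|=1$ on $E=E^+\cup E^-$) and $\Ba\in H^1_0(E)$ solves $\div\Ba=-(\pa_{x_1}\zeta)w_1$. This is solvable by Lemma \ref{lemmaA5} because the zero-flux property of $\Bw$ makes $(\pa_{x_1}\zeta)w_1$ have zero mean on $E$, with $\|\na\Ba\|_{L^2(E)}\le M_5(E)\|\Bw\|_{L^2(E)}$ and $M_5(E)$ bounded independently of $t$ by the very argument already carried out in the proof of Lemma \ref{lemma3} (bounded width). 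Inserting this $\Bp$, the viscous and friction terms are treated by the localized Korn identity of Lemma \ref{lemmaA3} as in \eqref{3-3} (the term $\int 2\BD(\Bw):\BD(\Ba)$ being an $E$-localized remainder of order $\|\na\Bw\|_{L^2(E)}^2$, and $2\alpha\int_{\pa\Omega}\Bw\cdot\Ba\,ds=0$ since $\Ba|_{\pa\Omega}=0$), so that, after rearrangement,
\begin{equation*}
\mfc\int_\Omega\zeta|\na\Bw|^2\,dx+\alpha\int_{\pa\Omega}\zeta|\Bw|^2\,ds\le \Big|\int_E\pa_{x_1}\zeta\,\pa_{x_1}\Bw\cdot\Bw\,dx\Big|+C\|\na\Bw\|_{L^2(E)}^2+\big|\text{convection terms}\big|.
\end{equation*}

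For the convection, integrating by parts and using $\Bu_2\cdot\Bn=0$ gives $\int_\Omega\zeta(\Bu_2\cdot\na\Bw)\cdot\Bw\,dx=-\tfrac12\int_E|\Bw|^2u_{2,1}\,\pa_{x_1}\zeta\,dx$, while $\int_\Omega\zeta(\Bw\cdot\na\Bu_1)\cdot\Bw\,dx$ is split over the unit cells $\Omega_{k-1,k}$ covering $\operatorname{supp}\zeta$; on each cell the two-dimensional Ladyzhenskaya inequality combined with the Poincar\'e inequality of Lemma \ref{lemmaA1} (applicable because $\Bw$ has zero flux and $\Bw\cdot\Bn=0$) yields $\|\Bw\|_{L^4(\Omega_{k-1,k})}^2\le C\|\na\Bw\|_{L^2(\Omega_{k-1,k})}^2$. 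The key input is that for small $\Phi$ the uniform local energy bound of Proposition \ref{uniform estimate}---whose constant is $O(\Phi^2)$, as follows from tracking the constants through its proof together with Lemma \ref{lemma1} and the remark after Proposition \ref{straight-existence}---gives $\|\na\Bu_i\|_{L^2(\Omega_{k-1,k})}+\|\Bu_i\|_{L^4(\Omega_{k-1,k})}\le C\Phi$ uniformly in $k$. Consequently all convective contributions, together with the remaining $\Ba$-terms (handled via $\|\na\Ba\|_{L^2(E)}\le C\|\na\Bw\|_{L^2(E)}$, Lemma \ref{lemmaA2}, and $\Ba|_{\pa\Omega}=0$), are bounded by $C\Phi\int_\Omega\zeta|\na\Bw|^2\,dx+C\big(\|\na\Bw\|_{L^2(E)}^2+\alpha\|\Bw\|_{L^2(\pa E\cap\pa\Omega)}^2\big)$. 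Choosing $\Phi_0$ so small that $C\Phi_0<\mfc/2$ and absorbing, one obtains, with $Y(t):=\int_\Omega\zeta|\na\Bw|^2\,dx+\alpha\int_{\pa\Omega}\zeta|\Bw|^2\,ds$ and $Y'(t)=\int_E|\na\Bw|^2\,dx+\alpha\int_{\pa E\cap\pa\Omega}|\Bw|^2\,ds$,
\begin{equation*}
c_*\,Y(t)\le C_8\,Y'(t)\qquad\text{for all }t\ge t_0.
\end{equation*}
On the other hand $Y(t)\le\|\na\Bw\|_{L^2(\Omega_t)}^2+\alpha\|\Bw\|_{L^2(\pa\Omega_t\cap\pa\Omega)}^2\le C(1+t)$ by \eqref{3-27}, i.e. $Y$ grows at most linearly; an inequality $c_*Y\le C_8Y'$ forces $Y$ to grow at least exponentially unless $Y\equiv0$ (the elementary case of Lemma \ref{lemmaA4}, or a direct Gronwall argument), so $Y\equiv0$. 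Since $\alpha>0$, this gives $\na\Bw\equiv0$ in $\Omega$ and $\Bw\equiv0$ on $\pa\Omega$, hence $\Bw\equiv0$, which proves uniqueness.

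The hard parts are, first, constructing the \emph{solenoidal} cut-off test function with a Bogovskii correction $\Ba$ whose operator norm is uniform in $t$---so that the unavoidable errors localized on $E$ feed exactly into $Y'(t)$---and, second, controlling the quadratic convection term $\int_\Omega\zeta(\Bw\cdot\na\Bu_1)\cdot\Bw\,dx$: pushing its coefficient below $\mfc/2$ is precisely where the smallness of $\Phi$ enters and where one needs the uniform local energy estimate of Proposition \ref{uniform estimate} to be $O(\Phi^2)$ as $\Phi\to0$.
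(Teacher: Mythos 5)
Your overall strategy is essentially the paper's: form the difference $\Bw$, test with a cut-off of $\Bw$, control the transition region $E$ with a Bogovskii field whose constant is uniform in $t$ (the very construction from the proof of Lemma \ref{lemma3}), use the smallness of $\Phi$ through the uniform local estimate to absorb the convection, and conclude from a differential inequality against the linear growth \eqref{3-27} via Lemma \ref{lemmaA4}. The only structural variation is that you make the test function solenoidal by adding the Bogovskii correction $\Ba$, whereas the paper keeps $\zeta\oBu$ as a test function in the pressure formulation (Proposition \ref{pressure1}) and estimates $\int_{E^\pm}p\,\overline{u}_1\partial_{x_1}\zeta\,dx$ with the same Bogovskii field; the two devices are equivalent in content and cost.

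There is, however, a genuine gap in your estimates. You invoke $\|\na\Bu_i\|_{L^2(\Omega_{k-1,k})}+\|\Bu_i\|_{L^4(\Omega_{k-1,k})}\le C\Phi$ uniformly in $k$ for both solutions, but Proposition \ref{uniform estimate} applies only to the solution constructed in Proposition \ref{straight-existence}; the competitor $\Bu_2$ is only assumed to satisfy the growth bound \eqref{3-27}, which gives no uniform (let alone $O(\Phi)$) control of its local norms on $E$. Consequently the $E$-localized terms that carry $\Bu_2$ --- the boundary convection term $\tfrac12\int_E|\Bw|^2u_{2,1}\partial_{x_1}\zeta\,dx$ and $\int_E(\Bu_2\cdot\na\Bw)\cdot\Ba\,dx$ --- cannot be bounded by $C\Phi\int_\Omega\zeta|\na\Bw|^2\,dx+CY'(t)$ as you claim. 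The standard repair (and what the paper does) is to write $\Bu_2=\Bu_1-\Bw$, which turns these into terms controlled by the constructed solution plus cubic terms of order $\|\na\Bw\|_{L^2(E)}^3$; the resulting differential inequality is $Y\le C\{Y'+(Y')^{3/2}\}$, not the purely linear $c_*Y\le C_8Y'$ you assert. Your exponential-growth dichotomy must then be replaced by part (3) of Lemma \ref{lemmaA4}, which yields $\liminf_{t\to\infty}t^{-3}Y(t)>0$ unless $Y\equiv0$; since \eqref{3-27} gives $Y(t)\le C(1+t)$, the conclusion $\Bw\equiv0$ still follows. So the gap is repairable within your framework, but as written the key absorption step and the final differential inequality are not justified.
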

\begin{proof} We divide the proof into three steps.

{\em Step 1. Set up.} Let $\Bu$ be the solution obtained in Proposition \ref{straight-existence}. Assume that $\tilde{\Bu}$ is also a solution of problem \eqref{NS} and \eqref{flux constraint}-\eqref{BC} satisfying
\begin{equation*}
	\|\nabla \tilde{\Bu}\|_{L^2(\Omega_{t})}^2+\|\tilde{\Bu}\|_{L^2(\partial\Omega_{t}\cap \partial\Omega)}^2\leq C(1+t).
\end{equation*} Then $\oBu :=\tilde{\Bu}-\Bu$ is a weak solution to the equations
\begin{equation}\label{3-15}
\left\{\begin{aligned}
&-\Delta \oBu +\oBu \cdot \nabla\Bu +\Bu\cdot \nabla \oBu +\oBu\cdot \nabla \oBu +\nabla p=0  ~~~~&\text{ in }\Omega,\\
&{\rm div}~\oBu =0&\text{ in }\Omega,\\
&\oBu \cdot \Bn=0,~(\Bn\cdot \BD(\oBu )+\alpha \oBu )\cdot \Bt=0&\text{ on } \partial\Omega,\\
&\int_{\Sigma(x_1)}\oBu  \cdot \Bn \,ds=0 &\text{ for any }x_1\in \R.
\end{aligned}\right.
\end{equation}
Multiplying the problem \eqref{3-15} by $\zeta\oBu $ where $\zeta$ is defined in Lemma \ref{lemma3}, and integrating over $\Omega$ yield
\begin{equation}\label{3-16}
\begin{aligned}
\int_{\Omega}2\BD(\oBu ):\BD(\zeta\oBu )+[\oBu \cdot \nabla \Bu +(\Bu+\oBu)\cdot \nabla \oBu ]\cdot (\zeta\oBu) \, dx & \\
- \int_{\Omega} p{\rm div}(\zeta\oBu)\,dx+2\alpha \int_{ \partial\Omega}\zeta \oBu ^2\,ds&=0.
\end{aligned}
\end{equation}
Similar to \eqref{3-3}, \eqref{3-4-1} and \eqref{3-6}, one has
\begin{equation}\label{3-17}
	\mfc\int_{\Omega}\zeta|\nabla\oBu |^2\,dx\leq 2\int_{\Omega}\BD(\oBu ):\BD(\zeta\oBu )\,dx+\alpha \int_{ \partial\Omega}\zeta|\oBu |^2\,ds
+ \left|\int_{ E}\partial_{x_1}\zeta \partial_{x_1}\oBu \cdot \oBu\,dx\right|,
\end{equation}
\begin{equation}\label{3-18}
\begin{aligned}
	-\int_{\Omega}\oBu \cdot\nabla \Bu\cdot (\zeta \oBu) \,dx
	=&\int_{\Omega}\zeta\oBu \cdot\nabla \oBu \cdot \Bu\,dx+\int_{ E}(\oBu \cdot \Bu)\overline{u}_1\partial_{x_1}\zeta\,dx\\
	=&\int_{\Omega_{t-1}}\oBu \cdot\nabla \oBu \cdot \Bu\,dx+\int_{ E}\zeta \oBu \cdot\nabla \oBu \cdot \Bu+(\oBu \cdot \Bu)\overline{u}_1\partial_{x_1}\zeta\,dx
\end{aligned}
\end{equation}
and
\begin{equation}\label{3-19}
-\int_{\Omega}(\Bu\cdot \nabla \oBu +\oBu\cdot \nabla \oBu )\cdot (\zeta\oBu) \,dx
=\int_{ E}\frac12|\oBu |^2(u_1+\overline{u}_1)\partial_{x_1}\zeta\,dx.
\end{equation}
Substituting \eqref{3-17}-\eqref{3-19} into \eqref{3-16} gives
\begin{equation}\label{3-20}
\begin{aligned}
&\mfc\int_{\Omega}\zeta|\nabla\Bu|^2\,dx+\alpha \int_{\partial\Omega}\zeta|\oBu |^2\,ds \\
\leq &\int_{\Omega_{t-1}} \oBu \cdot\nabla \oBu \cdot \Bu\,dx+\int_{E}\zeta \oBu \cdot\nabla \oBu \cdot \Bu\,dx\\
&+ \left|\int_{E}\partial_{x_1}\zeta \partial_{x_1}\oBu \cdot \oBu\,dx\right|+\int_{ E}\left[(\oBu \cdot \Bu)\overline{u}_1+\frac12|\oBu |^2(u_1+\overline{u}_1)+p\overline{u}_1   \right] \partial_{x_1}\zeta\,dx.
\end{aligned}
\end{equation}

{\em Step 2. Estimates for the Dirichlet norm.} Decompose $\Omega_{t-1}$ into several parts $\Omega_t^i=\{\Bx\in\Omega:~x_1\in(A_{i-1},A_i)\}$, where $-t+1 =A_0\leq A_1\leq \cdots\leq A_{N(t)}=t-1 $ and $\frac12\leq A_i-A_{i-1}\leq 1$ for every $i$. By Proposition \ref{uniform estimate} and Lemmas \ref{lemmaA1}-\ref{lemmaA2}, one has
\begin{equation}\label{3-22}
\begin{aligned}
 \int_{\Omega_{t-1}} \oBu \cdot\nabla \oBu \cdot \Bu \,dx \leq & \sum_{i=1}^{N(t)}\int_{\Omega_t^i }|\oBu \cdot\nabla \oBu \cdot \Bu|\,dx \\
\leq &  \sum_{i=1}^{N(t)}\|\nabla\oBu \|_{L^2(\Omega_t^i )} \|\oBu \|_{L^4(\Omega_t^i )}(\|\Bv\|_{L^4(\Omega_t^i )}+\|\Bg\|_{L^4(\Omega_t^i )}) \\
\leq&C \sum_{i=1}^{N(t)}\|\nabla\oBu \|_{L^2(\Omega_t^i )}^2(\|\nabla\Bv\|_{L^2(\Omega_t^i)}+\|\Bg\|_{L^4(\Omega_t^i )}) \\
\leq&C_8\sum_{i=1}^{N(t)}\|\nabla\oBu \|_{L^2(\Omega_t^i )}^2 \\
=&C_8\int_{\Omega_{t-1}} |\nabla\oBu |^2 \, dx.
\end{aligned}
\end{equation}
Since the constant $C_8$ is of the same order as $\Phi^2$ when $\Phi\to 0$, there exists some $\Phi_0>0$, such that for any $\Phi\in [0,\Phi_0)$, one has
\begin{equation}\label{3-23}
\int_{\Omega_{t-1}}\oBu \cdot\nabla \oBu \cdot \Bu\,dx \leq \frac{\mfc}{2}\int_{\Omega_t}\zeta|\nabla\oBu |^2\,dx.
\end{equation}


Then one uses Lemmas \ref{lemmaA1}-\ref{lemmaA2}, \ref{lemma1}, and Proposition \ref{uniform estimate} to obtain
\begin{equation}\label{3-24-0}
\begin{aligned}
\int_{ E^\pm} \zeta \oBu \cdot \nabla \oBu\cdot \Bu \, dx \leq&\|\oBu\|_{L^4( E^\pm)}\|\nabla\oBu\|_{L^2( E^\pm)}\|\Bu\|_{L^4( E^\pm)}\\
\leq&\|\oBu\|_{L^4( E^\pm)}\|\nabla\oBu\|_{L^2( E^\pm)}(\|\Bv\|_{L^4( E^\pm)}+\|\Bg\|_{L^4( E^\pm)})\\
\leq&C\|\oBu\|_{L^4( E^\pm)}\|\nabla\oBu\|_{L^2( E^\pm)}(\|\nabla \Bv\|_{L^2( E^\pm)}+\|\Bg\|_{L^4( E^\pm)})\\
\leq&C\|\nabla\oBu \|_{L^2( E^\pm)}^2
\end{aligned}
\end{equation}
and
\begin{equation}\label{3-24}
\begin{aligned}
&\left|\int_{ E^\pm}\partial_{x_1}\zeta\partial_{x_1}\oBu \cdot \oBu\,dx\right|+\int_{E^\pm}\left[(\oBu \cdot \Bu)\overline{u}_1+\frac12|\oBu |^2(u_1+\overline{u}_1) \right] \partial_{x_1}\zeta\,dx\\
\leq&C\left(\|\nabla\oBu \|_{L^2( E^\pm)}\|\oBu \|_{L^2( E^\pm)}+\|\oBu \|_{L^4( E^\pm)}^2\|\Bu\|_{L^2( E^\pm)}+\|\oBu\|_{L^4( E^\pm)}^2\|\oBu\|_{L^2( E^\pm)}\right)\\
\leq&C\left[\|\nabla\oBu \|_{L^2( E^\pm)}^2+\|\nabla \oBu \|_{L^2( E^\pm)}^2(\|\Bv\|_{L^2( E^\pm)}+\|\Bg\|_{L^2( E^\pm)})+\|\nabla \oBu\|_{L^2( E^\pm)}^3\right]\\
\leq&C \left(\|\nabla\oBu \|_{L^2( E^\pm)}^2+\|\nabla\oBu \|_{L^2( E^\pm)}^3\right).
\end{aligned}
\end{equation}
The estimate of $\int_{ E^\pm}p\overline{u}_1\partial_{x_1}\zeta\,dx$ is similar to \eqref{3-13-1}. That is,
\begin{equation}\label{3-25}
\begin{aligned}
&\left|\int_{ E^\pm}p\overline{u}_1\partial_{x_1}\zeta\,dx\right|= \left|\int_{ E^\pm}p\overline{u}_1\,dx\right|=\left|\int_{E^\pm}p{\rm div}\,\Ba\,dx\right|\\
=&\left|\int_{ E^\pm} 2\BD(\overline {\Bu}):\BD(\Ba)+(\oBu \cdot \nabla \Bu +(\Bu+\oBu)\cdot \nabla \oBu )\cdot\Ba\,dx\right|\\
=&\left|\int_{ E^\pm} 2\BD(\overline {\Bu}):\BD(\Ba)-\oBu \cdot \nabla \Ba\cdot\Bu -(\Bu+\oBu)\cdot \nabla \Ba \cdot \oBu\,dx\right|\\
\le &C\left(\|\nabla \overline {\Bu}\|_{L^2( E^\pm)}+\|\overline {\Bu}\|_{L^4( E^\pm)}\|\Bu\|_{L^4( E^\pm)}+\|\overline {\Bu}\|_{L^4( E^\pm)}^2\right)\|\nabla\Ba\|_{L^2( E^\pm)}\\
\leq&C(\|\nabla\oBu \|_{L^2( E^\pm)}^2+\|\nabla\oBu \|_{L^2( E^\pm)}^3),
\end{aligned}
\end{equation}
where $\Ba\in H_0^1(E^\pm)$ satisfies
\[
\text{div}\Ba =\bar{u}_1\quad \text{in}\,\, E^\pm.
\]

{\em Step 3. Growth estimate. } Let
\begin{equation*}
y(t)=\int_{\Omega}\zeta|\nabla \oBu |^2\,dx+\alpha\int_{ \partial\Omega}\zeta| \oBu |^2\,ds.
 \end{equation*}
 Combining \eqref{3-20}-\eqref{3-25} gives the differential inequality
\begin{equation*}
y(t)\leq C_9 \left\{y'(t)+[y'(t)]^\frac32\right\}.
\end{equation*}
This, together with Lemma \ref{lemmaA4}, implies that either $\oBu=0$ or
\begin{equation*}
\liminf_{t\rightarrow +\infty} \frac{y(t)}{t^3}>0.
\end{equation*}
Hence the proof of the proposition is completed.
\end{proof}

In particular, if an outlet of the channel is straight, for example,
\begin{equation*}
\Sigma(x_1)=\Sigma^\sharp(x_1):=(-1,1)\ \ \ \text{ when $x_1>0$},
\end{equation*}
we shall show that the solution obtained in Proposition \ref{straight-existence} tends to $\BU$ at infinity, where $\BU$ is given in \eqref{shearflow}, and is the shear flow solution of the Navier-Stokes system with Navier-slip boundary condition in the straight channel ${\Omega}^\sharp=\{(x_1,x_2):~x_1\in \R,~x_2\in (-1,1)\}$.
\begin{pro}\label{Poiseuille}
Assume that the outlet $\Omega^+=\{\Bx\in\Omega:~x_1>0\}= (0, +\infty) \times (-1, 1)$ is straight. There exists a constant $\Phi_1>0$, such that if  $\Phi\in [0,\Phi_1)$,  and the solution $\Bu$ of the problem \eqref{NS} and \eqref{flux constraint}-\eqref{BC}  satisfies
\begin{equation}\label{3-26}
\liminf_{t\rightarrow + \infty} t^{-3}\int_{\Omega^+_t} |\nabla \Bu|^2\,dx =0,
\end{equation}
where $\Omega^+_t=\{\Bx\in\Omega:~0<x_1<t\}$, then it holds that
\begin{equation*}
\|\Bu-\BU\|_{H^1(\Omega^+)}<\infty,
\end{equation*}
where $\BU$ is given in \eqref{shearflow}.
\end{pro}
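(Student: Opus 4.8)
The plan is to control the perturbation $\Bw:=\Bu-\BU$ on the straight outlet and to prove $\|\na\Bw\|_{L^2(\Omega^+)}<\infty$; a uniform Poincar\'e inequality then upgrades this to $\Bw\in H^1(\Omega^+)$. Since $\Bu$ and $\BU$ both carry flux $\Phi$ and both satisfy the impermeability condition $\Bu\cdot\Bn=0$, $\BU\cdot\Bn=0$ together with the Navier slip condition \eqref{BC} on the flat walls $\{x_2=\pm1,\ x_1>0\}$, the difference $\Bw$ belongs to $H^1_*(\Omega^+_{0,t})$ for every $t$ (zero flux per cross-section), satisfies $\Bw\cdot\Bn=0$ and the homogeneous Navier slip condition on $\partial\Omega^+\cap\partial\Omega$, and---subtracting the shear-flow identity $-\Delta\BU+\BU\cdot\na\BU+\na p_\BU=0$ (with $p_\BU$ affine) from the equation for $\Bu$---is a weak solution, in the sense of Proposition \ref{pressure1} localized to $\Omega^+$, of
\[
-\Delta\Bw+\Bw\cdot\na\BU+\BU\cdot\na\Bw+\Bw\cdot\na\Bw+\na\tilde p=0,\qquad \div\Bw=0,
\]
with $\tilde p:=p-p_\BU\in L^2_{loc}(\Omega^+)$.

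\textbf{Energy estimate with a one-sided cutoff.} Fix $\zeta(x_1,t)$ equal to $x_1$ on $[0,1]$, to $1$ on $[1,t-1]$, to $t-x_1$ on $[t-1,t]$, and to $0$ on $[t,\infty)$, so that $\zeta|_{\Sigma(0)}=0$ and $\pa_{x_1}\zeta$ is supported on the fixed slab $Q_0:=\Omega^+_{0,1}$ and on the moving window $E^+:=\Omega^+_{t-1,t}$. Test the weak formulation for $\Bw$ with $\Bp=\zeta\Bw\in\mcH(\Omega^+_{0,t})$ and use the Korn-type manipulation of Lemma \ref{lemmaA3} adapted to the cutoff (as in \eqref{3-3-1}--\eqref{3-3}; on the flat walls $\pa_\tau\Bn\equiv0$, so this step loses nothing) to control $\int_{\Omega^+}\zeta|\na\Bw|^2+\alpha\int_{\partial\Omega^+\cap\partial\Omega}\zeta|\Bw|^2$. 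The only genuinely interior nonlinear contribution is
\[
\int_{\Omega^+}\zeta\,(\Bw\cdot\na\BU)\cdot\Bw\,dx=\int_{\Omega^+}\zeta\,U'(x_2)\,w_1w_2\,dx ;
\]
from \eqref{shearflow}, $\|U'\|_{L^\infty(-1,1)}\le\tfrac32\Phi$, so Lemma \ref{lemmaA1} bounds this by $C\Phi\int_{\Omega^+}\zeta|\na\Bw|^2$, which is absorbed once $\Phi<\Phi_1$. Every remaining term either vanishes or localizes to $Q_0\cup E^+$ after integration by parts (using $\div\Bw=0$, $\Bw\cdot\Bn=0$ and $\zeta|_{\Sigma(0)}=0$): the transport terms $\BU\cdot\na\Bw\cdot(\zeta\Bw)$ and $\Bw\cdot\na\Bw\cdot(\zeta\Bw)$ reduce (up to sign) to $\tfrac12\int|\Bw|^2(U+w_1)\pa_{x_1}\zeta$, the Korn commutator produces $\pa_{x_1}\zeta\,\pa_{x_1}\Bw\cdot\Bw$, and the pressure produces $\int\tilde p\,w_1\pa_{x_1}\zeta$. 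On $E^+$ these are estimated exactly as in \eqref{3-9}--\eqref{3-14}, using Lemmas \ref{lemmaA1}, \ref{lemmaA2} and, for the pressure, the Bogovskii map of Lemma \ref{lemmaA5} on $E^+$; the constants $M_1(E^+),M_4(E^+),M_5(E^+)$ are independent of $t$ because $E^+$ is a translate of the fixed rectangle $(0,1)\times(-1,1)$. On the fixed slab $Q_0$ the same quantities are bounded by a constant $C_*=C_*(\alpha,\Phi,\Omega)$ via Proposition \ref{uniform estimate} (and the smoothness of $\BU$).

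\textbf{Differential inequality and dichotomy.} Setting $y(t):=\int_{\Omega^+}\zeta|\na\Bw|^2\,dx+\alpha\int_{\partial\Omega^+\cap\partial\Omega}\zeta|\Bw|^2\,ds$, the previous step yields, for $t\ge3$,
\[
y(t)\le C\big\{y'(t)+[y'(t)]^{3/2}\big\}+C_*,\qquad y'(t)=\int_{E^+}|\na\Bw|^2\,dx+\alpha\!\int_{E^+\cap\partial\Omega}\!|\Bw|^2\,ds\ \ge\ 0,
\]
so $y$ is nondecreasing. If $y$ were unbounded there would be $t_0$ with $y(t)\ge2C_*$ for all $t\ge t_0$, whence $y(t)\le\Psi(y'(t))$ with $\Psi(s):=2C(s+s^{3/2})$ and $\Psi(s)\le c_0s^{3/2}$ for $s$ large; Lemma \ref{lemmaA4}(3) would then force $\liminf_{t\to\infty}t^{-3}y(t)>0$. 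On the other hand $\int_{\Omega^+_t}|\na\BU|^2\le Ct$, while a trace estimate together with Proposition \ref{uniform estimate} gives $\alpha\int_{\partial\Omega^+_t\cap\partial\Omega}|\Bw|^2\,ds\le Ct$, so $y(t)\le2\int_{\Omega^+_t}|\na\Bu|^2\,dx+Ct$, and the hypothesis \eqref{3-26} yields $\liminf_{t\to\infty}t^{-3}y(t)=0$---a contradiction. Therefore $\sup_t y(t)<\infty$.

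\textbf{Conclusion and main difficulty.} Letting $t\to\infty$ gives $\int_{\Omega^+_{1,\infty}}|\na\Bw|^2<\infty$; the slab $\Omega^+_{0,1}$ contributes a finite amount by Proposition \ref{uniform estimate}, so $\|\na\Bw\|_{L^2(\Omega^+)}<\infty$. Since $\Bw$ has zero flux per cross-section and $\Bw\cdot\Bn=0$ on the walls, Lemma \ref{lemmaA1}---whose constant is uniform because $\Omega^+$ is a strip of width $2$---gives $\|\Bw\|_{L^2(\Omega^+)}\le C\|\na\Bw\|_{L^2(\Omega^+)}<\infty$, i.e. $\|\Bu-\BU\|_{H^1(\Omega^+)}<\infty$. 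The main obstacle is the bookkeeping in the energy estimate: organizing the integrations by parts so that the sole surviving interior term is the $O(\Phi)$ one $\int\zeta U'w_1w_2$, and verifying that the window-dependent constants---especially the Bogovskii constant entering the pressure estimate---are genuinely $t$-uniform, so that one arrives precisely at a differential inequality of the form $y\le C(y'+(y')^{3/2})+C_*$ to which the monotonicity of $y$ and Lemma \ref{lemmaA4}(3) apply; the extra constant $C_*$ coming from the artificial inlet $Q_0$ is then harmless.
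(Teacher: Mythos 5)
Your proposal is correct and follows essentially the same route as the paper: the same one-sided cutoff $\zeta_1$, the perturbation $\Bu-\BU$ with the shear-flow identity subtracted, an energy estimate whose window terms (including the Bogovskii pressure bound) yield $y^+\le C_*+C\{(y^+)'+[(y^+)']^{3/2}\}$, and then Lemma \ref{lemmaA4} combined with \eqref{3-26} to conclude boundedness, with the Poincar\'e inequality upgrading to the full $H^1$ bound. The only (harmless) deviation is that you absorb the term $\int\zeta U'w_1w_2$ directly via $\|U'\|_\infty\le C\Phi$ and a weighted cross-sectional Poincar\'e inequality, whereas the paper routes this term through the argument of Proposition \ref{uniqueness}; both hinge on the smallness of $\Phi$.
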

\begin{proof}
First, let us introduce a new truncating function
\begin{equation*}
\zeta_1(\Bx,t)=\left\{
\begin{aligned}
&1,~~~~~~~~~~~~~~ &&\text{ if }x_1\in (1,t-1),\\
&0,~~~~~~~~~~~~~~ &&\text{ if }x_1\in (-\infty,0)\cup(t,\infty),\\
&t-x_1,~~~~~~&&\text{ if }x_1\in [t-1,t],\\
&x_1,~~~~~~&&\text{ if }x_1\in [0,1].
\end{aligned}\right.
\end{equation*}
Denote $\overline{\Bv}=\Bu-\BU$. Taking the test function $\Bp=\zeta_1\overline{\Bv}$  in \eqref{3-14-1} gives
\begin{equation*}
\begin{aligned}
&2\int_{\Omega^+}\BD(\overline{\Bv}):\BD(\zeta_1\overline{\Bv})\,dx+2\alpha \int_{\partial\Omega^+\cap \partial\Omega}\zeta_1|\overline{\Bv}|^2\,ds\\
&+\int_{\Omega^+}\left[\overline{\Bv}\cdot\nabla \BU+(\overline{\Bv}+\BU)\cdot\nabla \overline{\Bv} \right] \cdot (\zeta_1\overline{\Bv})- p{\rm div}\,(\zeta_1\overline{\Bv})\,dx\\
=&\int_{\Omega^+}-2\BD(\BU):\BD(\zeta_1\overline{\Bv})-\BU \cdot \nabla \BU\cdot (\zeta_1\overline{\Bv})\,dx-2\alpha \int_{\partial\Omega^+\cap \partial\Omega}\BU\cdot (\zeta_1\overline{\Bv})\,ds.
\end{aligned}\end{equation*}

Noting that $\BU$ is a solution of the Navier-Stokes system  in $\Omega^+$, one has
\begin{equation*}
\int_{\Omega^+}-2\BD(\BU):\BD(\zeta_1\overline{\Bv})-\BU \cdot \nabla \BU\cdot (\zeta_1\overline{\Bv})\,dx-2\alpha \int_{\partial\Omega^+\cap \partial\Omega}\BU\cdot (\zeta_1\overline{\Bv})\,ds=\int_{\Omega^+} - P {\rm div}\,(\zeta_1\overline{\Bv})\,dx,
\end{equation*}
where $P$ is the pressure for the Navier-Stokes system associated with the velocity  $\BU$. Let $\bar{p}=p-P$. Hence,
\begin{equation*}
\begin{aligned}
&2\int_{\Omega^+}\BD(\overline{\Bv}):\BD(\zeta_1\overline{\Bv})\,dx+2\alpha \int_{\partial\Omega^+\cap \partial\Omega}\zeta_1|\overline{\Bv}|^2\,ds\\
&+\int_{\Omega^+} \left[ \overline{\Bv}\cdot\nabla \BU+ (\overline{\Bv}+\BU)\cdot\nabla \overline{\Bv} \right] \cdot (\zeta_1\overline{\Bv} )- \bar{p}{\rm div}\,(\zeta_1\overline{\Bv})\,dx=0,
\end{aligned}\end{equation*}

Let
\begin{equation*}
y^+(t)=\int_{\Omega^+}\zeta_1(\Bx,t)|\nabla \oBv |^2\,dx+\alpha\int_{\partial\Omega^+ \cap \partial\Omega}\zeta_1(\Bx,t)| \oBv |^2\,ds.
\end{equation*}
Following similar proof of Proposition \ref{uniqueness}, one obtains
\begin{equation*} \begin{aligned}
y^+ & \leq C_9 \left\{ \int_{ E_0 } |\nabla \overline{\Bv} |^2 \, dx + \left(\int_{ E_0} |\nabla \overline{\Bv} |^2 \, dx \right)^{\frac32}   +(y^+)'+[(y^+)']^\frac32 \right\} \\
& \leq C_{10} +C_9 \left\{(y^+)'+[(y^+)']^\frac32 \right\},
\end{aligned}
\end{equation*}
where $ E_0=\{\Bx\in\Omega:~0<x_1<1\}$ and   $C_{10}$ is  a constant independent of $t$.
This, together with  the assumption \eqref{3-26} and Lemma \ref{lemmaA4}, implies that $y^+(t)$ is bounded. Hence the proof of the proposition is completed.
\end{proof}
Combining Propositions \ref{straight-existence}, \ref{uniqueness}, and \ref{Poiseuille} together finishes the proof of Theorem \ref{bounded channel}.

\section{Flows in channels with unbounded outlets}\label{secexist2}
In this section, we study the flows in channels with unbounded width. Recall the definition of $\beta$ which is given in \eqref{assumpf}. In the rest of this section, $(4\beta)^{-1}$ is used to here and there.  For convenience, denote
\begin{equation*}
	\beta^*:=(4\beta)^{-1}.
\end{equation*}
Clearly, one has
\begin{equation*}
	\|f'\|_{L^\infty}= 2\beta=(2\beta^*)^{-1}
\end{equation*}
and
\begin{equation}\label{4-17}
	\frac12 f(t)\leq f(\xi) \leq \frac32f(t) \text{ for any }\xi\in[t-\beta^* f(t),\,  t+\beta^* f(t)].
\end{equation}
Define
\begin{equation*}
	k(t):=\int_0^t f^{-\frac53}(\xi)\,d\xi
\end{equation*}
and let $h(t)$ be the inverse function of $k(t)$. Then one has
\begin{equation*}
	t=\int_0^{h(t)}f^{-\frac53}(\xi)\,d\xi
\quad \text{and}\quad
	h'(t)=f^{\frac53}(h(t)).
\end{equation*}
Denote 
\begin{equation}\label{defh}
	h_L(t)=h(-t)+\beta^*f(h(-t))  \text{ and }h_R(t)=h(t)-\beta^*f(h(t)).
\end{equation}
Direct computations give 
\begin{equation}\label{4-1}
	\frac{d}{dt}h_L(t)=-h'(-t)-\beta^*f'(h(-t))h'(-t)=-[1+\beta^*f'(h(-t))]f^\frac53(h(-t))\leq -\frac{d^\frac53}{2}
\end{equation}
and 
\begin{equation}\label{4-2}
	\frac{d}{dt}h_R(t)=h'(t)-\beta^*f'(h(t))h'(t)=[1-\beta^*f'(h(t))]f^\frac53(h(-t))\ge \frac{d^\frac53}{2}.
\end{equation}


The existence of the solutions for problem \eqref{NS}, \eqref{flux constraint} and \eqref{BC} is investigated in three cases, according to  the range of $k$.

{\bf Case 1. The range of $k(t)$ is $(-\infty, \infty)$.} In this case, the function $h(t)$ is defined on $(-\infty,\infty)$. It follows from \eqref{4-1} and \eqref{4-2} that for suitably large $t$, one has 
\begin{equation*}
	h_L(t)<h_R(t).
\end{equation*}
 Then we introduce a new truncating function  $\hat{\zeta}(\Bx,t)$ on $\Omega$ as follows,
\begin{equation} \label{cut-off-hat}
\hat{\zeta}(\Bx,t)=\left\{
\begin{aligned}
&0,~~~~~~~~~~~~~~ &&\text{ if }x_1\in (-\infty,h(-t))\cup(h(t),\infty),\\
&\frac{h(t)-x_1}{f(h(t))},~~~~~~&&\text{ if }x_1\in [h_R(t),\, h(t)],\\
&\beta^*,~~~~~~~~~~~~~~ &&\text{ if }x_1\in (h_L(t),\, h_R(t)),\\
&\frac{-h(-t)+x_1}{f(h(-t))},~~~~~~&&\text{ if }x_1\in [h(-t),\, h_L(t)].
\end{aligned}\right.
\end{equation}
For the sake of convenience, one denotes
\begin{equation}\label{4-6}
\hat{\Omega}_{t}=\{\Bx\in \Omega:x_1\in (h(-t),h(t))\}\ \ \ \text{ and } \ \ \ \breve{\Omega}_{t}=\hat{\Omega}_{t}\setminus \overline{\hat{ E}},
\end{equation}
where
$\hat{ E}=\hat{ E}^+\cup \hat{ E}^-$
with
\begin{equation}\label{4-7}
\hat{ E}^-=\{\Bx\in\Omega:x_1\in (h(-t),h_L(t))\},~\hat{ E}^+=\{\Bx\in\Omega:x_1\in (h_R(t),h(t))\}.
\end{equation}
Clearly, $\nabla \hat{\zeta}$ and $\partial_t\hat{\zeta}$ vanish outside $\hat{ E}$ and satisfy
\begin{equation}\label{4-3}
|\nabla\hat{\zeta}|=|\partial_{x_1}\hat{\zeta}|= [f(h(\pm t))]^{-1}\,\,\text{in}\,\,\hat{E}^\pm,
\end{equation}
and
\begin{equation}\label{4-4}
\partial_t\hat{\zeta}=\frac{h'(\pm t)}{f(h(\pm t))}\left[1\mp\frac{\pm h(\pm t)\mp x_1}{f(h(\pm t))}f'(h(\pm t))\right]\ge \frac12\frac{h'(\pm t)}{f(h(\pm t))}=\frac12[f(h(\pm t))]^\frac23\,\,\text{in}\,\,\hat{E}^\pm.
\end{equation}

With the help of the new truncating function $\hat{\zeta}(\Bx,t)$, we have the following lemma which is used to prove the uniform local estimate for approximate solutions.

\begin{lemma}\label{lemma4}
Assume that the domain $\Omega$ satisfies \eqref{deffbar}, and
\be \nonumber
\int_{-\infty}^0 f^{-\frac53}(\tau) \, d\tau = \infty, \ \ \ \ \ \int_0^{+\infty} f^{-\frac53}(\tau) \, d\tau = \infty.
\ee
Let $\Bv^T$ be the solution of the approximate problem \eqref{aNS} on $\hat{\Omega}_T$, which is obtained in Proposition \ref{appro-existence} and satisfies the energy estimate \eqref{2-10-1}. Then there exists a positive constant $C_{15}$ independent of $t$ and $T$ such that
\begin{equation}\label{4-9}
\|\nabla \Bv^T\|_{L^2(\breve{\Omega}_{t})}^2+\alpha\|\Bv^T\|_{L^2(\partial\breve{\Omega}_{t}\cap \partial\Omega)}^2\leq C_{15}\left(1+\int_{h(-t)}^{h(t)} f^{-3}(\tau)\,d\tau\right) \,\,\text{for any }t^* \leq t\leq T,
\end{equation}
 where
\begin{equation}\label{deft*}
t^*=\sup\{t>0:h_L(t)\ge  h_R(t)\}.
\end{equation}
\end{lemma}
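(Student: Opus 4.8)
The plan is to run the argument of Lemma \ref{lemma3}, but with the truncating function $\hat\zeta(\Bx,t)$ of \eqref{cut-off-hat} replacing $\zeta$, and to keep careful track of how the geometric constants scale with the local width $w_\pm:=f(h(\pm t))$ on the end regions $\hat E^\pm$ of \eqref{4-7}. Since $\int_0^{\pm\infty}f^{-5/3}=\infty$, the function $h$ is defined on all of $\R$, so $\hat\zeta(\cdot,t)$ makes sense, and $t^*$ in \eqref{deft*} is finite by \eqref{4-1}--\eqref{4-2}. For $t^*\le t\le T$ the function $\hat\zeta(\cdot,t)$ is supported in $\{h(-t)<x_1<h(t)\}$, which is contained in $\hat\Omega_T$ (compactly when $t<T$, and equal to it when $t=T$, with $\hat\zeta$ vanishing on $\Sigma(h(\mp T))$); hence $\hat\zeta\Bv^T\in\mcH(\hat\Omega_T)$ is admissible in the weak identity \eqref{2-11} for \eqref{aNS} on $\hat\Omega_T$, and by density of $\mathcal{C}_\sigma(\hat\Omega_T)$ in $\mcH_\sigma(\hat\Omega_T)$ the integration by parts below is legitimate for the Lipschitz $\hat\zeta$. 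Performing verbatim the computations of Steps 1--2 of Lemma \ref{lemma3} --- Lemma \ref{lemmaA3} together with \eqref{A3-3.5}--\eqref{A3-4}, the identity \eqref{1-11}, the rearrangements \eqref{2-6-1}--\eqref{2-7} with the cutoff inserted, and Lemmas \ref{lemmaA1} and \ref{lemma1} (the latter to absorb the $\varepsilon\Phi$--terms once $\varepsilon$ is fixed small) --- one obtains the analog of \eqref{3-8}:
\[
\mfc\int_{\hat\Omega_t}\hat\zeta|\nabla\Bv^T|^2\,dx+\alpha\int_{\partial\hat\Omega_t\cap\partial\Omega}\hat\zeta|\Bv^T|^2\,ds\le C\int_{\hat\Omega_t}|\nabla\Bg|^2+|\Bg|^4\,dx+\mathcal{R},
\]
where $\mathcal{R}$ collects over $\hat E=\hat E^+\cup\hat E^-$ the same boundary integrals carrying $\partial_{x_1}\hat\zeta$ that occur in \eqref{3-8}, namely the ones built from $\partial_{x_1}\hat\zeta\,\partial_{x_1}\Bv^T\cdot\Bv^T$, $\partial_{x_1}\hat\zeta\,v_1^T|\Bv^T|^2$, $\partial_{x_1}\hat\zeta\,(\Bv^T\cdot\Bg)v_1^T$, $\partial_{x_1}\hat\zeta\,p\,v_1^T$, and $\partial_{x_1}\hat\zeta\,(-\partial_{x_1}\Bg\cdot\Bv^T+(\Bg\cdot\Bv^T)g_1)$.

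On $\hat E^\pm$ one has $|\partial_{x_1}\hat\zeta|=w_\pm^{-1}$ by \eqref{4-3}, an $x_1$--extent of $\beta^*w_\pm$, and a cross-section comparable to $w_\pm$ by \eqref{4-17}; thus Lemmas \ref{lemmaA1}--\ref{lemmaA2} give $M_1(\hat E^\pm)\le Cw_\pm$ and $M_4(\hat E^\pm)\le Cw_\pm^{1/2}$. For the pressure term, $\int_{\hat E^\pm}v_1^T\,dx=0$ by the zero-flux condition, so Lemma \ref{lemmaA5} produces $\Ba\in H^1_0(\hat E^\pm)$ with $\operatorname{div}\Ba=v_1^T$; writing $\hat E^\pm$ as a union of a number of subdomains --- depending only on $\beta$ --- each star-shaped with respect to a ball of radius comparable to $w_\pm$, exactly as in the (rescaled) decomposition used in Lemma \ref{lemma3}, the ratios entering \eqref{A5-2}--\eqref{A5-3} are scale invariant, and $M_5(\hat E^\pm)\le C$ with $C$ independent of $t$. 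Running the estimates \eqref{3-9}--\eqref{3-14} with these constants, and using $|\Bg|\le C\Phi\,w_\pm^{-1}$ and $\int_{\hat E^\pm}|\nabla\Bg|^2+|\Bg|^4\le C(\Phi^2+\Phi^4)w_\pm^{-2}$ from Lemma \ref{lemma1}, the factors of $w_\pm$ coming from $M_1,M_4^2,M_5$ are cancelled by $|\partial_{x_1}\hat\zeta|=w_\pm^{-1}$, and one arrives at
\[
\mathcal{R}\le C\sum_\pm\Big(\int_{\hat E^\pm}|\nabla\Bv^T|^2\,dx+w_\pm\Big(\int_{\hat E^\pm}|\nabla\Bv^T|^2\,dx\Big)^{3/2}\Big)+C.
\]

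Set $y(t)=\int_{\hat\Omega_t}\hat\zeta|\nabla\Bv^T|^2\,dx+\alpha\int_{\partial\hat\Omega_t\cap\partial\Omega}\hat\zeta|\Bv^T|^2\,ds$. Because $\hat\zeta$ depends only on $(x_1,t)$ and vanishes at $x_1=h(\pm t)$, one computes $y'(t)=\int_{\hat E}\partial_t\hat\zeta\,|\nabla\Bv^T|^2\,dx+\alpha\int_{\partial\hat E\cap\partial\Omega}\partial_t\hat\zeta\,|\Bv^T|^2\,ds\ge\tfrac12\sum_\pm w_\pm^{2/3}\int_{\hat E^\pm}|\nabla\Bv^T|^2\,dx$ by \eqref{4-4}, whence $\int_{\hat E^\pm}|\nabla\Bv^T|^2\,dx\le 2w_\pm^{-2/3}y'(t)$. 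Since $w_\pm\ge d>0$, the quadratic terms of $\mathcal{R}$ are $\le Cy'(t)$ and the cubic terms are $\le Cw_\pm(2w_\pm^{-2/3}y'(t))^{3/2}=C(y'(t))^{3/2}$; combining with Lemma \ref{lemma1}, which bounds $\int_{\hat\Omega_t}|\nabla\Bg|^2+|\Bg|^4$ by $C(\Phi^2+\Phi^4)\int_{h(-t)}^{h(t)}f^{-3}(\tau)\,d\tau$, we get
\[
y(t)\le C_1\big(y'(t)+(y'(t))^{3/2}\big)+C_2\Big(1+\int_{h(-t)}^{h(t)}f^{-3}(\tau)\,d\tau\Big),\qquad t^*\le t\le T.
\]
Choose $\Psi(\tau)=C_1(\tau+\tau^{3/2})$, $\delta_1=\tfrac12$, and $\varphi(t)=C_{16}\big(1+\int_{h(-t)}^{h(t)}f^{-3}(\tau)\,d\tau\big)+C_{15}$, fixing first $C_{16}$ so large that $C_2\big(1+\int_{\hat\Omega_t}|\nabla\Bg|^2+|\Bg|^4\big)\le(1-\delta_1)\varphi(t)$ and (by \eqref{2-10-1} with $\hat\zeta\le\beta^*$) $y(T)\le\beta^*C_0\int_{\hat\Omega_T}|\nabla\Bg|^2+|\Bg|^4\le\varphi(T)$, and then $C_{15}$ so large that $\varphi(t)\ge C_{15}\ge\delta_1^{-1}\Psi(\varphi'(t))$ --- possible because $\varphi'(t)=C_{16}\big(f^{-4/3}(h(t))+f^{-4/3}(h(-t))\big)\le 2C_{16}d^{-4/3}$ is bounded, so $\delta_1^{-1}\Psi(\varphi'(t))$ is bounded by a $t$--independent constant. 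Then \eqref{A4-1}--\eqref{A4-2} hold, and Lemma \ref{lemmaA4}(1) yields $y(t)\le\varphi(t)$ on $[t^*,T]$. Since $\hat\zeta\equiv\beta^*$ on $\breve\Omega_t$, this gives $\|\nabla\Bv^T\|_{L^2(\breve\Omega_t)}^2+\alpha\|\Bv^T\|_{L^2(\partial\breve\Omega_t\cap\partial\Omega)}^2\le(\beta^*)^{-1}\varphi(t)$, which is \eqref{4-9} after renaming constants.

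The main obstacle is the scaling step: establishing that $M_5(\hat E^\pm)$ is uniform in $t$ even though $|\hat E^\pm|\to\infty$, and --- more delicately --- verifying that the powers of $w_\pm=f(h(\pm t))$ carried by $M_1(\hat E^\pm)$, $M_4(\hat E^\pm)$, $M_5(\hat E^\pm)$, by the slope $|\partial_{x_1}\hat\zeta|=w_\pm^{-1}$, and by the weight $\partial_t\hat\zeta\gtrsim w_\pm^{2/3}$ all cancel, so that the differential inequality for $y$ retains exactly the homogeneity of the bounded-channel case while the forcing term upgrades to $\int_{h(-t)}^{h(t)}f^{-3}$. It is precisely this balance that dictates the choice of slope $w_\pm^{-1}$ in the definition of $\hat\zeta$ and the substitution $k(t)=\int_0^t f^{-5/3}$.
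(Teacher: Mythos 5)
Your proposal is correct and follows essentially the same route as the paper's proof: the test function $\hat\zeta\Bv^T$ in \eqref{2-11}, the scalings $M_1(\hat E^\pm)\sim f(h(\pm t))$, $M_4(\hat E^\pm)\sim [f(h(\pm t))]^{1/2}$, $M_5(\hat E^\pm)$ uniform, the cancellation of these factors against $|\partial_{x_1}\hat\zeta|=[f(h(\pm t))]^{-1}$ and $\partial_t\hat\zeta\gtrsim [f(h(\pm t))]^{2/3}$, and the resulting differential inequality $\hat y\le C(\hat y'+(\hat y')^{3/2})+C\int_{h(-t)}^{h(t)}f^{-3}$ closed by Lemma \ref{lemmaA4} with the comparison function $\varphi$ and the terminal bound from \eqref{2-10-1}. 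The only differences are cosmetic (your splitting of the constants in $\varphi$ and the explicit rescaling justification for $M_5$, which the paper leaves as ``similar to Lemma \ref{lemma3}'').
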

\begin{proof}
The superscript $T$ will be omitted throughout the proof. The proof is quite similar to that for Lemma \ref{lemma3}. Taking the test function $\Bp=\hat{\zeta} \Bv$ in \eqref{2-11} yields
\begin{equation}\label{4-10}
\begin{aligned}
	&\frac{\mfc}{2}\int_{\hat{\Omega}_T}\hat{\zeta}|\nabla\Bv|^2\,dx+\alpha \int_{\partial\hat{\Omega}_T\cap \partial\Omega}\hat{\zeta}|\Bv|^2\,ds\\
	\leq &\int_{\hat{\Omega}_T}\hat{\zeta} (-\nabla \Bg:\nabla \Bv+\Bg\cdot\nabla\Bv\cdot \Bg)\,dx\\
	&+\int_{\hat{ E}} \left[ (\Bv\cdot \Bg)v_1+\frac12|\Bv|^2v_1+pv_1 \right] \partial_{x_1}\hat{\zeta}\,dx +  \left|\int_{\hat{ E}} \partial_{x_1}\hat{\zeta} \partial_{x_1}\Bv\cdot \Bv\, dx\right| \\
	&+\int_{\hat{ E}}\left[ -\partial_{x_1}\Bg\cdot \Bv+(\Bg\cdot\Bv)g_1 \right] \partial_{x_1}\hat{\zeta}\,dx.
\end{aligned}
\end{equation}

The estimates for the terms on the right hand side of \eqref{4-10} are quite similar to \eqref{3-9}-\eqref{3-14}. By virtue of \eqref{4-3} and Lemmas \ref{lemmaA1} and \ref{lemmaA2}, one obtains
\begin{equation}\label{4-11}
\left|\int_{\hat{ E}^\pm} \partial_{x_1}\hat{\zeta} \partial_{x_1}\Bv\cdot \Bv \, dx \right| \leq [f(h(\pm t))]^{-1}M_1(\hat{E}^\pm)\|\nabla\Bv\|_{L^2(\hat{ E}^\pm)}^2
\end{equation}
and
\begin{equation}\label{4-12}
\begin{aligned}
	\left|\int_{\hat{ E}^\pm}\frac12 v_1|\Bv|^2\partial_{x_1}\hat{\zeta}\,dx\right|\leq&\frac12 [f(h(\pm t))]^{-1} \|\Bv\|_{L^2(\hat{ E}^\pm)}\|\Bv\|_{L^4(\hat{ E}^\pm)}^2\\
	\leq&\frac12[f(h(\pm t))]^{-1}M_1(\hat{E}^\pm)M_4^2(\hat{E}^\pm)\|\nabla \Bv\|_{L^2(\hat{ E}^\pm)}^3.
\end{aligned}
\end{equation}
Using Lemmas \ref{lemma1} and \ref{lemmaA1}, Young's inequality, and \eqref{4-17} gives
\begin{equation}\label{4-13}
\begin{aligned}
	\left|\int_{\hat{ E}^\pm}(\Bv\cdot \Bg)v_1\partial_{x_1} \hat{\zeta}\,dx\right|\leq& C[f(h(\pm t))]^{-1}\|\Bg\|_{L^\infty(\hat{ E}^\pm)}\|\Bv\|_{L^2(\hat{ E}^\pm)}^2\\
	\leq& C[f(h(\pm t))]^{-2} M_1^2(\hat{E}^\pm) \|\nabla\Bv\|_{L^2(\hat{ E}^\pm)}^2
\end{aligned}
\end{equation}
and
\begin{equation}\label{4-14}
\begin{aligned}
	& \left|\int_{\hat{ E}^\pm}\left[ -\partial_{x_1}\Bg\cdot \Bv+(\Bg\cdot\Bv)g_1 \right] \partial_{x_1}\hat{\zeta}\,dx\right| \\
	\leq& [f(h(\pm t))]^{-2} M_1^2(\hat{E}^\pm)\|\nabla\Bv\|_{L^2(\hat{ E}^\pm)}^2 + C \int_{\hat{ E}^\pm} | \nabla \Bg|^2 + |\Bg|^4 \, dx.
\end{aligned}
\end{equation}
Furthermore, one has
\begin{equation}\label{4-15}
\left|\int_{\hat{\Omega}_T}\hat{\zeta} (-\nabla \Bg:\nabla \Bv+\Bg\cdot\nabla\Bv\cdot \Bg)\,dx\right|\leq \frac{\mfc}{4}\int_{\hat{\Omega}_T}\hat{\zeta}|\nabla\Bv|^2\,dx+C \int_{\hat{\Omega}_t} | \nabla \Bg|^2 + |\Bg|^4 \, dx.
\end{equation}

One applies Lemmas \ref{lemmaA1}, \ref{lemmaA2}, and \ref{lemmaA5} and integration by parts to conclude 
\begin{equation*}
	\begin{aligned}
		&\left|\int_{\hat{ E}^\pm}pv_1\partial_{x_1}\hat{\zeta}\,dx\right|=[f(h(\pm t))]^{-1}\left|\int_{\hat{ E}^\pm}p{\rm div}\,\Ba\,dx\right|\\
		=&[f(h(\pm t))]^{-1}\left|\int_{\hat{ E}^\pm}2\BD(\Bv):\BD(\Ba)+(\Bv\cdot \nabla \Bg +(\Bg+\Bv)\cdot\nabla \Bv-\Delta \Bg+\Bg\cdot \nabla \Bg)\cdot \Ba\,dx\right|\\
		=&[f(h(\pm t))]^{-1}\left|\int_{\hat{ E}^\pm}2\BD(\Bv):\BD(\Ba)-\Bv\cdot \nabla \Ba\cdot \Bg -(\Bg+\Bv)\cdot\nabla \Ba\cdot \Bv+\nabla \Bg:\nabla \Ba-\Bg\cdot \nabla \Ba\cdot\Bg\,dx\right|\\
		\le &C[f(h(\pm t))]^{-1}\|\nabla\Ba\|_{L^2(\hat{ E}^\pm)}\left(\|\nabla \Bv\|_{L^2(\hat{ E}^\pm)}+\|\Bv\|_{L^4(\hat{ E}^\pm)}^2+\|\nabla\Bg\|_{L^2(\hat{ E}^\pm)}+\|\Bg\|_{L^4(\hat{ E}^\pm)}^2\right)\\
		\leq&C[f(h(\pm t))]^{-1}M_5(\hat{E}^\pm)M_1(\hat{E}^\pm)\|\nabla \Bv\|_{L^2(\hat{ E}^\pm)}\left(\|\nabla \Bv\|_{L^2(\hat{ E}^\pm)}+M_4^2(\hat{E}^\pm)\|\nabla\Bv\|_{L^2(\hat{ E}^\pm)}^2\right.\\
		&\left.+\|\nabla\Bg\|_{L^2(\hat{ E}^\pm)}+\|\Bg\|_{L^4(\hat{ E}^\pm)}^2\right),
	\end{aligned}
\end{equation*}
where $\Ba\in H_0^1(\hat{E}^\pm)$ satisfies
\[
\text{div}~\Ba =v_1 \quad \text{in}\,\, \hat{E}^{\pm}
\]  
and
\begin{equation}\label{4-5}
\|\nabla \Ba\|_{L^2(\hat{E}^\pm)}\leq M_5(\hat{E}^\pm)\|v_1\|_{L^2(\hat{E}^\pm)}.
\end{equation}
Similar to the proof of Lemma \ref{lemma3}, the constant $M_5(\hat{E}^\pm)$ in \eqref{4-5} should also be  independent of $t$. Hence, using Young's inequality gives
\begin{equation}\label{4-16}
	\begin{aligned}
		\left|\int_{\hat{ E}^\pm}pv_1\partial_{x_1}\hat{\zeta}\,dx\right|\leq&C[f(h(\pm t))]^{-1}M_1(\hat{E}^\pm) \left(\|\nabla \Bv\|_{L^2(\hat{ E}^\pm)}^2+M_4^2\|\nabla\Bv\|_{L^2(\hat{ E}^\pm)}^3\right)\\
		&+C[f(h(\pm t))]^{-2} M_1^2(\hat{E}^\pm) \|\nabla\Bv\|_{L^2(\hat{ E}^\pm)}^2 + C \int_{\hat{ E}^\pm} |\nabla \Bg|^2 + |\Bg|^4 \, dx .
	\end{aligned}
\end{equation}

Define
\begin{equation*}
\hat{y}(t)=\int_{\hat{\Omega}_T}\hat{\zeta} |\nabla \Bv|^2\,dx+\alpha\int_{\partial\Omega\cap \partial \hat{\Omega}_T}\hat{\zeta} |\Bv|^2\,ds.
\end{equation*}
Combining the previous estimates yields
\begin{equation*}\begin{aligned}
	\hat{y}'(t)=&\int_{\hat{\Omega}_T}\partial_t\hat{\zeta} |\nabla \Bv|^2\,dx+\alpha\int_{\partial\Omega\cap \partial\hat{\Omega}_T}\partial_t\hat{\zeta} |\Bv|^2\,ds\\
	\ge&\frac12 [f(h(-t))]^\frac23\left(\int_{\hat{ E}^-} |\nabla \Bv|^2\,dx+\alpha\int_{\partial\Omega\cap \partial\hat{ E}^-}|\Bv|^2\,ds\right)\\
	&+\frac12 [f(h(t))]^\frac23\left(\int_{\hat{ E}^+} |\nabla \Bv|^2\,dx+\alpha\int_{\partial\Omega\cap \partial\hat{ E}^+}|\Bv|^2\,ds\right).
\end{aligned}
\end{equation*}
It follows from Lemmas \ref{lemmaA1} and \ref{lemmaA2} that there exists a uniform constant $C>0$ such that the constants $M_1(\hat{E}^\pm)$ and $M_4(\hat{E}^\pm)$ appeared in \eqref{4-11}-\eqref{4-16} satisfy
\begin{equation*}
C^{-1} f(h(\pm t)) \leq M_1(\hat{E}^\pm)\leq C f(h(\pm t))\ \ \ \mbox{and}\ \ \ C^{-1} [f(h(\pm t))]^\frac12 \leq M_4(\hat{E}^\pm)\leq C [f(h(\pm t))]^\frac12.
\end{equation*}
Using Lemma \ref{lemma1}, one can combine \eqref{4-10}-\eqref{4-16} to conclude
\begin{equation*}
\begin{aligned}
	\hat{y}(t)
	\leq& C\|\nabla \Bv\|_{L^2(\hat{E})}^2+Cf(h(-t))\|\nabla \Bv\|_{L^2(\hat{E}^-)}^3+Cf(h(t))\|\nabla \Bv\|_{L^2(\hat{E}^+)}^3+C\int_{\hat{\Omega}_t } |\nabla \Bg|^2 + |\Bg|^4 \, dx\\
	\leq& C\left\{[f(h(-t))^{-\frac23}+f(h(t))^{-\frac23}]\hat{y}'(t)+  [\hat{y}'(t)]^\frac32\right\}+C\int_{h(-t)}^{h(t)} f^{-3}(\tau)\,d\tau\\
	\leq &C_{11}\left\{\hat{y}'(t)+  [\hat{y}'(t)]^\frac32 \right\}+C_{12}\int_{h(-t)}^{h(t)} f^{-3}(\tau)\,d\tau.
\end{aligned}
\end{equation*}

Define
\begin{equation*}
\hat\Psi(\tau)=C_{11}\left(\tau+ \tau^\frac{3}{2}  \right)\quad
\text{and}
\quad
\hat\varphi(t)=C_{13}+C_{14}\int_{h(-t)}^{h(t)} f^{-3}(\tau)\,d\tau,
\end{equation*}
where $C_{13}$ and $C_{14}$ are large enough such that
\begin{equation*}
C_{12}\int_{h(-t)}^{h(t)} f^{-3}(\tau)\,d\tau\leq \frac12\varphi(t)\ \  \text{ and }\ \
\hat\varphi(t)\ge 2 \hat\Psi(\hat\varphi'(t))\quad \text{for any }t\geq t^*.
\end{equation*}
 This holds since
\begin{equation*}
\begin{aligned}
	|\hat\varphi'(t)|=&C_{14}\left|\frac{d}{dt}\int_{h(-t)}^{h(t)}f^{-3}(\tau)\,d\tau\right|
	=C_{14} \left|\frac{h'( t)}{[f(h( t))]^3} + \frac{h'(-t)}{[f(h(-t))]^3}\right| \\
	\leq& C_{14}[f(h(t))]^{-\frac43} +C_{14}[f(h(-t))]^{-\frac43} \\
	\leq& 2 C_{14} d^{-\frac43},
\end{aligned}
\end{equation*}
where $d$ is defined in \eqref{assumpf}. The estimate \eqref{2-10-1} shows
\begin{equation*}
\hat{y}(T)=\|\hat{\zeta}(\cdot, T)^\frac12\nabla \Bv\|_{L^2(\Omega)}^2+\alpha\|\hat{\zeta}(\cdot, T)^\frac12\Bv\|_{L^2(\partial\Omega)}^2\leq C_0\int_{\hat{\Omega}_T} | \nabla \Bg|^2 + |\Bg|^4 \, dx \leq \hat\varphi(T),
\end{equation*}
provided  $C_{13}$ and $C_{14}$ are large enough. Hence it follows from Lemma \ref{lemmaA4}  that for any $ t^*\leq t\leq T$, one has
\begin{equation*}
\hat{y}(t)=\|\hat{\zeta}(\cdot, t)^\frac12\nabla \Bv \|_{L^2(\Omega )}^2+\alpha\|\hat{\zeta}(\cdot, t)^\frac12\Bv\|_{L^2(\partial \Omega)}^2\leq C_{13}+C_{14}\int_{h(-t)}^{h(t)} f^{-3}(\tau)\,d\tau.
\end{equation*}
 In particular, one has
\begin{equation*}
\|\nabla \Bv \|_{L^2(\breve{\Omega}_t )}^2+\alpha\|\Bv\|_{L^2(\partial \breve{\Omega}_t\cap \partial \Omega)}^2\leq C_{13}(\beta^*)^{-1}+C_{14}(\beta^*)^{-1}\int_{h(-t)}^{h(t)} f^{-3}(\tau)\,d\tau.
\end{equation*}
This finishes the proof of the lemma.
\end{proof}

With the help of Lemma \ref{lemma4}, one could find at least one  solution of \eqref{NS1} in a way similar to Proposition \ref{straight-existence}.
\begin{pro}\label{unbounded exits-1}  Assume that the domain $\Omega$ satisfies the assumptions in Lemma \ref{lemma4},  the problem \eqref{NS} and \eqref{flux constraint}-\eqref{BC} has a solution $\Bu=\Bv+\Bg\in H_\sigma(\Omega)$ satisfying
\begin{equation}
\|\nabla \Bu\|_{L^2(\breve{\Omega}_{t})}^2+\alpha\|\Bu\|_{L^2(\partial\breve{\Omega}_{t}\cap \partial\Omega)}^2\leq C_{16}\left(1+\int_{h(-t)}^{h(t)} f^{-3}(\tau)\,d\tau\right),
\end{equation}
where the constant $C_{16}$ depends only on $\alpha$, $\Phi$, and $\Omega$.
\end{pro}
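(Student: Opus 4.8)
The plan is to follow the strategy already used for Proposition \ref{straight-existence}, but with the family of truncated domains $\Omega_T$ replaced by the domains $\hat{\Omega}_T=\Omega_{h(-T),h(T)}$ adapted to the growth of $f$, and with Lemma \ref{lemma4} playing the role of Lemma \ref{lemma3}. First I would fix a sequence $T_n\to+\infty$ and, for each $n$, invoke Proposition \ref{appro-existence} (with $a=h(-T_n)$, $b=h(T_n)$) to obtain a weak solution $\Bv^{T_n}\in\mcH_\sigma(\hat{\Omega}_{T_n})$ of the approximate problem \eqref{aNS} on $\hat{\Omega}_{T_n}$ satisfying the energy bound \eqref{2-10-1}; its right-hand side is finite because $f$ obeys \eqref{assumpf''}, so Lemma \ref{lemma1} applies. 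Lemma \ref{lemma4} then yields, for every fixed $t\ge t^*$ and all $T_n\ge t$,
\begin{equation*}
\|\nabla\Bv^{T_n}\|_{L^2(\breve{\Omega}_t)}^2+\alpha\|\Bv^{T_n}\|_{L^2(\partial\breve{\Omega}_t\cap\partial\Omega)}^2\leq C_{15}\left(1+\int_{h(-t)}^{h(t)}f^{-3}(\tau)\,d\tau\right),
\end{equation*}
with $C_{15}$ independent of both $n$ and $t$. Since we are in Case 1, the range of $k$ is all of $\R$, hence $h(\pm t)\to\pm\infty$ as $t\to\infty$ and the sets $\breve{\Omega}_t$ exhaust $\Omega$; together with the elementary local $H^1$ bound on the relatively compact piece $\breve{\Omega}_{t^*}$, this makes $\{\Bv^{T_n}\}$ bounded in $H^1_{\mathrm{loc}}(\Omega)$.

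Next I would extract, by a diagonal argument, a subsequence (not relabeled) converging to some $\Bv$ weakly in $H^1(\breve{\Omega}_t)$ for every $t$, strongly in $L^4$ on each bounded subdomain (compactness of $H^1\hookrightarrow L^4$), and strongly in $L^2$ on the corresponding pieces of $\partial\Omega$ (compactness of the trace map). The properties ${\rm div}\,\Bv=0$ in $\Omega$ and $\Bv\cdot\Bn=0$ on $\partial\Omega$ pass to the limit, so $\Bv\in H_\sigma(\Omega)$. To identify $\Bv$ as a weak solution of \eqref{NS1}, I would fix $T>0$ and $\Bp\in\mcH_\sigma(\Omega_T)$, observe that $\overline{\Omega_T}\subset\hat{\Omega}_{T_n}$ for $n$ large so that $\Bp$ (extended by zero) is an admissible test function in \eqref{2-1} on $\hat{\Omega}_{T_n}$, and then let $n\to\infty$: the $\BD(\Bv^{T_n}):\BD(\Bp)$, the $\Bv^{T_n}\cdot\nabla\Bg\cdot\Bp$, and the boundary terms converge by weak $H^1$ and trace convergence, the fixed right-hand side $\int(\Delta\Bg-\Bg\cdot\nabla\Bg)\cdot\Bp$ is unchanged, and the convective term $\int_{\Omega_T}(\Bg+\Bv^{T_n})\cdot\nabla\Bv^{T_n}\cdot\Bp\,dx$---rewritten after integration by parts (using ${\rm div}\,\Bv^{T_n}=0$, $\Bg\cdot\Bn=\Bv^{T_n}\cdot\Bn=0$, and $\Bp\equiv0$ near $\Sigma(h(\pm T_n))$) as $-\int_{\Omega_T}(\Bg+\Bv^{T_n})\cdot\nabla\Bp\cdot\Bv^{T_n}\,dx$---converges by the strong $L^4(\Omega_T)$ convergence of $\Bv^{T_n}$. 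As $T$ is arbitrary, $\Bv$ solves \eqref{NS1} weakly, whence $\Bu:=\Bv+\Bg\in H_\sigma(\Omega)$ is a weak solution of \eqref{NS}, \eqref{flux constraint}, \eqref{BC}.

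Finally, for the quantitative bound I would pass to the limit in the displayed inequality using weak lower semicontinuity of the $L^2$ norms,
\begin{equation*}
\|\nabla\Bv\|_{L^2(\breve{\Omega}_t)}^2+\alpha\|\Bv\|_{L^2(\partial\breve{\Omega}_t\cap\partial\Omega)}^2\leq\liminf_{n\to\infty}\left(\|\nabla\Bv^{T_n}\|_{L^2(\breve{\Omega}_t)}^2+\alpha\|\Bv^{T_n}\|_{L^2(\partial\breve{\Omega}_t\cap\partial\Omega)}^2\right)\leq C_{15}\left(1+\int_{h(-t)}^{h(t)}f^{-3}(\tau)\,d\tau\right),
\end{equation*}
and then convert this estimate for $\Bv$ into one for $\Bu=\Bv+\Bg$: since $\Bg$ vanishes near $\partial\Omega$ the boundary norms of $\Bu$ and $\Bv$ coincide, while $\|\nabla\Bu\|_{L^2(\breve{\Omega}_t)}\leq\|\nabla\Bv\|_{L^2(\breve{\Omega}_t)}+\|\nabla\Bg\|_{L^2(\breve{\Omega}_t)}$, and the last term is controlled by Lemma \ref{lemma1} through $\|\nabla\Bg\|_{L^2(\breve{\Omega}_t)}^2\leq C(\varepsilon,\gamma)(\Phi^2+\Phi^4)\int_{h(-t)}^{h(t)}f^{-3}(\tau)\,d\tau$; absorbing all constants gives $C_{16}$ depending only on $\alpha$, $\Phi$, and $\Omega$. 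I expect the main obstacle to be bookkeeping rather than analytic: one must keep track that \eqref{4-9} is available only for $t\ge t^*$ (for $t<t^*$ the region $\breve{\Omega}_t$ is empty or degenerate and the estimate is trivial), and that it is exactly the Case 1 hypothesis $\int_0^{+\infty}f^{-\frac53}(\tau)\,d\tau=\int_{-\infty}^0 f^{-\frac53}(\tau)\,d\tau=\infty$ that forces $h(\pm t)\to\pm\infty$, so that $\breve{\Omega}_t\uparrow\Omega$ and every $\mcH_\sigma(\Omega_T)$ test function is eventually admissible on the $\hat{\Omega}_{T_n}$.
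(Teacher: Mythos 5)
Your proposal is correct and follows essentially the same route as the paper: the paper obtains Proposition \ref{unbounded exits-1} exactly by combining the uniform local estimate of Lemma \ref{lemma4} for the approximate solutions on $\hat{\Omega}_T$ with the limiting/compactness argument already used for Proposition \ref{straight-existence}, and then controlling $\Bg$ via Lemma \ref{lemma1}. Your version merely spells out the diagonal extraction, the passage to the limit in the weak formulation, and the $t\ge t^*$ bookkeeping, all of which are consistent with the paper's (abbreviated) argument.
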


Next, we prove that the solution $\Bu$ satisfies the estimate \eqref{1-9}.
\begin{pro}\label{unbounded exits-2}
Let $\Bu=\Bv+\Bg$ be the solution obtained in Proposition \ref{unbounded exits-1}. There exists a constant $C_{21}$ depending only on $\alpha$, $\Phi$, and $\Omega$ such that for any $t\ge 0$, one has
\begin{equation}\label{est111.5}
\|\nabla \Bu\|_{L^2(\Omega_{0,t})}^2+\alpha\|\Bu\|_{L^2(\partial\Omega_{0,t}\cap \partial\Omega)}^2\leq C_{21}\left(1+\int_0^t f^{-3}(\tau )\,d\tau\right)
\end{equation}
and
\begin{equation}\label{est11.6}
\|\nabla \Bu\|_{L^2(\Omega_{-t,0})}^2+\alpha\|\Bu\|_{L^2(\partial\Omega_{-t,0}\cap \partial\Omega)}^2\leq C_{21}\left(1+\int_{-t}^0 f^{-3}(\tau )\,d\tau\right).
\end{equation}
\end{pro}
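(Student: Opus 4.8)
The plan is to prove \eqref{est111.5} (the estimate \eqref{est11.6} being symmetric) by running a one-sided version of the localized energy estimate of Lemma \ref{lemma4}, using Proposition \ref{unbounded exits-1} only as a crude a priori input. For $t$ large enough that $h_R(t)>0$ (which holds eventually since $\|f'\|_{L^\infty}=(2\beta^*)^{-1}$ forces $\beta^*f(h(t))<h(t)$ for $h(t)$ large), I introduce a truncating function $\eta(\Bx,t)$ on $\Omega$ that equals $\beta^*$ on $\{0<x_1<h_R(t)\}$, decreases linearly to $0$ on the \emph{fixed} junction strip $\{-1<x_1<0\}$, and decreases to $0$ on the moving layer $\hat{E}^+=\{h_R(t)<x_1<h(t)\}$ with slope $[f(h(t))]^{-1}$, so that \eqref{4-3}--\eqref{4-4} hold on $\hat{E}^+$. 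Taking $\Bp=\eta\Bv$ with $\Bv=\Bu-\Bg$ in \eqref{3-14-1} and reproducing the computations in the proofs of Lemmas \ref{lemma3} and \ref{lemma4}, the terms supported on $\hat{E}^+$ are estimated exactly as in \eqref{4-10}--\eqref{4-16} (using Lemmas \ref{lemmaA1}, \ref{lemmaA2}, \ref{lemma1}, \ref{lemmaA5}, the layer bound \eqref{4-17}, $M_1(\hat{E}^+)\sim f(h(t))$, $M_4(\hat{E}^+)\sim[f(h(t))]^{1/2}$, and the $t$-independence of the Bogovskii constant of $\hat{E}^+$ as in Lemma \ref{lemma4}), while the terms supported on the fixed strip $\Omega_{-1,0}$ are bounded by an \emph{absolute} constant, since on this fixed Lipschitz domain $\|\nabla\Bv\|_{L^2},\|\Bv\|_{L^4},\|\nabla\Bg\|_{L^2},\|\Bg\|_{L^4},\|p\|_{L^2}$ are all controlled by Proposition \ref{unbounded exits-1} and $\Bg\in C^2(\overline\Omega)$, and $v_1\in L^2_0(\Omega_{-1,0})$ because $\Bv$ has zero flux (so the pressure term is handled by a fixed Bogovskii map as in \eqref{3-13-1}). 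Setting $y(t)=\int_\Omega\eta|\nabla\Bv|^2\,dx+\alpha\int_{\partial\Omega}\eta|\Bv|^2\,ds$, this yields, for all $t\ge t_0$,
\[
y(t)\le C_{17}\left\{y'(t)+[y'(t)]^{\frac32}\right\}+C_{18}\left(1+\int_0^{h(t)}f^{-3}(\tau)\,d\tau\right).
\]

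Next I set $\Psi(s)=C_{17}(s+s^{\frac32})$, $\delta_1=\frac12$, and $\varphi(t)=C_{19}+C_{20}(1+\int_0^{h(t)}f^{-3})$. Since $\varphi'(t)=C_{20}f^{-3}(h(t))h'(t)=C_{20}f^{-\frac43}(h(t))\le C_{20}d^{-\frac43}$ is bounded, for $C_{19},C_{20}$ large the hypotheses \eqref{A4-1}--\eqref{A4-2} hold, and the positive solution $\widetilde z$ of $\widetilde z=\delta_1^{-1}\Psi(\widetilde z')$ grows like $t^3$. To invoke part (2) of Lemma \ref{lemmaA4} it therefore suffices to show $y(t)=o(t^3)$, and this is the point where Proposition \ref{unbounded exits-1} enters. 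Because $\supp\eta(\cdot,t)\subset\Omega_{-1,h(t)}$ and, by \eqref{4-17}, one checks $\Omega_{-1,h(t)}\subset\breve{\Omega}_{s(t)}$ with $s(t)\le t+C_*$ for a fixed $C_*=C_*(\beta^*,d)$, Proposition \ref{unbounded exits-1} together with $\Bu=\Bv+\Bg$, Lemma \ref{lemma1}, and the substitution $x_1=h(\tau)$ give
\[
y(t)\le C\left(1+\int_{h(-t-C_*)}^{h(t+C_*)}f^{-3}(x_1)\,dx_1\right)=C\left(1+\int_{-t-C_*}^{t+C_*}f^{-\frac43}(h(\tau))\,d\tau\right)\le C(1+t),
\]
so $y(t)=o(t^3)$, and Lemma \ref{lemmaA4}(2) yields $y(t)\le\varphi(t)$ for all $t\ge t_0$.

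Finally I translate this back to the $x_1$-variable. Given $R>0$ large, choose the least $t$ with $h_R(t)\ge R$; then $h_R(t)=R$, $\eta(\cdot,t)\equiv\beta^*$ on $\Omega_{0,R}$, and $\int_0^{h(t)}f^{-3}\le\int_0^R f^{-3}+8\beta^*d^{-2}$ by \eqref{4-17}. Since $\Bg$ vanishes near $\partial\Omega$ and $\|\nabla\Bg\|_{L^2(\Omega_{0,R})}^2\le C\int_0^R f^{-3}$ by Lemma \ref{lemma1}, it follows that
\[
\|\nabla\Bu\|_{L^2(\Omega_{0,R})}^2+\alpha\|\Bu\|_{L^2(\partial\Omega_{0,R}\cap\partial\Omega)}^2\le 3(\beta^*)^{-1}y(t)+C\int_0^R f^{-3}\le C_{21}\left(1+\int_0^R f^{-3}(\tau)\,d\tau\right),
\]
which is \eqref{est111.5}; the remaining bounded range of $R$ is covered directly by Proposition \ref{unbounded exits-1}. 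The estimate \eqref{est11.6} follows from the mirror construction, with the fixed junction strip at $\{0<x_1<1\}$ and the moving layer at the left end $\{h(-t)<x_1<h_L(t)\}$.

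The principal obstacle is that Proposition \ref{unbounded exits-1} only provides a \emph{two-sided} bound, governed by $1+\int_{h(-t)}^{h(t)}f^{-3}$, so the terminal-value mechanism of Lemma \ref{lemmaA4}(1) cannot be applied directly: the left half-channel contribution $\int_{h(-t)}^0 f^{-3}$ may dwarf the target $\int_0^t f^{-3}$. The resolution is to extract from Proposition \ref{unbounded exits-1} only the crude linear bound $y(t)=O(1+t)$ — which comes for free from the change of variables turning $\int f^{-3}\,dx_1$ into $\int f^{-4/3}(h(\cdot))\,d\tau\le d^{-4/3}\cdot(\text{length})$ — and then let the differential inequality itself, via the second alternative in Lemma \ref{lemmaA4}(2), sharpen $O(1+t)$ down to $O(1+\int_0^t f^{-3})$. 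The routine-but-delicate part is reproducing the moving-layer estimates with the precise growth of $M_1(\hat{E}^+)$ and $M_4(\hat{E}^+)$ so that the coefficients of $y'$ and $[y']^{3/2}$ stay bounded, exactly as in the proof of Lemma \ref{lemma4}.
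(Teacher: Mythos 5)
Your proposal is correct and follows essentially the same route as the paper: a one-sided truncating function that is constant on $(0,h_R(t))$, fixed-strip terms bounded by a constant and moving-layer terms estimated as in Lemma \ref{lemma4}, the resulting differential inequality closed via the second alternative of Lemma \ref{lemmaA4}(2) using the crude $O(1+t)$ bound obtained from the two-sided estimate and the substitution $x_1=h(\tau)$, and finally the conversion $\int_0^{h(t)}f^{-3}\leq \int_0^{h_R(t)}f^{-3}+C$ via \eqref{4-17} together with Lemma \ref{lemma1}. The only deviations (junction strip on $\{-1<x_1<0\}$ rather than $\{0<x_1<1\}$, and invoking Proposition \ref{unbounded exits-1} instead of the intermediate bound from the proof of Lemma \ref{lemma4} for the $o(t^3)$ check) are cosmetic.
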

\begin{proof}
It's sufficient to prove \eqref{est111.5} since the proof for \eqref{est11.6} is similar.  First, for $t$ suitably large,  we introduce the following truncating function
\begin{equation*}
\hat{\zeta}^+(\Bx,t)=\left\{
\begin{aligned}
	&0, ~~~~~~&&\text{ if }x_1\in (-\infty,\,0),\\
	&\beta^* x_1, ~~~~~~&&\text{ if }x_1\in [0,\,1],\\
	&\beta^*,~~~~~~~~~~~~~~ &&\text{ if }x_1\in (1,\,h_R(t)),\\
	&\frac{h(t)-x_1}{f(h(t))},~~~~~~&&\text{ if }x_1\in [h_R(t),\, h(t)],\\
	&0,~~~~~~~~~~~~~~ &&\text{ if }x_1\in (h(t),\, \infty),
\end{aligned}\right.
\end{equation*}
where $h_R(t)$ is defined in \eqref{defh}. Taking the test function $\Bp=\hat{\zeta}^+ \Bv$ in \eqref{2-11} and following  proof of  Lemma \ref{lemma4} similarly, one has
\begin{equation*}
\begin{aligned}
	\hat{y}^+\leq& C\left\{\int_{ E_0}|\nabla \Bv|^2\,dx+\left(\int_{ E_0}|\nabla \Bv|^2\,dx\right)^\frac32+(\hat{y}^+)'+[(\hat{y}^+)']^\frac32\right\}+C\int_{0}^{h(t)}f^{-3}(\tau)\,d\tau\\
	\leq& C\left\{1+[(\hat{y}^+)']^\frac32\right\}+C\int_{0}^{h(t)}f^{-3}(\tau)\,d\tau\\
	\leq& C_{17}[ (\hat{y}^+)']^\frac32 +C_{18}\left(1+\int_{0}^{h(t)}f^{-3}(\tau)\,d\tau\right),
\end{aligned}
\end{equation*}
where $E_0 = \{\Bx\in \Omega:\ 0<x_1 < 1 \}$ and
\begin{equation*}
\hat{y}^+(t)=\int_{\Omega}\hat{\zeta}^+|\nabla \Bv |^2\,dx+\alpha\int_{\partial\Omega}\hat{\zeta}^+| \Bv |^2\,ds.
\end{equation*}
Set
\begin{equation*}
 \delta_1=\frac12,\quad  \tilde\Psi(\tau)=C_{17}\tau^\frac32,\quad \text{and}\quad \tilde\varphi(t)=C_{19}+C_{20}\int_{0}^{h(t)}f^{-3}(\tau)\,d\tau.
\end{equation*}
Similar to the proof of Lemma \ref{lemma4}, we choose the constants $C_{19}$ and $C_{20}$ to be sufficiently large such that
\begin{equation*}
C_{18}\left(1+\int_{0}^{h(t)}f^{-3}(\tau)\,d\tau\right)\leq \frac{1}{2}\tilde\varphi(t)\ \ \text{ and }\ \ \tilde\varphi(t)\ge 2 \tilde\Psi(\tilde\varphi'(t)).
\end{equation*}
It also follows from the proof of Lemma \ref{lemma4} that one has
\begin{equation*}
\hat{y}^+(t) \leq C_{13} + C_{14}\int_{h(-t)}^{h(t)} f^{-3}(\tau)\,d\tau
\end{equation*}
and
\begin{equation*}
\left|\frac{d}{dt}\int_{h(-t)}^{h(t)} f^{-3}(\tau)\,d\tau\right|\leq  C  d^{-\frac43}.
\end{equation*}
Hence, it holds that 
\begin{equation*}
\liminf_{t\rightarrow + \infty} \frac{\hat{y}^+(t) }{\tilde{z}(t)} = 0,
\end{equation*}
where $\tilde{z}(t)=\frac{1}{108C_{17}^2}t^3$ is a nonnegative solution to the ordinary differential equation
\begin{equation*}
\tilde{z}(t)=\delta_1^{-1}\Psi(\tilde{z}'(t))=2C_{17}[\tilde{z}'(t)]^\frac32.
\end{equation*}
It follows from Lemma \ref{lemmaA4} that one has
\begin{equation}\label{4-24}
\hat{y}^+(t)\leq C_{19}+C_{20}\int_{0}^{h(t)} f^{-3}(\tau)\,d\tau.
\end{equation}
With the help of \eqref{4-17}, one has further
\begin{equation}\label{4-25}
\begin{aligned}
	\int_{0}^{h(t)} f^{-3}(\tau)\,d\tau=&\int_{0}^{h_R(t)} f^{-3}(\tau)\,d\tau+\int_{h_R(t)}^{h(t)} f^{-3}(\tau)\,d\tau\\
	\leq&\int_{0}^{h_R(t)} f^{-3}(\tau)\,d\tau+  \max_{\xi\in [h_R(t),\, h(t)]}f^{-3}(\xi )\cdot \beta^* f(h(t))\\
	\leq&\int_{0}^{h_R(t)} f^{-3}(\tau)\,d\tau+ 2^3 \beta^* f^{-2}(h(t))\\
	\leq&\int_{0}^{h_R(t)} f^{-3}(\tau)\,d\tau+  2^3 \beta^*  d ^{-2},
\end{aligned}
\end{equation}
where $h_R(t)$ is defined in \eqref{defh}. Combining  \eqref{4-24} and \eqref{4-25} yields
\begin{equation*}
\|\nabla \Bv\|_{L^2(\Omega_{0,h_R(t)})}^2+\alpha\|\Bv\|_{L^2(\partial\Omega_{0,h_R(t)}\cap \partial\Omega)}^2\leq C_{21}\left(1+\int_{0}^{h_R(t)} f^{-3}(\tau)\,d\tau\right).
\end{equation*}
This, together with Lemma \ref{lemma1}, finishes the proof of the proposition.
\end{proof}

Hence we finish the proof for Part (i) of Theorem \ref{unbounded channel} in the case that the range of $k(t)$ is $(-\infty, \infty)$.


{\bf Case 2. The range of $k(t)$ is $(-L, \, R)$, $0< L, R< \infty.$}  In this case, it holds that
\be \nonumber
\int_{-\infty}^{+\infty} f^{-\frac53}(\tau) \, d\tau= R+ L < \infty.
\ee
Let $v^T$ be the solution of the approximate problem \eqref{aNS} on $\Omega_T$, which is obtained in Proposition \ref{appro-existence} and satisfies \eqref{2-10-1}. Hence, one has 
\be \nonumber
\int_{\Omega_T} |\nabla v^T |^2 \, dx + \alpha \int_{\partial \Omega_T \cap \partial \Omega} |v^T|^2 \, ds
\leq C \int_{-T}^T f^{-3}(\tau)\, d\tau  \leq C \int_{-\infty}^{+\infty} f^{-\frac53} (\tau) \, d\tau.
\ee
With the help of this uniform estimate and Lemma \ref{lemma1}, there exists at least one solution of \eqref{NS1}, which satisfies the estimate
\be \label{estimate-case2}
\| \nabla \Bu\|_{L^2(\Omega)}^2 + \alpha \|\Bu\|_{L^2(\partial \Omega)}^2 \leq C.
\ee
Hence we finish the proof for Part (i) of Theorem \ref{unbounded channel} in the case that the range of $k(t)$ is $(-L, R)$.

\vspace{3mm}
{\bf Case 3. The range of $k(t)$ is $(-L, \, \infty)$ or $(-\infty, \, R)$, $0<L, R< \infty$.}  Without loss of generality, we assume that
the range of $k(t)$ is $(-L, \infty)$. In this case, $h(t)$ is defined on $(-L,\infty)$. By \eqref{4-2}, one has $h_R(t)=h(t)-\beta^*f(h(t))>0$ for suitably large $t$. Then we introduce the new truncating function as follows,
\begin{equation} \label{cut-off-hat-new}
\hat{\zeta}^L_T (\Bx,t)=\left\{
\begin{aligned}
		&\beta^*,~~~~~~~~~~~~~~ &&\text{ if }x_1\in (-T,\, h_R(t)),\\
	&\frac{h(t)-x_1}{f(h(t))},~~~~~~&& \text{ if } x_1\in [h_R(t),\, h(t)],\\
	&0,~~~~~~~~~~~~~~ &&\text{ if } x_1\in (h(t),\,\infty),
\end{aligned}\right.
\end{equation}
where $h_R(t)$ is defined in \eqref{defh}.  Denote 
\begin{equation}\label{defhatt}
	\hat{t}=\sup\{t>0:\ h_R(t) \leq 0 \}.
\end{equation}

\begin{lemma}\label{lemma-case3}  Assume that the domain $\Omega$ satisfies \eqref{assumpf''},  and
\be \nonumber
\int_{-\infty}^0 f^{-\frac53}(\tau) \, d\tau = L< \infty, \ \ \ \ \ \int_0^{+\infty} f^{-\frac53}(\tau) \, d\tau = \infty.
\ee
Let $\Bv^T$ be the solution of the approximate problem \eqref{aNS} in $\Omega_{-T, h(T)}$, which is obtained in Proposition \ref{appro-existence} and satisfies the energy estimate \eqref{2-10-1}. Then there exists a positive constant $C_{22}$ independent of $t$ and $T$ such that for any $\hat{t} \leq t\leq T$, one has
\begin{equation}\label{case3-1}
	\|\nabla \Bv^T\|_{L^2(\Omega_{-T,\,  h_R(t)} )}^2+ \alpha\|\Bv^T\|_{L^2(\partial\Omega_{-T, \,h_R(t)}\cap \partial\Omega)}^2\leq C_{22}\left(1+\int_0^{h(t)} f^{-3}(\tau)\,d\tau\right),
\end{equation}
 where $h_R(t)$ is defined in \eqref{defh} and $C_{22}$ is independent of $T$. 
\end{lemma}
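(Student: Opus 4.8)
The plan is to repeat, essentially line by line, the argument of Lemma \ref{lemma4}, the one structural simplification being that $\hat{\zeta}^L_T$ is \emph{not} truncated on the left: it equals the constant $\beta^*$ on $\{-T<x_1<h_R(t)\}$, and since $\Bv^T$ vanishes on $\Sigma(-T)$ there is no analogue of the region $\hat E^-$. Consequently every error term carrying a factor $\partial_{x_1}\hat{\zeta}^L_T$ is supported in the single transition layer $\hat E^+=\{h_R(t)<x_1<h(t)\}$, which for $t\ge\hat t$ (with $\hat t$ as in \eqref{defhatt}) lies entirely in $\{x_1>0\}$.

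First I would take the test function $\Bp=\hat{\zeta}^L_T\Bv$ in the weak identity \eqref{2-11} on $\Omega_{-T,h(T)}$. Following \eqref{4-10}, Lemma \ref{lemmaA3} together with the boundary computations \eqref{A3-3.5}--\eqref{A3-4} and the integration-by-parts manipulations of Lemmas \ref{lemma3} and \ref{lemma4} give
\begin{equation*}
\frac{\mfc}{2}\int\hat{\zeta}^L_T|\nabla\Bv|^2\,dx+\alpha\int\hat{\zeta}^L_T|\Bv|^2\,ds
\le\int\hat{\zeta}^L_T\big(-\nabla\Bg:\nabla\Bv+\Bg\cdot\nabla\Bv\cdot\Bg\big)\,dx+I_{\hat E^+},
\end{equation*}
where $I_{\hat E^+}$ is the sum of the $\hat E^+$-integrals appearing in \eqref{3-8}: the pressure term $\partial_{x_1}\hat{\zeta}^L_T\,pv_1$, the cubic term $\tfrac12|\Bv|^2v_1\partial_{x_1}\hat{\zeta}^L_T$, the term $\partial_{x_1}\hat{\zeta}^L_T\,\partial_{x_1}\Bv\cdot\Bv$, and the flux-carrier terms $(\Bv\cdot\Bg)v_1$, $-\partial_{x_1}\Bg\cdot\Bv$, $(\Bg\cdot\Bv)g_1$, each multiplied by $\partial_{x_1}\hat{\zeta}^L_T$. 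On $\hat E^+$ one has $|\partial_{x_1}\hat{\zeta}^L_T|=[f(h(t))]^{-1}$, and by \eqref{4-17} the cross-section is comparable to $f(h(t))$ throughout $\hat E^+$, so Lemmas \ref{lemmaA1} and \ref{lemmaA2} give $M_1(\hat E^+)\sim f(h(t))$ and $M_4(\hat E^+)\sim[f(h(t))]^{1/2}$; the first bulk term is absorbed as in \eqref{4-15}, using that $\int_{-T}^{0}f^{-3}(\tau)\,d\tau\le d^{-4/3}\int_{-\infty}^{0}f^{-5/3}(\tau)\,d\tau=d^{-4/3}L$ is a finite constant and $\int_{\Omega_{-T,h(t)}}|\nabla\Bg|^2+|\Bg|^4\,dx\le C(1+\int_0^{h(t)}f^{-3})$ by Lemma \ref{lemma1}.

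The estimates of the six $\hat E^+$-integrals are then literally those in \eqref{4-11}--\eqref{4-16}; the only delicate point — and the step I expect to be the main obstacle — is the pressure term, for which one solves $\div\,\Ba=v_1$ on $\hat E^+$ with $\|\nabla\Ba\|_{L^2(\hat E^+)}\le M_5(\hat E^+)\|v_1\|_{L^2(\hat E^+)}$ (Lemma \ref{lemmaA5}), inserts $\Bp=\Ba$ into \eqref{2-11} and expands exactly as in \eqref{3-13-1}, and one must check that $M_5(\hat E^+)$ is bounded uniformly in $t$ even though $|\hat E^+|\to\infty$. This holds because in dimension two the Bogovskii constant is invariant under dilations: rescaling $\hat E^+$ by $f(h(t))$ produces a domain of uniformly bounded diameter whose boundary slopes remain $\le 2\beta$, so the star-shaped decomposition of Lemma \ref{lemma3}, with $N$ and $R$ depending only on $d$ and $\beta$, applies and yields a bound for $M_5(\hat E^+)$ depending only on $d$ and $\beta$. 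Combining everything with Young's inequality and choosing $\varepsilon$ small, and setting
\begin{equation*}
\hat y(t)=\int\hat{\zeta}^L_T|\nabla\Bv|^2\,dx+\alpha\int\hat{\zeta}^L_T|\Bv|^2\,ds,
\end{equation*}
I would obtain — using $\partial_t\hat{\zeta}^L_T\ge\tfrac12[f(h(t))]^{2/3}$ on $\hat E^+$ (cf.\ \eqref{4-4}), so that $\hat y'(t)\ge\tfrac12[f(h(t))]^{2/3}\big(\|\nabla\Bv\|_{L^2(\hat E^+)}^2+\alpha\|\Bv\|_{L^2(\partial\hat E^+\cap\partial\Omega)}^2\big)$ — the differential inequality
\begin{equation*}
\hat y(t)\le C_{11}\big\{\hat y'(t)+[\hat y'(t)]^{3/2}\big\}+C_{12}\Big(1+\int_0^{h(t)}f^{-3}(\tau)\,d\tau\Big),
\end{equation*}
with $C_{11},C_{12}$ independent of $t$ and $T$.

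Finally I would apply Lemma \ref{lemmaA4}(1) with $\delta_1=\tfrac12$, $\Psi(\tau)=C_{11}(\tau+\tau^{3/2})$ and $\varphi(t)=C_{13}+C_{14}\int_0^{h(t)}f^{-3}(\tau)\,d\tau$. Since $\varphi'(t)=C_{14}h'(t)[f(h(t))]^{-3}=C_{14}[f(h(t))]^{-4/3}\le C_{14}d^{-4/3}$ is bounded, $C_{13}$ and $C_{14}$ can be chosen large enough that $C_{12}(1+\int_0^{h(t)}f^{-3})\le(1-\delta_1)\varphi(t)$ and $\varphi(t)\ge\delta_1^{-1}\Psi(\varphi'(t))$ for all $t\ge\hat t$; and by \eqref{2-10-1} together with Lemma \ref{lemma1}, $\hat y(T)\le\varphi(T)$ for such $C_{13},C_{14}$. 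Lemma \ref{lemmaA4}(1) then gives $\hat y(t)\le\varphi(t)$ for $\hat t\le t\le T$, and restricting the integrals defining $\hat y(t)$ to $\{-T<x_1<h_R(t)\}$, where $\hat{\zeta}^L_T\equiv\beta^*$, yields \eqref{case3-1} with $C_{22}=(\beta^*)^{-1}(C_{13}+C_{14})$, which is independent of $T$ because $C_0$, $C_{11}$, $C_{12}$ and $M_5(\hat E^+)$ are.
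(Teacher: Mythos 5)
Your proposal is correct and follows essentially the same route as the paper: the paper's proof of this lemma simply takes $\Bp=\hat{\zeta}^L_T\Bv$ in \eqref{2-11}, repeats the estimates of Lemma \ref{lemma4} (with the only transition layer being $\hat E^+$ and $\int_{-T}^0 f^{-3}\,d\tau\le d^{-4/3}L$ absorbed into a constant), and closes via the comparison argument of Lemma \ref{lemmaA4} exactly as you do. Your scaling justification of the uniform bound on the Bogovskii constant $M_5(\hat E^+)$ is a legitimate way to fill in a point the paper only asserts by analogy with Lemma \ref{lemma3}.
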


\begin{proof}
The superscript $T$ is omitted throughout the proof.
We follow the proof of Lemma \ref{lemma4} by taking the test function $\Bp=\hat{\zeta}^L_T \Bv$ in \eqref{2-11}. Similarly, one has
\be \label{case3-4}
\begin{aligned}
	\hat{y}^L(t) & \leq C \left\{ (\hat{y}^L)^{\prime} + \left[ (\hat{y}^L)^\prime    \right]^{\frac32}   \right\} + C \int_{-T}^{h(t)}
	f^{-3}(\tau) \, d\tau \\
	& \leq C \left\{ (\hat{y}^L)^{\prime} + \left[ (\hat{y}^L)^\prime    \right]^{\frac32}   \right\} + C \left(1+ \int_{0}^{h(t)}
	f^{-3}(\tau) \, d\tau \right) ,
\end{aligned}
\ee
where
\be \nonumber
\hat{y}^L (t) = \int_{\Omega_{-T, h(T)}} \hat{\zeta}^L_T |\nabla \Bv|^2 \, dx + \alpha \int_{\partial \Omega_{-T, h(T)} \cap \partial \Omega} \hat{\zeta}^L_T |\Bv|^2 \, ds.
\ee
Hence, the same argument as in proof of Lemma \ref{lemma4} yields
\be \label{case3-6}
\hat{y}^L(t) \leq C \left( 1  + \int_0^{h(t)} f^{-3} (\tau) \, d\tau \right).
\ee
This completes the proof of the lemma.
\end{proof}

\begin{pro}\label{prop-case3}
Assume that the domain $\Omega$ satisfies the assumptions of Lemma \ref{lemma-case3},  the problem \eqref{NS} and \eqref{flux constraint}-\eqref{BC} has a solution $\Bu=\Bv+\Bg\in H_\sigma(\Omega)$ satisfying
\begin{equation}
	\|\nabla \Bu\|_{L^2(\Omega_{0, t} )}^2 + \alpha\|\Bu\|_{L^2(\partial \Omega_{0, t} \cap \partial\Omega)}^2\leq C_{23}\left(1+\int_0^t f^{-3}(\tau)\,d\tau\right)
\end{equation}
and
\be
\|\nabla \Bu\|_{L^2(\Omega_{-t , 0} )}^2 + \alpha\|\Bu\|_{L^2(\partial \Omega_{-t, 0} \cap \partial\Omega)}^2\leq C_{23},
\ee
where the constant $C_{23}$ depends only on $\alpha$, $\Phi$, and $\Omega$.
\end{pro}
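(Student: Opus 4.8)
The plan is to follow verbatim the scheme of Propositions~\ref{unbounded exits-1} and \ref{unbounded exits-2}, with the truncation $\hat\zeta^L_T$ and the a priori bound of Lemma~\ref{lemma-case3} replacing $\hat\zeta$ and Lemma~\ref{lemma4}. First I would take the approximate solution $\Bv^T$ on $\Omega_{-T,h(T)}$ supplied by Proposition~\ref{appro-existence}, which satisfies the energy estimate \eqref{2-10-1}. By Lemma~\ref{lemma-case3}, for each fixed $t\ge\hat t$ the quantity $\|\nabla\Bv^T\|_{L^2(\Omega_{-T,h_R(t)})}^2+\alpha\|\Bv^T\|_{L^2(\partial\Omega_{-T,h_R(t)}\cap\partial\Omega)}^2$ is bounded by $C_{22}(1+\int_0^{h(t)}f^{-3}(\tau)\,d\tau)$ uniformly in $T$, so $\{\Bv^T\}$ is bounded in $H^1_{loc}(\Omega)$. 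Extracting a subsequence converging weakly in $H^1_{loc}(\Omega)$ to some $\Bv\in H_\sigma(\Omega)$, and passing to the limit in the weak formulation \eqref{2-11} (the nonlinear terms pass thanks to the compact embedding $H^1(\Omega_{a,b})\hookrightarrow L^4(\Omega_{a,b})$ of Lemma~\ref{lemmaA2}), shows $\Bv$ is a weak solution of \eqref{NS1}; I set $\Bu:=\Bv+\Bg$. By weak lower semicontinuity the limit inherits, for every $t\ge\hat t$, the bound $\|\nabla\Bv\|_{L^2(\Omega\cap\{x_1<h_R(t)\})}^2+\alpha\|\Bv\|_{L^2(\partial\Omega\cap\{x_1<h_R(t)\})}^2\le C_{22}(1+\int_0^{h(t)}f^{-3}(\tau)\,d\tau)$.

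For the right-hand estimate I would re-parametrize exactly as in \eqref{4-24}--\eqref{4-25}. Given a large $t>0$, choose $s\ge\hat t$ with $h_R(s)=t$, which is possible since $h_R$ is strictly increasing by \eqref{4-2}; then $\Omega_{0,t}\subset\Omega\cap\{x_1<h_R(s)\}$, so the weak-limit bound gives $\|\nabla\Bv\|_{L^2(\Omega_{0,t})}^2+\alpha\|\Bv\|_{L^2(\partial\Omega_{0,t}\cap\partial\Omega)}^2\le C_{22}(1+\int_0^{h(s)}f^{-3}(\tau)\,d\tau)$, and splitting the integral at $h_R(s)$ and using \eqref{4-17} precisely as in \eqref{4-25} yields $\int_0^{h(s)}f^{-3}(\tau)\,d\tau\le\int_0^{h_R(s)}f^{-3}(\tau)\,d\tau+2^3\beta^*d^{-2}=\int_0^{t}f^{-3}(\tau)\,d\tau+2^3\beta^*d^{-2}$. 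For the bounded range of $t$ not reached by $h_R$, $\Omega_{0,t}$ is a fixed bounded subdomain and the bound is trivial. Since $\Bg$ vanishes near $\partial\Omega$, one has $\|\Bu\|_{L^2(\partial\Omega_{0,t}\cap\partial\Omega)}=\|\Bv\|_{L^2(\partial\Omega_{0,t}\cap\partial\Omega)}$, while $\|\nabla\Bu\|_{L^2(\Omega_{0,t})}\le\|\nabla\Bv\|_{L^2(\Omega_{0,t})}+\|\nabla\Bg\|_{L^2(\Omega_{0,t})}$ and Lemma~\ref{lemma1} gives $\int_{\Omega_{0,t}}|\nabla\Bg|^2+|\Bg|^4\,dx\le C(\varepsilon,\gamma)(\Phi^2+\Phi^4)\int_0^t f^{-3}(\tau)\,d\tau$. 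Collecting these bounds produces the first inequality with $C_{23}=C_{23}(\alpha,\Phi,\Omega)$.

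For the left-hand estimate the point is that the left outlet has finite ``resistance'': from $\int_{-\infty}^0 f^{-\frac53}(\tau)\,d\tau=L<\infty$ and $f\ge d$ one gets $\int_{-\infty}^0 f^{-3}(\tau)\,d\tau\le d^{-\frac43}\int_{-\infty}^0 f^{-\frac53}(\tau)\,d\tau=d^{-\frac43}L<\infty$. Fix $s_0:=\hat t+1$, so that $h_R(s_0)>0$ and hence $\Omega_{-t,0}\subset\Omega\cap\{x_1<h_R(s_0)\}$ for every $t>0$; the weak-limit bound with this fixed parameter gives $\|\nabla\Bv\|_{L^2(\Omega_{-t,0})}^2+\alpha\|\Bv\|_{L^2(\partial\Omega_{-t,0}\cap\partial\Omega)}^2\le C_{22}(1+\int_0^{h(s_0)}f^{-3}(\tau)\,d\tau)=:C$, a constant independent of $t$. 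Adding back the flux carrier via $\int_{\Omega_{-t,0}}|\nabla\Bg|^2+|\Bg|^4\,dx\le C(\varepsilon,\gamma)(\Phi^2+\Phi^4)\int_{-\infty}^0 f^{-3}(\tau)\,d\tau<\infty$ (again using that $\Bg=0$ near $\partial\Omega$) yields the second estimate uniformly in $t$. The remaining case where the range of $k(t)$ is $(-\infty,R)$ is reduced to the above by the mirror reflection $x_1\mapsto-x_1$.

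I expect the argument above to be essentially bookkeeping: the genuine difficulty, namely keeping every constant — the Korn constant $\mfc$ of Lemma~\ref{lemmaA3}, the Poincaré constants $M_1,M_4$, and above all the Bogovskii norm $M_5$ on the subdomains near the sliding interfaces $x_1=h_R(t)$ and $x_1=h(t)$ — independent of $T$ has already been handled inside Lemma~\ref{lemma-case3}, which imports the star-shapedness/Bogovskii construction of Lemma~\ref{lemma4}. Thus the only work left here is to match the weak-limit estimate of Lemma~\ref{lemma-case3} against the target domains $\Omega_{0,t}$ and $\Omega_{-t,0}$ and to pass between $\Bv$ and $\Bu=\Bv+\Bg$, for which the $\int f^{-3}$ decay of the flux carrier recorded in Lemma~\ref{lemma1} is precisely what is needed.
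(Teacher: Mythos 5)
Your argument is correct and follows essentially the same route as the paper: existence by passing to the weak limit of the approximate solutions using the uniform bound of Lemma \ref{lemma-case3}, the right-hand estimate by splitting $\int_0^{h(t)}f^{-3}$ at $h_R(t)$ exactly as in \eqref{4-25}, the left-hand estimate by invoking the Lemma \ref{lemma-case3} bound at a fixed parameter near $\hat t$ (so the right endpoint is at or past $0$), and then adding back the flux carrier via Lemma \ref{lemma1}. The only differences are cosmetic reparametrizations ($h_R(s)=t$ versus stating the bound on $\Omega_{0,h_R(t)}$, and $s_0=\hat t+1$ versus $t=\hat t$), so no further comment is needed.
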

\begin{proof}
With the help of Lemma \ref{lemma-case3}, one can find at least one solution $\Bu = \Bv + \Bg$ of \eqref{NS1} in a way similar to Proposition \ref{straight-existence}. Following the same argument in the estimate \eqref{4-25} yields
\be \label{case3-10}
\begin{aligned}
	 &\|\nabla \Bv\|_{L^2 (\Omega_{0, h_R(t)}) }^2 +  \alpha \|\Bv\|_{L^2(\partial \Omega_{0, h_R(t)} \cap \partial \Omega)}^2 \\
	\leq &
	C \left(1 +  \int_0^{h(t)} f^{-3}(\tau) \, d\tau \right)\\
	\leq & C \left(1 +  \int_0^{h_R(t)} f^{-3}(\tau) \, d\tau \right) + C \int_{h_R(t)}^{ h(t)} f^{-3}(\tau) \, d\tau \\
	\leq & C_{23} \left(1+  \int_0^{h_R(t)} f^{-3}(\tau) \, d\tau \right),
\end{aligned}
\ee
where $h_R(t)$ is defined in \eqref{defh}. Hence one has
\be \label{case3-11}
\|\nabla \Bv\|_{L^2(\Omega_{0,h_R(t)} )}^2  + \alpha \|\Bv\|_{L^2(\partial \Omega_{0, h_R(t)} \cap \partial \Omega)}^2  \leq C_{23} \left(1 +
\int_0^{h_R(t)} f^{-3}(\tau) \, d\tau \right).
\ee
On the other hand, according to Lemma \ref{lemma-case3}, it holds that
\be \label{case3-12}
\|\nabla \Bv\|_{L^2(\Omega_{-T, 0} )}^2 + \alpha \|\Bv\|_{L^2(\partial \Omega_{-T, 0} \cap \partial \Omega)}^2 \leq C\left(1 + \int_0^{h({\hat{t}}) } f^{-3}(\tau) \, d\tau\right) \leq C_{23}.
\ee
Combining the estimates \eqref{case3-11}-\eqref{case3-12} and Lemma \ref{lemma1} finishes the proof of the proposition.
\end{proof}

Hence we finish the proof for Part (i) of Theorem \ref{unbounded channel} in the case that the range of $k(t)$ is $(-L, +\infty)$. The same proof applies to the case that the range of $k(t)$ is $(-\infty, R)$. The proof of existence for flows in channels with unbounded outlets is completed.

We are ready to prove the uniqueness of solutions when the flux $\Phi$ is small.
In fact, one can derive some refined estimate for the local Dirichlet norm of $\Bu$, which plays an important role in proving the uniqueness when $\Phi$ is small.
\begin{pro}\label{decay rate-right}
Let $\Bu=\Bv+\Bg$ be the solution obtained in Part (i) of Theorem \ref{unbounded channel}. Assume further that either
\begin{equation}\label{4-27-2}
	\left|\int_0^{\infty} f^{-3}(\tau)\,d\tau\right|=\infty,\ \ \ \lim_{t\to \infty}f'(t)= 0,
\end{equation}
or
\begin{equation}\label{4-27-3}
	\left|\int_0^{\infty}f^{-3}(\tau)\,d\tau\right|<\infty,\ \ \ \lim_{t\to \infty}\frac{\sup_{\tau \ge t} f'(\tau)}{\left|\int_t^{ \infty} f^{-3}(\tau)\,d\tau\right|^\frac12}= 0.
\end{equation}
 Then there exists a constant $C_{31}$ depending only on $\alpha, \Phi$, and $\Omega$ such that for any $t\geq 0$, one has
\begin{equation*}
\|\nabla \Bu\|_{L^2(\Omega_{t-\beta^* f(t),t})}^2+\alpha\|\Bu\|_{L^2(\partial\Omega_{t-\beta^* f(t),t}\cap \partial\Omega)}^2\leq \frac{C_{31}}{  f^2(t)}.
\end{equation*}

\end{pro}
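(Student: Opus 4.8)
The plan is to mimic the proof of Lemma~\ref{lemma4} (and of Proposition~\ref{unbounded exits-2}), but with a cut-off whose $s$-derivative is localized \emph{precisely} on the sliding window $\Omega_{h(s)-\beta^* f(h(s)),\,h(s)}$, and then to convert the resulting differential inequality — together with the growth estimate already proved in Part~(i) — into the asserted pointwise decay by an application of Lemma~\ref{lemmaA4}. Concretely, write $\Bv=\Bu-\Bg$ and, for large $s$, take the one-sided cut-off $\hat\zeta^+(\Bx,s)$ introduced in the proof of Proposition~\ref{unbounded exits-2}, so that $\nabla\hat\zeta^+$ and $\partial_s\hat\zeta^+$ vanish outside the window $\hat E^+(s)=\Omega_{h_R(s),h(s)}$, with $\tfrac12 f^{\frac23}(h(s))\le\partial_s\hat\zeta^+\le\tfrac32 f^{\frac23}(h(s))$ and $|\partial_{x_1}\hat\zeta^+|=f^{-1}(h(s))$ on $\hat E^+(s)$ by \eqref{4-4}. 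Taking $\Bp=\hat\zeta^+\Bv$ in the weak formulation and estimating the nonlinear, gradient and pressure terms exactly as in Lemma~\ref{lemma4} — using Lemmas~\ref{lemmaA1}, \ref{lemmaA2}, \ref{lemma1}, the Bogovskii bound of Lemma~\ref{lemmaA5} on $\hat E^+(s)$, and the fact that $M_1(\hat E^+(s))$, $M_4(\hat E^+(s))$, $M_5(\hat E^+(s))$ are controlled by the appropriate powers of $f(h(s))$ uniformly in $s$ — one obtains a differential inequality
\[
\hat y^+(s)\le C\big\{(\hat y^+)'(s)+[(\hat y^+)'(s)]^{\frac32}\big\}+C\Big(1+\int_0^{h(s)}f^{-3}(\tau)\,d\tau\Big),
\]
where $\hat y^+(s)=\int_\Omega\hat\zeta^+|\nabla\Bv|^2\,dx+\alpha\int_{\partial\Omega}\hat\zeta^+|\Bv|^2\,ds$, together with $(\hat y^+)'(s)=\int_{\hat E^+(s)}\partial_s\hat\zeta^+|\nabla\Bv|^2\,dx+\alpha\int_{\partial\Omega\cap\partial\hat E^+(s)}\partial_s\hat\zeta^+|\Bv|^2\,ds$.

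The quantity to be bounded, $N(s):=\|\nabla\Bv\|_{L^2(\hat E^+(s))}^2+\alpha\|\Bv\|_{L^2(\partial\Omega\cap\partial\hat E^+(s))}^2$, satisfies $N(s)\le 2f^{-\frac23}(h(s))(\hat y^+)'(s)$, and since $h(k(t))=t$ the window $\hat E^+(k(t))$ is exactly $\Omega_{t-\beta^* f(t),\,t}$; hence, after replacing $\Bv$ by $\Bu$ using Lemma~\ref{lemma1}, the conclusion is equivalent to $(\hat y^+)'(s)\le C f^{-\frac43}(h(s))$ for all large $s$. The point is that $\tfrac{d}{ds}\int_0^{h(s)}f^{-3}(\tau)\,d\tau=f^{-3}(h(s))h'(s)=f^{-\frac43}(h(s))$, so this is precisely the statement that $(\hat y^+)'$ grows no faster than the derivative of the comparison function $\varphi(s):=C(1+\int_0^{h(s)}f^{-3})$, which already dominates $\hat y^+(s)$ by Part~(i). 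This is where the extra hypotheses \eqref{4-27-2} and \eqref{4-27-3} enter: under either of them one checks that $\varphi'(s)\asymp f^{-\frac43}(h(s))$ is sub-critical for the profile equation $\tilde z=\delta_1^{-1}\Psi(\tilde z')$ attached to $\Psi(\tau)=C(\tau+\tau^{\frac32})$ (so $\tilde z(s)\sim s^3$ and $\varphi'(s)=o(\varphi(s)^{\frac23})$), and this lets one feed the differential inequality into the backward/asymptotic forms of Lemma~\ref{lemmaA4} to propagate the sharp derivative rate downward from arbitrarily large $s$. For $t$ in a fixed bounded range the window $\Omega_{t-\beta^* f(t),t}$ lies in a fixed bounded sub-channel on which $\|\nabla\Bu\|_{L^2}^2+\|\Bu\|_{L^2(\partial\Omega)}^2$ and $f^2(t)$ are both bounded, so the estimate there holds trivially after enlarging $C_{31}$.

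I expect the main obstacle to be exactly this passage from the growth/differential-inequality information to the pointwise decay rate $f^{-2}(t)$. A direct energy estimate on a single window of width $\beta^* f(t)$ yields, after Poincar\'e's inequality (Lemma~\ref{lemmaA1}) and the uniform Bogovskii bound, an $O(1)$ coefficient in front of the Dirichlet norms over the flanking windows (since $|\partial_{x_1}\hat\zeta^+|\,M_1(\hat E^+)\asymp f^{-1}(h(s))\cdot f(h(s))=O(1)$, with no gain from the flux), so a crude comparison between consecutive windows does not close; one genuinely has to exploit the full differential-inequality hierarchy together with the quantitative smallness of $f'$ encoded in \eqref{4-27-2}-\eqref{4-27-3}. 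A secondary, routine point — handled as for the Bogovskii operator in Lemma~\ref{lemma3} — is to verify that $M_1(\hat E^\pm(s))$, $M_4(\hat E^\pm(s))$, $M_5(\hat E^\pm(s))$ and the Korn constant $\mfc$ over these windows scale like the stated powers of $f(h(\pm s))$ uniformly in $s$, which is what makes the nonlinear and pressure contributions absorbable into $(\hat y^+)'$ and $\varphi$.
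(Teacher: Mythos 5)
Your reduction of the statement to a derivative bound is where the argument breaks down. You correctly observe that the target quantity over the window $\Omega_{t-\beta^*f(t),\,t}$ is controlled by $f^{-\frac23}(h(s))(\hat y^+)'(s)$ at $s=k(t)$, so it would suffice to show $(\hat y^+)'(s)\le C f^{-\frac43}(h(s))$. But the mechanism you invoke for this cannot work: the differential inequality $\hat y^+\le C\{(\hat y^+)'+[(\hat y^+)']^{\frac32}\}+C\big(1+\int_0^{h(s)}f^{-3}(\tau)\,d\tau\big)$ together with Lemma \ref{lemmaA4} only compares \emph{values} of $\hat y^+$ with a supersolution; neither gives any pointwise upper bound on $(\hat y^+)'$. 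A large derivative only enlarges the right-hand side of the inequality, so the inequality is compatible with arbitrarily sharp spikes of $(\hat y^+)'$ even when $\hat y^+\le\varphi$ everywhere, and ``propagating the sharp derivative rate downward'' via the backward or asymptotic parts of Lemma \ref{lemmaA4} is not a valid step. Likewise, the hypotheses \eqref{4-27-2}--\eqref{4-27-3} do not enter through a sub-criticality of $\varphi'$ for the profile equation; as you yourself note in the last paragraph, the passage from the global growth information of Part (i) to the local rate $f^{-2}(t)$ is the main obstacle, and your proposal does not supply a way around it.

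The missing idea in the paper's proof is to keep the comparison at the level of values, but for a \emph{new} energy functional anchored at the target window. For fixed $T$ one sets $\hat T=T-\beta^*f(T)$, defines $T_1,T_2$ by $T_1+\beta^*f(T_1)=\hat T$ and $T_2-\beta^*f(T_2)=T$, and builds an \emph{expanding} family of cut-offs $\tilde\zeta^+(\Bx,t)$ supported on $[T_1-m_1(t),\,T_2+m_2(t)]$, where $m_1,m_2$ solve $m_1'=f^{\frac53}(T_1-m_1)$, $m_2'=f^{\frac53}(T_2+m_2)$, $m_i(0)=0$, so that at $t=0$ the plateau of $\tilde\zeta^+$ is exactly $(\hat T,T)$. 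The resulting $\tilde y^+(t)$ obeys the same type of differential inequality but with the \emph{localized} source $\int_{T_1-m_1(t)}^{T_2+m_2(t)}f^{-3}(\tau)\,d\tau$, and the comparison function is taken as $\varphi(t)=C\int_{T_1-m_1(t)}^{T_2+m_2(t)}f^{-3}(\tau)\,d\tau+Cf^{-2}(T)$ (with the coefficient normalized by $\int_{T_0}^{\infty}f^{-3}$ in the convergent case). The hypotheses \eqref{4-27-2}/\eqref{4-27-3} are used precisely twice: to check the endpoint condition $\tilde y^+(t_1)\le\varphi(t_1)$ at the time $t_1$ when $T_1-m_1(t_1)=T_0$, starting from the Part (i) growth bound and distinguishing the divergent/convergent cases of $\int^{\infty}f^{-3}$; and to verify the supersolution inequality $\varphi\ge 2\Psi(t,\varphi')$, by writing $f^{-2}(T_2+m_2(t))+f^{-2}(T_1-m_1(t))=2f^{-2}(T)+O\big(\gamma(T_0)\int_{T_1-m_1(t)}^{T_2+m_2(t)}f^{-3}\big)$ with $\gamma(T_0)$ small because $f'$ decays. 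The backward comparison of Lemma \ref{lemmaA4}(1) on $[0,t_1]$ then gives $\tilde y^+(0)\le\varphi(0)\le Cf^{-2}(T)$, since $T_2-T_1$ is comparable to $f(T)$ by \eqref{4-17}. None of these ingredients (the anchored expanding cut-offs, the localized comparison function of size $f^{-2}(T)$, the two verifications where \eqref{4-27-2}--\eqref{4-27-3} actually enter) appears in your proposal, so there is a genuine gap at its central step.
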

\begin{proof} We divide the proof into three steps.

{\em Step 1. Truncating function.}
Clearly,
\begin{equation*}
	\frac{d}{dt}(t\pm \beta^* f(t))=1\pm \beta^* f'(t) \ge \frac12.
\end{equation*}
Hence the function $t\pm \beta^* f(t)$ are strictly monotone increasing functions on $\mathbb{R}$.
 For any fixed $T>0$, one can uniquely define the numbers $\hat{T},T_1$, and $T_2$ by
\begin{equation*}
\hat{T}= T - \beta^* f(T), \ \ \hat{T} = T_1 + \beta^* f(T_1), \ \ \mbox{and}\ \ T= T_2 - \beta^* f(T_2).
\end{equation*}

 Let $T_0\ge 1$ be a positive constant to be determined.
We introduce two monotone increasing functions $m_i(t)(i=1,2)$ such that for any  $t\in [0,t_1]$,
\begin{equation}\label{4-26}
\left\{\begin{aligned}
&	\frac{d}{dt}m_1(t)=f^{\frac53}(T_1-m_1(t)),\\
&	\frac{d}{dt}m_2(t)=f^{\frac53}(T_2+m_2(t)),\\
&	m_i(0)=0, i=1, 2,
\end{aligned}\right.
\end{equation}
where $t_1$ is the number satisfying
\begin{equation}\label{4-21}
m_1(t_1)=T_1-T_0.
\end{equation}
Noting that  $\frac{d}{dt}m_1(t)\ge d^\frac{5}{3}>0$, the number $t_1$ is well-defined. Then  we define the new truncating function $\tilde{\zeta}^+$ as follows,
\begin{equation*}
\tilde{\zeta}^+(\Bx,t)=\left\{
\begin{aligned}
	&\frac{x_1-T_1+m_1(t)}{f(T_1-m_1(t))},~~&&\text{ if }x_1\in [T_1-m_1(t),T_1^*(t)],\\
	&\beta^*,~~~~~~~~~ &&\text{ if }\begin{array}{r}x_1\in (T_1^*(t),
		T_2^*(t))\end{array},\\
	&\frac{T_2+m_2(t)-x_1}{f(T_2+m_2(t))},~~&&\text{ if }x_1\in [T_2^*(t),T_2+m_2(t)],\\
	&0,~~~~~~~~ &&\text{ if }x_1\in (-\infty,T_1-m_1(t) )\cup (T_2+m_2(t),\infty),
\end{aligned}\right.
\end{equation*}
where
\[
T_1^*(t)=T_1-m_1(t)+\beta^* f(T_1-m_1(t))\quad \text{and}\quad T_2^*(t)=T_2+m_2(t)-\beta^* f(T_2+m_2(t)).
\]
With the help of \eqref{4-26},  similar to \eqref{4-3}-\eqref{4-4}, one has
\begin{equation*}
|\nabla \tilde{\zeta}^+|=|\partial_{x_1}\tilde{\zeta}^+|=\frac{1}{f(T_i\pm m_i(t))}
\quad\text{and}\quad
|\partial_t\tilde{\zeta}^+|\geq \frac12 [f(T_i\pm m_i(t))]^\frac23  \,\,\text{in}\,\, \operatorname{supp} \nabla \tilde{\zeta}^+=\operatorname{supp} \partial_t \tilde{\zeta}^+.
\end{equation*}

{\em Step 2. Energy estimate. } Taking the test function $\Bp=\tilde{\zeta}^+ \Bv$ in \eqref{2-11} and following the proof of Lemma \ref{lemma4} yield that for any $t\in [0,t_1]$,
\begin{equation}\label{eq130.5}
	\begin{aligned}
\tilde{y}^+\leq & C_{24}\left\{[f^{-\frac23}(T_2+m_2(t))+f^{-\frac23}(T_1-m_1(t))](\tilde{y}^+)'+[(\tilde{y}^+)']^\frac32\right\}\\
 &+C_{25}\int_{T_1-m_1(t)}^{T_2+m_2(t)} f^{-3}(\tau) \,d\tau,
\end{aligned}
\end{equation}
 where
\begin{equation*}
\tilde{y}^+(t)=\int_{\Omega}\tilde{\zeta}^+|\nabla \Bv |^2\,dx+\alpha\int_{\partial\Omega}\tilde{\zeta}^+| \Bv |^2\,ds.
\end{equation*}
By virtue of Propositions \ref{unbounded exits-2} and \ref{prop-case3}, one has
\begin{equation}\label{4-22-1}
	\begin{aligned}
		\tilde{y}^+(t_1)\leq &C \left(1+\int_{0}^{T_2+m_2(t_1)}f^{-3}(\tau) \,d\tau\right)\\
		\leq& C\int_{T_0}^{T_2+m_2(t_1)}f^{-3}(\tau) \,d\tau+C \left(1+\int_{0}^{T_0}f^{-3}(\tau) \,d\tau\right).
	\end{aligned}
\end{equation}
{\em Step 3. Analysis for flows in channels satisfying \eqref{4-27-2}.} 
Firstly, under the assumption \eqref{4-27-2}, choose $T_0$ and $T$ to be sufficiently large such that
\begin{equation}\label{4-23-2}
	1+\int_{0}^{T_0}f^{-3}(\tau)\,d\tau\leq 2\int_{0}^{T_0}f^{-3}(\tau)\,d\tau\leq 2\int_{T_0}^{T_1}f^{-3}(\tau)\,d\tau.
\end{equation}
Recalling that $T_1-m_1(t_1)=T_0$, one uses \eqref{4-22-1} and \eqref{4-23-2} to obtain
\begin{equation*}
	\tilde{y}^+(t_1)\leq C_{26}\int_{T_1-m_1(t_1)}^{T_2+m_2(t_1)}f^{-3}(\tau) \,d\tau.
\end{equation*}
Now, we set $\delta_1=\frac12$,
\begin{equation*}
	\Psi(t,\tau)=C_{24}\left\{ [f^{-\frac23}(T_2+m_2(t))+f^{-\frac23}(T_1-m_1(t))]\tau +\tau ^\frac32 \right\},
\end{equation*}
and
\begin{equation*}
	\varphi(t)=(2C_{25}+C_{26})\int_{T_1-m_1(t)}^{T_2+m_2(t)}f^{-3}(\tau)\,d\tau+C_{27}f^{-2}(T),
\end{equation*}
where $C_{27}$ is to be chosen. Thus, one has
\begin{equation}\label{eqyPsiphi}
	\tilde{y}^+\leq \Psi(t,(\tilde{y}^+)')+\frac12 \varphi(t)
\quad
\text{and}
\quad
	\tilde{y}^+(t_1)\leq \varphi(t_1).
\end{equation}
Moreover, according to \eqref{4-26} and the definition of $\varphi(t)$ and $\Psi(t,\tau)$, it holds that
\begin{equation*}
	\begin{aligned}
		\varphi'(t)
		=&(2C_{25}+C_{26})\left(\frac{m_2'(t)}{f^3(T_2+m_2(t))}+\frac{m_1'(t)}{f^3(T_1-m_1(t))}\right)\\
		=&(2C_{25}+C_{26})\left(f^{-\frac43}(T_2+m_2(t))+f^{-\frac43}(T_1-m_1(t))\right).
	\end{aligned}
\end{equation*}
Therefore, it holds that
\begin{equation*}
	\begin{aligned}
		\Psi(t, \varphi'(t))=&C_{24}\left\{ [f^{-\frac23}(T_2+m_2(t))+f^{-\frac23}(T_1-m_1(t))]\varphi'(t)+[\varphi'(t) ]^\frac32 \right\}\\	
		\leq &C\left[f^{-2}(T_2+m_2(t))+f^{-2}(T_1-m_1(t))\right]\\
		=&C\left[ 2f^{-2}(T)+2\int_{T_1-m_1(t)}^T(f'f^{-3})(\tau)\,d\tau-2\int_T^{T_2+m_2(t)}(f'f^{-3})(\tau)\,d\tau\right]\\
		\leq &2C\left( f^{-2}(T)+\int_{T_1-m_1(t)}^{T_2+m_2(t)}(f'f^{-3})(\tau)\,d\tau\right)\\
		\leq&2C\left( f^{-2}(T)+ \gamma_0(T_0) \int_{T_1-m_1(t)}^{T_2+m_2(t)}f^{-3}(\tau)\,d\tau\right),
	\end{aligned}
\end{equation*}
where
\begin{equation*}
	\gamma_0(T_0): =\sup_{t\ge T_0}|f'(t)|.
\end{equation*}
According to the assumption \eqref{4-27-2}, one could choose sufficiently large $T_0$ and $C_{27}$ such that
\begin{equation}\label{eq133.5}
	\varphi(t)\ge 2\Psi(t, \varphi'(t)).
\end{equation}
Now, it follows from Lemma \ref{lemmaA4} that one has
\begin{equation}\label{4-29-2}
	\tilde{y}^+(t)\leq \varphi(t) \text{ for any }t\in[0,t_1].
\end{equation}

{\em Step 4. Analysis for flows in channels satisfying \eqref{4-27-3}. }
If instead of \eqref{4-27-2}, the assumption \eqref{4-27-3} holds, we choose $T_0$ and $T$ to be sufficiently large such that
\begin{equation*}
	\int_{T_0}^\infty f^{-3}(\tau)\,d\tau\leq 1\quad \text{and}\quad
	\int_{T_0}^{T_1} f^{-3}(\tau)\,d\tau\ge \frac12 \int_{T_0}^\infty f^{-3}(\tau)\,d\tau.
\end{equation*}
Hence, it holds that 
\begin{equation}\label{4-23-3}
	1+\int_{0}^{T_0}f^{-3}(\tau)\,d\tau\leq 1+\int_{0}^{\infty}f^{-3}(\tau)\,d\tau\leq \frac{2C}{\int_{T_0}^\infty f^{-3}(\tau)\,d\tau} \int_{T_0}^{T_1}f^{-3}(\tau)\,d\tau.
\end{equation}
Recalling that  $T_1-m_1(t_1)=T_0$, one combines \eqref{4-22-1} and \eqref{4-23-3} to obtain
\begin{equation*}
	\begin{aligned}
		\tilde{y}^+(t_1)\leq &C \int_{T_0}^{T_2+m_2(t_1)}f^{-3}(\tau) \,d\tau+C \left(1+\int_{0}^{T_0}f^{-3}(\tau) \,d\tau\right)\\
		\leq& \frac{C_{26}}{\int_{T_0}^\infty f^{-3}(\tau)\,d\tau}\int_{T_1-m_1(t_1)}^{T_2+m_2(t_1)}f^{-3}(\tau) \,d\tau.
	\end{aligned}
\end{equation*}
Now,  set $\delta_1=\frac12$,
\begin{equation*}
	\Psi(t,\tau)=C_{24}\left([f^{-\frac23}(T_2+m_2(t))+f^{-\frac23}(T_1-m_1(t))]\tau +\tau ^\frac32 \right),
\end{equation*}
and
\begin{equation*}
	\varphi(t)=\left(2C_{25}+\frac{C_{28}}{\int_{T_0}^\infty f^{-3}(\tau)\,d\tau}\right)\int_{T_1-m_1(t)}^{T_2+m_2(t)}f^{-3}(\tau)\,d\tau+C_{29}f^{-2}(T),
\end{equation*}
where $C_{29}$ is to be determined. Then the inequalities in \eqref{eqyPsiphi} still hold.
Moreover, according to \eqref{4-26} and the definition of $\varphi(t)$ and $\Psi(t,\tau)$, one has
\begin{equation*}
	\begin{aligned}
		\varphi'(t)
		=&\left(2C_{25}+\frac{C_{28}}{\int_{T_0}^\infty f^{-3}(\tau)\,d\tau}\right)\left(\frac{m_2'(t)}{f^3(T_2+m_2(t))}+\frac{m_1'(t)}{f^3(T_1-m_1(t))}\right)\\
		=&\left(2C_{25}+\frac{C_{28}}{\int_{T_0}^\infty f^{-3}(\tau)\,d\tau}\right)\left(f^{-\frac43}(T_2+m_2(t))+f^{-\frac43}(T_1-m_1(t))\right).
	\end{aligned}
\end{equation*}
Hence,
\begin{equation*}
	\begin{aligned}
	\Psi(t, \varphi'(t))=&C_{24}\left([f^{-\frac23}(T_2+m_2(t))+f^{-\frac23}(T_1-m_1(t))]\varphi'(t)+[\varphi'(t) ]^\frac32 \right)\\	
		\leq &\frac{C}{\left(\int_{T_0}^\infty f^{-3}(\tau)\,d\tau\right)^\frac32}\left[f^{-2}(T_2+m_2(t))+f^{-2}(T_1-m_1(t))\right]\\	
		=&\frac{2C}{\left(\int_{T_0}^\infty f^{-3}(\tau)\,d\tau\right)^\frac32}\left( f^{-2}(T)+\int_{T_1-m_1(t)}^T(f'f^{-3})(\tau)\,d\tau-\int_T^{T_2+m_2(t)}(f'f^{-3})(\tau)\,d\tau\right)\\
		\leq&\frac{2C}{\left(\int_{T_0}^\infty f^{-3}(\tau)\,d\tau\right)^\frac32}\left( f^{-2}(T)+ \int_{T_1-m_1(t)}^{T_2+m_2(t)}| f'f^{-3}|(\tau)\,d\tau\right)\\
		\leq&2C\left( \frac{f^{-2}(T)}{\left(\int_{T_0}^\infty f^{-3}(\tau)\,d\tau\right)^\frac32} +\frac{\gamma_1(T_0)}{\int_{T_0}^\infty f^{-3}(\tau)\,d\tau}\int_{T_1-m_1(t)}^{T_2+m_2(t)}f^{-3}(\tau)\,d\tau\right),
	\end{aligned}
\end{equation*}
where
\begin{equation*}
	\gamma_1(T_0) =\frac{\sup_{t\ge T_0}|f'(t)|}{\left(\int_{T_0}^\infty f^{-3}(\tau)\,d\tau\right)^\frac12}.
\end{equation*}
According to the assumption \eqref{4-27-3}, one could choose sufficiently large $T_0$ and $C_{29}$ such that \eqref{eq133.5} holds. One can also get \eqref{4-29-2} with the aid of Lemma \ref{lemmaA4}.

{\em Step 5. Growth estimate.}
In particular, taking $t=0$ in \eqref{eq130.5} gives
\begin{equation}\label{4-28}
\|\nabla \Bv\|_{L^2(\Omega_{\hat{T},T})}^2+\alpha\|\Bv\|_{L^2(\partial\Omega_{\hat{T},T}\cap \partial\Omega)}^2\leq C\int_{T_1}^{T_2}f^{-3}(\tau)\,d\tau+Cf^{-2}(T).
\end{equation}
Finally, using the inequality \eqref{4-17}, one has
\begin{equation}\label{4-29}
\begin{aligned}
	\int_{T_1}^{T_2}f^{-3}(\tau)\,d\tau= &\int_{T_1}^{\hat{T}}f^{-3}(\tau)\,d\tau+\int_{\hat{T}}^Tf^{-3}(\tau)\,d\tau+  \int_{T}^{T_2}f^{-3}(\tau)\,d\tau\\
	\leq& 27\beta^* f^{-3}(\hat{T})f(T_1)+8\beta^* f^{-3}(T)f(T)+ 27 \beta^*f^{-3}(T)f(T_2)\\
	\leq& 54 \beta^* f^{-2}(\hat{T})+8\beta^* f^{-2}(T)+ 27 \beta^* f^{-3}(T) f(T_2)\\
	\leq &Cf^{-2}(T).
\end{aligned}
\end{equation}
Combining \eqref{4-28} and \eqref{4-29} gives
\begin{equation}\label{4-30}
\|\nabla \Bv\|_{L^2(\Omega_{\hat{T},T})}^2+\alpha\|\Bv\|_{L^2(\partial\Omega_{\hat{T},T}\cap \partial\Omega)}^2\leq \frac{C_{30}}{f^2(T)}.
\end{equation}
This, together with Lemma \ref{lemma1}, finishes the proof of the proposition.
\end{proof}

Similarly, one can also prove the estimate for $t<0$.
\begin{pro}\label{decay rate-left}
Let $\Bu=\Bv+\Bg$ be the solution obtained in Part (i) of Theorem \ref{unbounded channel}. Assume further that either
\begin{equation}\label{4-27-4}
	\int_{-\infty}^{0} f^{-3}(\tau)\,d\tau= \infty,\ \ \ \lim_{t\to -\infty}f'(t)= 0,
\end{equation}
or
\begin{equation}\label{4-27-5}
\int_{-\infty}^{0} f^{-3}(\tau)\,d\tau <\infty,\ \ \ \lim_{t\to -\infty}\frac{\sup_{ \tau \leq t} |f'(\tau) | }{\left|\int_{- \infty}^t  f^{-3}(\tau)\,d\tau\right|^\frac12}= 0.
\end{equation}
 Then there exists a constant $C_{31}$ depending only on $\alpha, \Phi$, and $\Omega$ such that for any $t\geq 0$, one has
\begin{equation*}
\|\nabla \Bu\|_{L^2(\Omega_{-t,-t+\beta^* f(-t)})}^2+\alpha\|\Bu\|_{L^2(\partial\Omega_{-t,-t+\beta^* f(-t)}\cap \partial\Omega)}^2\leq \frac{C_{31}}{  f^2(-t)}.
\end{equation*}

\end{pro}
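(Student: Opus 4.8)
The plan is to mirror the proof of Proposition \ref{decay rate-right} exactly, but for the left outlet, reflecting all constructions across $x_1 = 0$. Since the assumptions \eqref{4-27-4}--\eqref{4-27-5} are the $t\to -\infty$ analogues of \eqref{4-27-2}--\eqref{4-27-3}, the argument is symmetric. First I would fix $T>0$ and define the reflected numbers: $\hat{T}_- = -T + \beta^* f(-T)$, together with $T_1^-$ and $T_2^-$ determined by $\hat{T}_- = T_1^- - \beta^* f(T_1^-)$ and $-T = T_2^- + \beta^* f(T_2^-)$, so that $T_2^- < -T < \hat{T}_- < T_1^-$. The monotonicity $\frac{d}{dt}(t \pm \beta^* f(t)) \geq \frac12$ established at the start of the proof of Proposition \ref{decay rate-right} still applies and makes these definitions unambiguous.

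Next I would introduce monotone increasing functions $m_i^-(t)$ ($i=1,2$) solving $\frac{d}{dt}m_1^-(t) = f^{5/3}(T_1^- + m_1^-(t))$ and $\frac{d}{dt}m_2^-(t) = f^{5/3}(T_2^- - m_2^-(t))$ with $m_i^-(0)=0$, stopping at $t_1$ where $m_1^-(t_1) = T_1^- - T_0$ for a large constant $T_0$ (well-defined since the right-hand sides are bounded below by $d^{5/3}$). Then I would build a truncating function $\tilde{\zeta}^-$ supported in $[T_2^- - m_2^-(t),\, T_1^- + m_1^-(t)]$, equal to $\beta^*$ in the middle and linearly ramping down near the ends over intervals of length $\beta^* f$, so that $|\nabla \tilde{\zeta}^-| = [f(T_i^- \pm m_i^-(t))]^{-1}$ and $|\partial_t \tilde{\zeta}^-| \geq \frac12 [f(T_i^- \pm m_i^-(t))]^{2/3}$ on $\operatorname{supp}\nabla\tilde{\zeta}^-$. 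Taking $\Bp = \tilde{\zeta}^- \Bv$ in \eqref{2-11} and running the same estimates as in Lemma \ref{lemma4} (using Lemmas \ref{lemmaA1}, \ref{lemmaA2}, \ref{lemma1}, and the uniformity of the Bogovskii constant $M_5$) yields the differential inequality
\begin{equation*}
\tilde{y}^-(t) \leq C\left\{ [f^{-\frac23}(T_1^- + m_1^-(t)) + f^{-\frac23}(T_2^- - m_2^-(t))](\tilde{y}^-)'(t) + [(\tilde{y}^-)'(t)]^{\frac32} \right\} + C \int_{T_2^- - m_2^-(t)}^{T_1^- + m_1^-(t)} f^{-3}(\tau)\, d\tau,
\end{equation*}
where $\tilde{y}^-(t) = \int_\Omega \tilde{\zeta}^- |\nabla\Bv|^2\, dx + \alpha \int_{\partial\Omega} \tilde{\zeta}^-|\Bv|^2\, ds$.

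Then, using Propositions \ref{unbounded exits-2} and \ref{prop-case3} (whichever applies according to the range of $k$) to bound $\tilde{y}^-(t_1)$, I would choose comparison functions $\Psi(t,\tau)$ and $\varphi(t)$ of exactly the same shape as in Steps 3 and 4 of the proof of Proposition \ref{decay rate-right} — with $f^{-2}(-T)$ playing the role of $f^{-2}(T)$ and with $\gamma_0(T_0) = \sup_{t \leq -T_0}|f'(t)|$ under \eqref{4-27-4} (resp. $\gamma_1(T_0) = \sup_{\tau \leq -T_0}|f'(\tau)| \big/ |\int_{-\infty}^{-T_0} f^{-3}|^{1/2}$ under \eqref{4-27-5}) — so that the hypotheses \eqref{4-27-4}--\eqref{4-27-5} make $\gamma_0(T_0)$ (resp. $\gamma_1(T_0)$) small and force $\varphi(t) \geq 2\Psi(t,\varphi'(t))$. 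Lemma \ref{lemmaA4} then gives $\tilde{y}^-(t) \leq \varphi(t)$ on $[0,t_1]$; evaluating at $t=0$ and using \eqref{4-17} to control $\int_{T_2^-}^{T_1^-} f^{-3}(\tau)\, d\tau \leq C f^{-2}(-T)$ exactly as in \eqref{4-29} yields $\|\nabla\Bv\|_{L^2(\Omega_{-T, \hat{T}_-})}^2 + \alpha\|\Bv\|_{L^2(\partial\Omega_{-T,\hat{T}_-}\cap\partial\Omega)}^2 \leq C_{30} f^{-2}(-T)$. Relabeling $\hat{T}_- = -T + \beta^* f(-T)$, this is the claimed estimate on $\Omega_{-T, -T + \beta^* f(-T)}$, and adding back the contribution of $\Bg$ via Lemma \ref{lemma1} finishes the proof. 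The only point requiring genuine care — rather than pure transcription — is verifying that all the domain-dependent constants $M_1, M_4, M_5$ on the reflected end pieces remain uniformly bounded in terms of $f$ evaluated at the relevant points, but this is identical to the situation already handled in Lemma \ref{lemma4}, so I expect no new obstacle.
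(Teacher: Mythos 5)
Your proposal is exactly the reflected argument the paper intends — the paper gives no separate proof of this proposition beyond the remark that it follows ``similarly'' to Proposition \ref{decay rate-right} — and all the mirrored constructions (the reflected points $T_1^-,T_2^-$ with $T_2^-<-T<\hat T_-<T_1^-$, the ODEs for $m_i^-$, the truncating function $\tilde\zeta^-$, the bound on the initial data via Propositions \ref{unbounded exits-2}/\ref{prop-case3}, the comparison functions with $\gamma_0,\gamma_1$ taken over $t\le -T_0$, and the final use of \eqref{4-17} to get $Cf^{-2}(-T)$) are set up correctly. The only blemish is the stopping condition, which as written ($m_1^-(t_1)=T_1^--T_0$) is impossible since $m_1^-\ge 0$ while $T_1^-<-T_0$; it should read $T_1^-+m_1^-(t_1)=-T_0$, i.e.\ $m_1^-(t_1)=-T_0-T_1^-$, a harmless sign slip in the reflection.
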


With the help of the decay rate of the local Dirichlet norm of solutions obtained in Propositions \ref{decay rate-right}-\ref{decay rate-left}, we are ready to prove the uniqueness of solution when the flux $\Phi$ is sufficiently small.
\begin{pro}\label{unique-unbounded} Under the assumptions of Propositions \ref{decay rate-right}-\ref{decay rate-left}, there exists a constant $\Phi_2$ such that for any  $\Phi\in [0, \Phi_2)$, the solution $\Bu$ obtained in Part (i) of Theorem \ref{unbounded channel} is unique.
\end{pro}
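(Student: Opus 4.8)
The plan is to mimic the uniqueness argument of Proposition \ref{uniqueness}, but now tracking the spatial decay of the coefficients encoded in Propositions \ref{decay rate-right}--\ref{decay rate-left}, so that the offending nonlinear term can be absorbed. Suppose $\Bu$ is the solution from Part (i) of Theorem \ref{unbounded channel} and $\tilde\Bu$ is another solution satisfying \eqref{1-9}; set $\oBu=\tilde\Bu-\Bu$, which solves \eqref{3-15} (with $\Bu$ now the unbounded-channel solution). Following the derivation of \eqref{3-20}, with the same truncating function $\hat\zeta^{+}$-type device used in Lemma \ref{lemma4} and Proposition \ref{decay rate-right} adapted to the unbounded geometry, I would arrive at an inequality of the form
\[
\mfc\int_\Omega \hat\zeta|\nabla\oBu|^2\,dx+\alpha\int_{\partial\Omega}\hat\zeta|\oBu|^2\,ds
\leq \Big|\int_{\hat\Omega_{t-1}}\oBu\cdot\nabla\oBu\cdot\Bu\,dx\Big|
+(\text{boundary terms on } \hat E),
\]
where the boundary terms on $\hat E$ are estimated exactly as in \eqref{3-24-0}--\eqref{3-25}, using Lemmas \ref{lemmaA1}, \ref{lemmaA2}, \ref{lemmaA5}, and the uniform-in-$t$ Bogovskii constant, to produce $C\{(\hat y)'+[(\hat y)']^{3/2}\}$ with $\hat y(t)$ the weighted local Dirichlet energy of $\oBu$.

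The crucial point is the interior term $\int\oBu\cdot\nabla\oBu\cdot\Bu\,dx$. I would decompose $\hat\Omega_{t-1}$ into subdomains $\Omega^i$ of the form $\{x_1\in(A_{i-1},A_i)\}$ with $\beta^* f(A_i)\le A_i-A_{i-1}\le 2\beta^* f(A_i)$ (a partition compatible with the scaling in \eqref{4-17}), and on each piece estimate, via Hölder, Lemma \ref{lemmaA2}, and Lemma \ref{lemma1},
\[
\Big|\int_{\Omega^i}\oBu\cdot\nabla\oBu\cdot\Bu\,dx\Big|
\le \|\nabla\oBu\|_{L^2(\Omega^i)}\,\|\oBu\|_{L^4(\Omega^i)}\big(\|\nabla\Bv\|_{L^2(\Omega^i)}+\|\Bg\|_{L^4(\Omega^i)}\big)
\le C\,\big(\|\nabla\Bv\|_{L^2(\Omega^i)}+\Phi f^{-1}(A_i)\,|\Omega^i|^{1/4}\big)\|\nabla\oBu\|_{L^2(\Omega^i)}^2.
\]
By Propositions \ref{decay rate-right}--\ref{decay rate-left}, $\|\nabla\Bv\|_{L^2(\Omega^i)}\le C\,f^{-1}(A_i)\le C d^{-1}$, which is uniformly bounded; and $\Phi f^{-1}(A_i)|\Omega^i|^{1/4}\le C\Phi$ since $|\Omega^i|\le C f(A_i)\cdot \beta^* f(A_i)\le C f^2(A_i)$. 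Hence summing over $i$ gives
\[
\Big|\int_{\hat\Omega_{t-1}}\oBu\cdot\nabla\oBu\cdot\Bu\,dx\Big|\le C_8(\Phi)\int_{\hat\Omega_{t-1}}|\nabla\oBu|^2\,dx,
\]
with $C_8(\Phi)=O(\Phi)$ as $\Phi\to 0$ (the $\|\nabla\Bv\|_{L^2(\Omega^i)}$ contribution is itself $O(\Phi)$ by the remark after Proposition \ref{straight-existence} and the small-flux refinements). Therefore for $\Phi\in[0,\Phi_2)$ with $\Phi_2$ small this term is $\le\frac{\mfc}{2}\int\hat\zeta|\nabla\oBu|^2$, and may be absorbed into the left-hand side exactly as in \eqref{3-23}.

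What remains is to run the differential-inequality machinery. After absorption, $\hat y(t)$ satisfies $\hat y(t)\le C_9\{\hat y'(t)+[\hat y'(t)]^{3/2}\}$ for $t$ large. By part (3) of Lemma \ref{lemmaA4}, either $\oBu\equiv 0$ or $\liminf_{t\to\infty} t^{-3}\hat y(t)>0$; using the weight $\hat\zeta$ and the relation between $t$ and $h(t)$ (so that $\hat y(t)$ is comparable to $\|\nabla\oBu\|_{L^2(\breve\Omega_t)}^2$), the second alternative forces a lower growth bound on $\|\nabla\oBu\|_{L^2(\hat\Omega_t)}^2$ that contradicts the a priori bound \eqref{1-9} satisfied by both solutions, since \eqref{1-9} gives $\|\nabla\oBu\|_{L^2(\Omega_t)}^2\le \widetilde C(1+\int_{-t}^t f^{-3})$, and under \eqref{1-16}--\eqref{1-17} this grows slower than the cube of the ``time'' variable $t$ for $\hat y$. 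Hence $\oBu\equiv 0$. The main obstacle is precisely the bookkeeping that makes $C_8(\Phi)=O(\Phi)$ uniformly in $t$: it hinges on the decay estimates of Propositions \ref{decay rate-right}--\ref{decay rate-left} holding on the $f$-adapted blocks $\Omega^i$ and on the uniformity (in $t$) of the constants $M_1,M_4,M_5$ on those blocks, which follows from the scaling \eqref{4-17} together with Lemmas \ref{lemmaA1}, \ref{lemmaA2}, and the star-shapedness construction in Lemma \ref{lemma3}.
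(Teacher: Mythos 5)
Your overall route is the paper's: the $f$-adapted cutoff $\hat\zeta$ from \eqref{cut-off-hat}, a decomposition of the interior region into blocks of length comparable to $\beta^* f$, absorption of $\int\oBu\cdot\nabla\oBu\cdot\Bu\,dx$ for small $\Phi$ using Propositions \ref{decay rate-right}--\ref{decay rate-left}, Bogovskii for the pressure on $\hat E^\pm$, and finally the differential inequality $\hat y\le C[\hat y'+(\hat y')^{3/2}]$ with Lemma \ref{lemmaA4}(3) and the at-most-linear growth of $\hat y$ in the cutoff's time variable. That matches Steps 1--3 of the paper's proof, and your contradiction at the end is the intended one.

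However, your bookkeeping on the blocks contains a concrete error. On an $f$-adapted block $\Omega^i$ the constants of Lemmas \ref{lemmaA1}--\ref{lemmaA2} are \emph{not} uniform: $M_1(\Omega^i)\sim f(A_i)$ and $M_4(\Omega^i)\sim [f(A_i)]^{1/2}$, which are unbounded when the outlet widens, so your closing claim that uniformity of $M_1,M_4$ ``follows from the scaling \eqref{4-17}'' is false, and your displayed per-block inequality silently drops a factor $M_4^2(\Omega^i)\sim f(A_i)$ in front of $\|\nabla\Bv\|_{L^2(\Omega^i)}$ and $M_4(\Omega^i)\sim f^{1/2}(A_i)$ in front of $\|\Bg\|_{L^4(\Omega^i)}$. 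The correct mechanism, as in the paper's estimate \eqref{4-32}, is that these growing factors are exactly cancelled by the decay you invoke: $M_4^2\|\nabla\Bv\|_{L^2(\Omega^i)}\lesssim f\cdot C_{31}^{1/2}f^{-1}$ and $M_4\|\Bg\|_{L^4(\Omega^i)}\lesssim f^{1/2}\cdot C\Phi f^{-1/2}$, so the coefficient is uniform in $t$ and $o(1)$ as $\Phi\to0$; with your version of the inequality the decay would be superfluous, which signals the slip. The same caution applies to the terms on $\hat E^\pm$: they cannot be handled ``exactly as in \eqref{3-24-0}--\eqref{3-25}'', which used the uniformly bounded cross-section; one needs the weight $|\partial_{x_1}\hat\zeta|\sim [f(h(\pm t))]^{-1}$, the growth rates \eqref{4-36} of $M_1,M_4$, and the decay-based bounds $\|\Bu\|_{L^4(\hat E^\pm)}\lesssim [f(h(\pm t))]^{-1/2}$, $\|\Bu\|_{L^2(\hat E^\pm)}\lesssim 1$ (the paper's \eqref{4-34}--\eqref{4-38}) to arrive at $C\{\hat y'+[\hat y']^{3/2}\}$. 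With these corrections your argument coincides with the paper's proof.
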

\begin{proof}We divide the proof into three steps.

{\em Step 1. Set up.}
Assume that $\tilde{\Bu}$ is also a solution of problem \eqref{NS} and \eqref{flux constraint}-\eqref{BC} satisfying
\begin{equation*}
\|\nabla \tilde\Bu \|_{L^2(\Omega_{t})}^2+\|\tilde\Bu \|_{L^2(\partial\Omega_{t}\cap \partial\Omega)}^2\leq C\left(1+\int_{-t}^t f^{-3}(\tau)\,d\tau\right)\quad \text{for any}\,\, t>0.
\end{equation*}
 Then $\oBu :=
\tilde\Bu-\Bu$ is a weak solution to the problem \eqref{3-15}. Let $\hat{\zeta}(\Bx,t)$ be the truncating function defined in \eqref{cut-off-hat}.  Similar to \eqref{3-16}-\eqref{3-20},  multiplying the first equation in \eqref{3-15} by $\hat{\zeta}\oBu $ and integrating over $\Omega$ yield
\begin{equation}\label{4-31}
\begin{aligned}
	&\mfc \int_{\Omega}\hat{\zeta}| \nabla \oBu |^2\,dx+\alpha \int_{\partial\Omega}\hat{\zeta}|\oBu |^2\,ds \\
	\leq &\int_{\breve{\Omega}_t}\hat{\zeta}\oBu \cdot\nabla \oBu \cdot \Bu \,dx + \int_{\hat{ E}}  \hat{\zeta} \oBu \cdot\nabla \oBu \cdot \Bu\,dx+\left|\int_{\hat{ E}}\partial_{x_1}\hat{\zeta} \partial_{x_1}\oBu\cdot \oBu\,dx\right|\\
	&+\int_{\hat{ E}}\left[ (\oBu \cdot \Bu)\overline{u}_1+\frac12|\oBu |^2(u_1+\overline{u}_1)+p\overline{u}_1   \right] \partial_{x_1}\hat{\zeta}\,dx,
\end{aligned}
\end{equation}
where $\breve{\Omega}_t$ and $\hat{E}^\pm$ are defined in \eqref{4-6} and \eqref{4-7}.

{\em Step 2. Estimate for the Dirichlet norm.} Let $\breve{\Omega}_t^i=\{\Bx\in\Omega:~x_1\in(A_{i-1},A_i),~i=1,2,\cdots, N(t)\}$. Here the sequence $\{A_i\}$ satisfies $h_L(t)=A_0<\cdots<A_j=0 <A_{j+1}<\cdots < A_{N(t)}=h_R(t)$,
\begin{equation*}
\frac{\beta^*}{2}f(A_i) \leq  A_{i+1}-A_{i}\leq \beta^* f(A_{i}) \text{ for any }0\leq i\leq j-1
\end{equation*}
and
\begin{equation*}
\frac{\beta^*}{2} f(A_{i+1}) \leq  A_{i+1}-A_{i}\leq \beta^* f(A_{i+1})\text{ for any }j\leq i\leq  N(t)-1.
\end{equation*}
By Lemmas \ref{lemmaA1}, \ref{lemmaA2}, and \ref{lemma1}, and Propositions \ref{decay rate-right}-\ref{decay rate-left}, one has
\begin{equation}\label{4-32}
\begin{aligned}
	\int_{\breve{\Omega}_{t}}\hat{\zeta}\oBu \cdot\nabla \oBu \cdot \Bu\,dx
	\leq & \beta^* \sum_{i=1}^{N(t)}\int_{\breve{\Omega}_t^i }|\oBu \cdot\nabla \oBu \cdot \Bu|\,dx \\
	\leq &  \beta^* \sum_{i=1}^{N(t)}\|\nabla\oBu \|_{L^2(\breve{\Omega}_t^i )} \|\oBu \|_{L^4(\breve{\Omega}_t^i )}(\|\Bv\|_{L^4(\breve{\Omega}_t^i )}+\|\Bg\|_{L^4(\breve{\Omega}_t^i )}) \\
	\leq&\beta^* \sum_{i=1}^{N(t)}\|\nabla\oBu \|_{L^2(\breve{\Omega}_t^i )}^2(M_4^2\|\nabla\Bv\|_{L^2(\breve{\Omega}_t^i)}+M_4\|\Bg\|_{L^4(\breve{\Omega}_t^i )}) \\
	\leq&C\sum_{i=1}^{j}\|\nabla\oBu \|_{L^2(\breve{\Omega}_t^i )}^2(f(A_{i-1})\cdot f^{-1}(A_{i-1})+f^{\frac12}(A_{i-1})f^{-\frac12}(A_{i-1})) \\
	&+C\sum_{i=j+1}^{N(t)}\|\nabla\oBu \|_{L^2(\breve{\Omega}_t^i )}^2(f(A_i)\cdot f^{-1}(A_i)+f^{\frac12}(A_i)f^{-\frac12}(A_i)) \\
	\leq&C_{32}\sum_{i=1}^{N(t)}\|\nabla\oBu \|_{L^2(\breve{\Omega}_t^i )}^2 \\
	=&C_{32}\int_{\breve{\Omega}_{t}} |\nabla\oBu |^2 \, dx,
\end{aligned}
\end{equation}
where the constant $C_{32}$ goes to zero as $\Phi\to 0$. Hence there exists a $\Phi_2>0$, such that for any $\Phi\in [0,\Phi_2)$, one has
\begin{equation}\label{4-33}
\int_{\breve{\Omega}_{t}}\hat{\zeta}\oBu \cdot\nabla \oBu \cdot \Bu\,dx \leq \frac{\mfc}{2}\int_{\hat{\Omega}_t}\hat{\zeta}|\nabla\oBu |^2\,dx.
\end{equation}

On the other hand, using Lemmas \ref{lemmaA1} and \ref{lemmaA2} yields
\begin{equation}\label{4-34}
\begin{aligned}
	&\left|\int_{\hat{ E}}\partial_{x_1}\hat{\zeta}\partial_{x_1}\oBu\cdot \oBu\,dx\right|+\int_{\hat{ E}^\pm}\left[(\oBu \cdot \Bu)\overline{u}_1+\frac12|\oBu |^2(u_1+\overline{u}_1) \right] \partial_{x_1}\hat{\zeta}\,dx\\
	&\quad 
	+ \int_{\hat{ E}^\pm} \hat{\zeta}  \oBu\cdot\nabla \oBu\cdot\Bu \, dx \\
	\leq&C[f(h(\pm t))]^{-1}\left[ \|\nabla\oBu \|_{L^2(\hat{ E}^\pm)}\|\oBu \|_{L^2(\hat{ E}^\pm)}+\|\oBu \|_{L^4(\hat{ E}^\pm)}^2 \left(\|\Bu\|_{L^2(\hat{ E}^\pm)}+\|\oBu\|_{L^2(\hat{ E}^\pm)} \right)\right]\\
	&+\beta^* \|\oBu\|_{L^4(\hat{ E}^\pm)}\|\nabla \oBu\|_{L^2(\hat{ E}^\pm)}\|\Bu\|_{L^4(\hat{ E}^\pm)}\\
	\leq&C[f(h(\pm t))]^{-1}\left(M_1(\hat{ E}^\pm)\|\nabla\oBu \|_{L^2(\hat{ E}^\pm)}^2+M_4^2(\hat{ E}^\pm)\|\nabla \oBu \|_{L^2(\hat{ E}^\pm)}^2\|\Bu\|_{L^2(\hat{ E}^\pm)}\right)\\
	&+C[f(h(\pm t))]^{-1}M_1(\hat{ E}^\pm)M_4^2(\hat{ E}^\pm)\|\nabla\oBu\|_{L^2(\hat{ E}^\pm)}^3+\beta^* M_4(\hat{ E}^\pm)\|\nabla \oBu\|_{L^2(\hat{ E}^\pm)}^2\|\Bu\|_{L^4(\hat{ E}^\pm)}.
\end{aligned}
\end{equation}

Similar to \eqref{4-16}, one can estimate the term
 $\int_{\hat{ E}^\pm}p\overline{u}_1\partial_{x_1}\zeta\,dx$. More precisely,
\begin{equation}\label{4-35}
\begin{aligned}
	&\left|\int_{\hat{ E}^\pm}p\overline{u}_1\partial_{x_1}\hat{\zeta}\,dx\right|=\left|\int_{\hat{ E}^\pm}\partial_{x_1}\hat{\zeta} p{\rm div}\,\Ba\,dx\right|=[f(h(\pm t))]^{-1}\left|\int_{\hat{ E}^\pm} p{\rm div}\,\Ba\,dx\right|\\
	=&[f(h(\pm t))]^{-1}\left|\int_{\hat{ E}^\pm} 2\BD(\overline {\Bu}):\BD(\Ba)+(\oBu \cdot \nabla \Bu +(\Bu+\oBu)\cdot \nabla \oBu )\cdot\Ba\,dx\right|\\
	=&[f(h(\pm t))]^{-1}\left|\int_{\hat{ E}^\pm} 2\BD(\overline {\Bu}):\BD(\Ba)-\oBu \cdot \nabla \Ba\cdot\Bu -(\Bu+\oBu)\cdot \nabla \Ba \cdot \oBu\,dx\right|\\
	\le &C[f(h(\pm t))]^{-1}\|\nabla\Ba\|_{L^2(\hat{ E}^\pm)}\left(\|\nabla \overline {\Bu}\|_{L^2(\hat{ E}^\pm)}+\|\overline {\Bu}\|_{L^4(\hat{ E}^\pm)}\|\Bu\|_{L^4(\hat{ E}^\pm)}+\|\overline {\Bu}\|_{L^4(\hat{ E}^\pm)}^2\right)\\
	\le &C[f(h(\pm t))]^{-1}M_1(\hat{ E}^\pm)M_5(\hat{ E}^\pm)
\left(\|\nabla \overline {\Bu}\|_{L^2(\hat{ E}^\pm)}^2+M_4(\hat{ E}^\pm)\|\nabla\overline {\Bu}\|_{L^2(\hat{ E}^\pm)}^2\|\Bu\|_{L^4(\hat{ E}^\pm)}\right.\\
&\left.+M_4^2(\hat{ E}^\pm)\|\nabla \oBu\|_{L^2(\hat{ E}^\pm)}^3\right),
\end{aligned}
\end{equation}
where $\Ba\in H_0^1(\hat{E}^\pm)$  satisfies
\[
\text{div}~\Ba= \overline{u}_1\quad \text{in}\,\,\hat{E}^{\pm}
\]
and 
\begin{equation*}
\|\nabla\Ba\|_{L^2(\hat{E}^\pm)} \leq M_5(\hat{ E}^\pm)\|\overline{u}_1\|_{L^2(\hat{E}^\pm)}.
\end{equation*}

Note that for the subdomain $\hat{ E}^\pm$,  $M_5(E^\pm)$ is a uniform constant and  the constants $M_1(\hat{E}^\pm),M_4(\hat{E}^\pm)$ appeared in \eqref{4-34}-\eqref{4-35} satisfy the following estimates,
\begin{equation}\label{4-36}
 C^{-1} f(h(\pm t)) \leq M_1(\hat{ E}^\pm)\leq C f(h(\pm t)),  \ \  C^{-1} [f(h(\pm t))]^\frac12\leq M_4(\hat{ E}^\pm)\leq C [f(h(\pm t))]^\frac12.
\end{equation}
Moreover, according to Lemmas \ref{lemmaA1}-\ref{lemmaA2}, and \ref{lemma1}, and Propositions \ref{decay rate-right}-\ref{decay rate-left}, one has
\begin{equation}\label{4-37}
\begin{aligned}
	\|\Bu\|_{L^4(\hat{ E}^\pm)}\leq &\|\Bv\|_{L^4(\hat{ E}^\pm)}+\|\Bg\|_{L^4(\hat{ E}^\pm)}
	\leq M_4(\hat{ E}^\pm)\|\nabla\Bv\|_{L^2(\hat{ E}^\pm)}+\|\Bg\|_{L^4(\hat{ E}^\pm)}\\
	\leq& C[f(h(\pm t))]^{-\frac12}
\end{aligned}
\end{equation}
and
\begin{equation}\label{4-38}
\|\Bu\|_{L^2(\hat{ E}^\pm)}\leq \|\Bv\|_{L^2(\hat{ E}^\pm)}+\|\Bg\|_{L^2(\hat{ E}^\pm)}\leq M_1(\hat{ E}^\pm)\|\nabla\Bv\|_{L^2(\hat{ E}^\pm)}+\|\Bg\|_{L^2(\hat{ E}^\pm)}\leq C.
\end{equation}

{\em Step 3. Growth estimate.} Let
\begin{equation*}
\hat{y}(t)=\int_{\Omega}\hat{\zeta }|\nabla \oBu |^2\,dx+\alpha\int_{ \partial\Omega}\hat{\zeta} | \oBu |^2\,ds.
\end{equation*}
Combining \eqref{4-31}-\eqref{4-38} gives the differential inequality
\begin{equation*}
\hat{y}\leq C\left[\hat{y}'+(\hat{y}')^\frac32\right].
\end{equation*}
It follows from Lemma \ref{lemmaA4} that one has either $\oBu=0$ or
\begin{equation*}
\liminf_{t\rightarrow + \infty} t^{-3}\hat{y}(t)>0.
\end{equation*}
Hence the proof of the proposition is completed.
\end{proof}
Combining Propositions \ref{unbounded exits-1}, \ref{unbounded exits-2}, and \ref{unique-unbounded} together finishes the proof of Theorem \ref{unbounded channel}.


\medskip

{\bf Acknowledgement.}
This work is financially supported by the National Key R\&D Program of China, Project Number 2020YFA0712000.
The research of Wang was partially supported by NSFC grant 12171349. The research of  Xie was partially supported by  NSFC grant 11971307, and Natural Science Foundation of Shanghai 21ZR1433300, Program of Shanghai Academic Research Leader 22XD1421400. The authors thank Professors Changfeng Gui and Congming Li for helpful discussions.

\end{document}